\begin{document}
\begin{frontmatter}

  \title{State-based Modal Logics for Free Choice}
  \runtitle{State-based Modal Logics for Free Choice}

  \author{\fnms{Maria Aloni, Aleksi Anttila, and Fan Yang}
    \ead[label=e1]{m.d.aloni@uva.nl}
    \ead[label=e2]{aleksi.i.anttila@helsinki.fi}
    \ead[label=e3]{fan.yang.c@gmail.com}
  }
  \address{Aloni\\
  Institute for Logic, Language and Computation \& Philosophy\\
    University of Amsterdam\\
    Science Park 107\\
    1098 XG Amsterdam\\
    NETHERLANDS\\
    \href{mailto:m.d.aloni@uva.nl}{m.d.aloni@uva.nl}\\
    \\
    Anttila\\
    Institute for Logic, Language and Computation\\
    University of Amsterdam\\
    Science Park 107\\
    1098 XG Amsterdam\\
    NETHERLANDS\\
    \href{mailto:aleksi.ilari.anttila@gmail.com}{aleksi.ilari.anttila@gmail.com}\\
    \\
    Yang\\
    Department of Philosophy and Religious Studies\\
    Utrecht University\\
    Janskerkhof 13\\
    3512 BL Utrecht \\
    NETHERLANDS\\
    \href{mailto:fan.yang.c@gmail.com}{fan.yang.c@gmail.com}\\
  }
  \runauthor{M.~Aloni, A.~Anttila, and F.~Yang}

\begin{abstract}
We study the mathematical properties of bilateral state-based modal logic (BSML), a modal logic employing state-based semantics (also known as team semantics), which has been used to account for free choice inferences and related linguistic phenomena. This logic extends classical modal logic with a nonemptiness atom which is true in a state if and only if the state is nonempty. We introduce two extensions of BSML and show that the extensions are expressively complete, and develop natural deduction axiomatizations for the three logics.
\end{abstract}

\begin{keyword}[class=AMS]
  \kwd[Primary ]{03B65} \kwd[; Secondary ]{03B60} \kwd{03B45} 
\end{keyword}

\begin{keyword}
  \kwd{team semantics} \kwd{free choice} \kwd{modal logic} 
\end{keyword}

\end{frontmatter}

\section{Introduction} \label{section:introduction}
In this paper, we study \emph{bilateral state-based modal logic} ($\BSML$), a modal logic employing \emph{team semantics}. \BSML was introduced in \cite{aloni2022} to account for \emph{Free Choice} ($\FC$) inferences. In $\FC$-inferences, conjunctive meanings  are unexpectedly derived from  disjunctive sentences:
\ex. {\bf Free choice} \label{ex:fc}  \hfill \cite{vwright1968,kamp1973}
\a. You may go to the beach or   to the
      cinema. \\ 
    $\implicates$ You may go to the beach
      and you may go to the cinema.
      \b.  $\Diamond (b \vee c) \implicates \Diamond b \wedge \Diamond c$

The novel hypothesis at the core of the account in \cite{aloni2022} is that $\FC$ and other related inferences are a consequence of a tendency in human cognition to disregard structures that verify sentences by virtue of some empty configuration ({\emph{neglect-zero tendency}) \cite{Bott-Schlotterbeck-Klein:19}. Models that verify sentences by virtue of an empty witness-set are called {\it zero-models}---see Figure \ref{fig:zero-models} for an illustration.

 \begin{figure}[t]
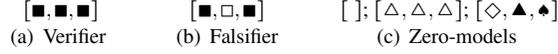

     \centering
 \subfigure[\footnotesize Verifier]{ 
 $\quad[\blacksquare, \blacksquare, \blacksquare]\quad$}
 \qquad \subfigure[\footnotesize Falsifier]{ 
 \ \ \ $[\blacksquare, \square, \blacksquare]$ \ \ \ }
 \qquad  \subfigure[\footnotesize Zero-models]{ 
$[ \  ]$; $[\triangle, \triangle, \triangle]$; $[\Diamond, \blacktriangle, \spadesuit]$}
    \caption{Models for the sentence {\it Every square is black.}}
    \label{fig:zero-models}
 \end{figure}

\BSML is introduced as a framework allowing for the formalization of the neglect-zero tendency and the rigorous study of its impact on linguistic interpretation. It extends classical modal logic ($\ML$) with a special \emph{nonemptiness atom} (\NE) requiring the adoption of \emph{team semantics}. In team semantics (introduced originally in \cite{hodges1997,hodges1997b} and developed further in the literature on \emph{dependence logic} \cite{vaananen2007,vaananen2008}), formulas are interpreted with respect to sets of evaluation points called \emph{teams}, rather than single points. In the type of modal team semantics employed by \BSML, teams are {\em sets of possible worlds}. In \cite{aloni2022}, teams are also called \emph{states} since they represent speakers' information states; we will use these terms interchangeably. The atom $\NE$ (introduced in \cite{vaananen2014,yang2017}) is true (or \emph{supported}) in a state iff the state is not empty: $s\vDash \NE$ iff $s\neq \emptyset$. This atom is used, in \cite{aloni2022}, to define a \emph{pragmatic enrichment function} $[\ ]^+$ whose core effect is to disallow zero-models; it is then shown that pragmatic enrichment yields non-trivial effects including the prediction (given certain preconditions) of both narrow-scope ($\Diamond (b \vee c) \implicates \Diamond b \wedge \Diamond c$) and wide-scope ($\Diamond b \vee \Diamond c \implicates \Diamond b \wedge \Diamond c$) $\FC$-inferences, as well as their cancellation under negation (see \ref{ex:negation} below). See Figure \ref{fig:split} for illustrations of zero-models and the narrow-scope $\FC$-prediction in $\BSML$.\footnote{The state $\{w_q\}$ in Figure \ref{fig:zero_model} supports  $p \vee q$. This is because we can find substates of $\{w_q\}$ supporting each disjunct: $\{w_q\}$ itself supports $q$, and the empty state vacuously supports $p$. But $\{w_q\}$ is a zero-model for $p \vee q$ because no \emph{nonempty} substate supports the first disjunct. Pragmatic enrichment has the effect of ruling out this zero-model---$\{w_q\}$ does not support $[p \vee q]^+$. The state $\{w_p,w_q\}$ in \ref{fig:non-zero} is a non-zero verifier for $p\vee q$, so it supports $[p \vee q]^+$. The state $\{w_\emptyset\}$ in \ref{fig:fc} is a zero-model for $\Di(p\lor q)$, and it does not support $\Di p\land \Di q$. The state $\{w_{pq}\}$, on the other hand, is a non-zero verifier for $\Di(p \lor q)$. We will see that such a verifier must also verify $\Di p\land \Di q$---in other words, narrow-scope $\FC$ is predicted by the fact that $[ \Di (p\lor q)]^+\vDash \Di p\land \Di q$. See Section \ref{section:preliminaries} for the definition of the pragmatic enrichment function and more details.} See \cite{aloni2022} for more discussion and for comparisons with other accounts of $\FC$.

\ex. {\bf Cancellation of $\FC$ under negation} \label{ex:negation}  \hfill  \cite{alonso2006} \a.  You are not allowed to eat the cake or the  ice-cream. \\   
   $\implicates$   You are not allowed to eat   either one. \b.   $\neg \Diamond  (\alpha \vee \beta)$ $\implicates$   $ \neg  \Diamond  \alpha \wedge \neg \Diamond  \beta $

{ 
  \begin{figure}[t]  
\centering
  \subfigure[$\{w_q\}\nvDash \lbrack p \vee q \rbrack^{+}$; zero-model for $p\lor q$]{\hspace{0.1cm}
\begin{tikzpicture}[>=latex,scale=.85]
\label{fig:zero_model}
 \draw[opaque,rounded corners,fill=gray!10] (0.1,1.4) rectangle (1.4, .1);

 \draw[draw=gray] (-1.5,1.5) rectangle (1.5, -1.5);

\draw (-0.75,.75) node[index gray, scale=0.9] (yy) {$w_{p}$};
 \draw (0.75,.75) node[index gray, scale=0.9] (yn) {$w_q$};
 \draw (-0.75,-0.75) node[index gray, scale=0.9] (ny) {$w_{pq}$};
 \draw (0.75,-0.75) node[index gray, scale=0.9] (nn) {$w_{\emptyset}$};

\end{tikzpicture}\hspace{0.1cm} }
\hspace{0.2cm}
 \subfigure[\footnotesize $\{w_p,w_q\}\vDash \lbrack p \vee q \rbrack^{+}$; non-zero verifier for $p\lor q$]{ 
 \hspace{0.1cm}
 
 \begin{tikzpicture}[>=latex,scale=.85]
 \label{fig:non-zero}

 \draw[opaque,rounded corners,fill=gray!10] (-1.4,1.4) rectangle (1.4, .1);
  
 \draw[draw=gray] (-1.5,1.5) rectangle (1.5, -1.5);

\draw (-0.75,.75) node[index gray, scale=0.9] (yy) {$w_{p}$};
 \draw (0.75,.75) node[index gray, scale=0.9] (yn) {$w_q$};
 \draw (-0.75,-0.75) node[index gray, scale=0.9] (ny) {$w_{pq}$};
 \draw (0.75,-0.75) node[index gray, scale=0.9] (nn) {$w_{\emptyset}$};

\end{tikzpicture}
\hspace{0.1cm}
}
\hspace{0.2cm}
 \subfigure[\footnotesize $\{w_\emptyset\}\nvDash \lbrack \Di( p \vee q) \rbrack^{+}$; $\{w_{pq}\}\vDash \lbrack \Di( p \vee q) \rbrack^+$]{  \hspace{0.1cm}
 \begin{tikzpicture}[>=latex,scale=.85]
 \label{fig:fc}

 \draw[opaque,rounded corners,fill=gray!10] (0.1,-0.1) rectangle (1.4, -1.4);
 \draw[opaque,rounded corners,fill=gray!10] (-1.4,-0.1) rectangle (-0.1, -1.4);
 
 \draw[draw=gray] (-1.5,1.5) rectangle (1.5, -1.5);

\draw (-0.75,.75) node[index gray, scale=0.9] (yy) {$w_{p}$};
 \draw (0.75,.75) node[index gray, scale=0.9] (yn) {$w_q$};
 \draw (-0.75,-0.75) node[index gray, scale=0.9] (ny) {$w_{pq}$};
 \draw (0.75,-0.75) node[index gray, scale=0.9] (nn) {$w_{\emptyset}$};

\path[->] (nn) edge[](yn);
\path[->] (ny) edge[](yn);
\path[->] (ny) edge[](yy);

\end{tikzpicture}\hspace{0.1cm}
 }
\caption{Zero-models and $\FC$ in $\BSML$. Worlds are labelled according to what is true in them: $w_p$ stands for a world where only $p$ is true, $w_{pq}$ for one in which only $p$ and $q$ are, etc. The accessibility relation is depicted using arrows.} \label{fig:split}
\end{figure}
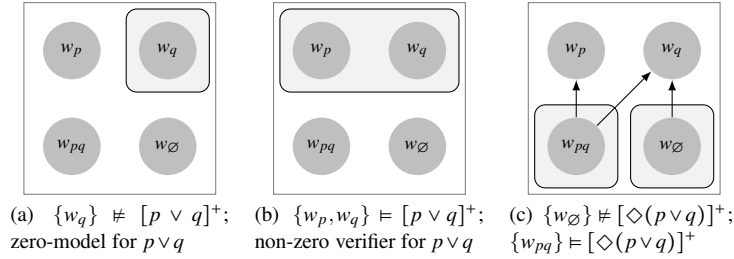  
}

$\BSML$ is closely related to modal logics in the lineage of dependence logic \cite{vaananen2007}, such as modal dependence \cite{vaananen2008,hella2014,yang20172}, inclusion \cite{kontinen2014,hella2015,anttila2023axiomatizingmodalinclusionlogic}, independence \cite{kontinen20142}, and team \cite{muller2014,kontinen2014,luck2020} logic. It is also similar to modal inquisitive logic \cite{ciardelli2016,ciardellibook}. However, the specific combination of connectives ($\NE,\lor,\Di,\lnot$) on which the account of $\FC$ in \cite{aloni2022} depends is unique to $\BSML$. Let us briefly discuss each of these in turn.

The most crucial component in the account of $\FC$ is the nonemptiness atom $\NE$. This atom also sets $\BSML$ apart from dependence and inquisitive logic in that formulas containing $\NE$ might not be \emph{downward closed} and might not have the \emph{empty state} (or empty team) property. Logics which violate these properties are commonly known as \emph{team logics} (see, for example, \cite{vaananen2007,yang2017,luck2020}). Most team logics incorporate the \emph{Boolean negation} $\sim$ ($s\vDash\sim \phi$ iff $s\nvDash \phi$). This negation is very strong in terms of expressive power: the extension of classical modal logic $\ML$ with $\sim$ (known as \emph{modal team logic} ($\MTL$)) can express all modally definable state properties \cite{kontinen2014}. $\BSML$, being essentially $\ML$ together with $\NE$, is a more modest extension. The propositional fragment of $\BSML$ (that is, classical propositional logic extended with $\NE$) was studied already in \cite{yang2017}.

The disjunction $\dis$ in $\BSML$ is the standard disjunction in the dependence logic lineage, commonly referred to as the \emph{tensor disjunction} or \emph{local disjunction}. Inquisitive logic makes use of a different disjunction, the \emph{global disjunction} $\intd$ (also known as the \emph{inquisitive disjunction}).
\begin{align*}
    &s\vDash \phi \lor \psi &&\text{iff} &&\text{there are }t,u\subseteq s \text{ s.t. }s=t\cup u \text{ and }t\vDash\phi \text{ and }u\vDash \psi  \\
    &s\vDash \phi \intd \psi &&\text{iff} &&s\vDash \phi \text{ or }s\vDash \psi
\end{align*}
The predictions in \cite{aloni2022} rely on the use of $\dis$, but it is interesting to note that alternative accounts of $\FC$ have been given in inquisitive semantics, making use of $\intd$ instead of $\dis$---see, for example, \cite{aloni2007,AloniCiardelli2013,nygren}, and see \cite{aloni2022} for a comparison.

The modalities $\Di$ and $\Bo$ of $\BSML$ (essential for the $\FC$-predictions) are equivalent to ones considered in an early version of modal inquisitive logic \cite{ciardelli2016}, and are closely related to the modalities in possibility semantics \cite{humberstone1981}. They are distinct from the standard modalities of modal team semantics---we refer the reader to \cite{anttila2021} for a comparison.

For its predictions involving negation (such as \ref{ex:negation}), $\BSML$ relies on a \emph{bilateral} semantics: in addition to support/assertion conditions, each formula is also provided with \emph{anti-support}/rejection conditions. The semantics of the \emph{bilateral negation} $\lnot$ are then defined using the anti-support conditions.\footnote{Bilateralism is typically associated with inferentialism in the proof-theoretic tradition \cite{price1983,smiley1996,rumfitt2000,restall2005,wansing2010,incurvati2017,wansingayhan23}; see \cite{sep-proof-theoretic-semantics} for an overview. Other systems employing bilateral semantics include first-degree entailment logic (FDE) \cite{anderson1975,dunn1976,belnap1977,belnap19772} and truthmaker semantics \cite{vfraassen1969,fine2017}. Recently, \cite{aher2011} and \cite{willer2018} presented bilateral accounts of $\FC$ in an inquisitive and a dynamic setting, respectively. The bilateral negation in $\BSML$ is essentially the same notion as the \emph{dual} or \emph{game-theoretic negation} of \emph{independence-friendly logic} \cite{hintikka1989,hintikka1996} and some formulations of dependence logic \cite{vaananen2007,vaananen2008}.}

In this paper, we study the logical properties of \BSML, as well as two new extensions of $\BSML$: $\BSML$ with the global disjunction $\intd$ ($\BSMLI$), and $\BSML$ with the novel \emph{emptiness operator} $\OC$ ($\BSMLO$). The emptiness operator is a natural counterpart to $\NE$ which can be used to cancel out its effects: a state $s$ supports $\OC\phi$ iff $s$ supports $\phi$ or $s$ is empty ($s\vDash \OC\phi$ iff $s\vDash \phi$ or $s =\emptyset$), meaning that, for instance, $\OC\NE$ is always supported. While these extensions may prove to have interesting applications of their own (we return to this in the conclusion), our introduction of them is motivated primarily by technical considerations: as we show in the first part of the paper, each extension is expressively complete for a natural class of state properties. Moreover, in the second part of the paper we axiomatize each of the three logics, and the expressive completeness of the extensions plays a crucial role in our proof of the completeness of these axiomatizations.

In the first part, then, we focus on expressive power. We characterize the expressiveness of our logics in terms of the notion of \emph{state (or team) bisimulation} as introduced in the literature on modal team semantics \cite{hella2014,kontinen2014} (cf. \emph{inquisitive bisimulation} \cite{ciardelli2021}). We show that $\BSMLI$ is expressively complete for the class of all state properties invariant under bounded bisimulation (meaning that $\BSMLI$ and $\MTL$ are equivalent in expressive power \cite{kontinen2014}), and that $\BSMLO$ is expressively complete for the class of all \emph{union-closed} state properties invariant under bounded bisimulation. These results build on similar results in, for instance, \cite{sevenster2009,hella2014,kontinen2014,hella2015}. $\BSML$, as we will demonstrate, is union closed but cannot express all union-closed properties.

In the second part, we develop natural deduction axiomatizations for each of the three logics. These systems build on systems presented in \cite{yang2017} for logics which are essentially the propositional fragments of $\BSML$ and $\BSMLI$. For other similar natural deduction axiomatizations for modal state-based logics, see, for instance, \cite{yang20172,anttila2023axiomatizingmodalinclusionlogic}. 

The paper is structured as follows. In Section \ref{section:preliminaries}, we define the syntax and semantics of $\BSML$ and the extensions, and discuss some basic properties of these logics. In Section \ref{section:expressive_power}, we show the expressive power results described above. We also derive some simple consequences of these results such as the finite model property (for each of the logics) and uniform interpolation (for $\BSMLI$ and $\BSMLO$). In Section \ref{section:axiomatizations}, we present our natural deduction axiomatizations, and in Section \ref{section:conclusion} we conclude by noting some open problems. The paper is partly based on the MSc thesis \cite{anttila2021} of the second author (supervised by the first author and the third author), which contains preliminary versions of some of the results.
\section{Bilateral State-based Modal Logic} \label{section:preliminaries}

In this section we define the syntax and semantics of $\BSML$ and its extensions, discuss the basic properties of these logics, and recall standard notions and results from team semantics.

\begin{definition}[Syntax] \label{def:syntax}
Fix a (countably infinite) set $\mathsf{Prop}$ of propositional variables. The set of formulas of \emph{bilateral state-based modal logic} $\BSML$ is generated by:
        \begin{align*}
            \phi ::= p \sepp \bot \sepp \bnot \phi \sepp ( \phi \land \phi )  \sepp ( \phi \dis \phi ) \sepp \Di \phi   \sepp \NE
        \end{align*}
        where $p\in \mathsf{Prop}$.   
        \emph{Classical modal logic} $\ML$ is the $\NE$-free fragment of $\BSML$.
        
        $\BSMLI$ is $\BSML$ extended with the \emph{global disjunction} $\intd$: the set of formulas of $\BSMLI$ is generated by the definition for $\BSML$ augmented with the case $\phi \intd \phi$.
        
        $\BSMLO$ is $\BSML$ extended with the \emph{emptiness operator} $\OC$: the set of formulas of $\BSMLO$ is generated by the definition for $\BSML$ augmented with the case $\OC\phi$.
\end{definition}

Throughout the paper, we use the first Greek letters $\alpha,\beta,\gamma,\dots$ to stand for formulas of \ML (also called {\em classical formulas}).
We write $\mathsf{P}(\phi)$ for the set of propositional variables in $\phi$ and $\phi (p_{1},\ldots ,p_{n})$ if $\mathsf{P}(\phi)\subseteq \{p_{1},\ldots ,p_{n}\}$. We write $\phi (\psi/p)$ for the result of replacing all occurrences of $p$ in $\phi$ by $\psi$.

\begin{definition} \label{def:models}
A \emph{(Kripke) model} (over $\mathsf{X}\subseteq \mathsf{Prop}$) is a triple $M = (W,R,V)$, where
\begin{itemize}
    \item[--] $W$ is a nonempty set, whose elements are called \emph{(possible) worlds};
    \item[--] $R \subseteq W \times W$ is a binary relation, called the \emph{accessibility relation};
    \item[--] 
    $V: \mathsf{X} \to \wp(W)$
    is a function, called the \emph{valuation}.
\end{itemize}  

We call a subset $s\subseteq W$ of $W$ a \emph{state} on $M$.
\end{definition} 
For any world $w$ in $M$, define, as usual, $R[w]:=\{v \in W\sepp wRv\}$. Similarly, for any state $s$ on $M$, define $R[s]:=\bigcup_{w\in s}R[w]$.

In the standard world-based Kripke semantics for modal logic, formulas are evaluated with respect to worlds: one writes $M,w\vDash \phi$ if $\phi$ is true in the world $w$ in the model $M$. In our state-based semantics, formulas are instead evaluated with respect to states. We will also make use of two fundamental semantic notions---\emph{support} and \emph{anti-support}---rather than one. As noted in Section \ref{section:introduction}, states in $\BSML$ represent speakers' information states. Support of a formula by (or in) a state represents that what the formula expresses is assertible given the information in the state; anti-support, similarly, represents rejectability in a state.

\begin{definition}[Semantics] \label{def:semantics}
    For a model $M=(W,R,V)$ over $\mathsf{X}$, a
    state $s$ on $M$, and formula $\phi$ with $\mathsf{P}(\phi)\subseteq \mathsf{X}$, the notions of $\phi$ being \emph{supported}/\emph{anti-supported} by $s$ in $M$, written $M,s \vDash \phi$/$M,s\Dashv \phi$ (or simply $s \vDash \phi$/$s\Dashv \phi$), are defined recursively as follows:
    \begin{center}
    \begin{tabular}{p{2cm} p{1cm} p{8.5cm}}
        $M,s \vDash   p$  & $:\iff$&  for all $ w \in s: w \in V(p)$ \\
        $M,s \Dashv  p  $& $:\iff$& for all $ w \in s: w \notin V(p)$\\
        &&\\
        $M,s \vDash   \bot$  & $:\iff$&  $s=\emptyset$ \\
        $M,s \Dashv  \bot  $& & always the case\\
        &&\\
                $M,s \vDash    \NE$ &$ :\iff$&$ s \neq \emptyset$ \\
        $M,s \Dashv    \NE $&$ :\iff$&$ s =\emptyset$\\
        &&\\
        $M,s \vDash    \bnot \phi$ & $:\iff$ & $M,s\Dashv \phi$ \\
        $M,s \Dashv    \bnot \phi $&$ :\iff $&$ M,s \vDash \phi $\\
        &&\\
        $M,s \vDash    \phi \land \psi  $&$ :\iff $&$ M,s \vDash  \phi\text{ and } M,s \vDash \psi$\\
        $M,s \Dashv    \phi \land \psi  $& $:\iff$& there exist $t,u$ s.t. $ s=t \cup u\text{ and } M,t \Dashv  \phi\text{ and } M,u \Dashv  \psi$ \\
        &&\\
        $M,s \vDash    \phi \dis \psi  $& $:\iff$& there exist $t,u $ s.t. $ s=t\cup u\text{ and } M,t \vDash  \phi\text{ and } M,u \vDash  \psi$ \\
        $M,s \Dashv    \phi \dis \psi  $&$ :\iff $&$ M,s \Dashv  \phi\text{ and } M,s \Dashv  \psi$\\
        &&\\
                $M,s \vDash    \phi \intd \psi  $&$ :\iff$&$ M,s \vDash  \phi \text{ or } M,s \vDash  \psi$\\
        $M,s \Dashv    \phi \intd \psi  $&$ :\iff$&$  M,s \Dashv  \phi\text{ and } M,s \Dashv  \psi$\\
        &&\\
        $M,s \vDash    \Di \phi  $& $:\iff$& for all $ w \in s$ there exists $  t \subseteq R[w]$ s.t. $ t\neq \emptyset \text{ and }M,t \vDash \phi$ \\
        $M,s \Dashv    \Di \phi  $&$ :\iff$& for all $ w \in s : M,R[w]\Dashv \phi$\\
        &&\\
        $M,s \vDash    \OC \phi  $& $:\iff$& $M,s\vDash \phi \text{ or }s=\emptyset$ \\
        $M,s \Dashv    \OC \phi  $& $:\iff$& $M,s\Dashv \phi $
    \end{tabular}
    \end{center}
\end{definition}   
For a set $\Phi$ of formulas, we write $M,s\vDash\Phi$ if $M,s \vDash \phi$ for all $\phi \in \Phi$. We say that $\Phi$ \emph{entails} $\psi$, written $\Phi \vDash \psi$, if for all models $M$ and states $s$ on $M$:  $M,s \vDash \Phi$  implies $M,s\vDash \psi$. We also write simply $\phi_1,\dots,\phi_n\vDash\psi$ for $\{\phi_1,\dots,\phi_n\}\vDash\psi$, and $\Phi,\phi\vDash \psi$ for $\Phi\cup\{\phi\}\vDash \psi$. We write $\vDash\phi$ for $\emptyset \vDash \phi$ and in this case say that $\phi$ is \emph{valid}. If both $\phi\vDash\psi$ and $\psi\vDash\phi$, then we write $\phi\equiv\psi$ and say that $\phi$ and $\psi$ are \emph{equivalent}. If both $\phi \equiv \psi$ and $\lnot \phi \equiv \lnot \psi$, then $\phi$ and $\psi$ are said to be \emph{strongly equivalent}, written $\phi \equivclosed \psi$.

The box modality $\Bo$ is defined as the dual of the diamond: $\Box \phi:=\lnot \Di\lnot \phi$; the resulting support/antisupport clauses are:
\begin{center}
    \begin{tabular}{p{2cm} p{1cm} p{8.5cm}}
        $M,s \vDash    \Bo \phi  $&$ \iff$&for all $ w \in s:  M,R[w]\vDash \phi$\\
        $M,s \Dashv    \Bo \phi  $& $\iff$&for all $ w \in s$ there exists $ t \subseteq R[w]$ s.t. $ t\neq \emptyset \text{ and }M,t \Dashv \phi$ \\
        &&\\
    \end{tabular}
\end{center}
    
We refer to the atom $\bot$ as the \emph{weak contradiction}. The original syntax for \BSML in \cite{aloni2022} does not include $\bot$, but rather defines it as $\bot:=p\land \lnot p$ for some fixed $p$. Including $\bot$ in our syntax allows us to simplify parts of our exposition. We also define the \emph{strong contradiction} $\Bot:=\bot \land \NE$, and the \emph{strong tautology} $\Top:=\lnot \bot$. The weak contradiction is supported only in the empty state, the strong contradiction in no state, and the strong tautology in all states. The atom $\NE$ (supported in all nonempty states) can  also be viewed as the \emph{weak tautology} $\top$; accordingly, we let $\top:=\NE$. Note that we have the following equivalences:
\begin{align*}
    &\lnot \bot \equiv \Top &&\lnot \Top \equiv \bot && \lnot \Bot \equiv \Top &&\lnot \top \equiv \bot 
\end{align*}
We use these contradictions and tautologies to interpret the empty disjunctions and conjunction:
\begin{align*}
    &\bigdis \emptyset := \bot &&\bigintdd \hspace{-2pt} \emptyset := \Bot && \bigwedge \emptyset := \Top
\end{align*}

We now give some examples to illustrate the semantics---consider Figure \ref{fig:examples}. A (local) disjunction $\phi \lor \psi$ is supported in $s$ if $s$ can be split into two states $t$ and $u$ such that $t\vDash \phi$ and $u\vDash \psi$. Note that one or both of these substates might be empty; for instance, in \ref{fig:examples_a} we have $\{w_q\}\vDash p\lor q$. One can force these substates to be nonempty using $\NE$: we have $\{w_q\}\nvDash (p\land \NE) \lor (q\land \NE)$, whereas $\{w_p,w_q\}\vDash (p\land \NE) \lor (q\land \NE)$. A global disjunction, on the other hand, is supported if either disjunct is supported: $\{w_q\}\vDash  (p\land \NE) \intd (q\land \NE)$, whereas $\{w_p,w_q\}\nvDash  (p\land \NE) \intd (q\land \NE)$. The emptiness operator $\OC$ is essentially a restricted variant of $\intd$ in that $\OC\phi \equiv \phi \intd \bot$: $\{w_q\}\vDash \OC (p\land \NE)\lor \OC (q\land \NE)$ and $\{w_p,w_q\}\vDash \OC (p\land \NE)\lor \OC (q\land \NE)$. A diamond formula $\Di \phi$ is supported in $s$ if for each world $w$ in $s$ there is a nonempty substate $t$ of the state of all worlds $R[w]$ accessible from $w$ that supports $\phi$; for example, in \ref{fig:examples_b} we have $s_b\vDash \Di q$ and $s_b\nvDash \Di p$. A box formula $\Bo\phi$ is supported in $s$ if for each $w\in s$, $R[w]$ as a whole supports $\phi$: $s_b\nvDash \Bo q$ and $s_b\vDash \Bo p \lor \Bo q$. It is easy to verify that the conjunction and (local) disjunction distribute over the global disjunction: $\phi\wedge(\psi\intd\chi)\equiv(\phi\wedge\psi)\intd(\phi\wedge\chi)$ and $\phi\vee(\psi\intd\chi)\equiv(\phi\vee\psi)\intd(\phi\vee\chi)$; whereas the modalities do not distribute over $\intd$, as, for example, $s_b \vDash \Bo (p \intd q)$ but $s_b \nvDash \Bo p \intd \Bo q$. We have instead $\Di (\phi \intd \psi)\equiv \Di \phi \lor \Di \psi$ and $\Bo (\phi \intd \psi)\equiv \Bo \phi \lor \Bo \psi$.

    { 
  \begin{figure}[t]  
\centering
  \subfigure[$\{w_q\}\vDash p\lor q$; $\{w_{p},w_q\}\vDash (p\land \NE)\lor (q\land \NE)$]{
\begin{tikzpicture}[>=latex,scale=.85]
\label{fig:examples_a}

 \draw[opaque,rounded corners,fill=gray!10] (-1.4,1.4) rectangle (1.4, .1);
 \draw[opaque,rounded corners,fill=gray!10] (0.2,1.3) rectangle (1.3, .2);

 \draw[draw=gray] (-1.5,1.5) rectangle (1.5, -1.5);

\draw (-0.75,.75) node[index gray, scale=0.9] (yy) {$w_{p}$};
 \draw (0.75,.75) node[index gray, scale=0.9] (yn) {$w_q$};
 \draw (-0.75,-0.75) node[index gray, scale=0.9] (ny) {$w_{pq}$};
 \draw (0.75,-0.75) node[index gray, scale=0.9] (nn) {$w_{\emptyset}$};

\end{tikzpicture} }
\hspace{0.9cm}
 \subfigure[\footnotesize $s_b\vDash \Di q$]{ 
 \begin{tikzpicture}[>=latex,scale=.85]
 \label{fig:examples_b}


 \draw[opaque,rounded corners,fill=gray!10] (0.-1.4,-0.1) rectangle (1.4, -1.4);
 
 \draw[draw=gray] (-1.5,1.5) rectangle (1.5, -1.5);

\draw (-0.75,.75) node[index gray, scale=0.9] (yy) {$w_{p}$};
 \draw (0.75,.75) node[index gray, scale=0.9] (yn) {$w_q$};
 \draw (-0.75,-0.75) node[index gray, scale=0.9] (ny) {$w_{pq}$};
 \draw (0.75,-0.75) node[index gray, scale=0.9] (nn) {$w_{\emptyset}$};

\path[->] (nn) edge[](yn);
\path[->] (ny) edge[](yy);
\path[->,loop,looseness=4,in=30,out=330] (ny) edge[] (ny); 

\end{tikzpicture}
}
\hspace{0.9cm}
 \subfigure[\footnotesize $s_c\vDash \lbrack \Di( p \vee q) \rbrack^{+}$]{ 
 \begin{tikzpicture}[>=latex,scale=.85]
 \label{fig:fcc}

 \draw[opaque,rounded corners,fill=gray!10] (-1.4,-0.1) rectangle (-0.1, -1.4);
 
 \draw[draw=gray] (-1.5,1.5) rectangle (1.5, -1.5);

\draw (-0.75,.75) node[index gray, scale=0.9] (yy) {$w_{p}$};
 \draw (0.75,.75) node[index gray, scale=0.9] (yn) {$w_q$};
 \draw (-0.75,-0.75) node[index gray, scale=0.9] (ny) {$w_{pq}$};
 \draw (0.75,-0.75) node[index gray, scale=0.9] (nn) {$w_{\emptyset}$};

\path[->] (ny) edge[](yn);
\path[->] (ny) edge[](yy);

\end{tikzpicture}
}
\caption{Examples of the semantics} \label{fig:examples}
\end{figure}
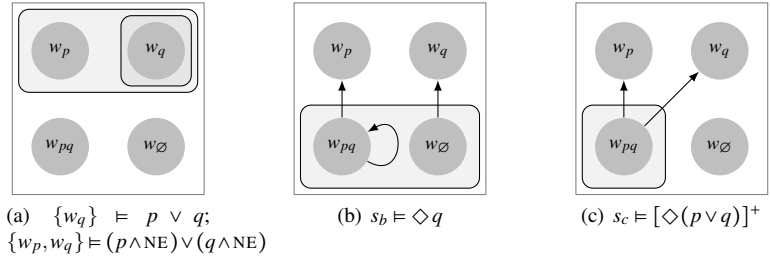  
}

Let us also revisit the pragmatic enrichment function $[\ ]^+:\ML\to \BSML$ described in Section \ref{section:introduction}, which can now be formally and recursively defined as:
\begin{align*}
    &[p]^+ &&:= &&p \land \NE&&\\
    &[\medcircle \alpha]^+ &&:= && \medcircle([\alpha]^+) \land \NE&&\text{for }\medcircle\in \{\lnot,\Di,\Bo\}\\
    &[\alpha \triangle \beta]^+ &&:= && ([\alpha]^+ \triangle [\beta]^+) \land \NE&&\text{for }\triangle\in \{\land,\lor\}
\end{align*}
It is easy to see that $[\ ]^+$ disallows zero-models as described in Section \ref{section:introduction}. Consider, for instance, $[p\lor q]^+=((p\land \NE)\lor (q\land \NE)) \land \NE\equiv (p\land \NE)\lor (q\land \NE)$. The state $\{w_q\}$ is a zero-model for $p\lor q$ since $\{w_q\}\vDash p\lor q$ but $\{w_q\}\nvDash (p\land \NE)\lor (q\land \NE)$, whereas $\{w_p,w_q\}$ is a non-zero verifier since $\{w_p,w_q\}\vDash (p\land \NE)\lor (q\land \NE)$. For a formula $\Di(\alpha \lor \beta)$ as in the antecedent of a narrow-scope $\FC$-inference, we have $[\Di(\alpha \lor \beta)]^+\equiv \Di ((\alpha \land \NE) \lor (\beta \land \NE))\land \NE$. It is then also easy to see that $[\Di(\alpha \lor \beta)]^+\vDash \Di \alpha \land \Di \beta$---a non-zero verifier for $\Di (\alpha \lor \beta)$ must verify $\Di \alpha \land \Di \beta$. In Figure \ref{fig:examples}, $s_b$ is a zero-model for $\Di(p\lor q)$ and $s_c$ is a non-zero verifier: we have $s_b\vDash p\lor q$, $s_b\nvDash \Di((p\land \NE)\lor (q\land \NE))$, and $s_b\nvDash \Di p\land \Di q$; whereas $s_c\vDash \Di((p\land \NE)\lor (q\land \NE))$ and $s_c\vDash \Di p\land \Di q$.

Similarly to many other logics employing team semantics (see, for example, \cite{yang2017,CiardelliRoelofsen2011}), $\BSML$ and its extensions do not admit uniform substitution: $\phi(p)\vDash \psi(p)$ need not imply $\phi(\chi/p)\vDash \psi(\chi/p)$. For instance, we have $\vDash p\lor \lnot p$ and $p\lor p\vDash p$, whereas $\nvDash \NE\lor \lnot \NE$ and $(p\intd \lnot p)\lor (p\intd \lnot p)\nvDash p\intd \lnot p$.  

Closely related is the failure of preservation of equivalence under replacement: $\phi\equiv \psi$ need not imply $\theta(\phi/p)\equiv\theta(\psi/p)$. For instance, we have $\lnot (\phi \lor \psi)\equiv \lnot (\phi \intd \psi)$ but $\lnot\lnot (\phi \lor \psi) \equiv\phi \lor \psi \notequiv  \phi \intd \psi \equiv\lnot \lnot (\phi \intd \psi)$, and $ \bot\equiv \lnot \NE$ but $ \lnot \bot\equiv \Top \notequiv \NE \equiv \lnot \lnot\NE$. This failure is occasioned by the bilateral negation: it is easy to show that if $p$ is not within the scope of $\lnot$, it does follow that $\theta(\phi/p)\equiv\theta(\psi/p)$.
Strong equivalence, on the other hand, \emph{is} preserved under replacement, as we prove in the following proposition (cf. a similar replacement theorem in \cite[Lemma 3.25]{vaananen2007}). We write $\phi[\chi]$ (and sometimes simply $[\chi]$ or $\chi$) to refer to a specific occurrence of the subformula $\chi$ in $\phi$, and $\phi[\psi/\chi]$ for the result of replacing this occurrence of $\chi$ in $\phi$ with $\psi$. These notations are also extended to sequences of occurrences $\phi[\chi_1,\ldots ,\chi_n]$ and sequences or replacements $\phi[\psi_1/\chi_1,\ldots ,\psi_n/\chi_n]$ in the obvious way.

\begin{proposition}[Replacement w.r.t. strong equivalence]\label{Replacement_thm_semantic}
For any formulas $\theta,\phi$, and $\psi$, if $\phi \equivclosed \psi$, then $ \theta[\phi/p] \equivclosed \theta[\psi/p]$.
\end{proposition}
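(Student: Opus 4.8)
The plan is to prove the statement by induction on the structure of $\theta$. The key preliminary observation is to unpack what strong equivalence amounts to: for any $\chi,\chi'$, we have $\chi \equivclosed \chi'$ iff $\chi$ and $\chi'$ have both the same support conditions and the same anti-support conditions across all models and states. Indeed, $\chi \equiv \chi'$ says $s \vDash \chi \iff s \vDash \chi'$ everywhere, while $\lnot\chi \equiv \lnot\chi'$, read through the semantic clause for $\lnot$, says $s \Dashv \chi \iff s \Dashv \chi'$ everywhere. Hence the induction hypothesis $\theta'[\phi/p] \equivclosed \theta'[\psi/p]$ for the immediate subformulas $\theta'$ will supply agreement of \emph{both} support and anti-support, which is exactly the invariant needed to push the argument through the mutually recursive semantics.

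For the base cases, if $\theta = p$ then $\theta[\phi/p] = \phi$ and $\theta[\psi/p] = \psi$, so the conclusion is just the hypothesis $\phi \equivclosed \psi$; and if $\theta$ is a propositional variable $q \neq p$ or $\theta = \NE$, the substitution leaves $\theta$ unchanged, so the claim is trivial. For the inductive step I would run through each connective and check that the support clause and the anti-support clause of $\theta[\phi/p]$ and $\theta[\psi/p]$ coincide, invoking the induction hypothesis on the immediate subformulas. For the binary connectives $\land$, $\dis$, and $\intd$, each support and anti-support clause is built solely from the support and/or anti-support of the two immediate subformulas (in some cases quantifying over splittings $s = t \cup u$), so both halves transfer at once. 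The same is true for $\Di$ (and hence for the defined $\Bo$) and for $\OC$: the support clause for $\Di\chi$ quantifies over nonempty $t \subseteq R[w]$ with $t \vDash \chi[\cdot/p]$, which agrees by the support half of the hypothesis, while its anti-support clause refers to $R[w] \Dashv \chi[\cdot/p]$, which agrees by the anti-support half.

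The case that motivates the whole formulation is negation. For $\theta = \lnot\chi$, the support of $\lnot\chi[\phi/p]$ is by definition the anti-support of $\chi[\phi/p]$, and the anti-support of $\lnot\chi[\phi/p]$ is the support of $\chi[\phi/p]$; the induction hypothesis $\chi[\phi/p] \equivclosed \chi[\psi/p]$ delivers agreement of both, so the two roles swap cleanly and $\lnot\chi[\phi/p] \equivclosed \lnot\chi[\psi/p]$ follows. I expect this to be the only conceptually delicate point, and it is really where the strengthening pays off: a naive induction tracking support alone would break down here, since mere equivalence $\phi \equiv \psi$ does not survive $\lnot$ — as the examples immediately preceding the proposition show — and this is precisely why strong equivalence, which carries the anti-support conditions as well, is the correct property to propagate. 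Everything else is a routine unwinding of the semantic clauses.
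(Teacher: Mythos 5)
Your proposal is correct and follows essentially the same route as the paper's proof: an induction on $\theta$ that propagates both halves of strong equivalence, which is exactly the observation that $\equivclosed$ amounts to agreement of support \emph{and} anti-support conditions, with the negation case handled by swapping the two roles. The paper phrases this syntactically (e.g.\ using $\lnot\lnot\eta \equiv \eta$ in the $\lnot$-case, and proving $\lnot\Di\eta[\phi/p] \equiv \lnot\Di\eta[\psi/p]$ for the anti-support half of the $\Di$-case), but the underlying invariant and case analysis are the same as yours.
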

\begin{proof}
By induction on $\theta$. We only give a detailed proof for a few  interesting cases.
        
        $\theta=\lnot \eta$.  By  the induction hypothesis, $\eta[\phi/p]\equivclosed \eta[\psi/p]$, so $\lnot \eta[\phi/p]\equiv \lnot \eta[\psi/p]$ and $\lnot \lnot \eta[\phi/p]\equiv \eta[\phi/p]\equiv \eta[\psi/p] \equiv \lnot \lnot \eta[\psi/p]$. Therefore, $\lnot \eta[\phi/p
        ]\equivclosed \lnot \eta[\psi/p]$.
        
        $\theta=\Di\eta$. By the induction hypothesis, we have $\eta[\phi/p]\equivclosed\eta[\psi/p]$. To show $\Di \eta[\phi/p]\vDash \Di\eta[\psi/p]$, assume $s\vDash \Di \eta[\phi/p]$. Then for all $w\in s$ there is a nonempty $ t\subseteq R[w]$ such that $t\vDash \eta[\phi/p]$. But then also for each such $t$ we have $t\vDash \eta[\psi/p]$, so $s\vDash \Di \eta[\psi/p]$. The proof of $\Di \eta[\psi/p]\vDash \Di\eta[\phi/p]$ is analogous. To show $\lnot \Di \eta[\phi/p]\vDash \lnot \Di\eta[\psi/p]$, assume $s\vDash \lnot \Di \eta[\phi/p]$. Then $R[w]\vDash \lnot \eta[\phi/p]$ for all $w\in s$. But then also $ R[w]\vDash \lnot \eta[\psi/p]$ for all $w\in s$, so $s\vDash \lnot \Di \eta[\psi/p]$. The proof of $\lnot \Di \eta[\psi/p]\vDash \lnot \Di\eta[\phi/p]$ is analogous.
        
        $\theta=\OC\eta$.  By the induction hypothesis, we have $\eta[\phi/p]\equivclosed\eta[\psi/p]$. Then $\OC \eta[\phi/p]\equiv \eta[\phi/p]\intd \bot\equiv \eta[\psi/p] \intd \bot\equiv \OC\eta[\psi/p]$ and $\lnot \OC \eta[\phi/p]\equiv \lnot\eta[\phi/p]\equiv\\ \lnot\eta[\psi/p] \equiv \lnot\OC\eta[\psi/p]$.
        \end{proof}

        An easy induction shows that a formula $\phi$ and its negation $\lnot \phi$ are \emph{incompatible} in the sense that if $M,s\vDash \phi$ and $M,t\vDash \lnot \phi$, then $s\cap t =\emptyset$ (for a proof, see \cite[Proposition 3.3.9]{anttila2021}). It also appears \cite{anttila2024} that one can prove the converse of (a reformulation of) this fact: if $M,s\vDash\phi$ and $M,t\vDash\psi$ implies $s \cap t=\emptyset$, then there is a formula $\theta$ such that $\phi \equiv \theta$ and $\psi \equiv \lnot \theta$ (cf. the similar results for Henkin sentences and their contraries \cite{burgess2003} and for dependence logic with the dual negation \cite{kontinen2011}).
        
As usual, each formula can be transformed into an equivalent formula in \emph{negation normal form}, that is, a formula in each occurrence of negation directly precedes a propositional variable $p$ or the atom $\NE$, or is part of the defined connective $\Bo$.

\begin{fact}[Negation normal form] \label{fact:negation_normal_form}
     For any formula $\phi$, there exists a formula 
    $\phi'$ in negation normal form such that  $\phi \equiv \phi'$. Moreover, if $\phi \in \BSML$, then $\phi\equivclosed\phi'$.
\end{fact}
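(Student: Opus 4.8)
The plan is to prove the statement by a single induction on $\phi$, but in a strengthened form that produces negation normal forms for a formula and for its negation \emph{simultaneously}. Concretely, I would show: for every formula $\phi$ there are formulas $\phi^+$ and $\phi^-$ in negation normal form with $\phi\equiv\phi^+$ and $\lnot\phi\equiv\phi^-$, and if moreover $\phi\in\BSML$ then $\phi\equivclosed\phi^+$ and $\lnot\phi\equivclosed\phi^-$. Taking $\phi':=\phi^+$ then gives the Fact. The reason for carrying $\phi^-$ along is that it lets me push every negation inward at the \emph{top} level, using the normal form of the negated subformula already supplied by the induction hypothesis, rather than transforming a formula sitting underneath a $\lnot$.

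For the base cases $p^+=p$, $p^-=\lnot p$, $\NE^+=\NE$, $\NE^-=\lnot\NE$ are all already normal. For the inductive cases I push negations using the standard laws: double negation $\lnot\lnot\eta\equiv\eta$, the De Morgan laws $\lnot(\psi\land\chi)\equiv\lnot\psi\lor\lnot\chi$ and $\lnot(\psi\lor\chi)\equiv\lnot\psi\land\lnot\chi$, the modal dual $\lnot\Di\psi\equiv\Bo\lnot\psi$, and, for the extensions, $\lnot(\psi\intd\chi)\equiv\lnot\psi\land\lnot\chi$ and $\lnot\OC\psi\equiv\lnot\psi$. So, for instance, for $\phi=\lnot\eta$ I set $\phi^+=\eta^-$ and $\phi^-=\eta^+$; for $\phi=\psi\land\chi$ I set $\phi^+=\psi^+\land\chi^+$ and $\phi^-=\psi^-\lor\chi^-$; for $\phi=\Di\psi$ I set $\phi^+=\Di\psi^+$ and $\phi^-=\Bo\psi^-$; and analogously for $\lor,\intd$ and $\OC$. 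Each equivalence is read off by unfolding the support (and, for the strong-equivalence claim, the anti-support) clause of the connective and invoking the induction hypothesis. Note that $\Bo\psi^-$ counts as normal, since the negations introduced by the abbreviation $\Bo=\lnot\Di\lnot$ are permitted as part of $\Bo$.

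The one genuine obstacle is that plain equivalence is \emph{not} a congruence: because of the bilateral negation, rewriting a subformula by a plainly equivalent one inside an arbitrary context need not preserve $\equiv$. The simultaneous induction is designed to sidestep this, because every equivalence it uses can be checked directly from the semantic clauses. In each combining step the governing support clause refers only to the \emph{support} of the immediate subformulas being replaced---explicitly so for $\land,\lor,\intd,\Di,\OC$, and also for the derived connective $\Bo$, whose clause ($s\vDash\Bo\chi$ iff for all $w\in s$, $R[w]\vDash\chi$) depends only on the support of $\chi$---so the induction hypothesis $\psi\equiv\psi^+$ (resp. $\lnot\psi\equiv\psi^-$) transfers at once, giving e.g. $\Di\psi\equiv\Di\psi^+$ and $\lnot\Di\psi\equiv\Bo\psi^-$; these are exactly the replacements outside the scope of the bilateral $\lnot$ singled out as safe in the remark preceding Proposition~\ref{Replacement_thm_semantic}. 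For the $\BSML$ part I upgrade every step to strong equivalence: the only connectives available in a $\BSML$-formula are $p,\NE,\lnot,\land,\lor,\Di$ (with $\Bo$ derived), and for each the law used (double negation, De Morgan, modal dual) in fact holds as $\equivclosed$, as inspection of the anti-support clauses confirms. The combining is then handled uniformly by Proposition~\ref{Replacement_thm_semantic}, which makes $\equivclosed$ a genuine congruence, so that $\phi\equivclosed\phi^+$ follows with no position-sensitivity remaining.
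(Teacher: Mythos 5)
Your proposal is correct and is essentially the paper's own argument: the paper proves the Fact by repeated application of exactly the six equivalences you use (double negation, the De Morgan laws for $\land$, $\lor$, $\intd$, the modal dual $\lnot\Di\phi\equivclosed\Bo\lnot\phi$, and $\lnot\OC\phi\equiv\lnot\phi$), with the same split between plain and strong equivalence, and your simultaneous $(\phi^+,\phi^-)$ induction is just the explicit formalization of that rewriting. Your added care about plain $\equiv$ not being a congruence (checking each step directly against the support/anti-support clauses, and using Proposition \ref{Replacement_thm_semantic} only for the $\BSML$ strong-equivalence claim) fills in a detail the paper's terse "follows easily" elides, but does not change the approach.
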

\begin{proof}
Follows easily from the following (strong) equivalences:
    \begin{align*}
    \lnot \lnot \phi&\equivclosed \phi    & \lnot (\phi\land \psi)&\equivclosed \lnot \phi \lor\lnot \psi &\lnot (\phi\intd \psi) &\equiv \lnot \phi \land\lnot \psi&\\
     \lnot \Di \phi&\equivclosed \Bo \lnot \phi&\lnot (\phi\lor \psi)&\equivclosed \lnot \phi \land\lnot \psi & \lnot \OC \phi&\equiv \lnot \phi&\tag*{\qedhere}
    \end{align*} 
\end{proof}
    
Throughout the paper, we make extensive use of the following properties:

\begin{definition} \label{def:closure_properties}
We say that a formula $\phi$ 
\begin{itemize}
    \item[--] is \emph{downward closed}, provided   $[M,s \vDash \phi \text{ and } t \subseteq s] \implies M,t \vDash \phi$;
     \item[--] is \emph{union closed}, provided $[M,s \vDash \phi \text{ for all } s\in S\neq \emptyset] \implies M, \bigcup S  \vDash \phi$;
    \item[--] has the \emph{empty state property}, provided $M, \emptyset \vDash \phi \text{ for all }M$;
     \item[--] is \emph{flat}, provided $M, s\vDash\phi  \iff M,\{w\}\vDash \phi\text{ for all }w\in s$.
\end{itemize}
\end{definition}
  It is easy to check that a formula is flat if and only if it is downwards closed, union closed, and has the empty state property. A simple induction shows:
  
\begin{fact} \label{fact:NE_intd_closure_properties}
    If a formula $\phi$ does not contain $\NE$, then it is downward closed and has the empty state property. If $\phi$ does not contain $\intd$, then it is union closed. 
    
    In particular, formulas of \BSML and \BSMLO are union closed; formulas of \ML are flat (as they are downward  and union closed, and have the empty state property).
\end{fact}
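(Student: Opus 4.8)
The plan is to prove both statements by a routine induction on the structure of $\phi$, reading off each inductive step from the support and anti-support clauses of Definition~\ref{def:semantics}. The one design choice that matters is that, because the clause for $\lnot$ swaps support and anti-support ($s \vDash \lnot\eta$ iff $s \Dashv \eta$), an induction phrased for support alone would stall at the negation case. I would therefore strengthen each property to its anti-support counterpart and prove the two in tandem.

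For the first statement I would show, for every $\NE$-free $\phi$, that (i) $s \vDash \phi$ and $t \subseteq s$ imply $t \vDash \phi$; (ii) $s \Dashv \phi$ and $t \subseteq s$ imply $t \Dashv \phi$; and (iii) $\emptyset \vDash \phi$ and $\emptyset \Dashv \phi$. The only base case is the atom $p$ (the atom $\NE$ is excluded by hypothesis), where all four conditions follow at once from the ``for all $w \in s$'' form of its clauses. Negation is immediate from the support/anti-support swap and the induction hypothesis; the modal cases $\Di\eta$ and $\Bo\eta$ are immediate because their conditions quantify over the worlds of $s$ (and hold vacuously at $\emptyset$); the clauses for $\intd$ and $\OC$ go through directly, noting that $\emptyset \vDash \OC\eta$ holds by definition. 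The only step needing an argument is the pair of ``cover'' clauses --- the anti-support of $\land$ and the support of $\lor$: given a witnessing split $s = t \cup u$ and a subset $s' \subseteq s$, I would pass to $t \cap s'$ and $u \cap s'$, which still cover $s'$ and still (anti-)support the two components by the induction hypothesis.

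For the second statement I would show, for every $\intd$-free $\phi$ and every nonempty family $S$ of states, that $s \vDash \phi$ for all $s \in S$ implies $\bigcup S \vDash \phi$, and likewise with $\Dashv$ throughout. Now $\NE$ must be treated, and is unproblematic in both polarities: a union of nonempty states is nonempty, and a union of empty states is empty; negation and the modalities are handled exactly as before. The main obstacle is once more the cover clauses (support of $\lor$, anti-support of $\land$): choosing for each $s \in S$ a split $s = t_s \cup u_s$ with $t_s \vDash \eta$ and $u_s \vDash \xi$, I would form the two nonempty families $T = \{t_s : s \in S\}$ and $U = \{u_s : s \in S\}$, apply the induction hypothesis to each to obtain $\bigcup T \vDash \eta$ and $\bigcup U \vDash \xi$, and conclude using $\bigcup S = \bigcup T \cup \bigcup U$. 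The operator $\OC$ needs one extra remark in the support direction: discarding the empty members of $S$ yields $S' = \{s \in S : s \neq \emptyset\}$; if $S' = \emptyset$ then $\bigcup S = \emptyset \vDash \OC\eta$, and otherwise every $s \in S'$ supports $\eta$, so the induction hypothesis gives $\bigcup S' \vDash \eta$, whence $\bigcup S = \bigcup S' \vDash \OC\eta$.

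Finally, the ``in particular'' claims are read off from Definition~\ref{def:syntax}: formulas of $\BSML$ and $\BSMLO$ contain no $\intd$ and so are union closed; formulas of $\ML$ contain neither $\NE$ nor $\intd$, hence are downward closed, union closed, and have the empty state property, and are therefore flat by the characterization of flatness stated just before the Fact. As a cleaner alternative that avoids carrying the anti-support clauses, I could instead first replace $\phi$ by an equivalent negation normal form via Fact~\ref{fact:negation_normal_form} --- a rewriting that introduces neither $\NE$ nor $\intd$ and hence preserves both freeness hypotheses --- and then run the induction on support only, with $\lnot p$ and $\Bo$ as additional literal/connective cases.
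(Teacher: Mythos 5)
Your proof is correct and is exactly the ``simple induction'' the paper invokes without spelling out: it supplies the details the paper omits, including the one genuine subtlety---strengthening the induction hypothesis to track anti-support alongside support (or, equivalently, passing to negation normal form first) so that the bilateral negation clause goes through. Your treatment of the cover clauses ($\lor$-support and $\land$-anti-support, via intersections for downward closure and via unions of the chosen splits for union closure) and of $\OC$ under unions is exactly right.
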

It is also easy to verify that $\Di\phi$ and $\Bo \phi$ are always flat. It follows from the flatness of \ML-formulas that the state-based semantics for $\ML$ correspond with the standard (single-world) Kripke semantics in the sense of the following fact:

\begin{fact} \label{fact:ML_team_classical_correspondence}
For any formula $\alpha\in \ML$:
\begin{align*}
    &M,s \vDash \alpha &&\iff &&M,\{w\}\vDash\alpha\text{ for all }w\in s&&\iff &&M,w \vDash \alpha \text{ for all }w \in s.
\end{align*}
\end{fact}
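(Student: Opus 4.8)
The plan is to treat the two biconditionals separately. The leftmost equivalence, $M,s\vDash\alpha \iff M,\{w\}\vDash\alpha$ for all $w\in s$, is nothing but the statement that $\alpha$ is flat (Definition \ref{def:closure_properties}), which holds for every $\alpha\in\ML$ by Fact \ref{fact:NE_intd_closure_properties}; so no work is needed there. For the second equivalence it suffices, by applying a per-world claim under the universal quantifier over $w\in s$, to establish the singleton correspondence $M,\{w\}\vDash\alpha \iff M,w\vDash\alpha$ for each individual world $w$, where $M,w\vDash\alpha$ denotes truth in the ordinary world-based semantics with its standard recursive clauses. This I would prove by induction on $\alpha$.

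Because $\ML$-formulas are built using the bilateral negation $\lnot$, whose support clause refers to anti-support, the induction will not close on the support statement alone: the case $\alpha=\lnot\beta$ reduces $M,\{w\}\vDash\lnot\beta$ to $M,\{w\}\Dashv\beta$, which must be matched against $M,w\nvDash\beta$. I would therefore strengthen the induction hypothesis to a simultaneous claim, proving for every $w$ both $M,\{w\}\vDash\alpha \iff M,w\vDash\alpha$ and $M,\{w\}\Dashv\alpha \iff M,w\nvDash\alpha$. Two auxiliary observations about the empty state feed the connective and modal cases. First, every classical $\alpha$ both supports and anti-supports the empty state: $M,\emptyset\vDash\alpha$ is the empty state property from Fact \ref{fact:NE_intd_closure_properties}, and $M,\emptyset\Dashv\alpha$ follows by applying that same property to the classical formula $\lnot\alpha$ and unfolding the negation clause. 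Second, anti-support of classical formulas is itself flat, in the sense that $M,s\Dashv\alpha \iff M,\{v\}\Dashv\alpha$ for all $v\in s$, again because $M,s\Dashv\alpha \iff M,s\vDash\lnot\alpha$ and $\lnot\alpha$ is flat.

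With these in hand the cases are routine. For $\alpha=p$ the support and anti-support clauses on a singleton read off $w\in V(p)$ and $w\notin V(p)$ respectively, matching the atomic clauses; for $\alpha=\lnot\beta$ the two halves of the hypothesis swap and negate, as above. For $\alpha=\beta\land\gamma$ and $\alpha=\beta\lor\gamma$, the relevant split $\{w\}=t\cup u$ has its empty component absorbed by the empty-state observation, so the team clause collapses to the expected conjunction or disjunction of the one-world conditions; here one must be slightly careful that it is the empty-state \emph{anti}-support that makes the $\land$ anti-support clause reproduce De Morgan's law. For $\alpha=\Di\beta$, the singleton support clause asks for a nonempty $t\subseteq R[w]$ with $M,t\vDash\beta$; flatness lets me shrink $t$ to a single accessible world, and the hypothesis turns this into the existence of some $v$ with $wRv$ and $M,v\vDash\beta$, the standard clause, while the anti-support clause $M,R[w]\Dashv\beta$ is handled dually via anti-support flatness. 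The box case then follows, since $\Bo$ is defined from $\Di$ and $\lnot$. The only genuine bookkeeping---and the step most likely to trip one up---is keeping the anti-support side of the hypothesis correctly aligned in the $\lnot$, $\land$, and $\Di$ cases, together with the need for the empty state to anti-support classical formulas; everything else is mechanical.
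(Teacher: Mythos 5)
Your proposal is correct and follows essentially the same route as the paper: the first equivalence is discharged by flatness (Fact \ref{fact:NE_intd_closure_properties}), and the second by the singleton correspondence $M,\{w\}\vDash\alpha \iff M,w\vDash\alpha$, which the paper dismisses as ``an easy induction.'' Your strengthening of the induction hypothesis to a simultaneous support/anti-support claim, together with the empty-state anti-support observation, is precisely the bookkeeping that induction requires because of the bilateral negation, so you have simply made explicit what the paper leaves implicit.
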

\begin{proof}
    The first equivalence follows from flatness, and the second from $\{w\}\vDash \alpha$ iff $w\vDash \alpha$ (proved by an easy induction).
\end{proof}
As an immediate consequence of Fact \ref{fact:ML_team_classical_correspondence}, we have that for any set $\Delta \cup \{\alpha\}$ of $\ML$-formulas, $\Delta \vDash \alpha$ in the state-semantics sense iff $\Delta \vDash \alpha$ in the usual single-world-semantics sense. For this reason, when discussing $\ML$-formulas, we simply use the notation $\vDash$ to refer to both state-based and world-based entailment (similarly for $\equiv$).
\section{Expressive Power and Normal Forms} \label{section:expressive_power}

In this section, we study the expressive power of \BSML and its extensions. We measure the expressive power of the logics in terms of the \emph{properties}---sets of pointed models---they can express. It is well-known that the expressive power of classical modal logic $\ML$ is characterized by \emph{bisimulation-invariance}: the properties expressible in $\ML$ are invariant (closed) under bisimulation. This also holds in the state-based modal setting: our logics are invariant under \emph{state bisimulation}. In Section \ref{section:bisimulation}, we introduce state bisimulation and prove bisimulation-invariance; in Section \ref{section:expressive_completeness}, we apply this notion to show expressive completeness results for our logics.

\subsection{Bisimulation} \label{section:bisimulation}
We recall briefly some standard results concerning bisimulation for classical modal logic and standard Kripke semantics; for more comprehensive discussion, we refer the reader to, for instance, \cite{blackburn2001,goranko2007}.

Throughout this section, we fix a finite set $\mathsf{X}\subseteq\mathsf{Prop}$ of propositional variables. We omit mention of $\mathsf{X}$ whenever doing so does not result in confusion in order to keep our notation light. A \emph{pointed model} (over $\mathsf{X}$) is a pair $(M,w)$ where $M$ is a model over $\mathsf{X}$ and $w\in W$.   
\begin{definition}[Bisimilarity] \label{def:bisimilarity}
    For any $k \in \mathbb{N}$ and any $(M,w)$ (over $\mathsf{Y}\supseteq \mathsf{X}$) and $(M',w')$ (over $\mathsf{Y}'\supseteq \mathsf{X}$), we write $M,w \bisim^{\mathsf{X}}_k M',w'$ (or simply $w\bisim_k w'$) and say $(M,w)$ and $(M',w')$ are $\mathsf{X},k$-\emph{bisimilar} if the following recursively defined relation holds:
    \begin{itemize}
        \item[--] $M,w \bisim^{\mathsf{X}}_0 M,w :\iff $ for all $p \in \mathsf{X}$ we have $M,w \vDash p$ iff $M',w' \vDash p$.
        \item[--] $M,w \bisim^{\mathsf{X}}_{k+1} M',w' :\iff M,w \bisim_0 M',w'$ and
        \begin{itemize}
            \item {[forth]} for all $v \in R[w]$ there is a $v' \in R' [w']$ such that $M,v \bisim_k M',v'$;
            \item {[back]} for all $v' \in R'[w']$ there is a $v \in R [w]$ such that $M,v \bisim_k M',v'$.
        \end{itemize}
    \end{itemize}
\end{definition}

The \emph{modal depth} $md(\phi)$ of a formula $\phi$ is defined recursively as:
    \begin{itemize}
        \item[--] $md(p)=md(\NE)=0$
        \item[--] $md(\bnot \phi)=md(\OC\phi)=md(\phi)$
        \item[--] $md(\phi \land \psi)=md(\phi \dis \psi)=md(\phi \intd \psi)=max\{md(\phi),md(\psi)\}$
        \item[--] $md(\Di \phi)= md(\phi)+1$
    \end{itemize}

    We say that $(M,w)$ and $(M,w')$ are \emph{${\mathsf{X}},k$-equivalent} ($k\in \mathbb{N}$), written $M,w \equiv^{\mathsf{X}}_k M',w'$ (or simply $w\equiv_k w'$), if for all $\alpha({\mathsf{X}}) \in \ML$ with $md(\alpha)\leq k$: $M,w\vDash \alpha \iff  M',w'\vDash \alpha$.

\begin{definition}[Hintikka formulas] \label{def:hintikka}
    Let $k \in \N$ and let $(M,w)$ be a pointed model over $\mathsf{Y}\supseteq \mathsf{X}$. We define the \emph{$k$-th Hintikka formula} $\chi^{\mathsf{X},k}_{M,w} $ (or simply $\chi^k_{w} $) of $(M,w)$ recursively as:
    \begin{align*}
        &\chi^{\mathsf{X},0}_{M,w}   &&:= &&\bigwedge \{p \sepp p \in \mathsf{X} , w \in V(p)\} \land \bigwedge \{\lnot p \sepp p \in \mathsf{X} , w \notin V(p)\}\\
        &\chi^{{\mathsf{X}},k+1}_{M,w} &&:= &&\chi^{k}_{w}
            \land \underset{v \in R[w]}{\bigwedge}\Di \chi^{k}_{v}
            \land \Bo \underset{v \in R[w]}{\bigdis}\chi^{k}_{v}
    \end{align*}
\end{definition}

It is easy to see that there are only finitely many non-equivalent $k$-th Hintikka formulas for a given finite $\mathsf{X}$---this is why we may assume that the conjunction and the disjunction in $\chi^{\mathsf{X},k+1}_{M,w}$ are finite and hence that $\chi^{\mathsf{X},k+1}_{M,w}$ is well-defined.

\begin{theorem} (See, for instance, \cite{blackburn2001,goranko2007}.
)\label{theorem:hintikka_bisimulation} 
    \begin{align*}
        &w \equiv_k w'  &&\iff &&w \bisim_k w' &&\iff   &&w'\vDash \chi^{k}_{w} && \iff && \chi^k_w \equiv \chi^k_{w'}
    \end{align*}
\end{theorem}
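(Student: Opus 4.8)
The statement is the classical Hintikka theorem for basic modal logic, chaining four equivalent characterizations of $k$-bisimilarity. The plan is to prove the cycle of implications
\[
w \equiv_k w' \;\Longrightarrow\; w' \vDash \chi^k_w \;\Longrightarrow\; w \bisim_k w' \;\Longrightarrow\; w \equiv_k w',
\]
and then fold in the fourth condition $\chi^k_w \equiv \chi^k_{w'}$ at the end, since it follows readily once the other three are known to coincide. All of this is by induction on $k$, and throughout I use Fact~\ref{fact:ML_team_classical_correspondence} to move freely between the world-based reading $M,w \vDash \alpha$ and the singleton-state reading $M,\{w\}\vDash \alpha$, so that the $\ML$-formulas $\chi^k_w$ (which are $\NE$-free, hence flat) behave classically.

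First I would verify the base case $k=0$. Here $\chi^0_w$ is the conjunction of the literals true at $w$, so $w' \vDash \chi^0_w$ says exactly that $w'$ agrees with $w$ on every $p \in \mathsf{X}$, which is the definition of $w \bisim_0 w'$; and this in turn is equivalent to $w \equiv_0 w'$ because a modal-depth-$0$ formula is a Boolean combination of propositional variables. For the inductive step, the key step is $w' \vDash \chi^k_w \Rightarrow w \bisim_k w'$. Writing $k = n+1$, the formula $\chi^{n+1}_w$ is built from $\chi^n_w$, the conjunction $\bigwedge_{v \in R[w]} \Di \chi^n_v$, and the box $\Bo \bigdis_{v \in R[w]} \chi^n_v$. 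Assuming $w' \vDash \chi^{n+1}_w$: the first conjunct gives $w' \bisim_0 w$ by the $k=0$ analysis; the $\Di$-conjuncts supply, for each $v \in R[w]$, a successor $v' \in R'[w']$ with $v' \vDash \chi^n_v$, whence $v \bisim_n v'$ by the induction hypothesis, establishing [forth]; and the $\Bo$-conjunct forces every successor $v' \in R'[w']$ to satisfy some disjunct $\chi^n_v$, again yielding $v \bisim_n v'$ by the induction hypothesis, establishing [back]. The reverse half, $w \bisim_k w' \Rightarrow w \equiv_k w'$, is the standard modal-invariance argument: by induction on the $\ML$-formula $\alpha$ with $md(\alpha)\le k$, where the $\Di$ case uses [forth] for one direction of the biconditional and [back] for the other, the modal depth dropping by one so the induction hypothesis on $k$ applies. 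The loop closes by noting $w \vDash \chi^k_w$ trivially (an easy induction), so $w \equiv_k w'$ gives $w' \vDash \chi^k_w$.

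For the final equivalence $\chi^k_w \equiv \chi^k_{w'}$, I would argue that it is interchangeable with the middle conditions: if $w \bisim_k w'$ then, since $\bisim_k$ is symmetric, both $w' \vDash \chi^k_w$ and $w \vDash \chi^k_{w'}$ hold; combined with the observation that any pointed model $(N,u)$ satisfying $\chi^k_w$ is $k$-bisimilar to $(M,w)$ (which is exactly the implication $u \vDash \chi^k_w \Rightarrow u \bisim_k w$ just proved), one gets that $\chi^k_w$ and $\chi^k_{w'}$ are satisfied by exactly the same pointed models up to $\bisim_k$, hence are equivalent; conversely $\chi^k_w \equiv \chi^k_{w'}$ together with $w' \vDash \chi^k_{w'}$ immediately yields $w' \vDash \chi^k_w$.

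The main obstacle is the $\Di$/$\Bo$ bookkeeping in the step $w' \vDash \chi^{n+1}_w \Rightarrow w \bisim_{n+1} w'$: one must be careful that the $\Di$-conjuncts deliver [forth] while the single $\Bo$-conjunct delivers [back], and that each appeal to the induction hypothesis is at the right depth $n$. A secondary point worth flagging explicitly is that although the ambient logic has team semantics, every formula here lives in $\ML$ and is flat, so Fact~\ref{fact:ML_team_classical_correspondence} legitimizes treating $\Di$ and $\Bo$ with their ordinary world-based truth conditions; without this reduction the support clauses for $\Di$ (quantifying over non-empty substates) would complicate the argument, but flatness collapses them to the classical ones.
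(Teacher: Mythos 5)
The paper itself gives no proof of this theorem: it is stated as a standard result and delegated to the cited references \cite{blackburn2001,goranko2007}. Your argument—the cycle $w \equiv_k w' \Rightarrow w' \vDash \chi^k_w \Rightarrow w \bisim_k w' \Rightarrow w \equiv_k w'$ by induction on $k$, with the fourth condition folded in via symmetry/transitivity of $\bisim_k$, and with Fact~\ref{fact:ML_team_classical_correspondence} justifying the classical reading of the flat $\ML$-formulas $\chi^k_w$—is precisely the canonical textbook proof, and it is correct.
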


In the context of standard Kripke semantics, we measure the expressive power of a logic in terms of the classes of pointed models expressible in the logic:

\begin{definition} \label{def:world_properties} 
A \emph{property} (over $\mathsf{X}$) is a class of pointed models over $\mathsf{X}$. Each formula $\alpha$ \emph{expresses} a property (over $\mathsf{X}\supseteq \mathsf{P}(\phi)$)
$$\llbracket \alpha \rrbracket_{\mathsf{X}}:=\{(M,w)\text{ over $\mathsf{X}$}\sepp M,w\vDash \alpha\}.$$
We say that a logic $L$ is \emph{expressively complete} for a class of properties $\mathbb{P}$, written $\llbracket L\rrbracket=\mathbb{P}$, if for each finite $\mathsf{X}$, the class $\mathbb{P}_\mathsf{X}$ of properties over $\mathsf{X}$ in $\mathbb{P}$ is precisely the class of properties over $\mathsf{X}$ expressible by formulas of $L$ with , that is, if
$$\mathbb{P}_\mathsf{X}=\llbracket L \rrbracket_\mathsf{X}:=\{\llbracket \alpha \rrbracket_\mathsf{X}\sepp \alpha \in L\}.$$
\end{definition}

A property $\PPP$ is \emph{invariant under} $\mathsf{X},k$-bisimulation if $(M,w)\in \PPP$ and $M,w\bisim^\mathsf{X}_k M',w'$ imply $(M',w')\in \PPP$. A property $\PPP$ over $\mathsf{X}$ is \emph{invariant under bounded bisimulation} if $\PPP$ is invariant under $\mathsf{X},k$-bisimulation for some $k\in \mathbb{N}$. Theorem \ref{theorem:hintikka_bisimulation} allows us to prove a world-based expressive completeness theorem for $\ML$: $\ML$ can express all world-based properties that are invariant under bounded bisimulation.

\begin{theorem}[World-based expressive completeness of $\MLBF$] \label{theorem:world-based_expressive_completeness} 
$$\llbracket \ML\rrbracket =\{\PPP\sepp \PPP \text{ is invariant under bounded bisimulation}\}.$$
\end{theorem}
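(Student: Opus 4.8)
The plan is to prove the two inclusions separately. The left-to-right inclusion is the easier direction: I would show that every property of the form $\llbracket \alpha \rrbracket$ for $\alpha \in \ML$ is invariant under $k$-bisimulation for $k = md(\alpha)$. This is the standard bisimulation-invariance result for classical modal logic. By Theorem \ref{theorem:hintikka_bisimulation}, $w \bisim_k w'$ implies $w \equiv_k w'$, and since $md(\alpha) \leq k$, the definition of $k$-equivalence gives $M,w \vDash \alpha \iff M',w' \vDash \alpha$. Hence if $(M,w) \in \llbracket \alpha \rrbracket$ and $M,w \bisim_k M',w'$, then $(M',w') \in \llbracket \alpha \rrbracket$, so the property is invariant under $k$-bisimulation. (Strictly speaking I should first confirm, by an easy induction on $\alpha$, that $md(\alpha) \leq k$ entails preservation under $\equiv_k$; this is just the content of the definition of $\equiv_k$.)

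The right-to-left inclusion is the main obstacle and the heart of the argument. Here I would take an arbitrary property $\PPP$ that is invariant under $k$-bisimulation for some fixed $k \in \mathbb{N}$ and construct a single $\ML$-formula expressing it. The key fact I rely on is that, over a fixed finite set $\mathsf{X}$ of propositional variables, there are only finitely many pointed models up to $k$-bisimulation: by Theorem \ref{theorem:hintikka_bisimulation}, the $k$-th Hintikka formula $\chi^k_w$ characterizes the $k$-bisimulation class of $(M,w)$, since $w' \vDash \chi^k_w \iff w \bisim_k w'$, and there are only finitely many distinct such formulas up to equivalence. The natural candidate is then the disjunction
\begin{equation*}
    \alpha_{\PPP} := \bigdis \{\chi^k_w \sepp (M,w) \in \PPP\},
\end{equation*}
where the disjunction is taken over a (finite) set of representatives of the $k$-bisimulation classes meeting $\PPP$. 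Since $\ML$-formulas are flat (Fact \ref{fact:NE_intd_closure_properties}), the disjunction $\dis$ collapses to the classical disjunction on single worlds, so evaluating $\alpha_{\PPP}$ at a pointed model behaves as expected.

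I would then verify $\llbracket \alpha_{\PPP} \rrbracket = \PPP$ by a double inclusion. For one direction, if $(M',w') \in \PPP$ then $\chi^k_{w'}$ is one of the disjuncts and $w' \vDash \chi^k_{w'}$ holds by Theorem \ref{theorem:hintikka_bisimulation}, so $w' \vDash \alpha_{\PPP}$. For the converse, if $w' \vDash \alpha_{\PPP}$, then by flatness $w'$ satisfies some disjunct $\chi^k_w$ with $(M,w) \in \PPP$; by Theorem \ref{theorem:hintikka_bisimulation} this gives $w \bisim_k w'$, and invariance of $\PPP$ under $k$-bisimulation then yields $(M',w') \in \PPP$. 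The main subtlety to address carefully is the \emph{finiteness} of the index set: I must argue that, up to logical equivalence, there are only finitely many $k$-th Hintikka formulas over the fixed finite vocabulary $\mathsf{X}$ (this follows by induction on $k$ from the finitely many atomic types at level $0$ and the finite branching of the recursive clause), so that $\alpha_{\PPP}$ is a genuine finite $\ML$-formula. A point I would flag explicitly is that the statement implicitly fixes the finite vocabulary $\mathsf{X}$ over which the properties and formulas live; without a finite $\mathsf{X}$ there would be infinitely many atomic types and the disjunction need not be finite.
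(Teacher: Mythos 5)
Your proposal is correct and takes essentially the same approach as the paper: the $\subseteq$ direction via bisimulation invariance of $\ML$-formulas with $k=md(\alpha)$ (Theorem \ref{theorem:hintikka_bisimulation}), and the $\supseteq$ direction via the characteristic disjunction $\bigdis_{(M,w)\in \PPP}\chi^k_w$, made well-defined by the observation that there are only finitely many non-equivalent Hintikka formulas over a fixed (finite) $\mathsf{X}$. The only cosmetic differences are that you pass through explicit representatives of $k$-bisimulation classes and justify the single-world behaviour of $\dis$ via flatness, both of which the paper leaves implicit.
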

\begin{proof}
    For the inclusion $\subseteq$, for any $\alpha\in \ML$, we have that $\llbracket\alpha \rrbracket$ is invariant under $k$-bisimulation for $k=md(\alpha)$ by Theorem \ref{theorem:hintikka_bisimulation}. For the converse inclusion $\supseteq$, for any $\PPP$ invariant under $k$-bisimulation, it follows easily from Theorem \ref{theorem:hintikka_bisimulation} that 
    $$M',w'\vDash \bigdis_{(M,w)\in \PPP} \chi^k_w \iff (M',w')\in \PPP \quad \text{(i.e., }\PPP=\llbracket \bigdis_{(M,w)\in \PPP} \chi^k_w\rrbracket\in \llbracket \ML \rrbracket).$$
    Note that since there are only finitely many non-equivalent Hintikka formulas $\chi^k_w$ for a given finite $\mathsf{X}$, we may assume that the disjunction in the above formula $\bigdis_{(M,w)\in \PPP} \chi^k_w$ is finite and hence that the formula is well-defined.
\end{proof}

The formula $\bigdis_{(M,w)\in \PPP} \chi^k_w$ in the above proof can be viewed as a characteristic formula for $\PPP$. The proof also yields a disjunctive normal form for formulas of $\ML$: given that $\llbracket\alpha\rrbracket$ is invariant under $k$-bisimulation for $k=md(\alpha)$, $\llbracket\alpha \rrbracket=\llbracket \bigdis_{(M,w)\in \llbracket\alpha\rrbracket} \chi^{md(\alpha)}_w\rrbracket$, that is, $\alpha \equiv \bigdis_{(M,w)\in \llbracket\alpha\rrbracket} \chi^{md(\alpha)}_w$.

We now introduce state-based analogues of the preceding notions and results. A \emph{pointed (state) model} (over $\mathsf{X}$) is a pair $(M,s)$ where $M$ is a model over $\mathsf{X}$ and $s$ is a state on $M$. For a given $k\in \mathbb{N}$, $(M,s)$ and $(M,s')$ are \emph{$\mathsf{X},k$-equivalent} (in a logic $L$), written $M,s \equiv^\mathsf{X}_k M',s'$, if for all $\phi(\mathsf{X})\in L$ with $md(\phi)\leq k: M,s\vDash \phi \iff M',s'\vDash \phi$.

State-based bisimulation (first introduced in \cite{hella2014,kontinen2014}) is a natural generalization of the world-based notion: two pointed state models are bisimilar if for each world in one there is a bisimilar world in the other, and vice versa.
    
\begin{definition}[State bisimilarity] \label{def:state_bisimilarity}
    For any $k \in \mathbb{N}$, $(M,s)$ (over $\mathsf{Y}\supseteq \mathsf{X}$) and $(M',s')$ (over $\mathsf{Y}'\supseteq \mathsf{X}$) are \emph{$\mathsf{X},k$-bisimilar}, written $M,s \bisim^\mathsf{X}_k M',s'$ (or simply $s\bisim_k s'$), if
    \begin{itemize}
        \item[--] {[forth]} for each $w \in s$ there is some $w' \in s'$ such that $M,w \bisim_k M',w'$;
        \item[--] {[back]} for each $w' \in s'$ there is some $w \in s$ such that $M,w \bisim_k M',w'$.
    \end{itemize}
\end{definition}
State bisimulation is clearly a conservative extension of world-based bisimulation in that $\{w\}\bisim_k\{w'\}$ iff $w\bisim_kw'$. We thus use the same symbol $\bisim_k$  for both world-based and state based bisimulation, and we sometimes write simply $w\bisim_ks'$ for $\{w\}\bisim_ks'$. It is easy to see that for any $k \in \N$, if $s \bisim_k s'$, then $s \bisim_n s'$ for all $n < k$ (for $w\bisim_k w'$ similarly implies $w \bisim_n w'$ for all $n< k$).

We list below some useful properties of state bisimulation;  see \cite{hella2014} for a proof.

\begin{lemma} \label{lemma:bisimulation_results}
    If $s \bisim_{k+1} s'$, then:
    \begin{enumerate}
        \item[(i)] 
        \makeatletter\def\@currentlabel{(i)}\makeatother\label{lemma:bisimulation_results_R[s]} $R[s] \bisim_k R'[s'] $;
        \item[(ii)] 
        \makeatletter\def\@currentlabel{(ii)}\makeatother \label{lemma:bisimulation_results_union} for all $t,u \subseteq s$ such that $s=t\cup u$ there are $t',u' \subseteq s'$ such that $s'= t'\cup u'$, $t\bisim_{k+1} t'$, and $u\bisim_{k+1} u'$.
    \end{enumerate}
\end{lemma}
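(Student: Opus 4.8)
The plan is to verify both items directly from the definitions of state bisimilarity (Definition \ref{def:state_bisimilarity}) and world bisimilarity (Definition \ref{def:bisimilarity}), checking the [forth] and [back] clauses in turn; no appeal to the Hintikka formulas is needed.

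For (i), I would unfold the definition of $R[s] \bisim_k R'[s']$. To check [forth], take any $v \in R[s] = \bigcup_{w \in s} R[w]$, so $v \in R[w]$ for some $w \in s$. Since $s \bisim_{k+1} s'$, the [forth] clause of state bisimilarity gives a $w' \in s'$ with $w \bisim_{k+1} w'$; unfolding $(k+1)$-bisimilarity of worlds, its [forth] clause applied to $v$ produces some $v' \in R'[w'] \subseteq R'[s']$ with $v \bisim_k v'$, as required. The [back] clause is entirely symmetric, using [back] of state bisimilarity followed by [back] of world bisimilarity. I would note that no case split on whether $R[s]$ is empty is required: the [back] argument automatically supplies, for each element of $R'[s']$, a $k$-bisimilar partner in $R[s]$ (so that, for instance, $R[s] = \emptyset$ forces $R'[s'] = \emptyset$).

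For (ii), the key idea is to define the witnesses by pulling the splitting $s = t \cup u$ back along the bisimulation:
$$t' := \{w' \in s' \sepp w \bisim_{k+1} w' \text{ for some } w \in t\}, \qquad u' := \{w' \in s' \sepp w \bisim_{k+1} w' \text{ for some } w \in u\}.$$
Both are subsets of $s'$ by construction. To see $s' = t' \cup u'$, take any $w' \in s'$; by [back] of $s \bisim_{k+1} s'$ there is $w \in s = t \cup u$ with $w \bisim_{k+1} w'$, so $w'$ lands in $t'$ or in $u'$ according to whether $w \in t$ or $w \in u$. For $t \bisim_{k+1} t'$, the [forth] clause holds because each $w \in t \subseteq s$ has, by [forth] of $s \bisim_{k+1} s'$, a $(k+1)$-bisimilar $w' \in s'$, which then lies in $t'$ by definition; the [back] clause is immediate from the definition of $t'$. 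The argument for $u \bisim_{k+1} u'$ is identical.

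The one point that needs care is the construction in (ii): the sets $t'$ and $u'$ must simultaneously be contained in $s'$, cover $s'$, and be $(k+1)$-bisimilar to their counterparts, and taking them to be the bisimilarity-images of $t$ and $u$ inside $s'$ secures all three at once. I would emphasize that $t'$ and $u'$ need not be disjoint---a world of $s'$ may be bisimilar to worlds in both $t$ and $u$---but this is harmless, since only the covering $s' = t' \cup u'$ is needed, matching the $\cup$ in the support clause for $\dis$.
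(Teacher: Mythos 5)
Your proof is correct. Note that the paper does not prove this lemma itself---it defers to \cite{hella2014}---so there is no in-paper argument to diverge from; your direct verification is the standard one, with the essential point being exactly your construction in (ii) of $t'$ and $u'$ as the $(k+1)$-bisimilarity images of $t$ and $u$ inside $s'$, and your side remarks (the empty-state case in (i), and that $t'$ and $u'$ need not be disjoint since only the covering $s'=t'\cup u'$ is required) are both apt.
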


As expected, state $k$-bisimilarity implies $k$-equivalence in each of the three logics:
\begin{theorem}[Bisimulation invariance]\label{theorem:bisimulation_invariance}
If $s\bisim_k s'$, then $s\equiv_k s'$.
\end{theorem}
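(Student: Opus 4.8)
The plan is to prove Theorem \ref{theorem:bisimulation_invariance} by induction on $k$, strengthening the inductive framework so that both directions of $k$-equivalence are available simultaneously. Because our logics are bilateral, the natural statement to prove by induction is actually a biconditional handling both support and anti-support: I would show by induction on the modal depth that if $s\bisim_k s'$, then for every formula $\phi$ with $md(\phi)\le k$ we have both $s\vDash\phi \iff s'\vDash\phi$ and $s\Dashv\phi \iff s'\Dashv\phi$. Proving support and anti-support together is essential, since the clauses for $\lnot$, $\land$, and $\dis$ are defined by swapping support with anti-support, and the modal clauses intertwine the two as well.

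For the base case $k=0$, a $0$-bisimulation $s\bisim_0 s'$ means every world in $s$ has a propositionally equivalent partner in $s'$ and vice versa; the depth-$0$ formulas are Boolean combinations (via $\land,\dis,\intd,\lnot,\OC$) of proposition letters and $\NE$. The atomic cases follow from Fact \ref{fact:ML_team_classical_correspondence} for proposition letters and from the observation that $s\bisim_0 s'$ preserves (non)emptiness for $\NE$; the connective cases are routine, using the semantic clauses directly. For the inductive step, I would assume the claim for $k$ and take $s\bisim_{k+1}s'$ with $md(\phi)\le k+1$; the connective cases $\land,\dis,\intd$ are where Lemma \ref{lemma:bisimulation_results}\ref{lemma:bisimulation_results_union} does the real work. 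For instance, if $s\vDash\phi\dis\psi$ via a split $s=t\cup u$, part (ii) of the lemma supplies a matching split $s'=t'\cup u'$ with $t\bisim_{k+1}t'$ and $u\bisim_{k+1}u'$, so the induction hypothesis (applicable since $md(\phi),md(\psi)\le k+1$) transfers support of each disjunct; symmetry gives the converse, and the $\land$ anti-support case is dual. The $\lnot$ and $\OC$ cases reduce immediately to the induction hypothesis via the support/anti-support swap.

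The main obstacle, and the step I would treat most carefully, is the modal case $\phi=\Di\eta$ (with the dual $\Bo$ handled by its derived clause or by the anti-support clause for $\Di$). Here $md(\eta)=k$, so I want to apply the induction hypothesis at level $k$, and the bridge is Lemma \ref{lemma:bisimulation_results}\ref{lemma:bisimulation_results_R[s]}, which gives $R[s]\bisim_k R'[s']$. However, the support clause for $\Di$ quantifies over individual worlds $w\in s$ and nonempty subteams $t\subseteq R[w]$, so the global relation $R[s]\bisim_k R'[s']$ is not by itself enough — I need a \emph{local} matching. The clean fix is to prove the auxiliary fact that $s\bisim_{k+1}s'$ yields, for each $w\in s$, a $w'\in s'$ with $R[w]\bisim_k R'[w']$ (this follows directly from the [forth]/[back] conditions of Definition \ref{def:state_bisimilarity} together with the world-level forth/back clauses of Definition \ref{def:bisimilarity}, exactly as in the proof of Lemma \ref{lemma:bisimulation_results_R[s]}). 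Given such a $w'$, if $t\subseteq R[w]$ is a nonempty witness supporting $\eta$, I use the induction hypothesis at level $k$ to find $t'\subseteq R'[w']$ with $t\bisim_k t'$ (nonempty, preserved by bisimulation) and $t'\vDash\eta$, establishing $s'\vDash\Di\eta$; symmetry closes the direction, and the anti-support clause, which quantifies $R[w]\Dashv\eta$ for all $w$, is handled analogously using $R[w]\bisim_k R'[w']$ and the anti-support half of the induction hypothesis.
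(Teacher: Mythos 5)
Your proposal is correct, and its core---the modal case---is exactly the paper's argument: the paper resolves the same ``global vs.\ local'' obstacle in the way you propose, picking for each $w'\in s'$ a world $w\in s$ with $w\bisim_k w'$ and applying Lemma \ref{lemma:bisimulation_results}\ref{lemma:bisimulation_results_R[s]} to the singleton states to get $R[w]\bisim_{k-1}R'[w']$. One small correction there: the subteam $t'\subseteq R'[w']$ with $t\bisim_{k-1}t'$ is supplied by the split property, Lemma \ref{lemma:bisimulation_results}\ref{lemma:bisimulation_results_union} (applied to the split $R[w]=t\cup R[w]$), not by the induction hypothesis; the induction hypothesis only transfers support of $\eta$ across $t\bisim_{k-1}t'$ and the nonemptiness of $t'$ comes from $t\neq\emptyset$ and bisimilarity.

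Where you genuinely diverge from the paper is in the treatment of the bilateral negation. The paper first invokes Fact \ref{fact:negation_normal_form} to put $\phi$ in negation normal form, so that the induction tracks support only, with $\lnot p$ and $\lnot\NE$ as additional base cases and a separate case for $\Bo$; you instead strengthen the induction to carry support and anti-support simultaneously. Both are sound. Your version avoids any appeal to the normal form fact and handles $\Bo=\lnot\Di\lnot$ for free from the $\lnot$ and $\Di$ cases, at the price of doubling every case; the paper's version halves the case analysis at the price of invoking Fact \ref{fact:negation_normal_form}. A final bookkeeping point: your outer induction ``on $k$'' does not quite work as stated, because in the connective cases ($\land,\dis,\intd,\lnot,\OC$) the bisimulation level does not decrease---the hypothesis you invoke there is the structural one on subformulas at the same level $k+1$, not the claim at level $k$. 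The clean fix is the paper's organization: induct on the structure of $\phi$ and prove the claim for $k=md(\phi)$, which suffices since $s\bisim_k s'$ implies $s\bisim_n s'$ for all $n\leq k$; your case analysis then goes through verbatim.
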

\begin{proof}
    For any $\phi$, we show by induction on  $\phi$ that  if $s\bisim_k s'$ for $k=md(\phi)$, then $s \vDash \phi$ iff $s'\vDash \phi$. By Fact \ref{fact:negation_normal_form}, we may assume that $\phi$ is in negation normal form. Most cases can be found in \cite{hella2014}; note that the $\dis$-case follows by Lemma \ref{lemma:bisimulation_results} \ref{lemma:bisimulation_results_union}. 
    The case $\phi=\OC \psi$ follows by $\OC\psi\equiv \psi \intd \bnot \NE$ and the other cases. We now show the cases for the modalities. We only give the detailed proof for one direction of each implication.

    $\phi=\Di \psi$. Suppose $s \bisim_k s'$ for $k= md(\Di\psi)=md(\psi)+1$, and  $s \vDash \Di \psi$. Let $w'\in s'$. By $s \bisim_k s'$, there is some $w\in s$ such that $w \bisim_k w'$. Since $s\vDash \Di \psi$, there is a nonempty $t \subseteq R[w]$ such that $t \vDash \psi$. By $w \bisim_k w'$ we obtain $R[w]\bisim_{k-1} R'[w']$ by Lemma \ref{lemma:bisimulation_results} \ref{lemma:bisimulation_results_R[s]}. Then by Lemma \ref{lemma:bisimulation_results} \ref{lemma:bisimulation_results_union}, there is some $t'\subseteq R'[w']$ such that $t\bisim_{k-1} t'$. By induction hypothesis, $t'\vDash \psi$. Since $t\neq \emptyset$ and $t \bisim_{k-1}t'$, we also have $t'\neq \emptyset$. Hence, $s' \vDash \Di \psi$.
        
    $\phi=\Bo \psi$. Suppose $s \bisim_k s'$ for $k= md(\Bo\psi)=md(\psi)+1$, and $s \vDash \Bo \psi$. Let $w' \in s'$. By $s \bisim_k s'$, there is a $w\in s$ such that $w \bisim_k w'$. Since $s\vDash \Bo \psi$, we have $R[w]\vDash \psi$. By $w \bisim_k w'$ we obtain $R[w]\bisim_{k-1} R'[w']$ by Lemma \ref{lemma:bisimulation_results} \ref{lemma:bisimulation_results_R[s]}. Then by induction hypothesis, $R'[w']\vDash \psi$, so $s' \vDash \Bo \psi$.
 \end{proof}       

\subsection{Expressive completeness} \label{section:expressive_completeness}

We move on to the main results concerning the expressive power of our logics. In the classical world-based setting we measured the expressive power of a logic in terms of the classes of pointed models expressible in the logic; we now use classes of pointed state models.

\begin{definition} \label{def:state_properties}
A \emph{state property} (over $\mathsf{X}$) is a class of pointed state models over $\mathsf{X}$. Each formula $\phi$ \emph{expresses} a property (over $\mathsf{X}\supseteq \mathsf{P}(\phi)$)
$$||\phi||_{\mathsf{X}}:=\{(M,s)\text{ over }\mathsf{X}\sepp M,s \vDash \phi\}.$$
We say that a logic $L$ is \emph{expressively complete} for a class of properties $\mathbb{P}$, written $||L||=\mathbb{P}$, if for each finite $\mathsf{X}$, the class $\mathbb{P}_\mathsf{X}$ of properties over $\mathsf{X}$ in $\mathbb{P}$ is precisely the class of properties over $\mathsf{X}$ expressible by formulas of $L$, that is, if
$$\mathbb{P}_\mathsf{X}=||L ||_\mathsf{X}:=\{|| \phi ||_\mathsf{X}\sepp \phi \in L\}.$$
\end{definition}

We say that a state property $\PPP$  is \emph{invariant under $\mathsf{X},k$-bisimulation ($k\in \N$)} if 
\[[M,s\bisim^\mathsf{X}_k M',s'] \implies (M',s')\in \PPP ,\]
and that $\PPP$ over $\mathsf{X}$ is \emph{invariant under bounded bisimulation} if $\PPP$  is invariant under $\mathsf{X},k$-bisimulation for some $k\in \N$. Similarly, $\PPP$  is \emph{union closed} if  \[[(M,s)\in \PPP \text{ for all } s\in S\neq \emptyset] \implies (M, \bigcup S  )\in \PPP,\]
and $\PPP$ is \emph{flat} if
\[(M, s)\in \PPP  \iff (M,\{w\})\in \PPP \text{ for all }w\in s .\]
Clearly, $\PPP$ is flat iff $\PPP$ is union closed and \emph{downward closed} (i.e., $(M,s)\in \PPP$ and $t \subseteq s$ imply $(M,t)\in \PPP$), and has the \emph{empty state property} (i.e., $(M,\emptyset)\in \PPP$ for all $M$).

    We aim to show three results. First, \BSMLI is expressively complete for
        $$\mathbb{B}:=\{\PPP\sepp \PPP\text{ is invariant under bounded bisimulation}\}.$$
        Second, \BSMLO is complete for 
        $$\mathbb{U}:=\{\PPP\sepp \PPP\text{ is union closed and invariant under bounded bisimulation}\}.$$
        Third, although the properties expressed by formulas of \BSML are union closed and invariant under bounded bisimulation, \BSML is not complete for $\mathbb{U}$. Using the notions we have introduced, it is also easy to show that $\ML$ is complete for $$\mathbb{F}:=\{\PPP\sepp \PPP\text{ is flat and invariant under bounded bisimulation}\}.$$ 
        We include below a brief proof of this folklore result as seeing it alongside the other expressive completeness theorems is instructive. Writing $||L_1||\subset ||L_2||$ if $||L_1||_\mathsf{X}\subseteq ||L_2||_\mathsf{X}$ for all finite $\mathsf{X}$ and $||L_2||_\mathsf{Y}\not\subseteq ||L_1||_\mathsf{Y}$ for some finite $\mathsf{Y}$, we therefore have:
        $$|| \ML || \subset ||\BSML  ||\subset  ||\BSMLO  ||\subset ||\BSMLI ||.$$

We begin by using Hintikka formulas $\chi^k_w \in \ML$ for pointed models to construct Hintikka formulas $\chi^k_s\in \ML$ and $\theta^k_s\in \BSML$ for pointed state models.

\begin{definition}[Hintikka formulas for states]\label{def:characteristic_formulas_states}
   For any pointed state model $(M,s)$ over $\mathsf{Y}\supseteq \mathsf{X}$ and any $k \in \N$, the \emph{$k$-th Hintikka formula} $\chi^{\mathsf{X},k}_{M,s}$ (or simply $\chi_s^k$) and the \emph{$k$-th strong Hintikka formula} $\theta^{\mathsf{X},k}_{M,s}$ (or simply $\theta_s^k$) of $(M,s)$ are defined as follows:
    \begin{align*}
        \chi^{\mathsf{X},k}_{M,s}&:=\displaystyle\bigvee_{w\in s}\chi^k_{w};\\
        \theta^{\mathsf{X},k}_{M,s}&:=\displaystyle\bigvee_{w\in s}(\chi^k_{w}\land \NE).
    \end{align*}
\end{definition}
Recall that we stipulate that $\bigvee\emptyset=\bot$. Since there are only finitely many non-equivalent $\chi^k_w$ for a given finite $\mathsf{X}$, we may assume that the disjunctions in $\theta^k_s$ and $\chi^k_w$ are finite, and hence that the formulas are well-defined. These formulas function as characteristic formulas for states in the sense of the following proposition.

\begin{proposition} \label{prop:characteristic_formulas_characterize_states}\
    \begin{enumerate}
        \item[(i)] 
        \makeatletter\def\@currentlabel{(i)}\makeatother \label{prop:characteristic_formulas_characterize_states_classical} $M',s' \vDash \chi^{k}_{s} \iff $ there is a state $t\subseteq s$ such that $M,t\bisim_k M',s'$.
        \item[(ii)] 
        \makeatletter\def\@currentlabel{(ii)}\makeatother \label{prop:characteristic_formulas_characterize_states_theta} $M',s' \vDash \theta^{k}_{s} \iff M,s \bisim_k M',s'$.
    \end{enumerate}
\end{proposition}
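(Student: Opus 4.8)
The plan is to prove both (i) and (ii) by induction on $k$, exploiting the recursive structure of the Hintikka formulas $\chi^k_w$ and the already-established Theorem \ref{theorem:hintikka_bisimulation} characterizing world-based bisimilarity. The two claims are intertwined: the disjunction $\chi^k_s=\bigvee_{w\in s}\chi^k_w$ is a (local) disjunction, so support by a state $s'$ means $s'$ splits into pieces each supporting some $\chi^k_w$, and since each $\chi^k_w\in\ML$ is flat, this collapses to a condition on the individual worlds of $s'$. I would first observe the key reduction: since $\chi^k_w$ is a classical (flat) formula, Fact \ref{fact:ML_team_classical_correspondence} gives $M',s'\vDash\chi^k_s$ iff every $w'\in s'$ satisfies $w'\vDash\chi^k_w$ for some $w\in s$ (the local disjunction of flat formulas behaves like picking, for each world, a disjunct it satisfies). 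By Theorem \ref{theorem:hintikka_bisimulation}, $w'\vDash\chi^k_w$ iff $w\bisim_k w'$.

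For part (i), I would argue as follows. Suppose $M',s'\vDash\chi^k_s$. By the reduction above, for each $w'\in s'$ there is some $w\in s$ with $w\bisim_k w'$. Let $t:=\{w\in s\sepp w\bisim_k w' \text{ for some } w'\in s'\}$; then $t\subseteq s$, the [forth] condition of Definition \ref{def:state_bisimilarity} holds by construction of $t$, and the [back] condition holds because each $w'\in s'$ was matched to some $w\in s$, which lies in $t$. Hence $M,t\bisim_k M',s'$. Conversely, if $M,t\bisim_k M',s'$ for some $t\subseteq s$, then by [back] every $w'\in s'$ has a $\bisim_k$-partner $w\in t\subseteq s$, so $w'\vDash\chi^k_w$ and thus every world of $s'$ supports one of the disjuncts of $\chi^k_s$; by flatness $M',s'\vDash\chi^k_s$. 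One subtlety to handle is the empty-state convention $\bigvee\emptyset=\bot$: if $s=\emptyset$ then $\chi^k_s=\bot$, which is supported only by $s'=\emptyset$, matching the fact that $t=\emptyset\bisim_k s'$ forces $s'=\emptyset$.

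For part (ii), the difference is that $\theta^k_s=\bigvee_{w\in s}(\chi^k_w\land\NE)$ adds a $\NE$ conjunct to each disjunct, which forces each piece of the split of $s'$ to be non-empty. The effect is precisely to upgrade the ``$t\subseteq s$'' of part (i) to ``$t=s$'': support of $\theta^k_s$ requires that every world $w\in s$ actually contribute a non-empty piece of $s'$, thereby enforcing the [forth] direction fully rather than just for the worlds of the subset $t$. Concretely, $M',s'\vDash\theta^k_s$ iff $s'$ can be written as $\bigcup_{w\in s} s'_w$ where each $s'_w\vDash\chi^k_w\land\NE$, i.e.\ each $s'_w\neq\emptyset$ and every world in $s'_w$ is $\bisim_k$-equivalent to $w$. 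This gives [back] (each $w'\in s'$ lies in some $s'_w$, so is matched to $w\in s$) and also [forth] (each $w\in s$ has a non-empty $s'_w$, any of whose worlds is a $\bisim_k$-partner), yielding $M,s\bisim_k M',s'$. For the converse, given $s\bisim_k s'$, I would define $s'_w:=\{w'\in s'\sepp w\bisim_k w'\}$ for each $w\in s$; the [forth] condition guarantees $s'_w\neq\emptyset$, and the [back] condition guarantees $\bigcup_{w\in s}s'_w=s'$, witnessing $M',s'\vDash\theta^k_s$.

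The main obstacle, and the step deserving the most care, is the bookkeeping around the local disjunction and the empty state. The reduction from ``$s'$ supports the local disjunction $\bigvee_w(\chi^k_w\land\NE)$'' to ``$s'$ partitions into non-empty bisimulation-classes'' is intuitively clear but requires justifying that a split witnessing support can be chosen so that each $w\in s$ receives a \emph{non-empty} block exactly when the $\NE$ conjuncts are present — one must check that overlapping blocks, empty blocks, and the convention $\bigvee\emptyset=\bot$ are all handled correctly, and that flatness of the $\chi^k_w$ (Fact \ref{fact:NE_intd_closure_properties}) is what allows reassembling worlds into the desired blocks. I expect the formal induction itself to be light, since all the real content is packaged into Theorem \ref{theorem:hintikka_bisimulation}; the proof is essentially a careful translation between the splitting semantics of $\dis$ and the [forth]/[back] clauses of state bisimilarity.
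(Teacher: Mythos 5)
Your proposal is correct and takes essentially the same approach as the paper's proof: both arguments rest on Theorem \ref{theorem:hintikka_bisimulation} together with flatness of classical formulas, and both amount to a careful translation between splittings of $s'$ under the local disjunction and the forth/back clauses of state bisimilarity, with the same handling of the $\bigvee\emptyset=\bot$ convention. The only differences are organizational: you prove the right-to-left direction of (ii) directly via the canonical blocks $s'_w=\{w'\in s'\mid w\bisim_k w'\}$, whereas the paper routes that direction through part (i) and then enlarges the blocks with witness worlds supplied by the forth condition; both are sound.
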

\begin{proof}
    We prove the two items simultaneously. If $s=\emptyset$, then $\theta^{k}_{\emptyset}=\chi^{k}_{\emptyset}=\bot$, and 
    $s'\vDash \bot \iff s'=\emptyset \iff s\bisim_k s' \iff \exists t\subseteq s:t\bisim_k s' $. Now assume that $s \neq \emptyset$.
        
    $\Longleftarrow$: We first prove \ref{prop:characteristic_formulas_characterize_states_classical}. Suppose $t\bisim_k s'$ for some $t\subseteq s$. Then for any $w'\in s'$, there exists $w\in t\subseteq s$ such that $w \bisim_k w'$, which, by Theorem \ref{theorem:hintikka_bisimulation}, implies that $w' \vDash \chi^{k}_{w}$ and so $w'\vDash \chi_s^k$. Since the classical formula $\chi_s^k$ is flat (Fact \ref{fact:NE_intd_closure_properties}), we conclude that $s'\vDash\chi_s^k$.
      
    For \ref{prop:characteristic_formulas_characterize_states_theta}, suppose $s \bisim_k s'$. First observe that the back condition for  $s \bisim_k s'$ implies that there is some $t\subseteq s$ such that $t\bisim_k s'$. Thus, by  \ref{prop:characteristic_formulas_characterize_states_classical}, we have $s'\vDash\chi_s^k$, which means that for every $w\in s$, there exists $s'_w\subseteq s'$ such that $s'=\bigcup_{w\in s}s'_w$ and $s'_w\vDash\chi_w^k$. Meanwhile, by the forth condition for $s \bisim_k s'$, we know that for every $w\in s$, there exists $w'\in s'$ such that $w\bisim_kw'$, which implies $w'\vDash \chi_w^k$ by Theorem \ref{theorem:hintikka_bisimulation}.  Thus, we may w.l.o.g. assume that each $s_w'\neq \emptyset$ (for we can simply include the world $w'$ in $s_w'$), giving that $s_w'\vDash\chi_w^k\wedge \NE$. Hence, $s'\vDash\theta_s^k$.

   $\Longrightarrow$: For \ref{prop:characteristic_formulas_characterize_states_classical}, suppose $s'\vDash\chi_s^k$. Then, for each $w\in s$, there exists a subset $s_w'\subseteq s'$ such that $s'=\bigcup_{w \in s}s_w'$ and $s_w' \vDash \chi^{k}_{w}$. By the empty state property of classical formulas and Theorem \ref{theorem:hintikka_bisimulation}, we have that either $s_w'=\emptyset$ or $w\bisim_ks_w'$. Let $t=\{w\in s\mid s_w'\neq \emptyset\}\subseteq s$. Clearly, the forth condition for $t\bisim_ks'$ is satisfied. To verify the back condition, observe that for any $v'\in s'=\bigcup_{w \in s}s_w'$, there exists $w\in s$ such that $v'\in s_w'\neq \emptyset$. Thus, $w\in t$ and $w\bisim_ks'_w$, whereby $w\bisim_kv'$.
            
    For \ref{prop:characteristic_formulas_characterize_states_theta}, suppose $s'\vDash\theta_s^k$. Then, for every $w\in s$, there exists $s_w'\subseteq s'$ such that $s'=\bigcup_{w \in s}s_w'$ and  $s_w' \vDash \chi^{k}_{w}\wedge\NE$. We then have $s_w'\neq \emptyset$ and $w\bisim_ks'$ (by Theorem \ref{theorem:hintikka_bisimulation} again). Analogous to the above proof for \ref{prop:characteristic_formulas_characterize_states_classical}, we define $t=\{w\in s\mid s_w'\neq \emptyset\}\subseteq s$ and obtain $t\bisim_ks'$ by the same argument. But since now each $s_w'\neq \emptyset$, we have actually $t=s$ and thus $s\bisim_ks'$.
\end{proof}
            
      The formulas $\chi^k_s$ are discussed also in \cite{hella2014}, and the formulas $\chi^k_s$ and $\theta^k_s$ are modal versions of the propositional characteristic formulas defined in \cite{yangvaananen2016,yang2017}. Table \ref{table:characteristic_formulas} presents a summary of the different characteristic formulas we consider. 
      
    \begin{table}[t] 
    \begin{center}
        \begin{tabular}{ l l l l } 
            Hintikka formula & Logic & Characterization effect & Defined in \\ 
            \hline
            $\chi^k_w$ & $\ML$ & $w'\vDash \chi^k_w \iff w\bisim_k w'$ & Definition \ref{def:hintikka}\\
            $\chi^k_s=\bigdis_{w\in s}\chi^k_w$ & $\ML$ & $s'\vDash \chi^k_s \iff t\bisim_k s'$ for some $t\subseteq s$ & Definition \ref{def:characteristic_formulas_states}\\
            $\theta^k_s=\bigdis_{w\in s}(\chi^k_w\land \NE)$ & $\BSML$ & $s'\vDash \theta^k_s \iff s\bisim_k s'$ & Definition \ref{def:characteristic_formulas_states}\\
            &&&\\
            Normal form & Logic & Characterization effect & Defined in \\
            \hline
            $\nu^k_\PPP=\bigdis_{(M,s)\in \PPP}\chi^k_s$ & $\ML$ & $M,s\vDash \nu^k_\PPP \iff (M,s)\in \PPP \text{ (}\PPP \in \mathbb{F})$ & Definition \ref{def:characteristic_formulas_F}\\
            $\xi^k_\PPP=\bigintd_{(M,s)\in \PPP}\theta^k_s$ & $\BSMLI$ & $M,s\vDash \xi^k_\PPP \iff (M,s)\in \PPP \text{ (}\PPP \in \mathbb{B})$ & Definition \ref{def:characteristic_formulas_B}\\
            $\zeta^k_\PPP=\bigdis_{(M,s)\in \PPP} \OC \theta^k_s$ & $\BSMLO$ & $M,s\vDash \zeta^k_\PPP \iff (M,s)\in \PPP \text{ (}\PPP \in \mathbb{U}^\emptyset\subseteq \mathbb{U})$ & Definition \ref{def:characteristic_formulas_U} \\
            $\NE\land \zeta^k_\PPP$ & $\BSMLO$ & $M,s\vDash \NE \land  \zeta^k_\PPP\iff (M,s)\in \PPP \text{ (}\PPP \in \mathbb{U}^{\NE}\subseteq \mathbb{U})$ & Definition \ref{def:characteristic_formulas_U}
        \end{tabular}
    \end{center}
    \caption{Characteristic formulas}
       \label{table:characteristic_formulas} 
    \end{table}

      Table \ref{table:characteristic_formulas} also illustrates the main results of the remaining part of this section: we will use Hintikka formulas $\chi^k_s$ and $\theta^k_s$ for states to construct characteristic formulas for different types of state properties, and the formula for a given type of property will, in turn, be used to show that the logic it belongs to is expressively complete with respect to that type of property. Each of the characteristic formulas for a type of property also exemplifies the disjunctive normal form for the logic in question. Similar disjunctive normal forms  are studied in the modal context in, for example, \cite{hella2014,kontinen2014,hella2015}; and in the propositional context in, for example, \cite{yangvaananen2016,yang2017,CiardelliRoelofsen2011}.

We now give the proofs of our expressive completeness results in order. We start with the folklore result that $\ML$ is complete for $\mathbb{F}$. Hereafter, we often abbreviate a pointed state model $(M,s)$ in a property $\PPP$ simply as $s$.

\begin{definition}[Characteristic formulas for properties in $\mathbb{F}$] \label{def:characteristic_formulas_F}
    For any $\PPP\in \mathbb{F}_\mathsf{X}$ and $k\in \mathbb{N}$, the \emph{$k$-th characteristic formula} $\nu^{\mathsf{X},k}_\PPP\in \ML$ of $\PPP$ is defined as follows:
    $$\nu^{\mathsf{X},k}_\PPP:=\bigdis_{s\in \PPP}\chi^k_s$$
\end{definition}
As before, the formula is clearly well-defined given a specific finite $\mathsf{X}$.

\begin{proposition}[$\MLBF$ is expressively complete for $\mathbf{\mathbb{F}}$] \label{prop:ML_expressive_completeness}
            $$||\ML||=\mathbb{F}:=\{\mathcal{P}\sepp \mathcal{P}\text{ is flat and invariant under bounded bisimulation}\}.$$ 
        \end{proposition}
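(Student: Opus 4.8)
The plan is to prove the two inclusions of $||\ML||=\mathbb{F}$ separately, with the left-to-right inclusion being routine and the converse carried by the characteristic formula $\nu^k_\PPP$ of Definition \ref{def:characteristic_formulas_F}. For $||\ML||\subseteq \mathbb{F}$, I would fix $\alpha\in\ML$ and set $k=md(\alpha)$. Flatness of $||\alpha||$ is immediate from the flatness of $\ML$-formulas (Fact \ref{fact:NE_intd_closure_properties}), and invariance of $||\alpha||$ under $k$-bisimulation is immediate from Theorem \ref{theorem:bisimulation_invariance}, since $s\bisim_k s'$ yields $s\equiv_k s'$ and hence $s\vDash\alpha \iff s'\vDash\alpha$. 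Thus $||\alpha||\in\mathbb{F}$.

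For the converse $\mathbb{F}\subseteq ||\ML||$, I would fix $\PPP\in\mathbb{F}$ and choose $k$ such that $\PPP$ is invariant under $k$-bisimulation. Since there are only finitely many non-equivalent $\chi^k_w$ over the fixed $\mathsf{X}$, there are only finitely many non-equivalent $\chi^k_s$, so $\nu^k_\PPP=\bigdis_{s\in\PPP}\chi^k_s$ may be taken as a finite disjunction and is a well-defined $\ML$-formula. I then show $\PPP=||\nu^k_\PPP||$, i.e. $M',s'\vDash\nu^k_\PPP \iff (M',s')\in\PPP$. The direction $\Leftarrow$ is short: if $(M',s')\in\PPP$ then $\chi^k_{s'}$ is (equivalent to) one of the disjuncts, and $M',s'\vDash\chi^k_{s'}$ follows from Proposition \ref{prop:characteristic_formulas_characterize_states}\ref{prop:characteristic_formulas_characterize_states_classical} by taking $t=s'$ and using $s'\bisim_k s'$; since the remaining disjuncts are classical and so have the empty state property, splitting $s'$ into the block $s'$ on the disjunct $\chi^k_{s'}$ and $\emptyset$ on every other disjunct witnesses $M',s'\vDash\nu^k_\PPP$.

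The direction $\Rightarrow$ is where the argument does its real work, and it is the step I expect to be the main obstacle, since it must knit together all three hypotheses on $\PPP$. Assume $M',s'\vDash\nu^k_\PPP$. Because $\nu^k_\PPP$ is classical and hence flat, it suffices to argue pointwise: for each $w'\in s'$ we have $\{w'\}\vDash\nu^k_\PPP$, and since a local disjunction of empty-state-property formulas restricted to a singleton collapses to a plain disjunction, $\{w'\}\vDash\chi^k_s$ for some $(M,s)\in\PPP$. By Proposition \ref{prop:characteristic_formulas_characterize_states}\ref{prop:characteristic_formulas_characterize_states_classical} (or by unfolding $\chi^k_s=\bigdis_{v\in s}\chi^k_v$ and applying Theorem \ref{theorem:hintikka_bisimulation}), some $v\in s$ satisfies $v\bisim_k w'$. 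Flatness of $\PPP$ gives $(M,\{v\})\in\PPP$, and $k$-bisimulation invariance then transfers this along $\{v\}\bisim_k\{w'\}$ to yield $(M',\{w'\})\in\PPP$. As this holds for every $w'\in s'$, a final appeal to flatness of $\PPP$ delivers $(M',s')\in\PPP$. Having established the biconditional, $\PPP=||\nu^k_\PPP||\in ||\ML||$, completing the proof; the same argument incidentally exhibits $\nu^{md(\alpha)}_{||\alpha||}$ as a disjunctive normal form for each $\alpha\in\ML$.
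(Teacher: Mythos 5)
Your proof is correct and takes essentially the same route as the paper: both inclusions are handled the same way, with $\mathbb{F}\subseteq ||\ML||$ carried by the characteristic formula $\nu^k_\PPP$ and Proposition \ref{prop:characteristic_formulas_characterize_states}\ref{prop:characteristic_formulas_characterize_states_classical}, and your $\Longleftarrow$ direction is identical to the paper's. The only difference is cosmetic: in $\Longrightarrow$ you reduce to singletons using flatness of $\nu^k_\PPP$ and of $\PPP$, whereas the paper works with the cover $s'=\bigcup_{(M,s)\in \PPP}t_s'$ supplied by the semantics of the disjunction and then applies downward closure, $k$-bisimulation invariance, and union closure of $\PPP$ to those pieces—two interchangeable decompositions of the same argument.
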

\begin{proof}
The inclusion $\subseteq$ follows from Theorem \ref{theorem:bisimulation_invariance} and Fact \ref{fact:NE_intd_closure_properties}. For the converse inclusion $\supseteq$, we show that for any $\PPP\in \mathbb{F}$ invariant under $k$-bisimulation:
        $$M',s'\vDash \bigdis_{(M,s)\in \PPP}\chi^k_s\iff (M',s')\in \PPP\quad \text{(i.e., $\PPP=||\nu^k_\PPP||\in || \ML ||$).}$$
        
            $\Longleftarrow$: Suppose $s' \in \PPP$. Since $s'\bisim_k s'$, we have $s' \vDash \chi^k_{s'}$ by Proposition \ref{prop:characteristic_formulas_characterize_states}\ref{prop:characteristic_formulas_characterize_states_classical}, which implies $s' \vDash \bigdis_{s\in \PPP}\chi^k_s$ by the empty state property of $\chi_s^k$.
    
    $\Longrightarrow$: Suppose $s' \vDash \bigdis_{s\in \PPP}\chi^k_s$. Then for each $s\in \PPP$, there exists $t_s'\subseteq s'$ such that $s'=\bigcup_{s\in \PPP}t_s'$ and $t_s'\vDash \chi_s^k$. The latter means, by Proposition \ref{prop:characteristic_formulas_characterize_states}\ref{prop:characteristic_formulas_characterize_states_classical}, that there exists $t\subseteq s$ such that $t_s'\bisim_kt$. Since $\mathbb{F}$ is downward closed and invariant under $k$-bisimulation, we have $t_s'\in \PPP$ for each $s\in \mathcal{P}$. Finally, since $\mathcal{P}$ is union closed, and also using the fact that since $\mathcal{P}$ has the empty state property, it is not the empty property, we conclude $s'=\bigcup_{s\in \PPP}t_s'\in \PPP$.
\end{proof}

The theorem also yields an alternative normal form for formulas in $\ML$: For an arbitrary $\ML$-formula $\alpha$, since $||\alpha||$ is invariant under $k$-bisimulation for $k=md(\alpha)$, we have $||\alpha||=||\nu_{||\alpha||}^{md(\alpha)}||$, that is, $\alpha \equiv \bigdis_{s\in ||\alpha||}\chi^{md(\alpha)}_s$.

Next, we  show the first of our main results: $\BSMLI$ is complete for $\mathbb{B}$, and is thus the expressively strongest logic. The proof is similar to the one given for $\ML$ and $\mathbb{F}$, but the addition of $\NE$ and $\intd$ allow us to fully characterize states and to capture all properties invariant under bounded bisimulation.

\begin{definition}[Characteristic formulas for properties in $\mathbb{B}$] \label{def:characteristic_formulas_B}
    For any $\PPP\in \mathbb{B}_\mathsf{X}$ and $k\in \mathbb{N}$, the \emph{$k$-th characteristic formula} $\xi^{\mathsf{X},k}_\PPP\in \BSMLI$ of $\PPP$ is defined as follows:
    $$\xi^{\mathsf{X},k}_\PPP:=\bigintdd_{s\in \PPP}\theta^k_s.$$
\end{definition}
Recall that we stipulate that $\bigintd\emptyset=\Bot$.

\begin{theorem}[$\BSMLIBF$ is expressively complete for $\mathbb{B}$] \label{theorem:characterization_BSMLI}
    $$||\BSMLI||=\mathbb{B}=\{\PPP\sepp \PPP\text{ is invariant under bounded bisimulation}\}$$
\end{theorem}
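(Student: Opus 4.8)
The plan is to prove the two inclusions $||\BSMLI|| \subseteq \mathbb{B}$ and $\mathbb{B} \subseteq ||\BSMLI||$ separately, letting the characteristic formula $\xi^k_\PPP$ of Definition \ref{def:characteristic_formulas_B} carry the work for the harder, right-to-left inclusion. This mirrors the structure of the $\ML$/$\mathbb{F}$ proof (Proposition \ref{prop:ML_expressive_completeness}), but the combination of $\NE$ and $\intd$ makes the argument considerably more direct.

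For the inclusion $\subseteq$, I would simply invoke bisimulation invariance: given any $\phi \in \BSMLI$, Theorem \ref{theorem:bisimulation_invariance} tells us that $||\phi||$ is invariant under $k$-bisimulation for $k = md(\phi)$, so $||\phi|| \in \mathbb{B}$. This requires no further argument.

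For the converse inclusion $\supseteq$, I would fix a property $\PPP \in \mathbb{B}$ invariant under $k$-bisimulation and show that $\xi^k_\PPP := \bigintdd_{s \in \PPP} \theta^k_s$ expresses exactly $\PPP$, i.e.\ that $M',s' \vDash \xi^k_\PPP$ iff $(M',s') \in \PPP$. For the left-to-right direction, if $M',s' \vDash \xi^k_\PPP$, then by the support clause for the global disjunction there is some $(M,s) \in \PPP$ with $M',s' \vDash \theta^k_s$; by Proposition \ref{prop:characteristic_formulas_characterize_states}\ref{prop:characteristic_formulas_characterize_states_theta} this gives $M,s \bisim_k M',s'$, and since $(M,s) \in \PPP$ and $\PPP$ is invariant under $k$-bisimulation we conclude $(M',s') \in \PPP$. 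Conversely, if $(M',s') \in \PPP$, then $s' \bisim_k s'$ yields $M',s' \vDash \theta^k_{s'}$ by the same proposition, and since $\theta^k_{s'}$ is one of the disjuncts of $\xi^k_\PPP$, the support clause for $\intd$ gives $M',s' \vDash \xi^k_\PPP$.

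The decisive feature making this smoother than the $\ML$/$\mathbb{F}$ case is the pairing of $\intd$ with the \emph{strong} Hintikka formula $\theta^k_s$: because $\intd$ is supported exactly when one disjunct is, and because $\theta^k_s$ characterizes $k$-bisimilarity exactly (Proposition \ref{prop:characteristic_formulas_characterize_states}\ref{prop:characteristic_formulas_characterize_states_theta}) rather than only up to a substate as $\chi^k_s$ does, both directions reduce to a single application of that proposition together with the invariance hypothesis; in particular, no appeal to union-closure or downward-closure of $\PPP$ is needed. Consequently I expect no genuine obstacle here, since the substantive content has already been discharged in Proposition \ref{prop:characteristic_formulas_characterize_states}. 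The only point requiring a moment's care is the degenerate case $\PPP = \emptyset$, where $\xi^k_\PPP = \bigintdd\emptyset = \Bot$ is supported in no state, correctly matching the empty property.
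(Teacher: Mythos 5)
Your proposal is correct and follows essentially the same route as the paper's own proof: the inclusion $\subseteq$ via Theorem \ref{theorem:bisimulation_invariance}, and the inclusion $\supseteq$ by showing $\PPP = ||\xi^k_\PPP||$ through two applications of Proposition \ref{prop:characteristic_formulas_characterize_states}\ref{prop:characteristic_formulas_characterize_states_theta} together with $k$-bisimulation invariance, including the degenerate case $\PPP = \emptyset$ where $\xi^k_\PPP = \Bot$. Your closing observation---that the pairing of $\intd$ with the strong Hintikka formulas $\theta^k_s$ eliminates any need for union- or downward-closure assumptions---accurately identifies why this case is simpler than the $\ML$/$\mathbb{F}$ argument.
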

\begin{proof}
The inclusion $\subseteq$ follows from Theorem \ref{theorem:bisimulation_invariance}. For the nontrivial inclusion $\supseteq $, we show that for any $\PPP\in\mathbb{B}$ invariant under $k$-bisimulation: $$M',s'\vDash \bigintdd_{(M,s)\in \PPP}\theta^k_s\iff (M',s')\in \PPP\quad\text{(i.e., }\PPP=||\xi^k_\PPP||\in ||\BSMLI||).$$

If $\PPP=\emptyset$, then $\xi^k_\PPP=\Bot$, and thus $\PPP=||\Bot||=||\xi^k_\PPP||$. Now, suppose $\PPP\neq \emptyset$. If $s' \vDash \bigvvee_{s\in \PPP}\theta^k_s$, then $s' \vDash \theta^{k}_{s}$ for some $s \in \PPP$. By Proposition \ref{prop:characteristic_formulas_characterize_states}\ref{prop:characteristic_formulas_characterize_states_theta}, we have $s \bisim_k s'$, which implies $s'\in \PPP$ by invariance under $k$-bisimulation. Conversely, if $s'\in \PPP$, since $s'\bisim_k s'$, we have $s' \vDash \theta^k_{s'}$ by Proposition \ref{prop:characteristic_formulas_characterize_states}\ref{prop:characteristic_formulas_characterize_states_theta} and so $s' \vDash \bigvvee_{s\in \PPP}\theta^k_s$.
\end{proof}

As above, this also provides us with a disjunctive normal form: for each $\BSMLI$-formula $\phi$: $||\phi||=||\xi^{md(\phi)}_{||\phi||}||$, that is, $\phi \equiv \bigintd_{s\in ||\phi||}\theta^{md(\phi)}_s$. 

The expressive completeness of $\BSMLO$ for $\mathbb{U}$ is proved in a similar way, but there is a small twist in this case. First define: 
\begin{align*}
    \mathbb{U}^\emptyset&:=\{\PPP\in \mathbb{U}\sepp \PPP\text{ has the empty state property}\};\\
    \mathbb{U}^{\NE}&:=\{\PPP\in \mathbb{U}\sepp \PPP\text{ does not have the empty state property}\}.
\end{align*}
So that $\mathbb{U}=\mathbb{U}^\emptyset\cup \mathbb{U}^{\NE}$. Note that if $\PPP\in \mathbb{U}^{\NE}$ so that for some $M$ we have $(M,\emptyset)\notin \PPP$, then in fact by invariance under $k$-bisimulation $(M',\emptyset)\notin \PPP$ for all $M'$.

\begin{definition}[Characteristic formulas for properties in $\mathbb{U}$] \label{def:characteristic_formulas_U}
    For any $\PPP\in \mathbb{U}_\mathsf{X}$ and $k\in \N$, let 
    \begin{align*}
        &\zeta^{\mathsf{X},k}_\PPP:=\bigdis_{s\in \PPP}\OC\theta^{k}_{s}.
    \end{align*}
    If $\PPP\in \mathbb{U}^\emptyset$, the \emph{characteristic formula} of $\PPP$ is $\zeta^{k}_\PPP$. If $\PPP\in \mathbb{U}^{\NE}$, the \emph{characteristic formula} of $\PPP$ is $\NE\land \zeta^{k}_\PPP$.
\end{definition}

\begin{theorem}[$\BSMLOBF$ is expressively complete for $\mathbb{U}$] \label{theorem:BSMLO_expressive_completeness}
    $$||\BSMLO||=\mathbb{U}=\{\PPP\sepp \PPP\text{ is union closed and invariant under bounded bisimulation}\}.$$
\end{theorem}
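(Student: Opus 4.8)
The plan is to follow the template established in the proofs of Proposition \ref{prop:ML_expressive_completeness} and Theorem \ref{theorem:characterization_BSMLI}. The easy inclusion $||\BSMLO|| \subseteq \mathbb{U}$ is immediate: every $\BSMLO$-formula is invariant under bounded bisimulation by Theorem \ref{theorem:bisimulation_invariance}, and is union closed since it contains no $\intd$ (Fact \ref{fact:NE_intd_closure_properties}). For the converse inclusion $\mathbb{U} \subseteq ||\BSMLO||$, I would fix $\PPP \in \mathbb{U}$ together with a $k$ witnessing invariance under $k$-bisimulation, and show that the characteristic formula from Definition \ref{def:characteristic_formulas_U} expresses $\PPP$. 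Since $\mathbb{U} = \mathbb{U}^\emptyset \cup \mathbb{U}^{\NE}$, this splits into two cases according to whether $\PPP$ has the empty state property.

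The core computation is to unwind the semantics of $\zeta^k_\PPP = \bigdis_{s \in \PPP} \OC\theta^k_s$. By the $\OC$-clause and Proposition \ref{prop:characteristic_formulas_characterize_states}\ref{prop:characteristic_formulas_characterize_states_theta}, we have $s' \vDash \OC\theta^k_s$ iff $s \bisim_k s'$ or $s' = \emptyset$. Hence $s' \vDash \zeta^k_\PPP$ iff $s'$ can be split as $s' = \bigcup_{s \in \PPP} t'_s$ with each $t'_s$ either empty or satisfying $s \bisim_k t'_s$. For the direction $(M', s') \in \PPP \Rightarrow s' \vDash \zeta^k_\PPP$, I would take the trivial split placing all of $s'$ into the disjunct indexed by $s'$ itself (using $s' \bisim_k s'$) and the empty state everywhere else, the empty pieces being harmless since they support every $\OC$-formula. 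For the converse, given a nonempty $s'$ supporting $\zeta^k_\PPP$, each nonempty piece $t'_s$ satisfies $s \bisim_k t'_s$, hence $(M', t'_s) \in \PPP$ by closure under $k$-bisimulation; collecting the nonempty pieces yields a nonempty family $S$ with $\bigcup S = s'$, so $(M', s') \in \PPP$ by union closure.

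The twist, and the only real obstacle, is that $\zeta^k_\PPP$ always has the empty state property (being a disjunction of $\OC$-formulas), so on its own it can capture only properties in $\mathbb{U}^\emptyset$. For $\PPP \in \mathbb{U}^\emptyset$, the converse direction handles the remaining case $s' = \emptyset$ directly from the empty state property of $\PPP$, so $\zeta^k_\PPP$ works as the characteristic formula. For $\PPP \in \mathbb{U}^{\NE}$, I would instead verify that $\NE \land \zeta^k_\PPP$ expresses $\PPP$: conjoining $\NE$ excludes the empty state, and since $\PPP$ fails the empty state property, invariance under $k$-bisimulation forces $(M', \emptyset) \notin \PPP$ for every $M'$ (as noted before Definition \ref{def:characteristic_formulas_U}), so no member of $\PPP$ is lost by ruling out the empty state. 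With $s' \neq \emptyset$ now guaranteed by the $\NE$ conjunct, the union-closure argument of the previous paragraph goes through verbatim, completing the proof.
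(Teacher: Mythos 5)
Your proposal is correct and follows essentially the same route as the paper's own proof: the same characteristic formulas $\zeta^k_\PPP$ and $\NE\land\zeta^k_\PPP$ from Definition \ref{def:characteristic_formulas_U}, the same appeal to Proposition \ref{prop:characteristic_formulas_characterize_states}\ref{prop:characteristic_formulas_characterize_states_theta} to decode the disjuncts $\OC\theta^k_s$, and the same closing moves via bisimulation invariance and union closure, with the $\mathbb{U}^\emptyset$/$\mathbb{U}^{\NE}$ split handled identically. Your variant of discarding the empty pieces before applying union closure (rather than admitting them into $\PPP$ via the empty state property, as the paper does in the $\mathbb{U}^\emptyset$ case) is a minor tidy refinement that lets the same argument serve both cases verbatim, but it is not a different proof.
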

\begin{proof}
The inclusion $\subseteq$ follows from Fact \ref{fact:NE_intd_closure_properties} and Theorem \ref{theorem:bisimulation_invariance}. For the nontrivial inclusion $\supseteq$, we first show that for any $\PPP\in\mathbb{U}^\emptyset$ invariant under $k$-bisimulation, 
$$M',s'\vDash \bigdis_{(M,s)\in \PPP}\OC\theta^{k}_{s}\iff (M',s')\in \PPP\quad\text{(i.e., $\PPP=||\zeta^{k}_\PPP||\in ||\BSMLO||$).}$$

$\Longleftarrow$: Suppose $s'\in \PPP\in \mathbb{U}^\emptyset$. Since $s'\bisim_k s'$, by Proposition \ref{prop:characteristic_formulas_characterize_states} \ref{prop:characteristic_formulas_characterize_states_theta}, we have $s'\vDash \theta^k_{s'}$, whereby $s'\vDash\oslash \theta^k_{s'}$. Hence, $s'\vDash \bigdis_{s\in \PPP}\OC\theta^{k}_{s}$ by the empty state property of $\OC\theta^{k}_{s}$.

$\Longrightarrow$: Suppose $s'\vDash \bigdis_{s\in \PPP}\oslash\theta^k_{s}$. Then for each $s\in \PPP$, there exists $t_s'\subseteq s'$ such that $s'=\bigcup_{s\in \PPP}t_s'$ and $t_s'\vDash \oslash\theta^k_{s}$. The latter implies, by Proposition \ref{prop:characteristic_formulas_characterize_states} \ref{prop:characteristic_formulas_characterize_states_theta}, that $t_s'=\emptyset$ or $t_s'\bisim_k s$. Since $\PPP \in\mathbb{U}^\emptyset$ has the empty state property, and since it is invariant under $k$-bisimulation, we have $t_s'\in \PPP$ for each $s\in \PPP$. Finally, since $\PPP$ is union closed, and also using the fact that since $\PPP$ has the empty state property, it is not the empty property, we conclude $s'=\bigcup_{s\in \PPP}t_s'\in \PPP$.

Next, we show that for any $\PPP\in\mathbb{U}^{\NE}$ invariant under $k$-bisimulation,  
$$M',s'\vDash \NE \land \bigdis_{(M,s)\in \PPP}\OC\theta^{k}_{s}\iff (M',s')\in \PPP\quad\text{(i.e., $\PPP=||\NE\land \zeta^{k}_\PPP||\in ||\BSMLO||$).}$$ 
If $\PPP=\emptyset$, then $\zeta^{k}_\PPP=\bot$, and thus $\PPP=||\Bot||=||\NE \land \bot||=||\NE \land \zeta^{k}_\PPP||$. Now, suppose $\PPP\neq \emptyset$.

$\Longrightarrow$: Suppose $s'\in \PPP\in \mathbb{U}^{\NE}$. By the same argument as in the $\mathbb{U}^{\emptyset}$-case, we obtain $s'\vDash \bigdis_{s\in \PPP}\OC\theta^{k}_{s}$. Moreover, since $\PPP\in \mathbb{U}^{\NE}$, we have $s'\neq \emptyset$ so $s'\vDash \NE$.

$\Longleftarrow$: Suppose $s'\vDash\NE\land  \bigdis_{s\in P}\oslash\theta^k_{s}$. As in the $\mathbb{U}^{\emptyset}$-case, for each $s\in \PPP$, there exists $t_s'\subseteq s'$ such that $s'=\bigcup_{s\in \PPP}t_s'$ and $t_s'=\emptyset$ or $t_s'\bisim_k s$. Let $\QQQ:=\{s\in \PPP\sepp t_s'\bisim_k s\}$ so that $s'=\bigcup_{s\in \QQQ}t_s'$. Since $\PPP$ is invariant under $k$-bisimulation, we have $t'_s\in \PPP$ for each $s\in \QQQ$, and since $s' \vDash \NE$, we have $s'\neq \emptyset$ so that $\QQQ\neq \emptyset$. Finally, since $\PPP$ is union closed and $\QQQ\neq \emptyset$, we conclude $s'=\bigcup_{s\in \QQQ}t_s'\in \PPP$.
\end{proof}

Again the proof yields a disjunctive normal form: for any $\BSMLO$-formula $\phi$, either $\phi\equiv \zeta^{md(\phi)}_{||\phi||}$ or $\phi\equiv\NE\land \zeta^{md(\phi)}_{||\phi||}$.

Finally, we remark that while $\BSML$ is union closed and invariant under bisimulations, it is not complete for $\mathbb{U}$. To prove this, we show first that for \BSML-formulas, the empty state property implies the downward closure property. Note that while converse might also appear to hold, it does not---for instance, the formula $\Bot$ is downward closed but does not have the empty state property.

\begin{lemma} \label{lemma:BSML_downward_closure_empty_state}
    For any $\phi\in\BSML$, if $\phi$ has the empty state property, then $\phi $ is downward closed.
\end{lemma}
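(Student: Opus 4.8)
The plan is to proceed by induction on the structure of $\phi\in\BSML$, proving the stronger simultaneous statement: \emph{if $\phi$ has the empty state property, then $\phi$ is downward closed}. The base cases are the atoms. For $p$, downward closure is immediate from the semantic clause (if every world in $s$ is in $V(p)$, so is every world in any $t\subseteq s$), and $p$ does have the empty state property, so there is nothing to exclude. The key observation is the atom $\NE$: it is \emph{not} downward closed (since $\emptyset\subseteq s$ but $\emptyset\nvDash\NE$), but crucially it also lacks the empty state property, so the hypothesis of the lemma fails and $\NE$ is vacuously handled. This is exactly why the lemma holds for $\BSML$ but would fail if we could form a contradiction-like formula; the empty state property is what screens out the problematic occurrences of $\NE$.

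For the inductive step I would treat the connectives $\land,\dis,\Di,\lnot$. The cases for $\land$ and $\dis$ are routine: if $\phi\land\psi$ (resp.\ $\phi\dis\psi$) has the empty state property, I must first argue that each conjunct (resp.\ disjunct) does too, so that the induction hypothesis applies and yields downward closure of the components, from which downward closure of the compound follows by the standard team-semantic argument. The modal case $\Di\psi$ and the $\lnot$-cases require more care. Since every formula is equivalent (indeed strongly equivalent, by Fact \ref{fact:negation_normal_form}) to one in negation normal form, and since $\equivclosed$ preserves both the empty state property and downward closure, \textbf{I would first reduce to formulas in negation normal form}, so that $\lnot$ occurs only in front of $p$ and $\NE$. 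Then $\lnot p$ is flat (hence downward closed) and has the empty state property, while $\lnot\NE\equiv\bot$ is downward closed and has the empty state property, so both fall under the atomic treatment. The modalities $\Di\psi$ and $\Bo\psi$ are always flat by the remark preceding Fact \ref{fact:ML_team_classical_correspondence}, hence downward closed outright, so those cases impose no constraint.

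The main obstacle is the propagation of the empty state property from a compound formula down to its immediate subformulas, which is what makes the induction hypothesis applicable. For $\dis$ this is delicate: $\emptyset\vDash\phi\dis\psi$ only guarantees a split $\emptyset=t\cup u$ with $t\vDash\phi$ and $u\vDash\psi$, forcing $t=u=\emptyset$, so in fact $\emptyset\vDash\phi\dis\psi$ \emph{does} yield $\emptyset\vDash\phi$ and $\emptyset\vDash\psi$ directly — but I must verify that the empty state property (which quantifies over all models $M$) transfers uniformly, i.e.\ that $\emptyset\vDash\phi\dis\psi$ in \emph{every} $M$ forces $\emptyset\vDash\phi$ and $\emptyset\vDash\psi$ in every $M$. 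For $\land$ the transfer is immediate from the support clause. The cleanest route, and the one I expect the authors take, is therefore: reduce to negation normal form; observe that all the genuinely non-downward-closed building block, namely $\NE$, is excluded precisely by the empty state hypothesis; and handle $\land,\dis$ by the split argument while dispatching $\Di,\Bo$ via their flatness. The subtlety to watch is ensuring the empty state property is correctly inherited by subformulas in the $\land$ and $\dis$ cases before invoking the induction hypothesis.
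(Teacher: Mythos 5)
Your proposal is correct and follows essentially the same route as the paper's own proof: induction on formulas in negation normal form, with the empty state property propagated to the subformulas in the $\land$ and $\dis$ cases so that the induction hypothesis applies, $\NE$ handled vacuously because it lacks the empty state property, and the modal cases dispatched by the flatness of $\Di\psi$ and $\Bo\psi$. The uniformity worry you raise about transferring the empty state property (quantified over all models) to subformulas is harmless, since the propagation argument works model by model, exactly as in the paper.
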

\begin{proof}
    By induction on $\phi$ (assumed to be in negation normal form). We only show the nontrivial cases.
  
        Let $\phi = \psi \land \chi$. If $M,\emptyset \vDash \psi \land \chi$, then $M,\emptyset\vDash \psi$ and $M,\emptyset\vDash \chi$. Therefore, by the induction hypothesis, $\psi$ and $\chi$ are downward closed. So if $s\vDash \psi \land \chi$ and $t\subseteq s$, then $t\vDash \psi$ and $t \vDash \chi$, and therefore $t\vDash \psi \land \chi$.
        
        Let $\phi = \psi \dis \chi$. If $M,\emptyset \vDash \psi \dis \chi$, then clearly $M, \emptyset\vDash \psi$ and $M,\emptyset\vDash \chi$. So by the induction hypothesis, $\psi$ and $\chi$ are downward closed. If $s\vDash \psi \dis \chi$, and $t\subseteq s$, then there are $s_1,s_2$ such that $s=s_1\cup s_2$, $s_1\vDash \psi$ and $s_2\vDash \chi$. By downward closure we have $t\cap s_1\vDash \psi$ and $s\cap t_2 \vDash \chi$ and therefore $t=(t\cap s_1)\cup (t\cap s_2)\vDash \psi \dis \chi$.
\end{proof}

\begin{fact}[$\BSMLBF$ is not expressively complete for $\mathbb{U}$] \label{fact:BSML_expressive_power}
    $$||\BSML||\subset \mathbb{U}=\{\PPP\sepp \PPP\text{ is invariant under bounded bisimulation and union closed}\}.$$
\end{fact}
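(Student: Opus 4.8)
The inclusion $||\BSML|| \subseteq \mathbb{U}$ is already in hand: every $\BSML$-formula is union closed by Fact \ref{fact:NE_intd_closure_properties}, and the property it expresses is invariant under bounded bisimulation by Theorem \ref{theorem:bisimulation_invariance}, so $||\phi|| \in \mathbb{U}$ for every $\phi \in \BSML$. Hence all that remains is to witness strictness by exhibiting a property $\PPP \in \mathbb{U}$ that no $\BSML$-formula expresses.

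The plan is to exploit Lemma \ref{lemma:BSML_downward_closure_empty_state}, which is the real engine of the argument: within $\BSML$, the empty state property forces downward closure. So it suffices to produce a $\PPP \in \mathbb{U}$ that has the empty state property but is \emph{not} downward closed. For then, if some $\phi \in \BSML$ had $||\phi|| = \PPP$, the formula $\phi$ would itself have the empty state property (since $(M,\emptyset) \in \PPP$ for all $M$), whence $\phi$ would be downward closed by the lemma, and so $\PPP = ||\phi||$ would be downward closed --- a contradiction.

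It then remains to exhibit such a separating property. I would fix $\mathsf{X} = \{p\}$ and set
\[ \PPP := \{(M,s)\text{ over }\{p\}\sepp s = \emptyset \text{ or } w \in V(p) \text{ for some } w \in s\}, \]
and check three things. First, $\PPP \in \mathbb{U}$: it is union closed, since a nonempty union of states each of which is empty or contains a $p$-world is again empty or contains a $p$-world; and it is invariant under $0$-bisimulation, since $s \bisim_0 s'$ guarantees that $s$ and $s'$ are simultaneously empty and, when nonempty, realize the same propositional types at their worlds, so that one contains a $p$-world iff the other does. Second, $\PPP$ has the empty state property by construction. Third, $\PPP$ is not downward closed: taking any $M$ with worlds $w_1 \in V(p)$ and $w_2 \notin V(p)$, we have $\{w_1,w_2\} \in \PPP$ but the substate $\{w_2\} \notin \PPP$. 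By the preceding paragraph, $\PPP$ is therefore inexpressible in $\BSML$, giving $||\BSML|| \subset \mathbb{U}$.

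The argument has essentially no obstacle beyond Lemma \ref{lemma:BSML_downward_closure_empty_state} itself; the only point requiring care is to choose a witness that genuinely lies in $\mathbb{U}$ --- in particular, one whose ``at least one $p$-world'' clause is union closed and bisimulation-invariant --- while simultaneously severing the link between the empty state property and downward closure that the lemma shows $\BSML$ cannot sever.
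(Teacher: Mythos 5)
Your proof is correct and follows essentially the same route as the paper: both reduce strictness to Lemma \ref{lemma:BSML_downward_closure_empty_state} by exhibiting a property in $\mathbb{U}$ that has the empty state property but is not downward closed, and then deriving a contradiction for any $\BSML$-formula purporting to express it. The only difference is the choice of witness --- the paper uses $||(p\land \NE)\lor (\lnot p \land \NE)||\cup ||\bot||$ (empty states together with states containing both a $p$-world and a non-$p$-world), whereas you use states that are empty or contain at least one $p$-world --- and both witnesses satisfy the required checks (union closure, invariance under $0$-bisimulation, empty state property, failure of downward closure).
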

\begin{proof}
The inclusion $\subseteq$ follows from Fact \ref{fact:NE_intd_closure_properties} and Theorem \ref{theorem:bisimulation_invariance}. For inequality, let $$\PPP:=||(p\land \NE)\lor (\lnot p \land \NE)||\cup ||\bot||.$$ Clearly $\PPP\in \mathbb{U}$. Assume for contradiction that  $||\phi ||=\PPP$ for some $\phi$ in $\BSML$. Then $\phi$ has the empty state property so it is downward closed by Lemma \ref{lemma:BSML_downward_closure_empty_state}. Let $(M,s)\in || (p \land \NE) \dis (\bnot p \land \NE)||\subseteq \PPP$. Then there are $t,u$ such that $s=t\cup u$, $t\vDash p \land \NE$, and $u\vDash \bnot p \land \NE$. By $||\phi ||=\PPP$, we have $s\vDash \phi$. By downward closure, $t\vDash \phi$, so $(M,t)\in \PPP$. But  
$t\nvDash (p \land \NE) \dis (\bnot p \land \NE) $ and $t\nvDash \bot$; a contradiction.
\end{proof}

The property $\PPP\in \mathbb{U}$ in the above proof is clearly expressible in the expressively complete logic \BSMLO as $\PPP=||\OC((p \land \NE)\lor (\lnot p \land \NE))||$.

We conclude this section by noting some consequences of the expressive power results. First, it is shown in \cite{dagostino} that any state-based modal logic with the locality property and which is also \emph{forgetting} enjoys \emph{uniform interpolation}. Each of our logics has the locality property, and it follows from our expressive completeness results that $\BSMLI$ and $\BSMLO$ are forgetting. Therefore $\BSMLI$ and $\BSMLO$ enjoy uniform interpolation. (See \cite{dagostino} for details; for the locality requirement, see \cite{yang2022}.)

Second, the finite model property for our logics follows from that
for $\ML$, by a simple argument that makes use of disjoint unions of models. The \emph{disjoint union} $\biguplus_{i\in I}M_i$ of the models $\{M_i\sepp i\in I\neq \emptyset\}$ is defined as usual. Recall in particular that the domain of the disjoint union is defined as $\bigcup_{i\in I}(W_i\times \{i\})$. We also extend the notion to state properties in a natural way: the \emph{disjoint union} of a nonempty property $\PPP=\{(M_i,s_i)\sepp i\in I\}$ is $\biguplus \PPP:=(\biguplus_{i\in I} M_i, \biguplus_{i\in I} s_i)$, where $\biguplus_{i\in I} s_i:=\bigcup_{i\in I}(s_i \times\{i\})$. To simplify notation, we define the disjoint union $\biguplus \emptyset$ of the empty property to be $(M,\emptyset)$ for some fixed $M$. We refer to an element $(w,i)$ in the disjoint union as simply $w$, and similarly for states. The following is standard (see, for example, \cite{blackburn2001,goranko2007}):

\begin{proposition} \label{prop:disjoint_union}
    For all $i\in I,w\in W_i,k\in \mathbb{N}, \mathsf{X}\subseteq\mathsf{Prop}: M_i,w \bisim^\mathsf{X}_k \biguplus_{i\in I} M_i, w $. Consequently, for all $s\subseteq W_i: M_i,s\bisim_k \biguplus_{i\in I} M_i, s $.
\end{proposition}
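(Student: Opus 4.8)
The plan is to prove the first claim — that $M_i, w \bisim_k \biguplus_{i\in I} M_i, w$ for every $w\in W_i$ and every $k$ — by induction on $k$, and then to derive the state version as an immediate corollary. The single fact driving the whole argument is that the disjoint union introduces no edges between distinct components: writing $\biguplus_{i\in I}M_i=(W,R,V)$ and $M_i=(W_i,R_i,V_i)$, one has $R[(w,i)]=\{(v,i)\sepp v\in R_i[w]\}$ and $(w,i)\in V(p)$ iff $w\in V_i(p)$, directly from the definition of the disjoint union. Under the convention identifying $(v,i)$ with $v$, this says that the successors of $w$ in $\biguplus_{i\in I}M_i$ are exactly its successors $R_i[w]$ in $M_i$, and that $w$ satisfies the same propositional variables in both models.

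For the base case $k=0$, the required propositional equivalence $M_i,w\vDash p \iff \biguplus_{i\in I}M_i,w\vDash p$ is immediate from the clause $(w,i)\in V(p)$ iff $w\in V_i(p)$. For the inductive step, assume the claim for $k$ and fix $w\in W_i$; the $\bisim_0$ condition holds as in the base case. For [forth], given $v\in R_i[w]$, the world $v$ is a successor of $w$ in $\biguplus_{i\in I}M_i$ as well, and $M_i,v\bisim_k \biguplus_{i\in I}M_i,v$ by the induction hypothesis, so $v$ serves as its own witness. For [back], any successor of $w$ in $\biguplus_{i\in I}M_i$ is of the form $(v,i)$ with $v\in R_i[w]$ — here is precisely where the absence of cross-component edges is used — and again the induction hypothesis yields $M_i,v\bisim_k\biguplus_{i\in I}M_i,v$.

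The state claim then follows directly from the world claim and the definition of state bisimilarity. Fix $s\subseteq W_i$. For [forth], each $w\in s$ is matched to itself (viewed as a world of $\biguplus_{i\in I}M_i$) using $M_i,w\bisim_k\biguplus_{i\in I}M_i,w$; for [back], each world of the image state is some $w\in s$ and is matched to itself in the same way. Hence $M_i,s\bisim_k\biguplus_{i\in I}M_i,s$, as required.

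Since the result is standard, I do not expect any genuine obstacle; the only point requiring care is the [back] condition, which rests entirely on the fact that the accessibility relation of the disjoint union never links worlds lying in different components, so that every successor of a world from $M_i$ again comes from $M_i$.
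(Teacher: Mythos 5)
Your proof is correct. The paper itself gives no proof of this proposition---it is stated as standard with a citation to \cite{blackburn2001,goranko2007}---and your argument (induction on $k$ for the world claim, driven by the fact that the disjoint union has no edges between distinct components, followed by reading off the state claim world-by-world) is exactly the standard argument those references supply, so there is nothing to fault and nothing that diverges from the paper's intent.
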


\begin{proposition}[Finite model property] \label{prop:finite_model_property}
If $\nvDash\phi$, then there is some finite model $M$ and state $s$ such that $M,s\nvDash \phi$.
\end{proposition}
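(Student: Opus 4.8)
The plan is to reduce the finite model property to the finite model property for classical modal logic $\ML$, exploiting the disjunctive normal forms established in the previous theorems together with the fact that each Hintikka formula $\chi^k_w$ is a genuine $\ML$-formula. The key observation is that every formula $\phi$ in any of our three logics is equivalent (via the normal form results, Theorems \ref{theorem:characterization_BSMLI} and \ref{theorem:BSMLO_expressive_completeness} and Proposition \ref{prop:ML_expressive_completeness}) to a Boolean-type combination of the classical characteristic formulas $\chi^k_w$. Thus if $\nvDash \phi$, then by soundness there is some model $M$ and state $s$ with $M,s\nvDash \phi$, and I want to replace $(M,s)$ with a finite witness.

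First I would fix a formula $\phi$ with $\nvDash \phi$ and take any countermodel $M,s\nvDash\phi$. Set $k:=md(\phi)$. By bisimulation invariance (Theorem \ref{theorem:bisimulation_invariance}), whether $\phi$ is supported depends only on the $k$-bisimulation type of $(M,s)$, so it suffices to find a finite pointed state model $k$-bisimilar to $(M,s)$. The natural route is to pass through the finite model property for $\ML$: since there are only finitely many non-equivalent Hintikka formulas $\chi^k_w$ over the (finite) set $\mathsf{P}(\phi)$, the $k$-bisimulation types of worlds are finite in number. Using Proposition \ref{prop:disjoint_union} one can realize each relevant type by a finite pointed model (each $\chi^k_w$ is satisfiable in a finite model by the classical finite model property), and then assemble a finite state model whose worlds realize exactly the set of $k$-types occurring among the worlds of $s$.

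Concretely, for each world $w\in s$ let $\tau(w)$ be its $k$-bisimulation type, i.e. the equivalence class determined by $\chi^k_w$. The set $\{\tau(w)\sepp w\in s\}$ is finite. For each such type pick a finite pointed model $(N_\tau, v_\tau)$ with $N_\tau,v_\tau\vDash \chi^k_{v_\tau}$ and $v_\tau\bisim_k w$ for the corresponding $w$; such a finite model exists by the finite model property for $\ML$. Form the finite disjoint union $N:=\biguplus_\tau N_\tau$ and let $s^\ast:=\{v_\tau\sepp \tau \in \{\tau(w)\sepp w\in s\}\}$, a finite state on the finite model $N$. By Proposition \ref{prop:disjoint_union}, each $v_\tau$ remains $k$-bisimilar in $N$ to the original $w$, so the forth and back conditions for state bisimilarity (Definition \ref{def:state_bisimilarity}) hold between $(M,s)$ and $(N,s^\ast)$, giving $M,s\bisim_k N,s^\ast$. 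Then by Theorem \ref{theorem:bisimulation_invariance}, $N,s^\ast\nvDash\phi$, and $N,s^\ast$ is finite, as required.

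The main obstacle is the bookkeeping that ensures the assembled state $s^\ast$ is genuinely $k$-bisimilar to $s$ and not merely type-matched world-by-world: one must check both the forth direction (each $w\in s$ has a $k$-bisimilar witness $v_{\tau(w)}\in s^\ast$) and the back direction (each $v_\tau\in s^\ast$ was chosen to match some $w\in s$ of that type), which is exactly what the construction of $s^\ast$ from the occurring types guarantees. A subtlety worth noting is the treatment of the empty state: if $s=\emptyset$ then $\phi$ fails already in the (finite) empty state over any fixed finite model, so that degenerate case is handled separately and trivially.
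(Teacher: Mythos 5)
Your proof is correct and follows essentially the same route as the paper's: handle the empty state trivially, exploit the finiteness of $k$-bisimilarity types to reduce $s$ to finitely many representatives, realize each type by a finite model via the finite model property for $\ML$ applied to the (satisfiable) Hintikka formulas $\chi^k_w$, take the disjoint union, and conclude with Proposition \ref{prop:disjoint_union} and Theorem \ref{theorem:bisimulation_invariance}. The only cosmetic differences are that the paper first extracts a finite $t\subseteq s$ with $t\bisim_k s$ rather than working with types directly, and that your opening appeal to the normal-form theorems is unnecessary---your actual argument never uses them.
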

\begin{proof}
 Let $(M,s)$ be such that $M,s\nvDash \phi$ and let $\mathsf{X}=\mathsf{P}(\phi)$ and $k=md(\phi)$. If $s=\emptyset$, we clearly have $M',s\nvDash \phi$ for all finite $M'$. Now assume $s\neq \emptyset$. Since there are only finitely many $\mathsf{X},k$-bisimilarity types (or Hintikka formulas), we can pick a finite $t\subseteq s$ such that $t \bisim^\mathsf{X}_k s$. Clearly for any $w\in t$ we have $\chi^k_w\notequiv \bot$; therefore, by the finite model property for $\ML$, there is some finite $(M_w,v_w)$ such that $M_w,v_w\vDash \chi^k_w$. Then by Theorem \ref{theorem:hintikka_bisimulation}, $M_w,v_w \bisim_k M,w$. By Proposition \ref{prop:disjoint_union}, $\biguplus_{w\in t}M_w,v_w\bisim_k M_w,v_w$ for all $w\in t$; then also $\biguplus_{w\in t}M_w,v_w\bisim_k M,w$ for all $w\in t$, so $\biguplus_{w\in t}M_w,\{v_w\sepp w\in t\}\bisim_k M,t$, whence $\biguplus_{w\in t}M_w,\{v_w\sepp w\in t\}\nvDash \phi$ by Theorem \ref{theorem:bisimulation_invariance}, where $\biguplus_{w\in t}M_w$ is clearly finite.
\end{proof}

Together with the completeness results to be proved in Section \ref{section:axiomatizations}, this implies that our logics $\BSML$, $\BSMLO$, and $\BSMLI$ are decidable.
\section{Axiomatizations} \label{section:axiomatizations}

In this section we introduce sound and complete natural deduction systems for $\BSML$ and its extensions. The strategy we employ in proving completeness makes essential use of the expressive power results in Section \ref{section:expressive_power}: for the expressively complete logics $\BSMLI$ and $\BSMLO$, we show that each formula is provably equivalent to some formula in disjunctive normal form---completeness then reduces to showing it for formulas in normal form. As for $\BSML$, for which no similar normal form is available, our strategy involves simulating the $\BSMLI$-normal form by using sets of $\BSML$-formulas. For this reason we first axiomatize the extensions.

\subsection{$\BSMLIBF$} \label{section:BSMLI}

We first focus on the strongest logic $\BSMLI$. The propositional fragment of $\BSMLI$ corresponds to the logic $\PT$ studied in \cite{yang2017} (with the caveat that the negation in $\PT$ is not the bilateral negation). Our system for $\BSMLI$ is essentially an extension (and simplification) of the propositional system for $\PT$ presented in \cite{yang2017}, with additional rules for the bilateral negation and the modalities.

Before we present the system, let us issue a word of caution. Recall from Section \ref{section:preliminaries} that $\BSML$ and its extensions are not closed under uniform substitution. Due to this our systems will \emph{not} admit the usual {\em Uniform Substitution} rule. Note in particular that in the presentation of our rules, the metavariables $\alpha$ and $\beta$ range, as before, exclusively over formulas of $\ML$. 

Most rules in the system for $\BSMLI$ are also rules in the systems for $\BSML$ and $\BSMLO$. For ease of reference we first present these shared rules; all rules involving $\intd$ are grouped together at the end.

\begin{definition}[Natural deduction system for $\BSMLIBF$] \label{def:BSMLI_system}
    The following rules comprise a natural deduction system for $\BSMLI$.
    
    \noindent 
    (a) Rules for $\land$:
        \begin{proofbox}[]
        {\small
            
            \begin{minipage}{.3\textwidth}
                \begin{prooftree}
                    \AxiomC{$D_1 $}
                    \noLine
                    \UnaryInfC{$\phi $}
                    \AxiomC{$D_2 $}
                    \noLine
                    \UnaryInfC{$\psi$}
                    \RightLabel{$\land \R{I}$}
                    \BinaryInfC{$ \phi \land\psi$}
                \end{prooftree}
            \end{minipage}
            \begin{minipage}{.3\textwidth}
                \begin{prooftree}
                    \AxiomC{$D$}
                    \noLine
                    \UnaryInfC{$\phi \land \psi$}
                    \RightLabel{$\land \R{E}$}
                    \UnaryInfC{$ \phi $}
                \end{prooftree}
            \end{minipage}
            \begin{minipage}{.3\textwidth}
                \begin{prooftree}
                    \AxiomC{$D$}
                    \noLine
                    \UnaryInfC{$\phi \land \psi$}
                    \RightLabel{$\land \R{E}$}
                    \UnaryInfC{$ \psi $}
                \end{prooftree}
            \end{minipage}
            }
        \end{proofbox}

    \noindent 
    (b) Rules for $\lnot$:

        \begin{proofbox}[]
        {\small
        
            \begin{minipage}{.33\textwidth}
                \begin{prooftree}
                    \AxiomC{$[\alpha] $}
                    \noLine
                    \UnaryInfC{$D$}
                    \noLine
                    \UnaryInfC{$\bot$}
                    \RightLabel{$\bnot \R{I}(*)$}
                    \UnaryInfC{$ \bnot \alpha$}
                \end{prooftree}
            \end{minipage}
            \begin{minipage}{.33\textwidth}
                \begin{prooftree}
                    \AxiomC{$D_1 $}
                    \noLine
                    \UnaryInfC{$\alpha $}
                    \AxiomC{$D_2 $}
                    \noLine
                    \UnaryInfC{$\bnot \alpha$}
                    \RightLabel{$\bnot \R{E}$}
                    \BinaryInfC{$ \beta $}
                \end{prooftree}
            \end{minipage}
            \begin{minipage}{.33\textwidth}
                \begin{prooftree}
                    \AxiomC{$D  $}
                    \noLine
                    \UnaryInfC{$\bnot \bnot \phi $}
                    \doubleLine
                    \RightLabel{$\lnot \lnot\R{E}$}
                    \UnaryInfC{$ \phi $}
                \end{prooftree}
            \end{minipage}
            \vspace{0.2cm}
            
            \begin{minipage}{.33\textwidth}
                \begin{prooftree}
                    \AxiomC{$D  $}
                    \noLine
                    \UnaryInfC{$\bnot (\phi \land \psi) $}
                    \doubleLine
                    \RightLabel{$\R{DM}_\land$}
                    \UnaryInfC{$ \bnot \phi \lor \bnot \psi $}
                \end{prooftree}
            \end{minipage}
            \begin{minipage}{.33\textwidth}
                \begin{prooftree}
                    \AxiomC{$D  $}
                    \noLine
                    \UnaryInfC{$\bnot (\phi \dis \psi) $}
                    \doubleLine
                    \RightLabel{$\R{DM}_\lor$}
                    \UnaryInfC{$ \bnot \phi \land \bnot \psi $}
                \end{prooftree}
            \end{minipage}
            \begin{minipage}{.33\textwidth}
                \begin{prooftree}
                    \AxiomC{$D  $}
                    \noLine
                    \UnaryInfC{$\bnot \NE $}
                    \doubleLine
                    \RightLabel{$\bnot \NE\R{E}$}
                    \UnaryInfC{$ \bot $}
                \end{prooftree}
            \end{minipage}
            \vspace{0.2cm}}
            
            {\footnotesize
            $(*)$ The undischarged assumptions in $D$ do not contain $\NE$.}
        \end{proofbox}

    \noindent 
    (c) Rules for $\dis$:
        \begin{proofbox}[]
        {\small

            \begin{minipage}{.22\textwidth}
                \begin{prooftree}
                    \AxiomC{$D$}
                    \noLine
                    \UnaryInfC{$\phi $}
                    \RightLabel{$\dis \R{I}(*)$}
                    \UnaryInfC{$ \phi \dis\psi$}
                \end{prooftree}
            \end{minipage}
            \begin{minipage}{.22\textwidth}
                \begin{prooftree}
                    \AxiomC{$D$}
                    \noLine
                    \UnaryInfC{$\phi $}
                    \RightLabel{$\dis \R{W}$}
                    \UnaryInfC{$ \phi \dis\phi$}
                \end{prooftree}
            \end{minipage}
                        \begin{minipage}{.22\textwidth}
                \begin{prooftree}
                    \AxiomC{$D$}
                    \noLine
                    \UnaryInfC{$\phi \dis \psi$}
                    \RightLabel{$\R{Com}\dis$}
                    \UnaryInfC{$ \psi \dis \phi$}
                \end{prooftree}
            \end{minipage}
            \begin{minipage}{.3\textwidth}
                \begin{prooftree}
                    \AxiomC{$D$}
                    \noLine
                    \UnaryInfC{$ \phi \dis (\psi\vee\chi)$}
                    \RightLabel{$\R{Ass}\dis$}
                    \UnaryInfC{$ (\phi \dis \psi)\vee\chi$}
                \end{prooftree}
            \end{minipage}
            \vspace{0.2cm}
            
                        \begin{minipage}{.5\textwidth}
                \begin{prooftree}
                    \AxiomC{$D$}
                    \noLine
                    \UnaryInfC{$\phi \dis \psi$}
                    \AxiomC{$[\phi]$}
                    \noLine
                    \UnaryInfC{$D_1$}
                    \noLine
                    \UnaryInfC{$\chi$}
                    \AxiomC{$[\psi]$}
                    \noLine
                    \UnaryInfC{$D_2$}
                    \noLine
                    \UnaryInfC{$\chi$}
                    \RightLabel{$\dis \R{E}(\dagger,\ddagger)$}
                    \TrinaryInfC{$ \chi $}
                \end{prooftree}
            \end{minipage}
                        \begin{minipage}{.5\textwidth}
                \begin{prooftree}
                    \AxiomC{$D$}
                    \noLine
                    \UnaryInfC{$\phi \dis \psi$}
                    \AxiomC{$[\psi]$}
                    \noLine
                    \UnaryInfC{$D_1$}
                    \noLine
                    \UnaryInfC{$\chi$}
                    \RightLabel{$\dis \R{Mon}(\dagger)$}
                    \BinaryInfC{$ \phi \dis \chi$}
                \end{prooftree}
            \end{minipage}
    
            \vspace{0.2cm}}
    
            {\footnotesize
            $(*)$ $\psi$ does not contain $\NE$.
            
            $(\dagger)$ The undischarged assumptions in $D_1,D_2$ do not contain $\NE$.

            $(\ddagger)$ $\chi$ does not contain $\intd$.} 
        \end{proofbox}

        \noindent
        (d) Rules for $\bot$ and $\NE$:
        \begin{proofbox}[]
        {\small

            \begin{minipage}{0.5\textwidth}
                \begin{prooftree}
                    \AxiomC{$D$}
                    \noLine
                    \UnaryInfC{$\bot\lor \phi$}
                    \RightLabel{$\bot \R{E}$}
                    \UnaryInfC{$ \phi$}
                \end{prooftree}
            \end{minipage}
            \begin{minipage}{0.5\textwidth}
                \begin{prooftree}
                    \AxiomC{$D$}
                    \noLine
                    \UnaryInfC{$\Bot \dis \phi $}
                    \RightLabel{$\Bot \R{Ctr}$}
                    \UnaryInfC{$ \psi$}
                \end{prooftree}
            \end{minipage}}
        \end{proofbox}

    \noindent
    (e) Basic modal rules:
        \begin{proofbox}[]
        {\small
            
            \begin{minipage}{.35\textwidth}
                \begin{prooftree}
                    \AxiomC{$[\phi]  $}
                    \noLine
                    \UnaryInfC{$ D $}
                    \noLine
                    \UnaryInfC{$ \psi $}
                    \AxiomC{$D_1  $}
                    \noLine
                    \UnaryInfC{$\Di \phi $}
                    \RightLabel{$\Di \R{Mon}(*)$}
                    \BinaryInfC{$ \Di\psi $}
                \end{prooftree}
            \end{minipage}
            \begin{minipage}{.65\textwidth}
                \begin{prooftree}
                    \AxiomC{$[\phi_1] \ldots [\phi_n] $}
                    \noLine
                    \UnaryInfC{$ D $}
                    \noLine
                    \UnaryInfC{$ \psi $}
                    \AxiomC{$D_1$}
                    \noLine
                    \UnaryInfC{$\Bo \phi_1$}
                    \AxiomC{$\ldots$}
                    \AxiomC{$D_n$}
                    \noLine
                    \UnaryInfC{$\Bo \phi_n $}
                    \RightLabel{$\Bo \R{Mon}(*)$}
                    \QuaternaryInfC{$ \Bo\psi $}
                \end{prooftree}
            \end{minipage}
                
                       \vspace{0.2cm}

                       \begin{minipage}{.6\textwidth}
                       \vspace{1cm}
            {\footnotesize
            $(*)$ $D$ does not contain undischarged assumptions.}
            \end{minipage}
                    \begin{minipage}{.4\textwidth}
                        \begin{prooftree}
                \AxiomC{$D  $}
                \noLine
                \UnaryInfC{$\bnot \Di \phi  $}
                \RightLabel{$\R{Inter}\Di \Bo$}
                \doubleLine
                \UnaryInfC{$ \Bo \bnot \phi$}
            \end{prooftree}
            \end{minipage}
            }
        \end{proofbox}
          \noindent
    (f) Rules governing the interaction of the modalities and connectives:
        \begin{proofbox}[]
        {\small

            \begin{minipage}{.5\textwidth}
                \begin{prooftree}
                    \AxiomC{$D  $}
                    \noLine
                    \UnaryInfC{$\Di (\phi  \dis (\psi\land \NE)) $}
                    \RightLabel{$\Di\R{Sep}$}
                    \UnaryInfC{$\Di \psi $}
                \end{prooftree}
            \end{minipage}
            \begin{minipage}{.5\textwidth}
                \begin{prooftree}
                    \AxiomC{$D_1  $}
                    \noLine
                    \UnaryInfC{$\Di \phi  $}
                    \AxiomC{$D_2  $}
                    \noLine
                    \UnaryInfC{$ \Di \psi $}
                    \RightLabel{$\Di\R{Join}$}
                    \BinaryInfC{$\Di (\phi  \dis \psi) $}
                \end{prooftree}
            \end{minipage}
            \vspace{0.2cm}

            \begin{minipage}{.5\textwidth}
                \begin{prooftree}
                    \AxiomC{$D  $}
                    \noLine
                    \UnaryInfC{$\Bo (\phi  \land \NE) $}
                    \RightLabel{$\Bo\R{Inst}$}
                    \UnaryInfC{$\Di \phi $}
                \end{prooftree}
            \end{minipage}
            \begin{minipage}{.5\textwidth}
            \begin{prooftree}
                \AxiomC{$D_1  $}
                \noLine
                \UnaryInfC{$\Bo\phi$}
                \AxiomC{$D_2  $}
                \noLine
                \UnaryInfC{$ \Di \psi$}
                \RightLabel{$\Bo\Di\R{Join}$}
                \BinaryInfC{$\Bo (\phi\dis \psi) $}
            \end{prooftree}
            \end{minipage}}
        \end{proofbox}
        
        \noindent
        (g) Propositional rules involving $\intd$ ($\BSMLI$-specific):
        
        \begin{proofbox}[]
        {\small
            \begin{minipage}{.25\textwidth}
                \begin{prooftree}
                    \AxiomC{$D$}
                    \noLine
                    \UnaryInfC{$\phi $}
                    \RightLabel{$\intd \R{I}$}
                    \UnaryInfC{$ \phi \intd\psi$}
                \end{prooftree}
            \end{minipage}
            \begin{minipage}{.25\textwidth}
                \begin{prooftree}
                    \AxiomC{$D$}
                    \noLine
                    \UnaryInfC{$\psi $}
                    \RightLabel{$\intd \R{I}$}
                    \UnaryInfC{$ \phi \intd\psi$}
                \end{prooftree}
            \end{minipage}
            \begin{minipage}{.4\textwidth}
                \begin{prooftree}
                    \AxiomC{$D$}
                    \noLine
                    \UnaryInfC{$\phi \intd \psi$}
                    \AxiomC{$[\phi]$}
                    \noLine
                    \UnaryInfC{$D_1$}
                    \noLine
                    \UnaryInfC{$\chi$}
                    \AxiomC{$[\psi]$}
                    \noLine
                    \UnaryInfC{$D_2$}
                    \noLine
                    \UnaryInfC{$\chi$}
                    \RightLabel{$\intd \R{E}$}
                    \TrinaryInfC{$ \chi $}
                \end{prooftree}
            \end{minipage}
            \vspace{0.2cm}

            \begin{minipage}{.4\textwidth}
                \begin{prooftree}
                    \AxiomC{$D$}
                    \noLine
                    \UnaryInfC{$\phi \dis (\psi \intd \chi)$}
                    \RightLabel{$\R{Distr}\dis \intd $}
                    \UnaryInfC{$( \phi \dis \psi )\intd (\phi \dis \chi)$}
                \end{prooftree}
            \end{minipage}
            \begin{minipage}{.25\textwidth}
                \begin{prooftree}
                    \AxiomC{$D  $}
                    \noLine
                    \UnaryInfC{$\bnot (\phi \intd \psi) $}
                    \doubleLine
                    \RightLabel{$\R{DM}_{\intd}$}
                    \UnaryInfC{$ \bnot \phi \land \bnot \psi $}
                \end{prooftree}
            \end{minipage}
            \begin{minipage}{.25\textwidth}
                \begin{prooftree}
                    \AxiomC{}
                    \RightLabel{$\NE \R{I}$}
                    \UnaryInfC{$ \bot \intd\NE $}
                \end{prooftree}
            \end{minipage}}
        \end{proofbox}

\noindent
    (h) Modal rules for $\intd$ ($\BSMLI$-specific):
        \begin{proofbox}[]
        {\small

            \begin{minipage}{.5\textwidth}
                \begin{prooftree}
                    \AxiomC{$D  $}
                    \noLine
                    \UnaryInfC{$\Di(\phi\intd \psi)$}
                    \RightLabel{$\R{Conv}\Di\intd\dis$}
                    \doubleLine
                    \UnaryInfC{$\Di \phi \dis \Di \psi$}
                \end{prooftree}
            \end{minipage}
            \begin{minipage}{.5\textwidth}
                \begin{prooftree}
                    \AxiomC{$D  $}
                    \noLine
                    \UnaryInfC{$\Bo(\phi\intd \psi)$}
                    \RightLabel{$\R{Conv}\Bo\intd\dis$}
                    \doubleLine
                    \UnaryInfC{$\Bo \phi \dis \Bo \psi$}
                \end{prooftree}
            \end{minipage}}
         \end{proofbox}
        We write $\Phi \vdash_{\BSMLI} \psi$ (or simply $\Phi\vdash \psi$) if $\psi$ is derivable from formulas in $\Phi$ in the system. We also write $\Phi,\phi_1,\dots,\phi_n\vdash \psi$ for $\Phi\cup\{\phi_1,\dots,\phi_n\}\vdash \psi$. We say that $\phi$ and $\psi$ are \emph{provably equivalent}, written $\phi \proveq \psi$, if $\phi \vdash \psi $ and $\psi \vdash \phi$.
    \end{definition}
        
  The conjunction  $\land$ and global disjunction $\intd$ have the standard introduction and elimination rules. The rules involving $\dis$ must be constrained due to the failure of downward/union closure occasioned by the presence of $\NE$ and $\intd$. For instance, $p\nvDash p \dis \NE$, so unconstrained $\dis$-introduction is not sound. To ensure soundness, the introduced disjunct must have the empty state property, and so in $\dis\R{I}$ we require that the introduced disjunct does not contain $\NE$. Alternatively, the introduced disjunct may simply be the premise again, as in $\dis$-weakening $\dis \R{W}$. $\dis$-elimination is similarly constrained to ensure that the subderivations $D_1$ and $D_2$ do not depend on formulas which are not downward-closed, and to ensure that the consequent $\chi$ of the rule is union-closed. The commutativity and associativity of $\dis$, and the distributivity of $\vee$ over $\intd$ need to be included as rules; for the derivable algebraic properties of the connectives, see Proposition \ref{prop:commutativity_associativity_distributivity} below. The rules for $\bnot$ include both constrained standard rules as well as rules corresponding to the equivalences noted in Section \ref{section:preliminaries}. The propositional rules involving $\bot$ and $\NE$---$\bot \R{E}$, $\Bot\R{Ctr}$ ($\Bot$-contraction), $\NE\R{I}$ and $\lnot \NE\R{E}$---are self-explanatory. Note that with $\bot$ we have \emph{ex falso} with respect to classical formulas, and with $\Bot$, with respect to all formulas:
        \begin{lemma} \label{lemma:Bot_E} $\Bot\vdash \phi$.
    \end{lemma}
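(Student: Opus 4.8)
The plan is to observe that, in contrast to the usual \emph{ex falso}, no induction on the structure of $\phi$ is required here. The rule $\Bot\R{Ctr}$ already licenses an \emph{arbitrary} conclusion from a premise of the form $\Bot\dis\chi$, with no restriction on the conclusion. Consequently the entire task reduces to manufacturing such a disjunction from the single assumption $\Bot$, after which one application of $\Bot\R{Ctr}$ delivers the desired $\phi$ outright.

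Concretely, I would first apply the weakening rule $\dis\R{W}$ to the assumption $\Bot$, obtaining $\Bot\dis\Bot$; note that $\dis\R{W}$ carries no side condition, so this step is unproblematic. Then a single application of $\Bot\R{Ctr}$ to $\Bot\dis\Bot$---reading the schematic disjunct of that rule as $\Bot$ and its schematic conclusion as the target formula $\phi$---yields $\phi$. The derivation is thus only two lines long.

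An equally short variant would replace $\dis\R{W}$ with $\dis\R{I}$: since $\bot$ contains no $\NE$, the side condition $(*)$ on $\dis\R{I}$ is satisfied, so from $\Bot$ one infers $\Bot\dis\bot$ and again closes with $\Bot\R{Ctr}$. Either route works uniformly for every formula of $\BSMLI$, including those containing $\intd$, $\OC$, or the modalities, precisely because $\Bot\R{Ctr}$ places no constraint on its conclusion.

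There is essentially no obstacle in this lemma; the only point worth flagging is that $\Bot\R{Ctr}$ performs all the real work, producing an arbitrary formula in a single inference and thereby obviating any case analysis on the shape of $\phi$. The contrast to record is with $\bot\R{E}$, which only supports \emph{ex falso} relative to a premise of the form $\bot\dis\phi$ (and whose conclusion is that very $\phi$ rather than an arbitrary formula); it is the strong contradiction $\Bot$, together with $\Bot\R{Ctr}$, that upgrades this to full \emph{ex falso} over all formulas.
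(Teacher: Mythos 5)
Your proof is correct and takes essentially the same approach as the paper: the paper's own derivation is exactly your second variant ($\Bot\vdash\Bot\lor\bot$ by $\dis\R{I}$, then $\Bot\R{Ctr}$), and your primary route via $\dis\R{W}$ differs only in which rule manufactures the disjunction before the single application of $\Bot\R{Ctr}$.
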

    \begin{proof}
        $\Bot\vdash \Bot\lor \bot$ by $\dis\R{I}$, and then $\Bot\lor \bot\vdash \phi$ by $\Bot\R{Ctr}$.
    \end{proof}

    The basic modal rules (e) are standard. While the modalities do not distribute over $\intd$, a weaker distributivity in which $\intd$ is switched with $\dis$ --- the conversion rules $\R{Conv}\Di\intd\dis$ and  $\R{Conv}\Bo\intd\dis$---does hold. The $\Di$-separation rule $\Di\R{Sep}$  corresponds to $\FC$-entailment for pragmatically enriched formulas as described in Sections \ref{section:introduction} and \ref{section:preliminaries}. The $\Bo$-instantiation rule $\Bo\R{Inst}$ characterizes the fact that $\Bo\phi$ implies $\Di\phi$ in case accessible worlds exist. The two join rules $\Di\R{Join}$ and $\Bo\Di\R{Join}$ allow one to graft together witnessing accessible states.
    
    \begin{theorem}[Soundness of $\BSMLIBF$] \label{theorem:soundness_BSMLI} If $\Phi \vdash \psi$, then $ \Phi \vDash \psi$.
\end{theorem}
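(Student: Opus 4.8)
The plan is to prove soundness by induction on the structure of derivations, establishing the more general statement that for every derivation $D$ with conclusion $\psi$ and set of undischarged assumptions $\Gamma$, one has $\Gamma \vDash \psi$. The base case, a derivation consisting of a single assumption, gives $\psi \vDash \psi$ and is trivial; the inductive step amounts to checking that each rule in (a)--(h) preserves entailment when applied to subderivations that are sound by the induction hypothesis. Generalizing to open assumptions is what lets the discharge conventions interact correctly with the side conditions.

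I would first dispatch the large group of rules that merely internalize semantic facts already recorded in the excerpt. The rules $\lnot\lnot\R{E}$, $\R{DM}_\land$, $\R{DM}_\lor$, $\R{DM}_{\intd}$, and $\bnot\NE\R{E}$ follow from the equivalences in Fact \ref{fact:negation_normal_form}; the interaction rule $\R{Inter}\Di\Bo$ is just the definition $\Bo\phi := \lnot\Di\lnot\phi$; the rules $\R{Distr}\dis\intd$, $\R{Conv}\Di\intd\dis$, and $\R{Conv}\Bo\intd\dis$ are exactly the equivalences $\phi\vee(\psi\intd\chi)\equiv(\phi\vee\psi)\intd(\phi\vee\chi)$, $\Di(\phi\intd\psi)\equiv\Di\phi\dis\Di\psi$, and $\Bo(\phi\intd\psi)\equiv\Bo\phi\dis\Bo\psi$ noted after Figure \ref{fig:examples}; and $\NE\R{I}$ holds because $\bot\intd\NE$ is supported in every state. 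The $\land$-, $\intd$-, $\bot$-, and $\Bot$-rules (including $\Bot\R{Ctr}$, which is vacuous since $\Bot\dis\phi$ is supported in no state) are routine unfoldings of the clauses; note that $\intd\R{E}$ needs no side condition because it keeps the state fixed rather than splitting it.

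The real content lies in the side-conditioned rules, and here I would lean on the closure properties of Fact \ref{fact:NE_intd_closure_properties}: a set of $\NE$-free undischarged assumptions is downward closed and has the empty state property, while an $\intd$-free formula is union closed. For $\dis\R{I}(*)$ the $\NE$-freeness of the added disjunct $\psi$ lets me write $s = s \cup \emptyset$ with $\emptyset \vDash \psi$. For $\dis\R{E}(\dagger,\ddagger)$, given $s$ supporting all assumptions I split $s = t \cup u$ with $t \vDash \phi$ and $u \vDash \psi$; downward closure of the $\NE$-free assumptions of $D_1,D_2$ gives $t \vDash \chi$ and $u \vDash \chi$ by the induction hypothesis, and union closure of the $\intd$-free $\chi$ then yields $s = t\cup u \vDash \chi$; the rule $\dis\R{Mon}(\dagger)$ is analogous. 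For $\bnot\R{I}(*)$ I would use that $\alpha$ and $\bnot\alpha$ are classical and hence flat, reducing to a world-by-world argument in which the $\NE$-freeness of the open assumptions prevents any single world from jointly verifying them and $\alpha$ while the derivation forces $\bot$. The monotonicity rules $\Di\R{Mon}(*)$ and $\Bo\R{Mon}(*)$ use that $D$ is assumption-free, so the induction hypothesis supplies the global entailments $\phi \vDash \psi$ and $\phi_1,\dots,\phi_n \vDash \psi$, which transfer through the modalities by feeding a witnessing nonempty $t \subseteq R[w]$ (resp.\ the whole $R[w]$) into the entailment; the purely modal rules $\Di\R{Sep}$, $\Di\R{Join}$, $\Bo\R{Inst}$, and $\Bo\Di\R{Join}$ are verified directly by manufacturing or extracting the appropriate nonempty witness inside $R[w]$.

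I expect the main obstacle to be bookkeeping rather than conceptual depth: the delicate point is to state the induction over open assumptions correctly and, for each side-conditioned rule, to identify exactly which closure property the side condition secures and where it is invoked. The rules $\dis\R{E}$ and $\bnot\R{I}$ are the most demanding, since their soundness genuinely fails without the $\NE$- and $\intd$-restrictions, and one must confirm that these restrictions suffice by appealing to Fact \ref{fact:NE_intd_closure_properties} at precisely the right step.
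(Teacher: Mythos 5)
Your proposal is correct and follows essentially the same route as the paper: induction over derivations, verifying rule by rule that entailment is preserved, with the side-conditioned rules ($\dis\R{E}$, $\bnot\R{I}$, $\dis\R{I}$, $\dis\R{Mon}$, the $\R{Mon}$ rules) handled exactly via the closure properties of Fact \ref{fact:NE_intd_closure_properties} and the modal rules ($\Di\R{Sep}$, $\Di\R{Join}$, $\Bo\R{Inst}$, $\Bo\Di\R{Join}$, the $\R{Conv}$ rules) checked by direct witness manipulation inside $R[w]$. The only cosmetic difference is that the paper details just $\dis\R{E}$ and the modal cases, delegating the remaining propositional cases to the cited system of Yang--V\"{a}\"{a}n\"{a}nen, whereas you sketch them all; your sketches (including the flatness/downward-closure argument for $\bnot\R{I}$ and the vacuity of $\Bot\R{Ctr}$) are sound.
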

\begin{proof}
By induction on the length $k$ of possible derivations of $\Phi \vdash\psi$. We only include the cases for the novel rules involving modalities, as well as for $\dis\R{E}$; most of the other cases can be found in \cite{yang2017}. The base case ($k=1$) is simple; see \cite{yang2017}. For the inductive case, assume the result holds for all derivations of length $\leq k$. We consider the different possibilities for what the final rule in the derivation $\Phi\vdash \psi$ can be.

$\dis \R{E}$: Assume we have derivations of length $\leq k$ witnessing $\Phi \vdash \phi \dis \psi$; $\Phi_1,\phi \vdash \chi $; and $\Phi_2, \psi \vdash \chi $; that for all $\eta\in \Phi_1\cup \Phi_2$, $\eta$ does not contain $\NE$, and that $\chi$ does not contain $\intd$. By the induction hypothesis, $\Phi \vDash \phi \dis \psi$; $\Phi_1,\phi \vDash \chi $; and $\Phi_2,\psi \vDash \chi $. We show $\Phi, \Phi_1, \Phi_2 \vDash \chi$. Assume $s \vDash \Phi\cup \Phi_1\cup \Phi_2$. Then $s \vDash \phi \dis \psi$, so there are some $t_1,t_2$ such that $s=t_1\cup t_2$; $t_1 \vDash \phi$; and $t_2\vDash \psi$. Since no $\eta \in \Phi_1\cup \Phi_2$ contains $\NE$, each such $\eta$ is downward closed. So $t_1 \vDash \eta_1$ for all $\eta_1\in \Phi_1$, and $t_2 \vDash \eta_2$ for all $\eta_2\in \Phi_2$. Thus $t_1 \vDash \chi$ and $t_2 \vDash \chi$. Since $\chi$ does not contain $\intd$, it is union closed; therefore $s \vDash \chi$.
            
  $\R{Conv} \Di\intd\dis$: It suffices to show $\Di(\phi \intd \psi)\equiv \Di  \phi \dis \Di \psi$. $\Dashv$ is easy. For $\vDash$, let $s\vDash \Di(\phi \intd \psi)$. If $s = \emptyset$, then clearly $ s \vDash \Di \phi \dis \Di \psi$. Otherwise if $s \neq \emptyset$, let $w \in s$. Then there is a nonempty $t\subseteq R[w]$ such that $t \vDash \phi \intd \psi$, so that $t \vDash \phi$ or $t \vDash\psi$. Letting
                \begin{align*}
                    s_1:=\{w \in s\sepp \exists t \subseteq R[w]:t\neq \emptyset \text{ and }t \vDash \phi \}~\text{and}~
                    s_2:=\{w \in s\sepp \exists t \subseteq R[w]:t\neq \emptyset\text{ and }t \vDash \psi \}
                \end{align*}
                we have $s=s_1\cup s_2$. Clearly $s_1\vDash \Di \phi$ and $s_2\vDash \Di \psi$, so $s \vDash \Di \phi \dis \Di \psi$.
        
             $\R{Conv}\Bo \intd \dis$: Analogous to $\R{Conv}\Di \intd \dis$.
        
    $\Di\R{Sep}$: It suffices to show $\Di (\phi  \dis (\psi\land \NE))\vDash \Di \psi$; let $s\vDash \Di (\phi  \dis (\psi\land \NE))$. If $s=\emptyset$, clearly $s \vDash \Di \psi$. If $s\neq \emptyset$, let $w \in s$. Then there is a nonempty $t \subseteq R[w]$ such that $ t\vDash \phi \dis (\psi \land \NE)$. Therefore, there are $t_1,t_2$ such that $t=t_1\cup t_2$; $t_1\vDash \phi $; and $t_2\vDash \psi \land \NE$. Then $t_2\neq \emptyset$ and $ t_2\vDash \psi$, and clearly $t_2\subseteq R[w]$. Thus $s \vDash \Di \psi$.
         
   $\Di\R{Join}$: It suffices to show $\Di  \phi,\Di \psi\vDash \Di (\phi  \dis \psi)$; let $s \vDash \Di  \phi$ and $s\vDash \Di \psi$. If $s=\emptyset$, then clearly $s \vDash \Di (\phi\dis \psi )$. Otherwise if $s \neq \emptyset$, let $w \in s$. Since $ s \vDash \Di  \phi$ and $s\vDash \Di \psi$, there are nonempty $t_1,t_2\subseteq R[w]$ such that $ t_1\vDash \phi$ and $ t_2\vDash \psi$. Therefore $t_1\cup t_2\vDash \phi \dis \psi $. Clearly $t_1\cup t_2$ is nonempty and $t_1\cup t_2\subseteq R[w]$, so $s \vDash \Di (\phi\dis \psi )$.

     $\Bo\R{Inst}$: It suffices to show $\Bo (\phi \land \NE)\vDash \Di \phi$; let $s\vDash \Bo (\phi \land \NE)$. If $s =\emptyset$, 
     clearly $s \vDash \Di \phi$.  
     If $s \neq \emptyset$, let $w\in s$. Then $ R[w]\vDash \phi \land \NE$, so $R[w]\neq \emptyset$, and therefore $s \vDash \Di\phi$.
            
      $\Bo\Di \R{Join}$: It suffices to show $\Bo\phi, \Di \psi\vDash  \Bo (\phi \dis \psi )$; let $s\vDash \Bo\phi$ and $s\vDash \Di \psi$. If $s=\emptyset$, clearly $s\vDash \Bo(\phi \dis \psi )$. If $s \neq \emptyset$, let $w \in s$. Since $s \vDash \Bo\phi$, we have $R[w]\vDash \phi $, and since $s\vDash \Di \psi$, there is some nonempty $t\subseteq R[w]$ such that $ t\vDash \psi$. Since $R[w]=R[w]\cup t$, we have $R[w]\vDash \phi \dis \psi$, and so $s\vDash \Bo (\phi \dis \psi)$.
\end{proof}
As expected, a limited replacement lemma holds for our system; cf. Proposition \ref{Replacement_thm_semantic}, the semantic replacement lemma w.r.t. strong equivalence. 

\begin{lemma}[Replacement] \label{lemma:provable_subformula_replacement}
 Suppose $\theta$ contains a specific occurrence $\theta[p]$ of $p$ which is not in the scope of $\lnot$ (unless the $\lnot$ forms part of $\Bo=\lnot \Di\lnot$). Then $\phi\vdash \psi$ implies $\theta[\phi/p]\vdash\theta[\psi/p]$. In particular, if $\phi\proveq \psi$, then $\theta[\phi/p]\proveq\theta[\psi/p]$.
\end{lemma}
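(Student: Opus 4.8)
The plan is to prove the first statement — that $\phi\vdash\psi$ implies $\theta[\phi/p]\vdash\theta[\psi/p]$ — by induction on the structure of $\theta$, following the path from the root of $\theta$ down to the distinguished occurrence $\theta[p]$, and treating $\Bo=\lnot\Di\lnot$ as a single primitive operator. The key observation is that every connective lying above the distinguished occurrence is \emph{covariant}: the side condition on the occurrence guarantees that the only negations sitting above it are exactly those that constitute box modalities, so along the path we meet only $\land$, $\dis$, $\intd$, $\Di$, and $\Bo$. For each of these I will invoke the corresponding introduction/elimination or monotonicity rule to push the derivation $\phi\vdash\psi$ one level further outward. The final claim about $\proveq$ is then immediate by applying the statement in both directions.

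The base case is $\theta=p$, where $\theta[\phi/p]=\phi$ and $\theta[\psi/p]=\psi$, so the conclusion is exactly the hypothesis. For the inductive step, suppose the occurrence lies in an immediate subformula $\eta$, with sibling subformula $\sigma$ unchanged, so that by the induction hypothesis there is a derivation $D$ witnessing $\eta[\phi/p]\vdash\eta[\psi/p]$. If $\theta=\eta\land\sigma$ (or $\sigma\land\eta$), I apply $\land\R{E}$ to the premise $\theta[\phi/p]$ to recover $\eta[\phi/p]$ and $\sigma$, run $D$ to obtain $\eta[\psi/p]$, and recombine via $\land\R{I}$. If $\theta=\sigma\dis\eta$, I apply $\dis\R{Mon}$ to $\theta[\phi/p]=\sigma\dis\eta[\phi/p]$, using $D$ as the minor subderivation $[\eta[\phi/p]]\Rightarrow\eta[\psi/p]$, to conclude $\sigma\dis\eta[\psi/p]$; the case $\theta=\eta\dis\sigma$ reduces to this one by bracketing with $\R{Com}\dis$. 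If $\theta=\eta\intd\sigma$ (or $\sigma\intd\eta$), I reason by $\intd\R{E}$ on $\theta[\phi/p]$: in the branch assuming $\eta[\phi/p]$ I run $D$ and then $\intd\R{I}$, and in the branch assuming $\sigma$ I apply $\intd\R{I}$ directly, both branches yielding $\eta[\psi/p]\intd\sigma$. Finally, if $\theta=\Di\eta$ I apply $\Di\R{Mon}$, and if $\theta=\Bo\eta$ I apply $\Bo\R{Mon}$ with $n=1$, in each case using $D$ as the subderivation and $\theta[\phi/p]$ as the modal premise.

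The one point that needs care — and where I expect the main obstacle to lie — is the side conditions on the monotonicity rules: $\dis\R{Mon}$ requires that its subderivation have no undischarged assumption containing $\NE$, while $\Di\R{Mon}$ and $\Bo\R{Mon}$ require no undischarged assumptions at all. This is automatically met, however, since in every case the subderivation is the induction hypothesis $\eta[\phi/p]\vdash\eta[\psi/p]$, whose sole premise $\eta[\phi/p]$ is discharged by the rule, leaving no undischarged assumptions whatsoever. The genuine danger the lemma guards against is the bilateral negation, for which no covariant rule is (or can be) available, since $\phi\vdash\psi$ yields no control over $\lnot\phi$ and $\lnot\psi$; this is precisely why bare occurrences in the scope of $\lnot$ are excluded from the hypothesis, and why the two negations making up $\Bo$ must be kept together so that $\Bo\R{Mon}$ can supply the required covariance. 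For the last assertion, if $\phi\proveq\psi$ then applying the statement just proved to $\phi\vdash\psi$ and to $\psi\vdash\phi$ gives both $\theta[\phi/p]\vdash\theta[\psi/p]$ and $\theta[\psi/p]\vdash\theta[\phi/p]$, that is, $\theta[\phi/p]\proveq\theta[\psi/p]$.
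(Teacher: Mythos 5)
Your proof is correct and takes essentially the same route as the paper, whose proof is precisely the routine induction on $\theta$ you spell out, with the modality cases handled by $\Di\R{Mon}$ and $\Bo\R{Mon}$. Your explicit check of the side conditions---that each inductive subderivation's sole assumption is discharged by the rule applying it, so $\dis\R{Mon}$, $\Di\R{Mon}$, and $\Bo\R{Mon}$ are legitimately applicable---is exactly the point that makes the induction go through.
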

\begin{proof} 
By a routine induction on $\theta$, where the modality cases are proved by applying $\Di \R{Mon}$ and $\Bo \R{Mon}$.
\end{proof}

We showed in Fact \ref{fact:negation_normal_form} that every formula is equivalent semantically to one in negation normal form. This fact also holds in the proof system.

\begin{proposition}[Negation normal form] \label{prop:negation_normal_form_provable_BSMLI}
    Every formula $\phi$ is provably equivalent to a formula in negation normal form.
\end{proposition}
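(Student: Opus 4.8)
The plan is to mimic the proof of Fact \ref{fact:negation_normal_form} inside the deductive system, replacing each semantic (strong) equivalence used there by the corresponding provable equivalence supplied by the rules. Concretely, the double-negation law $\lnot\lnot\psi \proveq \psi$ is the bidirectional rule $\lnot\lnot\R{E}$; the De Morgan laws $\lnot(\psi \land \chi) \proveq \lnot\psi \dis \lnot\chi$, $\lnot(\psi \dis \chi) \proveq \lnot\psi \land \lnot\chi$, and $\lnot(\psi \intd \chi) \proveq \lnot\psi \land \lnot\chi$ are $\R{DM}_\land$, $\R{DM}_\lor$, and $\R{DM}_\intd$; and the modal law $\lnot\Di\psi \proveq \Bo\lnot\psi$ is $\R{Inter}\Di\Bo$. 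These let us push a negation inward past every compound, so a straightforward induction should suffice.

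The one subtlety is that naive structural induction on $\phi$ breaks at the case $\phi = \lnot\psi$: after, say, $\R{DM}_\land$ turns $\lnot(\eta \land \delta)$ into $\lnot\eta \dis \lnot\delta$, we are left needing normal forms for $\lnot\eta$ and $\lnot\delta$, which are not subformulas of $\phi$. The clean fix is to strengthen the induction hypothesis: I will prove by induction on $\phi$ that \emph{both} $\phi$ \emph{and} $\lnot\phi$ are provably equivalent to formulas in negation normal form. With this strengthening every case is immediate. For $\phi = \psi \land \chi$, the hypothesis gives normal forms $\psi', \chi'$ with $\psi \proveq \psi'$ and $\chi \proveq \chi'$, whence $\phi \proveq \psi' \land \chi'$; for $\lnot\phi$, $\R{DM}_\land$ yields $\lnot\phi \proveq \lnot\psi \dis \lnot\chi$, and we replace the two disjuncts by the normal forms of $\lnot\psi$ and $\lnot\chi$ furnished by the hypothesis. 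The cases $\phi = \psi \dis \chi$ and $\phi = \psi \intd \chi$ are analogous, using $\R{DM}_\lor$ and $\R{DM}_\intd$. For $\phi = \Di\psi$ we get $\phi \proveq \Di\psi'$ and, via $\R{Inter}\Di\Bo$, $\lnot\phi \proveq \Bo\lnot\psi \proveq \Bo(\lnot\psi)'$, which is in normal form since its only negations are those occurring inside the normal-form formula $(\lnot\psi)'$ and those making up $\Bo$. The case $\phi = \lnot\psi$ is now trivial: $\phi \proveq (\lnot\psi)'$ is given directly by the hypothesis, while $\lnot\phi = \lnot\lnot\psi \proveq \psi \proveq \psi'$ via $\lnot\lnot\R{E}$. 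The atomic cases $\phi \in \{p, \NE\}$ hold because $p, \lnot p, \NE, \lnot\NE$ are already in normal form.

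Each step that recombines normal forms inside an unnegated context (the conjunctions, the disjunctions, and the arguments of $\Di$ and $\Bo$) is justified by the provable replacement lemma (Lemma \ref{lemma:provable_subformula_replacement}), whose side condition is met precisely because these positions are not in the scope of any negation other than one forming part of $\Bo$; equivalently, one may appeal directly to $\Di\R{Mon}$ and $\Bo\R{Mon}$ for the modal cases. The main---and essentially only---obstacle is the negation case, and the work there is entirely front-loaded into adopting the strengthened induction hypothesis; once that is in place, the provable De Morgan, double-negation, and $\Di$/$\Bo$-interaction rules make every case a one-line derivation.
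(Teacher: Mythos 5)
Your proof is correct and takes essentially the same approach as the paper: the paper's own proof is a one-line appeal to repeated applications of exactly the provable equivalences you invoke ($\lnot\lnot\R{E}$, the $\R{DM}$-rules, and $\R{Inter}\Di\Bo$), with rewriting inside unnegated contexts licensed by the Replacement lemma (Lemma \ref{lemma:provable_subformula_replacement}). Your strengthened induction hypothesis (treating $\phi$ and $\lnot\phi$ simultaneously) simply makes explicit the bookkeeping that the paper's terse ``repeated applications'' leaves implicit.
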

\begin{proof}
    The result follows by repeated applications of $\lnot \lnot \phi\proveq \phi$ (by $\lnot \lnot \R{E}$); $\lnot (\phi \land \psi) \proveq \lnot \phi \lor \lnot \psi$, $\lnot (\phi \lor \psi) \proveq \lnot \phi \land \lnot \psi$ and $\lnot (\phi \intd \psi) \proveq \lnot \phi \land \lnot \psi$ (by the $\mathsf{DM}$-rules); and $\lnot \Di\phi \proveq \Bo\lnot \phi$ (by $\R{Inter}\Di\Bo$).
\end{proof}

 Standard commutativity, associativity and distributivity laws for $\land$ and $\intd$ can be derived in the usual way in our system. Laws involving $\vee$ hold with some restrictions:  

\begin{proposition} \label{prop:commutativity_associativity_distributivity}\ 
\begin{enumerate}
    \item[(i)]
        \makeatletter\def\@currentlabel{(i)}\makeatother \label{prop:commutativity_associativity_distributivity_i}
        $\phi  \land(\psi \dis \chi)\vdash (\phi \land \psi ) \dis (\phi \land \chi) $ if $\phi$ does not contain $\NE$,\\ and $(\phi \land \psi ) \dis (\phi \land \chi)\vdash \phi  \land(\psi \dis \chi)$  if $\phi$ does not contain $\intd$;
    \item[(ii)] 
        \makeatletter\def\@currentlabel{(ii)}\makeatother \label{prop:commutativity_associativity_distributivity_ii} $\phi \dis (\psi \land \chi)\vdash (\phi \dis \psi )\land (\phi \dis \chi) $;
    \item[(iii)] 
        \makeatletter\def\@currentlabel{(iii)}\makeatother $\phi  \intd(\psi \dis \chi)\vdash (\phi \intd \psi ) \dis (\phi \intd \chi) $;
    \item[(iv)] 
        \makeatletter\def\@currentlabel{(iv)}\makeatother $\phi \dis (\psi \intd \chi) \proveq (\phi \dis \psi )\intd (\phi \dis \chi)$.
\end{enumerate}
\end{proposition}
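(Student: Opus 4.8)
The plan is to reduce all four items to a single derivable congruence principle for $\dis$ and then check, case by case, that the stated side conditions are exactly what the restrictions on the $\dis$-rules demand. First I would establish the auxiliary principle that whenever there are subderivations of $\phi'$ from $\phi$ and of $\psi'$ from $\psi$ whose remaining undischarged assumptions contain no $\NE$, then $\phi\dis\psi\vdash\phi'\dis\psi'$. This follows by applying $\dis\R{Mon}$ to transform the right disjunct, then $\R{Com}\dis$ to swap, a second $\dis\R{Mon}$ to transform the (now right) disjunct, and a final $\R{Com}\dis$; the restriction $(\dagger)$ on $\dis\R{Mon}$ is met precisely because the transforming subderivations are $\NE$-free in their side assumptions. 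I would also use freely that $\land\R{I}$, $\land\R{E}$, $\intd\R{I}$ and $\intd\R{E}$ carry no such restrictions.

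With this in hand, (ii) is immediate: from $\phi\dis(\psi\land \chi)$, two applications of $\dis\R{Mon}$, each transforming the right disjunct by an $\land\R{E}$-subderivation with no undischarged assumptions, yield $\phi\dis\psi$ and $\phi\dis\chi$, which $\land\R{I}$ combines. For (iv) the forward half is exactly the rule $\R{Distr}\dis\intd$; for the converse I would apply $\intd\R{E}$ to $(\phi\dis\psi)\intd(\phi\dis\chi)$, and in the $\phi\dis\psi$-branch use $\dis\R{Mon}$ with the subderivation $[\psi]\vdash\psi\intd\chi$ (and symmetrically $[\chi]\vdash\psi\intd\chi$ by $\intd\R{I}$ in the other branch) to reach the common conclusion $\phi\dis(\psi\intd\chi)$; note that $\intd\R{E}$ places no constraint on its conclusion, so the $\intd$ in the target causes no trouble. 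For (iii) I would likewise apply $\intd\R{E}$ to $\phi\intd(\psi\dis\chi)$: in the $\psi\dis\chi$-branch the congruence principle with $[\psi]\vdash\phi\intd\psi$ and $[\chi]\vdash\phi\intd\chi$ (both by $\intd\R{I}$) gives the goal, while in the $\phi$-branch I first weaken by $\dis\R{W}$ to $\phi\dis\phi$ and then apply congruence with the same two $\intd\R{I}$-subderivations.

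Finally, (i) is where the two side conditions become visible, and this is the step I expect to be the most delicate. For the forward direction, $\land\R{E}$ extracts $\phi$ and $\psi\dis\chi$; I then apply the congruence principle to $\psi\dis\chi$ with the subderivations $[\psi],\phi\vdash\phi\land\psi$ and $[\chi],\phi\vdash\phi\land\chi$ (each by $\land\R{I}$). Here $\phi$ occurs as an undischarged side assumption of the transforming subderivations, so restriction $(\dagger)$ forces $\phi$ to be $\NE$-free---exactly the stated hypothesis. For the converse I derive the two conjuncts separately: $\psi\dis\chi$ comes from the congruence principle applied with $[\phi\land\psi]\vdash\psi$ and $[\phi\land\chi]\vdash\chi$ (by $\land\R{E}$, no side assumptions), whereas $\phi$ must be pulled out across the disjunction using $\dis\R{E}$ with the two $\land\R{E}$-subderivations; since $\dis\R{E}$ requires its conclusion to be $\intd$-free by $(\ddagger)$, this is precisely where the hypothesis that $\phi$ contains no $\intd$ is needed, after which $\land\R{I}$ assembles $\phi\land(\psi\dis\chi)$. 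The only real care required throughout is the bookkeeping of these restrictions; the hardest single step is the extraction of $\phi$ in the converse of (i), which genuinely needs $\dis\R{E}$ and hence the $\intd$-freeness of $\phi$.
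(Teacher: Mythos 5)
Your congruence principle is sound provided its conclusion is stated with the leftover side assumptions as additional premises, and with that reading your derivations of (ii), (iii), (iv) and of the right-to-left half of (i) are all correct: in each of those cases the transforming subderivations fed to $\dis\R{Mon}$ have \emph{no} remaining undischarged assumptions (their hypotheses are discharged by $\dis\R{Mon}$ itself or by the surrounding $\intd\R{E}$), so $(\dagger)$ holds vacuously, and your use of $\dis\R{E}$ to extract $\phi$ from $(\phi\land\psi)\dis(\phi\land\chi)$ correctly isolates where $(\ddagger)$, hence the $\intd$-freeness of $\phi$, is needed. (The paper's own proof is omitted as ``Easy,'' so correctness is the only thing to assess.)

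The left-to-right half of (i), however, has a genuine gap, exactly at the step you flag as delicate. The $\phi$ inside your transforming subderivations $[\psi],\phi\vdash\phi\land\psi$ and $[\chi],\phi\vdash\phi\land\chi$ must come from somewhere. If it is obtained by $\land\R{E}$ from the premise \emph{inside} those subderivations, then their undischarged assumption is not $\phi$ but $\phi\land(\psi\dis\chi)$, which contains $\NE$ whenever $\psi$ or $\chi$ does --- and that is the case that matters, since the paper applies this very item with $\NE$-containing disjuncts, e.g.\ to $((\alpha\land\NE)\dis\phi)\land\lnot\alpha$ in Lemma \ref{lemma:provability_results_Bot} and to $(\phi\dis(\psi\land\NE))\land\bot$ in Lemma \ref{lemma:provability_results_NE} --- so $(\dagger)$ is violated, contrary to your claim that the hypothesis on $\phi$ is exactly what $(\dagger)$ demands. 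If instead $\phi$ is a fresh assumption leaf, $(\dagger)$ is satisfied but your tree establishes only the two-premise statement $\phi,\,\psi\dis\chi\vdash(\phi\land\psi)\dis(\phi\land\chi)$; passing to the stated single-premise claim requires composing derivations, and cut is \emph{not} an innocent operation in this system: grafting the $\land\R{E}$-derivation of $\phi$ onto those leaves is precisely the first reading and breaks $(\dagger)$, while appealing to soundness-plus-completeness would be circular, since this proposition feeds into the completeness proof. The missing idea is to discharge the side assumption with the one discharging rule of $\BSMLI$ that carries no side conditions: from the premise derive $\phi$ by $\land\R{E}$ and then $\phi\intd\phi$ by $\intd\R{I}$, and apply $\intd\R{E}$ with both branches assuming $[\phi]$; inside a branch the premise may be used freely to obtain the major premise $\psi\dis\chi$ (no restriction applies to assumptions in $\intd\R{E}$-branches), your two $\dis\R{Mon}$ steps are legitimate because the only undischarged assumption of their subderivations is the $\NE$-free leaf $\phi$, and the final $\intd\R{E}$ discharges those leaves, leaving $\phi\land(\psi\dis\chi)$ as the sole open assumption.
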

\begin{proof}
Easy.
\end{proof}

We often apply the above three basic lemmas (Lemmas \ref{lemma:provable_subformula_replacement}--\ref{prop:commutativity_associativity_distributivity}) without explicit reference to them. Note that all results in this section which do not involve $\intd$ (such as \ref{prop:commutativity_associativity_distributivity_i} and \ref{prop:commutativity_associativity_distributivity_ii} of Proposition \ref{prop:commutativity_associativity_distributivity}) also hold for $\BSMLO$ and $\BSML$. Some such results are given $\BSMLI$-specific derivations in this section. The $\BSMLO$- and $\BSML$-derivations can be found in the sequel.
    
    We now move on to prove the completeness theorem for the system. Our strategy consists in showing that each formula is provably equivalent to one in disjunctive normal form---as stated in the following lemma---and then making use of the semantic and proof-theoretic properties of formulas in this form.
    
\begin{lemma} \label{lemma:normal_form_provable_equivalence_BSMLI}
    For each $\phi\in \BSMLI$, each $k \geq md(\phi)$, and each finite $\mathsf{X}\supseteq \mathsf{P}(\phi)$, there is some property $\PPP$ over $\mathsf{X}$ such that:
    $$\phi \proveq \bigintdd_{s\in \PPP}\theta^{\mathsf{X},k}_s.$$
\end{lemma}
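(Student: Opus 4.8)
The plan is to argue by induction on the structure of $\phi$, which by Proposition~\ref{prop:negation_normal_form_provable_BSMLI} we may assume to be in negation normal form. I would prove the statement with $k\geq md(\phi)$ and the finite set $\mathsf{X}\supseteq\mathsf{P}(\phi)$ both universally quantified inside the induction: this uniformity is exactly what makes the modal cases work, since passing from $\Di\psi$ or $\Bo\psi$ to $\psi$ lowers the modal depth by one and the induction hypothesis may then be applied to $\psi$ at depth $k-1\geq md(\psi)$. Soundness (Theorem~\ref{theorem:soundness_BSMLI}) together with the semantic normal form (Theorem~\ref{theorem:characterization_BSMLI}) already pins down the target: $\PPP$ must be a set of representatives, one per $k$-bisimulation type of state supporting $\phi$, and there are only finitely many such types since $\mathsf{X}$ is finite. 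Thus the whole content of the lemma is the \emph{derivability} of the equivalence, not the choice of $\PPP$.

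Before the induction I would record three provable facts about characteristic formulas at a fixed depth $k$. First, a \emph{join} identity $\theta^k_s\dis\theta^k_t\proveq\theta^k_{s\cup t}$, immediate from associativity, commutativity and idempotence of $\dis$ (via $\dis\R{W}$ and $\dis\R{E}$) since $\theta^k_s=\bigdis_{w\in s}(\chi^k_w\land\NE)$. Second, a \emph{meet} identity: $\theta^k_s\land\theta^k_t\proveq\theta^k_s$ if $s\bisim_k t$, and $\theta^k_s\land\theta^k_t\proveq\Bot$ otherwise. Third, that a $\Bot$-disjunct may be deleted from a global disjunction, using $\Bot\vdash\psi$ (Lemma~\ref{lemma:Bot_E}) and the $\intd$-rules. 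With these the propositional cases follow the propositional normal-form argument of \cite{yang2017}: for $\phi\intd\psi$ one concatenates the two global disjunctions supplied by the induction hypothesis; for $\phi\land\psi$ one applies the hypothesis to both conjuncts at the common depth $k$, distributes $\land$ over $\intd$, and collapses each $\theta^k_s\land\theta^k_t$ by the meet identity; for $\phi\dis\psi$ one distributes $\dis$ over $\intd$ (Proposition~\ref{prop:commutativity_associativity_distributivity}) and merges the resulting local disjunctions by the join identity. The literal base cases are handled directly, using $\NE\R{I}$ and the $\lnot\NE$-rule for the $\NE$-literals.

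The modal cases are the crux. For $\phi=\Di\psi$ the induction hypothesis gives $\psi\proveq\bigintd_{t}\theta^{k-1}_t$ for $t$ ranging over a finite set of states; replacement (Lemma~\ref{lemma:provable_subformula_replacement}, applicable as this occurrence of $\psi$ is not under $\lnot$) yields $\Di\psi\proveq\Di\bigintd_t\theta^{k-1}_t$, and iterating the conversion rule $\R{Conv}\Di\intd\dis$ rewrites this as the local disjunction $\bigdis_t\Di\theta^{k-1}_t$; the $\Bo$-case is identical via $\R{Conv}\Bo\intd\dis$. It then suffices to show, for a single state $t$, that $\Di\theta^{k-1}_t$ (respectively $\Bo\theta^{k-1}_t$) is provably equivalent to a global disjunction of depth-$k$ characteristic formulas. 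Granting this, one substitutes, distributes $\dis$ over $\intd$, and merges the depth-$k$ local disjunctions by the join identity to reach the required form $\bigintd_{s\in\PPP}\theta^k_s$.

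This single-state modal reduction is the main obstacle, and it is where the bespoke modal rules are designed to be used. The target is dictated by the recursive shape of the Hintikka formulas, $\chi^k_w\equiv\chi^{k-1}_w\land\bigwedge_{v\in R[w]}\Di\chi^{k-1}_v\land\Bo\bigdis_{v\in R[w]}\chi^{k-1}_v$: a depth-$k$ world type is fixed by which depth-$(k-1)$ successor types it must realize and by the fact that it realizes no others. Accordingly I would establish the two inclusions for $\Di\theta^{k-1}_t$ and $\Bo\theta^{k-1}_t$ using $\Di\R{Join}$ and $\Bo\Di\R{Join}$ to graft the witnessing successor states into a single accessible state realizing all the types occurring in $t$, $\Di\R{Sep}$ and $\Bo\R{Inst}$ to read off the individual commitments $\Di\chi^{k-1}_v$ and to pass from $\Bo$ to $\Di$ when a nonempty witness is present, and $\Di\R{Mon}$, $\Bo\R{Mon}$ together with $\R{Inter}\Di\Bo$ to rewrite underneath the modalities. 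The laborious part is purely the bookkeeping: tracking precisely which successor types occur and matching them against the $\chi^k_w\land\NE$ blocks of the candidate $\theta^k_s$; everything surrounding it is propositional reshuffling of global and local disjunctions.
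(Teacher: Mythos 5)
Your overall architecture coincides with the paper's: induction on negation normal form, the same three preliminary facts about characteristic formulas (the paper's Lemmas~\ref{lemma:charf_provable_equivalence}, \ref{lemma:not_bisimilar_implies_charfs_incompatible}, and \ref{lemma:Bot_E}), the same treatment of $\land$, $\dis$, and $\intd$, and the same first move in the modal case (replacement plus $\R{Conv}\Di\intd\dis$ to reach $\bigdis_t \Di\theta^{k-1}_t$). The gap lies in the step you yourself call the crux, and it is not mere bookkeeping. To finish, you must derive inside the system $\Di\theta^{k-1}_t \vdash \bigintd_{s\in\PPP}\theta^{k}_s$ (and likewise for the literal base cases, e.g.\ $p\vdash\bigintd_{s\in\PPP}\theta^{k}_s$, which you also underestimate). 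Since $\Di\theta^{k-1}_t$ and $p$ are flat, they entail no single disjunct $\theta^k_s$, so $\intd\R{I}$ is inapplicable; and none of the rules you list for this step ($\Di\R{Sep}$, $\Bo\R{Inst}$, $\Di\R{Join}$, $\Bo\Di\R{Join}$, the monotonicity rules, $\R{Inter}\Di\Bo$) introduces $\intd$ at all. The only mechanism in the calculus that manufactures a global disjunction which is not entailed disjunct-wise is $\NE\R{I}$ ($\vdash\bot\intd\NE$) pumped through $\R{Distr}\dis\intd$, and orchestrating this is precisely the paper's Lemma~\ref{lemma:BSMLI_provability_results_nf} (built on Lemma~\ref{lemma:dis_NE_elimination}): the provable conversion of a local disjunction of classical formulas $\bigdis_{i}\alpha_i$ into the global disjunction $\bigintd_{J\subseteq I}\bigdis_{j\in J}(\alpha_j\land\NE)$. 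You never state this conversion, yet both your base cases and your modal cases stand or fall with it.

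Relatedly, you miss the structural shortcut that makes the paper's ``bookkeeping'' tractable: $\Di\theta^{k-1}_t$ and $\Bo\theta^{k-1}_t$ are flat, hence equivalent to classical formulas, and the paper proves this equivalence in the system (Lemma~\ref{lemma:modal_charfs_provably_classical}, via the free-choice equivalences of Lemma~\ref{lemma:fc}, which use exactly the modal rules you cite): $\Di\theta^{k-1}_t\proveq\Di\chi^{k-1}_t\land\bigwedge_{w\in t}\Di\chi^{k-1}_w$. After that reduction, all tracking of successor types is delegated to classical completeness (Proposition~\ref{prop:classical_completeness}) combined with the conversion lemma, packaged as Lemma~\ref{lemma:normal_form_provable_equivalence_classical}: every classical formula of modal depth $\leq k$ is provably equivalent to some $\bigintd_{s\in\PPP}\theta^{k}_s$. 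Your plan to instead match successor types directly against the $\chi^k_w\land\NE$ blocks of a candidate $\theta^k_s$ would amount to re-deriving this fragment of classical modal completeness rule by rule, and your sketch gives no indication of how the direction that produces the global disjunction could be carried out with the rules you name. The repair is to split your single-state claim in two: first reduce $\Di\theta^{k-1}_t$ (resp.\ $\Bo\theta^{k-1}_t$) to a classical formula, then invoke classical completeness together with the local-to-global conversion; with that insertion your argument becomes the paper's proof.
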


The proof of the above lemma is involved. We withhold this proof for now, and first present the main completeness argument.

 As a first step, let us note that our system is a conservative extension of the smallest normal logic $\mathbf{K}$---it is easy to see by inspecting our rules, that, for instance, the axioms and rules of the Hilbert-style system for $\mathbf{K}$ are derivable (see also \cite{yang20172} for a detailed proof of the analogous fact for a similar system). Therefore, since $\mathbf{K}$ is complete with respect to the class of all Kripke models, the same holds for the classical fragment of our system:

\begin{proposition}[Classical completeness] \label{prop:classical_completeness}
    For any $\Delta\cup \{\alpha\}\subseteq \ML:\Delta\vDash \alpha$ iff $\Delta\vdash \alpha$.
\end{proposition}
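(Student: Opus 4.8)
The plan is to prove Proposition~\ref{prop:classical_completeness} by reducing both directions to known facts about classical modal logic and the machinery already established in the excerpt. The soundness direction ($\Delta\vdash\alpha$ implies $\Delta\vDash\alpha$) is immediate from Theorem~\ref{theorem:soundness_BSMLI}, so the real content is completeness: from $\Delta\vDash\alpha$ derive $\Delta\vdash\alpha$ using only classical premises and a classical conclusion. By Fact~\ref{fact:ML_team_classical_correspondence} and the remark following it, state-based entailment between $\ML$-formulas coincides with ordinary world-based entailment, so $\Delta\vDash\alpha$ is just the standard semantic consequence of classical modal logic.

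\medskip

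First I would reduce to the finitary case. By compactness of classical modal logic (or, if one prefers a self-contained route, by the finite model property established in Proposition~\ref{prop:finite_model_property} together with standard arguments), $\Delta\vDash\alpha$ yields a finite $\Delta_0\subseteq\Delta$ with $\Delta_0\vDash\alpha$; since derivability is monotone in the premise set, it suffices to show $\Delta_0\vdash\alpha$. Writing $\delta$ for the conjunction $\bigwedge\Delta_0$, this is equivalent to $\vDash\delta\to\alpha$ in the usual classical sense, i.e.\ $\delta\to\alpha$ (or rather the classical implication, which here we would express as $\lnot\delta\lor\alpha$ using that these are $\ML$-formulas and Fact~\ref{fact:ML_team_classical_correspondence} makes $\lor$ behave classically on flat formulas) is a valid $\ML$-formula. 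The key claim is then: every valid $\ML$-formula, and more generally every classically valid consequence among $\ML$-formulas, is derivable in our system.

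\medskip

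To establish this claim I would appeal to the fact, noted in the text just before the proposition, that the axioms and rules of the Hilbert-style system $\mathbf{K}$ are all derivable in our natural deduction system when restricted to classical formulas. Concretely, one verifies that modus ponens is available (via the $\ML$-behaviour of the connectives and the $\lnot$-rules), that all classical propositional tautologies are derivable (using the constrained $\dis$, $\land$, and $\lnot$ rules, which are unconstrained in their effect on $\ML$-formulas since these contain no $\NE$ and no $\intd$), and crucially that the distribution axiom $\Bo(\alpha\to\beta)\to(\Bo\alpha\to\Bo\beta)$ and the necessitation rule are derivable --- necessitation from $\Bo\R{Mon}$, and the $\mathbf{K}$-axiom from $\Bo\R{Mon}$ together with propositional reasoning. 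Since $\mathbf{K}$ is complete for classical modal logic, derivability of all its axioms and rules gives us $\vdash\alpha$ for every valid $\alpha$, and hence $\Delta_0\vdash\alpha$ by discharging the premises in $\delta$ through the $\land$- and $\lnot$-rules.

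\medskip

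The main obstacle I anticipate is bookkeeping around the side conditions on the rules rather than any genuine difficulty: because $\dis\R{I}$, $\dis\R{E}$, $\lnot\R{I}$, and the $\Bo$-rules all carry restrictions tied to $\NE$ and $\intd$, I must check that these side conditions are automatically met whenever all formulas involved are classical. This is exactly where the hypothesis $\Delta\cup\{\alpha\}\subseteq\ML$ does its work: classical formulas contain neither $\NE$ nor $\intd$, so the proviso "the undischarged assumptions do not contain $\NE$" and "$\chi$ does not contain $\intd$" are vacuously satisfied throughout the simulation of $\mathbf{K}$. Verifying this uniformly --- i.e.\ confirming that the derivations witnessing the $\mathbf{K}$-axioms and rules stay within the classical fragment and never trip a side condition --- is the one place demanding care, and I would state it as the crux of the argument before citing $\mathbf{K}$-completeness to conclude.
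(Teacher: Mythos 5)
Your proposal is correct and takes essentially the same route as the paper: the paper justifies this proposition precisely by observing that the axioms and rules of the Hilbert-style system $\mathbf{K}$ are derivable in the natural deduction system once one notes that the side conditions on the $\dis$-, $\lnot$-, and modal rules are vacuously satisfied for classical formulas (which contain neither $\NE$ nor $\intd$), deferring the detailed verification to cited prior work. One caveat: your parenthetical suggestion that compactness could instead be obtained from the finite model property ``together with standard arguments'' is not valid in general (a logic can enjoy the FMP and fail compactness), but your primary appeal to the compactness of classical modal logic is sound, so the argument stands.
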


As an easy corollary, the $k$-th Hintikka formulas of $k$-bisimilar pointed models are provably equivalent.

\begin{lemma} \label{lemma:hintikka_provable_equivalence}\
\begin{enumerate}
    \item[(i)] 
        \makeatletter\def\@currentlabel{(i)}\makeatother \label{lemma:hintikka_provable_equivalence_world} If $w \bisim_k w'$, then $ \chi^{k}_{w}\proveq\chi^{k}_{w'}$.
    
    \item[(ii)] 
        \makeatletter\def\@currentlabel{(ii)}\makeatother \label{lemma:hintikka_provable_equivalence_state} If $s \bisim_k s'$, then $\chi^k_{s} \proveq \chi^k_{s'}$.
\end{enumerate}
\end{lemma}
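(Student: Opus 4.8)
The plan is to derive both items from the already-established Classical completeness (Proposition \ref{prop:classical_completeness}), exploiting the fact that all the Hintikka formulas for worlds and for states are classical, i.e.\ members of $\ML$. The guiding idea throughout is: prove the relevant \emph{semantic} equivalence first, then upgrade it to provable equivalence using that semantic and syntactic consequence coincide on $\ML$-formulas.

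For \ref{lemma:hintikka_provable_equivalence_world}, I would simply combine Theorem \ref{theorem:hintikka_bisimulation} with classical completeness. If $w \bisim_k w'$, then Theorem \ref{theorem:hintikka_bisimulation} gives $\chi^k_w \equiv \chi^k_{w'}$, i.e.\ both $\chi^k_w \vDash \chi^k_{w'}$ and $\chi^k_{w'} \vDash \chi^k_w$. Since $\chi^k_w,\chi^k_{w'} \in \ML$, Proposition \ref{prop:classical_completeness} turns each of these entailments into a derivation, giving $\chi^k_w \vdash \chi^k_{w'}$ and $\chi^k_{w'} \vdash \chi^k_w$, that is $\chi^k_w \proveq \chi^k_{w'}$.

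For \ref{lemma:hintikka_provable_equivalence_state}, the same strategy applies once the state Hintikka formulas are shown semantically equivalent: since $\chi^k_s = \bigdis_{w\in s}\chi^k_w \in \ML$ and likewise $\chi^k_{s'}\in \ML$, it suffices to establish $\chi^k_s \equiv \chi^k_{s'}$ and then invoke Proposition \ref{prop:classical_completeness} exactly as above. To see the semantic equivalence I would use Proposition \ref{prop:characteristic_formulas_characterize_states}\ref{prop:characteristic_formulas_characterize_states_classical}: for any $M'',s''$ we have $s'' \vDash \chi^k_s$ iff some $t \subseteq s$ satisfies $t \bisim_k s''$. Given $s \bisim_k s'$, from such a $t$ one builds a partner $t' \subseteq s'$ by choosing, for each $w \in t$, a world $w' \in s'$ with $w \bisim_k w'$ (available by the forth condition of $s\bisim_k s'$); the resulting $t'$ satisfies $t \bisim_k t'$, so by transitivity of $\bisim_k$ we get $t' \bisim_k s''$, whence $s'' \vDash \chi^k_{s'}$. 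The symmetric argument (using the back condition) gives the converse inclusion, so $\chi^k_s \equiv \chi^k_{s'}$.

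I do not expect a genuine obstacle here; the only point needing care is the construction of the partner substate $t'$ and the implicit appeal to symmetry and transitivity of state bisimulation (which, being built from the equivalence relation $\bisim_k$ on worlds, is itself an equivalence). Alternatively, \ref{lemma:hintikka_provable_equivalence_state} can be obtained purely proof-theoretically from \ref{lemma:hintikka_provable_equivalence_world}: by the forth/back conditions together with item \ref{lemma:hintikka_provable_equivalence_world}, the disjuncts of $\chi^k_s$ and of $\chi^k_{s'}$ coincide up to $\proveq$, so provable commutativity ($\R{Com}\dis$), associativity, and idempotence of $\dis$ on classical disjuncts, combined with Replacement (Lemma \ref{lemma:provable_subformula_replacement}), rearrange one formula into the other.
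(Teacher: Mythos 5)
Your proposal is correct and takes essentially the same approach as the paper: item \ref{lemma:hintikka_provable_equivalence_world} is proved identically (Theorem \ref{theorem:hintikka_bisimulation} plus Proposition \ref{prop:classical_completeness}), and item \ref{lemma:hintikka_provable_equivalence_state} likewise reduces to a semantic equivalence between the classical formulas $\chi^k_s$ and $\chi^k_{s'}$, which Proposition \ref{prop:classical_completeness} upgrades to provable equivalence. The only (immaterial) difference is that the paper obtains the semantic equivalence in \ref{lemma:hintikka_provable_equivalence_state} disjunct-by-disjunct from item \ref{lemma:hintikka_provable_equivalence_world}---essentially your alternative route---whereas your primary argument routes it through Proposition \ref{prop:characteristic_formulas_characterize_states}\ref{prop:characteristic_formulas_characterize_states_classical} together with symmetry and transitivity of $\bisim_k$.
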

\begin{proof} For \ref{lemma:hintikka_provable_equivalence_world}, we have $\chi^k_w\equiv \chi^k_{w'}$ by Theorem \ref{theorem:hintikka_bisimulation}. Thus $\chi^k_w\proveq \chi^k_{w'}$ follows from Proposition \ref{prop:classical_completeness}. Item \ref{lemma:hintikka_provable_equivalence_state} follows from \ref{lemma:hintikka_provable_equivalence_world} and Proposition \ref{prop:classical_completeness}.
\end{proof}

Note that, strictly speaking, Hintikka formulas are only guaranteed to be defined for finite pointed models; for an infinite pointed model, we in effect choose some finite $k$-bisimilar pointed model and treat the $k$-th Hintikka formula of this finite model as that of the infinite model. The lemma above ensures that our choice of finite representative does not matter proof-theoretically and hence that our use of these representatives in this section is admissible. It follows from results we show that similar provable equivalence results hold for all the characteristic formulas we make use of, and hence that in all cases the use of these representatives is admissible. We now show that the strong Hintikka formulas of two bisimilar pointed models are likewise provably equivalent.

\begin{lemma} \label{lemma:charf_provable_equivalence}
   If  $s \bisim_k s'$, then  $\theta^{k}_{s}\proveq\theta^{k}_{s'}$.
\end{lemma}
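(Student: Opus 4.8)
The plan is to mirror the proof of Lemma \ref{lemma:hintikka_provable_equivalence}, lifting the world-level provable equivalence of Hintikka formulas to the state level while now keeping track of the $\NE$-conjuncts. Since $\proveq$ is symmetric, rather than arguing $\theta^k_s\vdash\theta^k_{s'}$ and its converse separately, I would show that both $\theta^k_s$ and $\theta^k_{s'}$ reduce, provably, to one common canonical disjunction, from which the result follows by transitivity.

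First I would dispose of the degenerate case. If $s=\emptyset$ then $\theta^k_s=\bot$, and the [back] clause of $s\bisim_k s'$ forces $s'=\emptyset$ as well (any $w'\in s'$ would require a $k$-bisimilar world in the empty $s$); hence $\theta^k_{s'}=\bot=\theta^k_s$. The case $s'=\emptyset$ is symmetric. So I may assume $s,s'\neq\emptyset$.

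Next I would set up the canonical form. Since there are only finitely many non-equivalent $k$-th Hintikka formulas over $\mathsf{X}$, the worlds of any state fall into finitely many $k$-bisimulation types; let $T$ be the set of types occurring in $s$, and fix for each $\tau\in T$ a representative formula $\chi_\tau$ (the Hintikka formula of some world of that type). The [forth] and [back] clauses of $s\bisim_k s'$ say precisely that the worlds of $s$ and of $s'$ realize the same types, so $T$ is also the set of types occurring in $s'$. I then claim
$$\theta^k_s\proveq\bigvee_{\tau\in T}(\chi_\tau\land\NE)\proveq\theta^k_{s'},$$
which yields the lemma. For the left equivalence: each $w\in s$ of type $\tau(w)$ is $k$-bisimilar to a world of that type, so $\chi^k_w\proveq\chi_{\tau(w)}$ by Lemma \ref{lemma:hintikka_provable_equivalence}\ref{lemma:hintikka_provable_equivalence_world}, whence $\chi^k_w\land\NE\proveq\chi_{\tau(w)}\land\NE$. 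Replacing each disjunct accordingly---legitimate by Replacement (Lemma \ref{lemma:provable_subformula_replacement}), since each such occurrence sits within the scope of $\dis$ and $\land$ only, never of $\lnot$---gives $\theta^k_s\proveq\bigvee_{w\in s}(\chi_{\tau(w)}\land\NE)$. This disjunction contains, for every $\tau\in T$, at least one and possibly several copies of $\chi_\tau\land\NE$; collapsing the repetitions to a single copy each produces exactly $\bigvee_{\tau\in T}(\chi_\tau\land\NE)$. The right equivalence is identical with $s'$ in place of $s$.

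The one genuinely delicate point---and the main obstacle---is justifying the reordering and collapsing of disjuncts, since $\dis$ is in general neither idempotent nor freely associative in these logics, owing to the side conditions on the $\dis$-rules. Here, however, every disjunct in play has the form $\chi_\tau\land\NE$ with $\chi_\tau\in\ML$, hence is $\intd$-free, and the target formulas are $\intd$-free as well. I would therefore first record the structural facts for $\intd$-free formulas: commutativity (the rule $\R{Com}\dis$), associativity (derivable in the standard way from $\dis\R{W}$, $\dis\R{I}$, $\dis\R{E}$, and $\R{Com}\dis$), and idempotence $\phi\dis\phi\proveq\phi$. The direction $\phi\vdash\phi\dis\phi$ is $\dis\R{W}$; and $\phi\dis\phi\vdash\phi$ is an instance of $\dis\R{E}$ whose two minor subderivations are trivial---their only assumption, $\phi$, is discharged, so no undischarged assumption contains $\NE$, meeting $(\dagger)$---and whose conclusion $\phi$ is $\intd$-free, meeting $(\ddagger)$. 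Applying these facts inside the large disjunction via Replacement then lets me merge the duplicate disjuncts as claimed, completing the reduction to canonical form.
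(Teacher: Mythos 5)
Your overall route is the paper's: dispose of the empty case, observe that the [forth] and [back] clauses force $s$ and $s'$ to realize exactly the same $k$-bisimilarity types, and then lift the world-level equivalence $\chi^k_w\proveq\chi^k_{w'}$ of Lemma \ref{lemma:hintikka_provable_equivalence} to the state level by rewriting the disjuncts of $\theta^k_s$ one at a time. Your use of Replacement (Lemma \ref{lemma:provable_subformula_replacement}) for that rewriting is legitimate (the disjuncts are not under $\lnot$), and your derivation of idempotence $\phi\dis\phi\proveq\phi$ for $\intd$-free $\phi$ (via $\dis\R{W}$, and $\dis\R{E}$ with trivial minor premises) is correct. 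The genuine gap sits exactly at the point you flag as the main obstacle: the claim that associativity of $\dis$ is \emph{derivable in the standard way} from $\dis\R{W}$, $\dis\R{I}$, $\dis\R{E}$, and $\R{Com}\dis$. It is not, for precisely the formulas you need it for. The standard derivation of $(\phi\dis\psi)\dis\chi\vdash\phi\dis(\psi\dis\chi)$ proceeds by $\dis\R{E}$ on the outer disjunction, and inside \emph{both} case-subderivations it must introduce fresh disjuncts by $\dis\R{I}$ (e.g.\ $\chi\vdash\psi\dis\chi$ and $\psi\dis\chi\vdash\phi\dis(\psi\dis\chi)$); but $\dis\R{I}$ carries the side condition that the introduced disjunct be $\NE$-free, and here every disjunct has the form $\chi_\tau\land\NE$. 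Nor can $\dis\R{W}$ stand in for these steps: it only duplicates the premise, and $\psi\dis\chi$ is not a consequence of $\phi$. Finally, you cannot retreat to ``associativity is valid, hence derivable'', since this lemma is itself an ingredient of the completeness proofs---that would be circular. Without associativity you cannot bring non-sibling duplicates together, so the collapse to your canonical disjunction is unjustified as written.

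The way to repair this is to never reassociate a disjunction you are handed, which is how the paper's argument is arranged: it works in the \emph{opposite} direction from your collapsing. For each disjunct $\chi^k_w\land\NE$ of $\theta^k_s$, the paper \emph{builds} the required disjunction $\theta^k_{s'_w}$ (over the block $s'_w$ of worlds of $s'$ bisimilar to $w$) from scratch, by repeated $\dis\R{W}$---duplication is unrestricted, and you choose the association as you build---followed by $\dis\R{Mon}$ to rewrite the copies using Lemma \ref{lemma:hintikka_provable_equivalence}; duplicates are thus only ever created and rewritten, never merged across an inherited tree structure. Alternatively, and most cleanly, invoke the finiteness convention of Definition \ref{def:characteristic_formulas_states} in a sharpened form: take $\theta^k_s$ to contain one disjunct per $\bisim_k$-class of worlds of $s$, listed in a fixed canonical order of the finitely many pairwise non-equivalent Hintikka formulas over $\mathsf{X}$. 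Since $s$ and $s'$ realize the same classes, $\theta^k_s$ and $\theta^k_{s'}$ then have literally the same disjunctive shape, corresponding disjuncts are provably equivalent by Lemma \ref{lemma:hintikka_provable_equivalence} and the $\land$-rules, and Replacement alone finishes the proof---with no appeal to commutativity, associativity, or idempotence of $\dis$ at all.
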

\begin{proof}
Suppose $s\bisim_k s'$. The two directions are symmetric; we only give the detailed proof for $\theta_s^k\vdash\theta_{s'}^k$, i.e., $\bigvee_{w\in s}(\chi_w^k\wedge \NE)\vdash \bigvee_{w'\in s'}(\chi_{w'}^k\wedge \NE)$.  
If $s=\emptyset$, then clearly $s'=\emptyset$, so that $\theta^k_s=\theta^k_{s'}=\bot$ and $\theta^k_{s}\vdash\theta^k_{s'}$.
 Otherwise if $s\neq \emptyset$, then for any $w\in s$, there exists by assumption a (nonempty) substate $s'_w\subseteq s'$ such that $w\bisim_k w'$ for all $w'\in s'_w$. We may assume that $s'_w$ is the maximal such substate. By Lemma \ref{lemma:hintikka_provable_equivalence}, we have that for any $w'\in s'_w$, $\chi_w^k\vdash \chi_{w'}^k$ implying $\chi_w^k\wedge\NE\vdash \chi_{w'}^k\wedge\NE$. Now, by repeatedly applying the rule $\dis \R{W}$, we obtain $\chi_{w'}^k\wedge\NE\vdash \bigvee_{u\in s'_w}(\chi_{w'}^k\wedge\NE)$. Since $w'$ is also $k$-bisimilar to every element $u\in s'_w$, we have by Lemma \ref{lemma:hintikka_provable_equivalence} again that $\chi_{w'}^k\wedge\NE\vdash \chi_{u}^k\wedge\NE$. Thus, by $\dis \R{Mon}$, we obtain $ \bigvee_{u\in s'_w}(\chi_{w'}^k\wedge\NE)\vdash \bigvee_{u\in s'_w}(\chi_{u}^k\wedge\NE)$. Putting all these together, we derive $\chi_w^k\wedge \NE\vdash \bigvee_{u\in s'_w}(\chi_{u}^k\wedge\NE)$; that is, $\chi_w^k\wedge \NE\vdash\theta_{s'_w}$.

Hence, by repeatedly applying $\dis \R{Mon}$ and $\dis \R{E}$, we obtain $\bigvee_{w\in s}(\chi_w^k\wedge \NE)\vdash \bigvee_{w\in s}\theta_{s'_w}\\\vdash \theta_{\bigcup_{w\in s}s'_w}$. Observe that since  $s\bisim_k s'$, we have $s'=\bigcup_{w\in s}s'_w$, whereby $\theta_{\bigcup_{w\in s}s'_w}=\theta_{s'}$. Finally, we conclude that $\theta_s\vdash \theta_{s'}$.
\end{proof}

On the other hand, if two states are not $k$-bisimilar, their strong Hintikka formulas are contradictory:

\begin{lemma} \label{lemma:not_bisimilar_implies_charfs_incompatible} If $s\nbisim_k s'$, then $ \theta^{k}_{s},\theta^{k}_{s'}\vdash \Bot$.
\end{lemma}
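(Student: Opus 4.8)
The plan is to reduce the claim to a purely classical incompatibility of Hintikka formulas and then transport it through the team-semantic connectives by distribution. Since the desired conclusion $\theta^k_s,\theta^k_{s'}\vdash\Bot$ is symmetric in $s$ and $s'$, and since $s\nbisim_k s'$ means exactly that the forth condition fails for $(s,s')$ or for $(s',s)$, I may assume without loss of generality that there is some $w_0\in s$ with $w_0\nbisim_k w'$ for every $w'\in s'$ (in particular $s\neq\emptyset$). The world $w_0$ contributes a disjunct $\chi^k_{w_0}\land\NE$ to $\theta^k_s$, and using the derivable commutativity and associativity of $\dis$ (Proposition \ref{prop:commutativity_associativity_distributivity}) I would first rewrite $\theta^k_s\proveq(\chi^k_{w_0}\land\NE)\dis\theta'$ so as to isolate this disjunct.

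Two ingredients feed the argument. First, I would show $\theta^k_{s'}\vdash\chi^k_{s'}$, i.e.\ that the premise $\theta^k_{s'}$ yields its purely classical core: each disjunct $\chi^k_{w'}\land\NE$ proves $\chi^k_{w'}$ by $\land\R{E}$, hence proves $\chi^k_{s'}=\bigvee_{u'\in s'}\chi^k_{u'}$ by repeated $\dis\R{I}$ (legitimate since every $\chi^k_{u'}$ is $\NE$-free), so iterated $\dis\R{E}$ delivers $\chi^k_{s'}$ (this application is licit because $\chi^k_{s'}$ is $\intd$-free and the subderivations carry no undischarged $\NE$-assumption; when $s'=\emptyset$ the claim is the trivial $\bot\vdash\bot$). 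Second, I would record the classical fact $\chi^k_{w_0},\chi^k_{s'}\vdash\bot$: at any world $v$, supporting both would force $v\bisim_k w_0$ and $v\bisim_k w'$ for some $w'\in s'$, hence $w_0\bisim_k w'$ (Theorem \ref{theorem:hintikka_bisimulation}), contradicting the choice of $w_0$; thus the two formulas are jointly unsatisfiable and derivability follows from classical completeness (Proposition \ref{prop:classical_completeness}).

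To combine these I would conjoin $\chi^k_{s'}$ (derived from $\theta^k_{s'}$) with $\theta^k_s\proveq(\chi^k_{w_0}\land\NE)\dis\theta'$ and distribute. Since $\chi^k_{s'}$ is $\NE$-free, Proposition \ref{prop:commutativity_associativity_distributivity}\ref{prop:commutativity_associativity_distributivity_i} gives
\[\chi^k_{s'}\land\theta^k_s\vdash(\chi^k_{s'}\land\chi^k_{w_0}\land\NE)\dis(\chi^k_{s'}\land\theta').\]
The first disjunct proves $\Bot$: from $\chi^k_{w_0},\chi^k_{s'}\vdash\bot$ we obtain $\bot$, the conjunct $\NE$ survives, and $\land\R{I}$ yields $\bot\land\NE=\Bot$. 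Finally I would replace this disjunct by $\Bot$ inside the disjunction — using $\R{Com}\dis$ to move it into second position, $\dis\R{Mon}$ to rewrite it to $\Bot$ (here the discharged assumption may contain $\NE$, while no undischarged $\NE$-assumption remains, so the side condition is met), and $\R{Com}\dis$ once more — obtaining $\Bot\dis(\chi^k_{s'}\land\theta')$, whence $\Bot\R{Ctr}$ gives $\Bot$.

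The main obstacle is precisely what forces the distributivity step: one cannot simply run $\dis\R{E}$ on $\theta^k_s$ against the premise $\theta^k_{s'}$, both because $\theta^k_{s'}$ is an $\NE$-containing undischarged assumption (barred by the side condition on $\dis\R{E}$) and, more essentially, because non-bisimilarity supplies only the single bad world $w_0$, so the disjuncts of $\theta^k_s$ coming from the other worlds of $s$ need not contradict $\theta^k_{s'}$ at all. The resolution is to strip $\theta^k_{s'}$ down to its downward-closed classical core $\chi^k_{s'}$ and push it, via distribution, into exactly the $w_0$-disjunct of $\theta^k_s$, where the classical contradiction lives.
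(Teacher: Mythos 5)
Your proof is correct and follows essentially the same route as the paper's: both arguments assume w.l.o.g.\ a world $w_0\in s$ not $k$-bisimilar to any world of $s'$, strip the $\NE$-conjuncts from $\theta^k_{s'}$ to obtain the classical $\chi^k_{s'}$, invoke classical completeness (Proposition \ref{prop:classical_completeness}) to get the incompatibility with $\chi^k_{w_0}$, distribute this $\NE$-free classical formula over the isolated disjunct $(\chi^k_{w_0}\land\NE)\dis\theta'$ of $\theta^k_s$, and finish with $\Bot\R{Ctr}$. The only difference is packaging: the paper factors the classical step into Lemma \ref{lemma:not_bisimilar_implies_hintikka_incompatible} (deriving $\lnot\chi^k_{w_0}$ from $\chi^k_{s'}$) and the distribution step into Lemma \ref{lemma:provability_results_Bot}, whereas you inline both and derive $\bot$ directly from the pair $\chi^k_{w_0},\chi^k_{s'}$.
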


To prove this fact, we need two lemmas. The first is an analogous result for Hintikka formulas for worlds:

\begin{lemma} \label{lemma:not_bisimilar_implies_hintikka_incompatible}
    If $w\nbisim_k w'$ for all $w'\in s'$, then $\bigdis_{w'\in s'} \chi^{k}_{w'} \vdash \bnot \chi^{k}_{w}$.
\end{lemma}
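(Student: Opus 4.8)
The plan is to reduce the claim to the classical completeness theorem (Proposition \ref{prop:classical_completeness}), exploiting the fact that both the disjunction $\bigdis_{w' \in s'} \chi^k_{w'}$ and the target $\lnot \chi^k_w$ are classical formulas. Indeed, each $\chi^k_{w'}$ lies in $\ML$, the local disjunction $\dis$ of $\ML$-formulas is again an $\ML$-formula, and the bilateral negation of an $\ML$-formula is an $\ML$-formula; hence $\bigdis_{w' \in s'} \chi^k_{w'}, \lnot \chi^k_w \in \ML$. (As elsewhere in the paper, since there are only finitely many non-equivalent Hintikka formulas over the fixed $\mathsf{X}$, the disjunction may be taken to be finite and so the formula is well-defined.) By Proposition \ref{prop:classical_completeness} it therefore suffices to prove the semantic entailment $\bigdis_{w' \in s'} \chi^k_{w'} \vDash \lnot \chi^k_w$.

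To establish this entailment I would use flatness of classical formulas (Facts \ref{fact:NE_intd_closure_properties} and \ref{fact:ML_team_classical_correspondence}) to reduce to single worlds: it suffices to show that every world $u$ with $u \vDash \bigdis_{w' \in s'} \chi^k_{w'}$ also satisfies $u \vDash \lnot \chi^k_w$. At a single world a local disjunction of flat formulas collapses to an ordinary disjunction (using the empty-state property of classical formulas to split $\{u\}$ as $\{u\}\cup\emptyset$ or $\emptyset\cup\{u\}$), so $u \vDash \bigdis_{w' \in s'} \chi^k_{w'}$ holds iff $u \vDash \chi^k_{w'}$ for some $w' \in s'$, which by Theorem \ref{theorem:hintikka_bisimulation} amounts to $u \bisim_k w'$.

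The key step is then to combine $u \bisim_k w'$ with the hypothesis $w \nbisim_k w'$. Using the $\chi^k_w \equiv \chi^k_{w'}$ characterization of bisimilarity in Theorem \ref{theorem:hintikka_bisimulation}, we have $\chi^k_u \equiv \chi^k_{w'}$ and $\chi^k_w \notequiv \chi^k_{w'}$, and transitivity of $\equiv$ gives $\chi^k_w \notequiv \chi^k_u$, i.e. $w \nbisim_k u$. Hence $u \nvDash \chi^k_w$, which for the classical formula $\chi^k_w$ at a world is exactly $u \vDash \lnot \chi^k_w$. This yields the world-level entailment, hence (by flatness again) the entailment between states, and the proof concludes by invoking classical completeness.

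The argument is essentially routine; the only points requiring care are the reduction of the local disjunction to an ordinary one at a world and the use of the symmetry/transitivity of $\bisim_k$ in the final step. An alternative, purely proof-theoretic route would instead derive $\chi^k_{w'} \vdash \lnot \chi^k_w$ for each $w'$ (again by classical completeness, since $w' \nvDash \chi^k_w$) and then stitch these together by repeated applications of $\dis\R{E}$; should one take this route, the main thing to verify is that the side conditions of $\dis\R{E}$ are satisfied, namely that $\lnot \chi^k_w$ contains no $\intd$ and that the discharged classical assumptions contain no $\NE$ --- both of which hold since every formula involved is classical.
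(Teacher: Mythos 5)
Your proposal is correct and is essentially the paper's own argument: both rest on Theorem \ref{theorem:hintikka_bisimulation} plus classical completeness (Proposition \ref{prop:classical_completeness}), and the ``alternative, purely proof-theoretic route'' you mention at the end---deriving $\chi^{k}_{w'} \vdash \bnot \chi^{k}_{w}$ for each $w'\in s'$ by classical completeness and combining these by $\dis\R{E}$ (whose side conditions hold since all formulas involved are classical)---is exactly the proof the paper gives. Your main route differs only in that it handles the disjunction semantically and invokes Proposition \ref{prop:classical_completeness} once for the whole entailment $\bigdis_{w'\in s'}\chi^{k}_{w'} \vDash \bnot \chi^{k}_{w}$, rather than once per disjunct followed by $\dis\R{E}$; this is a harmless reorganization of the same idea.
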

\begin{proof}
For any $w'\in s'$ we have $w'\nvDash \chi^k_w$ by Theorem \ref{theorem:hintikka_bisimulation} and therefore $w'\vdash \lnot \chi^k_w$ by Proposition \ref{prop:classical_completeness}. So $\bigdis_{w'\in s'} \chi^k_{w'} \vdash \bnot \chi^k_w$ by $\lor \R{E}$.
 \end{proof}
 
The second lemma states a basic fact concerning the strong contradiction.
 
\begin{lemma}\label{lemma:provability_results_Bot}
$(\alpha\land \NE)\dis \phi ,\bnot \alpha \vdash \Bot$.
\end{lemma}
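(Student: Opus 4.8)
The plan is to exploit the fact that the two premises are jointly unsatisfiable: any state supporting $(\alpha \land \NE) \dis \phi$ splits off a \emph{non-empty} substate supporting $\alpha$, whereas $\bnot \alpha$ forces every world of the state to falsify $\alpha$. For a classical $\alpha$ no world can do both, so the first disjunct is, in effect, contradictory. Proof-theoretically, the idea is accordingly to isolate the offending disjunct $\alpha \land \NE$, collapse it to $\Bot$, and then discharge $\phi$ via $\Bot\R{Ctr}$.

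The crucial observation is that we cannot apply $\dis\R{E}$ directly: the second disjunct $\phi$ is an arbitrary formula, and we have no way to derive $\Bot$ from it. Instead I would use the monotonicity rule $\dis\R{Mon}$, which transforms a single disjunct while leaving the other untouched. First I would build a subderivation $D_1$ witnessing $\alpha \land \NE \vdash \Bot$ under the ambient assumption $\bnot \alpha$: from the assumption $[\alpha\land\NE]$ obtain $\alpha$ and $\NE$ by $\land\R{E}$; from $\alpha$ together with $\bnot \alpha$ derive the (classical) weak contradiction $\bot$ by $\bnot\R{E}$; and finally recombine $\bot$ and $\NE$ by $\land\R{I}$ to obtain $\Bot=\bot\land\NE$.

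With $D_1$ in hand I would assemble the full derivation as follows. Starting from the premise $(\alpha \land \NE) \dis \phi$, apply $\R{Com}\dis$ to get $\phi \dis (\alpha \land \NE)$; then apply $\dis\R{Mon}$ using $D_1$ to rewrite the second disjunct, yielding $\phi \dis \Bot$; apply $\R{Com}\dis$ once more to obtain $\Bot \dis \phi$; and conclude $\Bot$ by $\Bot\R{Ctr}$. The one point that must be checked — and the only real subtlety — is the side condition $(\dagger)$ on $\dis\R{Mon}$, namely that the undischarged assumptions of $D_1$ contain no $\NE$. Here the sole undischarged assumption of $D_1$ is $\bnot \alpha$, which is $\NE$-free precisely because $\alpha$ is a classical formula; this is exactly where the hypothesis $\alpha \in \ML$ is used.
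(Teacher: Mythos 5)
Your proof is correct and follows essentially the same route as the paper's: both exploit the $\NE$-freeness of $\bnot\alpha$ (you via the side condition of $\dis\R{Mon}$, the paper via the restricted distributivity of Proposition \ref{prop:commutativity_associativity_distributivity}, which encodes the same constraint) to bring $\bnot\alpha$ together with the disjunct $\alpha\land\NE$, collapse that disjunct to $\Bot$ using $\bnot\R{E}$ and the $\land$-rules, and finish with $\Bot\R{Ctr}$. Your check of the $(\dagger)$ side condition is exactly the point where the hypothesis $\alpha\in\ML$ is needed, matching the paper's use of it.
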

\begin{proof}
We have that
        \begin{align*}
            &((\alpha\land \NE)\dis \phi) \land \bnot \alpha&&\vdash&&((\alpha\land \NE)\land \bnot \alpha) \dis (\phi\land \bnot \alpha)\tag{Prop. \ref{prop:commutativity_associativity_distributivity}} \\
            &&&\vdash&&(\bot\land \NE) \dis (\phi\land \bnot \alpha)\tag{$\lnot \R{E}$}\\
            &&&\vdash&&\Bot\tag{$\Bot \R{Ctr}$}
        \end{align*}
\end{proof}

\begin{proof}[Proof of Lemma \ref{lemma:not_bisimilar_implies_charfs_incompatible}]
    If $s\nbisim_k s'$, then either there is a $w\in s$ such that $w\nbisim_k w'$ for all $w'\in s'$, or there is a $w'\in s'$ such that $w\nbisim_k w'$ for all $w\in s$. We may w.l.o.g. assume the former.
    Then:
    \begin{align*}
            &\theta^k_s\land\theta^k_{s'}&&=&&\theta^k_s\land\underset{w'\in s'}{\bigdis}(\chi^k_{w'}\land \NE) \\
            &&&\vdash&&\theta^k_s\land\underset{w'\in s'}{\bigdis}\chi^k_{w'} \\
            &&&\vdash&&((\chi^k_w\land \NE)\dis\theta^k_{s\backslash\{w\}})\land\lnot \chi^k_w \tag{Lemma \ref{lemma:not_bisimilar_implies_hintikka_incompatible}}\\
            &&&\vdash&&\Bot \tag{Lemma \ref{lemma:provability_results_Bot}}
    \end{align*}
\end{proof}

We show also that strong Hintikka formulas $\theta_s^{\mathsf{X},k}$ are monotone with respect to the parameters $k$ and $\mathsf{X}$ in the sense of the following lemma.

\begin{lemma} \label{lemma:charf_proves_lower_md_charf}\
\begin{enumerate}
    \item[(i)] 
        \makeatletter\def\@currentlabel{(i)}\makeatother \label{lemma:charf_proves_lower_md_charf_md} If $n \leq k$, then $\theta^{k}_{s}\vdash\theta^{n}_{s}$.
    \item[(ii)] 
        \makeatletter\def\@currentlabel{(ii)}\makeatother \label{lemma:charf_proves_lower_md_charf_prop} If $\mathsf{Y}\subseteq \mathsf{X}$, then $\theta^{\mathsf{X},k}_s\vdash \theta^{\mathsf{Y},k}_s$.
\end{enumerate}
\end{lemma}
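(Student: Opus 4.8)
The plan is to reduce both claims to the analogous facts about the world Hintikka formulas $\chi^k_w$ and then lift them to the strong state Hintikka formulas $\theta^k_s=\bigvee_{w\in s}(\chi^k_w\land\NE)$ by the monotonicity of $\dis$. Throughout, the underlying model $M$, the state $s$, and every accessibility set $R[w]$ stay fixed; only the Hintikka subformulas change---by lowering the depth in (i), and by shrinking the vocabulary in (ii). In both parts, if $s=\emptyset$ then $\theta^k_s=\theta^n_s=\bot$ (resp.\ $\theta^{\mathsf{X},k}_s=\theta^{\mathsf{Y},k}_s=\bot$) and there is nothing to prove, so I assume $s\neq\emptyset$.

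For the world-level facts, part (i) is immediate: by Definition \ref{def:hintikka}, $\chi^{k+1}_w$ has $\chi^k_w$ as a conjunct, so $\chi^{k+1}_w\vdash\chi^k_w$ by $\land\R{E}$, and iterating yields $\chi^k_w\vdash\chi^n_w$ for every $n\leq k$. For part (ii), I would establish $\chi^{\mathsf{X},k}_w\vdash\chi^{\mathsf{Y},k}_w$. Since both are $\ML$-formulas, the slickest route is via Classical completeness (Proposition \ref{prop:classical_completeness}), exactly as in Lemma \ref{lemma:hintikka_provable_equivalence}: the entailment $\chi^{\mathsf{X},k}_w\vDash\chi^{\mathsf{Y},k}_w$ holds because $k$-bisimilarity over $\mathsf{X}$ implies $k$-bisimilarity over the smaller vocabulary $\mathsf{Y}$ (Theorem \ref{theorem:hintikka_bisimulation}), whence the implication is provable. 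Alternatively, one can argue directly by induction on $k$: the base case holds because $\chi^{\mathsf{Y},0}_w$ is a subconjunction of $\chi^{\mathsf{X},0}_w$ (the literals agree, as the valuation is unchanged on $\mathsf{Y}$), and the inductive step passes the hypotheses $\chi^{\mathsf{X},k}_v\vdash\chi^{\mathsf{Y},k}_v$ ($v\in R[w]$) through $\Di\R{Mon}$, $\Bo\R{Mon}$, and $\dis$-monotonicity to obtain each conjunct of $\chi^{\mathsf{Y},k+1}_w$, recombining via $\land\R{I}$.

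Finally I would lift to the $\theta$'s. Conjoining $\NE$ to the world-level implications gives $\chi^k_w\land\NE\vdash\chi^n_w\land\NE$ (resp.\ $\chi^{\mathsf{X},k}_w\land\NE\vdash\chi^{\mathsf{Y},k}_w\land\NE$) by $\land\R{E}$ and $\land\R{I}$. As $\theta^k_s$ and $\theta^n_s$ (resp.\ $\theta^{\mathsf{X},k}_s$ and $\theta^{\mathsf{Y},k}_s$) are disjunctions over the same index set $s$ with corresponding disjuncts, replacing the disjuncts one at a time---either by the Replacement lemma (Lemma \ref{lemma:provable_subformula_replacement}), since each disjunct occurs outside the scope of $\lnot$, or directly by $\dis\R{Mon}$ together with $\R{Com}\dis$---yields $\theta^k_s\vdash\theta^n_s$ and $\theta^{\mathsf{X},k}_s\vdash\theta^{\mathsf{Y},k}_s$. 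When $\dis\R{Mon}$ is used, its side condition $(\dagger)$ is met because each replacement is derived from its discharged disjunct alone, with no other undischarged assumptions.

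I do not anticipate a genuine obstacle: this is a routine monotonicity argument. The only points needing care are the bookkeeping in (ii)---confirming that restricting from $\mathsf{X}$ to $\mathsf{Y}$ leaves $M$, $s$, and each $R[w]$ untouched, so that the two families of Hintikka formulas range over the same worlds and differ only in their propositional content---and the verification of the $\dis\R{Mon}$ side condition in the lifting step.
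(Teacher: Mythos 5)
Your proposal is correct and takes essentially the same route as the paper's own proof: part (i) by observing that $\chi^{k}_{w}$ syntactically contains $\chi^{n}_{w}$ as a conjunct, part (ii) by deriving $\chi^{\mathsf{X},k}_w\vdash \chi^{\mathsf{Y},k}_w$ from Proposition \ref{prop:classical_completeness}, and in both cases lifting to the $\theta$'s by conjoining $\NE$ and repeatedly applying $\dis\R{Mon}$. Your additional checks (the $\dis\R{Mon}$ side condition, the alternative induction for (ii)) are sound but not needed beyond what the paper records.
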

\begin{proof}
 It follows from Theorem \ref{theorem:hintikka_bisimulation} that $\chi^{k}_w\vDash \chi^{n}_w$ and $\chi^{\mathsf{X},k}_w\vDash \chi^{\mathsf{Y},k}_w$ for any $w\in s$. Therefore, $\chi^{k}_w\vdash \chi^{n}_w$ and $\chi^{\mathsf{X},k}_w\vdash \chi^{\mathsf{Y},k}_w$ by Proposition \ref{prop:classical_completeness}. Then also $\chi^{k}_w\land \NE \vdash \chi^{n}_w \land \NE$ and $\chi^{\mathsf{X},k}_w\land \NE\vdash \chi^{\mathsf{Y},k}_w\land \NE$, and the results therefore follow by repeated applications of $\dis\R{Mon}$.
\end{proof}

Our final lemma leading to the completeness proof concerns some standard properties of the disjunction elimination rule.

    \begin{lemma}\label{lemma:traceable_deduction_failure}\
    \begin{enumerate}
        \item[(i)] 
        \makeatletter\def\@currentlabel{(i)}\makeatother \label{lemma:traceable_deduction_failure_i} If $\Phi,\bigintd_{j\in J}\phi_j\nvdash \psi$, then $ \Phi,\phi_j \nvdash \psi$ for some $j\in J$.
        \item[(ii)] 
        \makeatletter\def\@currentlabel{(ii)}\makeatother\label{lemma:traceable_deduction_failure_ii} If $\{\bigintd_{j \in J_i} \phi_j\sepp i\in I\}\nvdash \psi$, then $ \{\phi_{i} \sepp i \in I\}\nvdash \psi$ for some collection $\{\phi_i \sepp i \in I\}$ such that for each $i\in I$,  $\phi_i\in\{\phi_{j}\mid j\in J_i\}$.
    \end{enumerate}
    \end{lemma}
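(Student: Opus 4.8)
We must prove two closely related facts about the global disjunction $\intd$ under the derivability relation $\vdash$:
\begin{enumerate}
\item[(i)] If $\Phi,\bigintd_{j\in J}\phi_j\nvdash \psi$, then $\Phi,\phi_j \nvdash \psi$ for some $j\in J$.
\item[(ii)] If $\{\bigintd_{j \in J_i} \phi_j\sepp i\in I\}\nvdash \psi$, then $\{\phi_{i} \sepp i \in I\}\nvdash \psi$ for some collection $\{\phi_i \sepp i \in I\}$ with $\phi_i\in\{\phi_{j}\mid j\in J_i\}$ for each $i$.
\end{enumerate}

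The plan is to prove both parts by contraposition, so that the only proof-theoretic tool needed is the elimination rule $\intd\R{E}$ together with the introduction rules $\intd\R{I}$. For part (i), I would assume the contrapositive: $\Phi,\phi_j\vdash \psi$ for every $j\in J$. The goal is then to assemble these $|J|$-many derivations into a single derivation of $\psi$ from $\Phi$ together with $\bigintd_{j\in J}\phi_j$. Since $\bigintd_{j\in J}\phi_j$ is a (finite, nested) global disjunction, I would proceed by a straightforward induction on $|J|$: the base case is immediate, and the inductive step peels off one disjunct and applies $\intd\R{E}$ once. Concretely, writing $\bigintd_{j\in J}\phi_j$ as $\phi_{j_0}\intd \bigintd_{j\in J\setminus\{j_0\}}\phi_j$, the premise $\Phi,\phi_{j_0}\vdash\psi$ handles the left branch while the induction hypothesis (applied to the smaller disjunction, whose disjuncts each still yield $\psi$) handles the right branch; $\intd\R{E}$ glues them into $\Phi,\bigintd_{j\in J}\phi_j\vdash\psi$. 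Crucially, $\intd\R{E}$ carries \emph{no} side condition (unlike $\dis\R{E}$), so the assumptions in $\Phi$ and any $\NE$-containing formulas cause no difficulty here.

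For part (ii), again I would argue contrapositively: suppose that for \emph{every} choice function selecting one disjunct $\phi_i\in\{\phi_j\mid j\in J_i\}$ from each $i\in I$, we have $\{\phi_i\sepp i\in I\}\vdash\psi$; the goal is to show $\{\bigintd_{j\in J_i}\phi_j\sepp i\in I\}\vdash\psi$. This is the natural multi-premise generalization of (i), and indeed it follows from iterating (i): I would prove it by induction on $|I|$, at each stage using part (i) to eliminate one global disjunction $\bigintd_{j\in J_{i_0}}\phi_j$ from the premise set. More precisely, if I can show that for each fixed disjunct $\phi_{i_0}\in\{\phi_j\mid j\in J_{i_0}\}$ the reduced premise set $\{\bigintd_{j\in J_i}\phi_j\sepp i\in I\setminus\{i_0\}\}\cup\{\phi_{i_0}\}$ derives $\psi$ — which is exactly the induction hypothesis, since every choice over $I\setminus\{i_0\}$ extended by $\phi_{i_0}$ is a choice over all of $I$ — then applying $\intd\R{E}$ to $\bigintd_{j\in J_{i_0}}\phi_j$ (with the remaining premises carried along as undischarged assumptions, which $\intd\R{E}$ permits) yields $\psi$ from the full premise set.

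The only genuinely delicate point, and the step I expect to require the most care, is bookkeeping the assumption management in $\intd\R{E}$ within part (ii). The rule discharges the hypothesis $\phi$ (resp. $\psi$) in each branch, but the \emph{other} premises $\bigintd_{j\in J_i}\phi_j$ for $i\neq i_0$ must remain available as live assumptions throughout both subderivations; I must check that $\intd\R{E}$ indeed allows undischarged assumptions in $D_1,D_2$ (it does — no side condition restricts them), so that the nested elimination does not accidentally discharge the wrong formula. Beyond this, both parts are routine inductions on the (finite) index sets. Since all index sets here are finite — the nested $\intd$ is a finite syntactic object — no compactness or infinitary reasoning is needed, and the argument stays entirely within the proof system.
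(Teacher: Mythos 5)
Your part (i) is exactly the paper's argument: contrapose and glue the finitely many subderivations together with $\intd\R{E}$ (here $J$ is automatically finite because $\bigintd_{j\in J}\phi_j$ is a single formula), and this works even when $\Phi$ is infinite since every derivation uses only finitely many premises. The gap is in part (ii), at precisely the point you wave away: you assert that ``all index sets here are finite,'' but only the sets $J_i$ are finite; the index set $I$ enumerates the \emph{premise set}, and nothing bounds it. This matters, because the lemma is invoked in the strong completeness proof (Theorem \ref{theorem:BSMLI_completeness}) with $\Phi=\{\phi_i\sepp i\in I\}$ an arbitrary, possibly infinite set of formulas---indeed the paper derives compactness as a corollary of strong completeness, so the infinite case is the whole point. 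Your induction on $|I|$ establishes only the finite case, and your contrapositive formulation does not extend to infinite $I$: the contrapositive hypothesis then hands you infinitely many derivations (one per choice function), and no finite sequence of $\intd\R{E}$ applications can glue them into a single derivation of $\psi$ from $\{\bigintd_{j\in J_i}\phi_j\sepp i\in I\}$; you would need a K\"onig-lemma-style or compactness argument on top.

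The repair is to argue in the direct (non-contraposed) direction, as the paper does: fix an enumeration $i_1,i_2,\dots$ of $I$ (any set of formulas is countable), and build the choice function stage by stage, where at stage $n+1$ you apply part (i) to the set $\{\phi_{f(i_1)},\dots,\phi_{f(i_n)}\}\cup\{\bigintd_{j\in J_i}\phi_j\sepp i\in I\setminus\{i_1,\dots,i_{n+1}\}\}$ together with the disjunction $\bigintd_{j\in J_{i_{n+1}}}\phi_j$, obtaining some disjunct $\phi_{f(i_{n+1})}$ that preserves non-derivability of $\psi$. Finally, the limit set $\{\phi_{f(i)}\sepp i\in I\}$ cannot derive $\psi$: any derivation would use only finitely many of its members, and these all lie in some stage-$n$ set, contradicting the invariant. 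This last step is exactly the finiteness-of-derivations (syntactic compactness) reasoning that you explicitly declared unnecessary. For finite $I$ your proof is correct and coincides, modulo contraposition, with the paper's.
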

    \begin{proof} See, e.g., \cite{ciardelliIemhoffYang}. Item \ref{lemma:traceable_deduction_failure_i} is proved easily by applying  $\intd \mathsf{E}$. 
    For \ref{lemma:traceable_deduction_failure_ii}, choose an enumeration of $I$; the result then follows by an inductive proof making use of \ref{lemma:traceable_deduction_failure_i}.
    \end{proof}

   We are now ready to prove completeness. Our proof is similar to the one given in \cite{yang2017} for the completeness of the system $\PT$, though note that the proof in \cite{yang2017} is for weak completeness; here we directly prove strong completeness.
   
\begin{theorem}[Completeness of \BSMLIBF]\label{theorem:BSMLI_completeness}
    If $\Phi \vDash \psi$, then $\Phi\vdash \psi$.
\end{theorem}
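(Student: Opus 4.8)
The plan is to reduce strong completeness to the case of formulas in disjunctive normal form, exploiting the machinery built up in the preceding lemmas. The central tool is Lemma~\ref{lemma:normal_form_provable_equivalence_BSMLI}, which tells us that every $\BSMLI$-formula is provably equivalent to a characteristic disjunction $\bigintd_{s\in\PPP}\theta^{\mathsf{X},k}_s$. So the first move is to replace the given $\Phi$ and $\psi$ by normal forms. The subtlety is that $\Phi$ may be infinite, while a single normal form only accounts for one formula at a fixed modal depth $k$ and vocabulary $\mathsf{X}$; I expect to handle this by a \emph{finitariness/compactness} step, arguing that the semantic entailment $\Phi\vDash\psi$ can be witnessed at a common depth and vocabulary, and—together with the finite model property (Proposition~\ref{prop:finite_model_property}) and the fact that there are only finitely many non-equivalent Hintikka formulas for a fixed $\mathsf{X}$—reduced to an entailment from finitely many premises.

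\textbf{Main reduction.} Concretely, suppose $\Phi\vDash\psi$. Let $k\ge md(\psi)$ and fix a vocabulary $\mathsf{X}\supseteq\mathsf{P}(\psi)$; by Lemma~\ref{lemma:normal_form_provable_equivalence_BSMLI} write $\psi\proveq\bigintd_{s\in\QQQ}\theta^{\mathsf{X},k}_s$ for a finite property $\QQQ$. For the premises, I would reduce to finitely many: using the finite model property and bisimulation-invariance, the relevant models are determined up to $k$-bisimulation by finitely many Hintikka types, so it suffices to consider finitely many premises $\phi_1,\dots,\phi_n\in\Phi$ with $\phi_1,\dots,\phi_n\vDash\psi$, each put into normal form $\phi_i\proveq\bigintd_{s\in\PPP_i}\theta^{\mathsf{X},k}_s$ (after using Lemma~\ref{lemma:charf_proves_lower_md_charf} to align the depth and vocabulary parameters across all formulas). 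By the replacement Lemma~\ref{lemma:provable_subformula_replacement} it now suffices to derive $\psi$ from these normal-form premises.

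\textbf{Dismantling the $\intd$'s.} At this point I apply Lemma~\ref{lemma:traceable_deduction_failure}\ref{lemma:traceable_deduction_failure_ii}: to show $\{\bigintd_{s\in\PPP_i}\theta^k_s\mid i\le n\}\vdash\psi$, it suffices—arguing contrapositively—to show that every selection of one disjunct $\theta^k_{s_i}$ ($s_i\in\PPP_i$) per premise proves $\psi$. So fix such a choice and consider $\theta^k_{s_1},\dots,\theta^k_{s_n}\vdash\psi$. Here I distinguish two cases. If the states $s_1,\dots,s_n$ are pairwise $k$-bisimilar to a common state $s$, then by Lemma~\ref{lemma:charf_provable_equivalence} all the $\theta^k_{s_i}$ are provably equivalent to $\theta^k_s$, so the hypotheses collapse to $\theta^k_s$; the semantic entailment $\theta^k_s\vDash\psi$ together with Proposition~\ref{prop:characteristic_formulas_characterize_states}\ref{prop:characteristic_formulas_characterize_states_theta} forces $s\in\QQQ$ (up to $k$-bisimulation), whence $\theta^k_s\vdash\bigintd_{s\in\QQQ}\theta^k_s\proveq\psi$ by $\intd\R{I}$. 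Otherwise some pair $s_i\nbisim_k s_j$, and Lemma~\ref{lemma:not_bisimilar_implies_charfs_incompatible} gives $\theta^k_{s_i},\theta^k_{s_j}\vdash\Bot$, whence $\vdash\psi$ by Lemma~\ref{lemma:Bot_E}. In either case the selection proves $\psi$, so Lemma~\ref{lemma:traceable_deduction_failure}\ref{lemma:traceable_deduction_failure_ii} delivers $\Phi\vdash\psi$.

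\textbf{The main obstacle} I anticipate is the reduction from the possibly infinite premise set $\Phi$ to a finite one at a uniform modal depth and vocabulary. The normal-form lemma is inherently finitary (one formula, one $k$, one $\mathsf{X}$), so some compactness-type argument is needed to make the strong-completeness claim go through; I expect this to rest on the finite model property and the finiteness of the stock of Hintikka formulas over a fixed $\mathsf{X}$, but marshalling these carefully—especially ensuring the chosen $k$ and $\mathsf{X}$ simultaneously bound all relevant formulas—is the delicate part. The remainder, once the reduction to finitely many normal forms is in place, is essentially the propositional completeness argument of \cite{yang2017} transported to the modal Hintikka formulas via Lemmas~\ref{lemma:charf_provable_equivalence}--\ref{lemma:not_bisimilar_implies_charfs_incompatible}.
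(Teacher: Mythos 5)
Your finite-premise core is essentially correct and close in spirit to the paper's argument: with finitely many premises at a common depth $k$ and vocabulary $\mathsf{X}$, combining Lemma \ref{lemma:normal_form_provable_equivalence_BSMLI}, the contrapositive of Lemma \ref{lemma:traceable_deduction_failure}\ref{lemma:traceable_deduction_failure_ii}, and your dichotomy (all selected states pairwise $k$-bisimilar, handled via Proposition \ref{prop:characteristic_formulas_characterize_states}\ref{prop:characteristic_formulas_characterize_states_theta}, Lemma \ref{lemma:charf_provable_equivalence} and $\intd\R{I}$; otherwise Lemmas \ref{lemma:not_bisimilar_implies_charfs_incompatible} and \ref{lemma:Bot_E}) does give $\phi_1,\dots,\phi_n\vdash\psi$. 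The genuine gap is exactly the step you call delicate: reducing an infinite entailment $\Phi\vDash\psi$ to a finite one \emph{is} semantic compactness of $\BSMLI$, and your proposed tools cannot deliver it. The finite model property concerns countermodels for single non-valid formulas and says nothing about infinite premise sets; the finiteness of Hintikka formulas over a fixed finite $\mathsf{X}$ is of no use because, for infinite $\Phi$, there is no finite $\mathsf{X}$ containing every $\mathsf{P}(\phi_i)$ and no finite bound on the $md(\phi_i)$, so the ``common depth and vocabulary'' you want to work at need not exist. Worse, in this paper compactness of $\BSMLI$ is explicitly obtained as a \emph{corollary} of the strong completeness theorem you are proving (the system being finitary), so invoking it here is circular unless you supply an independent proof (say, via a first-order translation), which you do not. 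As it stands, your argument establishes only weak completeness.

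The paper's proof shows how to avoid this entirely: it argues contrapositively and never uniformizes the parameters. Assuming $\Phi=\{\phi_i\sepp i\in I\}\nvdash\psi$, it normalizes each premise with its \emph{own} parameters $k_i:=\max\{md(\phi_i),md(\psi)\}$ and $\mathsf{X}_i:=\mathsf{P}(\phi_i)\cup\mathsf{P}(\psi)$, then applies Lemma \ref{lemma:traceable_deduction_failure}\ref{lemma:traceable_deduction_failure_ii}, which holds for arbitrary (possibly infinite) families, to select one disjunct $\theta^{\mathsf{X}_i,k_i}_{s_i}$ per premise so that $\Phi'=\{\theta^{\mathsf{X}_i,k_i}_{s_i}\sepp i\in I\}\nvdash\psi$. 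Comparisons between premises with different parameters are mediated by the monotonicity Lemma \ref{lemma:charf_proves_lower_md_charf} (restricting to $\mathsf{X}_i\cap\mathsf{X}_j$ and $\min\{k_i,k_j\}$), which yields $\Phi'\nvDash\Bot$, hence a state $t\vDash\Phi'$, hence $t\vDash\Phi$. The crucial structural point is that the contrapositive needs only \emph{one} premise to close the argument: if $t\vDash\psi$, then fixing a single $i$ and writing $\psi$'s normal form over $(\mathsf{X}_i,k_i)$, the chain $r\bisim^{\mathsf{X}_i}_{k_i}t\bisim^{\mathsf{X}_i}_{k_i}s_i$ gives $\theta^{\mathsf{X}_i,k_i}_{s_i}\vdash\psi$, contradicting $\Phi'\nvdash\psi$; so $t$ is the desired countermodel. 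Your direct formulation instead requires all selected premises to derive $\psi$ jointly through a common bisimilar state, which is precisely what forces you to a uniform $(k,\mathsf{X})$ and hence into the missing compactness step. The repair is not to patch the finitization but to restructure the proof in this contrapositive, per-premise-parameter form.
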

\begin{proof}
    Assume $\Phi\nvdash \psi$. We  show $\Phi\nvDash \psi$.  
    Put $\Phi=\{\phi_i\sepp i \in I\}$. For each $i\in I$,  let 
    $k_i:=max\{md(\phi_i),md(\psi)\}$, and 
$\mathsf{X}_i:=\mathsf{P}(\phi_i)\cup \mathsf{P}(\psi)$. By Lemma \ref{lemma:normal_form_provable_equivalence_BSMLI},
    \begin{align}\label{theorem:BSMLI_completeness_eq1}
    &\quad\quad\quad\phi_i \proveq \bigintdd_{s\in \PPP_i}\theta^{\mathsf{X}_i,k_i}_{s}
    \end{align}
for some properties $\PPP_i$ over $\mathsf{X}_i$. Then also $\{ \bigintd_{s\in \PPP_i}\theta^{\mathsf{X}_i,k_i}_{s}\sepp i\in I\}\nvdash \psi$. By Lemma \ref{lemma:traceable_deduction_failure}, there is a collection of formulas $\Phi'=\{\theta^{\mathsf{X}_i,k_i}_{s_i}\sepp s_i\in \PPP_i$ and $i\in I\}$ such that $\Phi'\nvdash  \psi$.

Observe that $\Phi'\nvDash \Bot$, since otherwise clearly for some $\theta^{\mathsf{X}_i,k_i}_{s_i},\theta^{\mathsf{X}_j,k_j}_{s_j}\in \Phi'$ we have $s_i\nbisim^\mathsf{M}_m s_j$ for $m=min\{k_i,k_j\}$ and $\mathsf{M}=\mathsf{X}_i\cap \mathsf{X}_j$, so that by Lemma \ref{lemma:not_bisimilar_implies_charfs_incompatible}, $\theta^{\mathsf{M},m}_{s_i},\theta^{\mathsf{M},m}_{s_j}\vdash\Bot$. Then by Lemma \ref{lemma:charf_proves_lower_md_charf}, $\theta^{\mathsf{X}_i,k_i}_{s_i},\theta^{\mathsf{X}_j,k_j}_{s_j}\vdash\Bot$, so that $\Phi'\vdash \Bot$, whence $\Phi'\vdash \psi$ by Lemma \ref{lemma:Bot_E}; a contradiction.

Thus, we can let $t$ be such that $t\vDash \Phi'$. By (\ref{theorem:BSMLI_completeness_eq1}) and soundness, we have $\Phi'\vDash \phi_i$ for each $i\in I$. Therefore $t\vDash \Phi$. To show $\Phi\nvDash\psi$, it suffices to show $t\nvDash\psi$.
Assume otherwise. Take an $i\in I$. By Lemma \ref{lemma:normal_form_provable_equivalence_BSMLI},  we have 
\begin{align}\label{theorem:BSMLI_completeness_eq2}
    &\quad\quad\quad\psi \proveq \bigintdd_{r\in \QQQ}\theta^{\mathsf{X}_i,k_i}_r
    \end{align}
for some property $\QQQ$ over $\mathsf{X}_i$. Since $t\vDash \psi$,  by (\ref{theorem:BSMLI_completeness_eq2}) and soundness, we have $t\vDash \theta^{\mathsf{X}_i,k_i}_r$ for some $r\in \QQQ$. Meanwhile, $t\vDash\Phi'$ implies $t\vDash\theta_{s_i}^{\mathsf{X}_i,k_i}$. Thus,  by Proposition \ref{prop:characteristic_formulas_characterize_states}\ref{prop:characteristic_formulas_characterize_states_theta}, we have $r \bisim^{\mathsf{X}_i}_{k_i} t\bisim^{\mathsf{X}_i}_{k_i} s_i$, whence $\theta^{\mathsf{X}_i,k_i}_{s_i}\vdash \theta^{\mathsf{X}_i,k_i}_r$ by Lemma \ref{lemma:charf_provable_equivalence}. Then by $\intd \R{I}$ and (\ref{theorem:BSMLI_completeness_eq2}), $\theta^{\mathsf{X}_i,k_i}_{s_i}\vdash \bigintd_{r\in \QQQ}\theta^{\mathsf{X}_i,k_i}_r\vdash \psi$. But then $\Phi' \vdash \psi$, which is a contradiction.
\end{proof}

We derive the compactness of $\BSMLI$ as well as that of the weaker logics $\BSML$ $\BSMLO$ as a corollary of strong completeness.

The remainder of this section is devoted to the proof of Lemma \ref{lemma:normal_form_provable_equivalence_BSMLI} (provable equivalence of the normal form). 
We start with two technical lemmas concerning the behavior of $\NE$ in disjunctions.

\begin{lemma} \label{lemma:provability_results_NE}
$\phi \dis (\psi \land \NE) \proveq (\phi \dis (\psi\land \NE))\land \NE $.
\end{lemma}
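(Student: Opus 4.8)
The plan is to prove the two entailments of $\proveq$ separately. The right-to-left direction, $(\phi \dis (\psi \land \NE)) \land \NE \vdash \phi \dis (\psi \land \NE)$, is immediate by $\land \R{E}$. For the left-to-right direction I would apply $\land \R{I}$, which (since the premise trivially proves itself) reduces the entire task to establishing the single entailment $\phi \dis (\psi \land \NE) \vdash \NE$. Everything hinges on this sub-derivation.

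The main obstacle is precisely this $\NE$-derivation. The obvious attempt — eliminating the disjunction by $\dis \R{E}$ with conclusion $\NE$ — is doomed, because the $\phi$-branch would require $\phi \vdash \NE$, which is unsound. Semantically the non-emptiness information lives entirely in the right disjunct $\psi \land \NE$, and splitting the disjunction throws it away in the $\phi$-branch. Hence the derivation must keep the disjunction intact rather than decompose it along the two disjuncts.

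To get around this I would case-split instead on the valid formula $\bot \intd \NE$, obtained from $\NE \R{I}$, using $\intd \R{E}$ with target conclusion $\NE$. The $\NE$-branch is trivial. In the $\bot$-branch, with $\bot$ now available as an (undischarged, $\NE$-free) assumption, I would apply $\dis \R{Mon}$ to the premise $\phi \dis (\psi \land \NE)$, using the subderivation $\psi \land \NE,\, \bot \vdash \Bot$ — extract $\NE$ from $\psi \land \NE$ by $\land \R{E}$ and combine with $\bot$ via $\land \R{I}$, recalling $\Bot := \bot \land \NE$ — to rewrite the second disjunct as $\Bot$, yielding $\phi \dis \Bot$. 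Then $\R{Com}\dis$ turns this into $\Bot \dis \phi$, and $\Bot \R{Ctr}$ produces $\NE$. Thus both $\intd \R{E}$-branches deliver $\NE$, giving $\phi \dis (\psi \land \NE) \vdash \NE$.

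Assembling, $\land \R{I}$ then yields the nontrivial direction $\phi \dis (\psi \land \NE) \vdash (\phi \dis (\psi \land \NE)) \land \NE$, and together with the $\land \R{E}$ direction we obtain $\proveq$. The one delicate point to check is the side condition $(\dagger)$ on $\dis \R{Mon}$ in the $\bot$-branch, namely that the only undischarged assumption of the subderivation, $\bot = p \land \bnot p$, contains no $\NE$; this holds, and the remaining steps are routine.
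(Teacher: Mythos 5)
Your proof is correct, and it is essentially the paper's own $\BSMLI$-specific derivation: both hinge on introducing $\bot \intd \NE$ by $\NE\R{I}$, performing a case split in which the $\NE$-case is immediate, and, in the $\bot$-case, letting $\bot$ meet the $\NE$ inside the right disjunct to form $\Bot$ and then applying $\Bot\R{Ctr}$. The differences are only cosmetic: you first reduce the task to deriving $\NE$ alone via $\land\R{I}$ and use $\dis\R{Mon}$ (whose side condition $(\dagger)$ indeed holds, since $\bot$ is $\NE$-free) and $\R{Com}\dis$, where the paper instead carries the whole formula through, distributing $\land$ over $\intd$ and over $\dis$ via Proposition \ref{prop:commutativity_associativity_distributivity}.
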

\begin{proof}
$\dashv$ follows by $\land \R{E}$. We give a $\BSMLI$-specific derivation for $\vdash$:
        \begin{align*}
            &&&\phi \dis (\psi \land \NE) &&\\
            &\vdash&& (\phi \dis (\psi \land \NE))\land (\bot \intd \NE) \tag{$\NE\R{I}$}\\
            &\vdash&&((\phi \dis (\psi \land \NE))\land \bot) \intd ((\phi \dis (\psi \land \NE))\land\NE) \\
            &\vdash&&((\phi\land \bot )\dis ((\psi \land \NE)\land \bot)) \intd ((\phi \dis (\psi \land \NE))\land\NE) \tag{Prop. \ref{prop:commutativity_associativity_distributivity}}\\
            &\vdash&&((\phi\land \bot )\dis (\psi \land\Bot)) \intd ((\phi \dis (\psi \land \NE))\land\NE) \\
            &\vdash&& ((\phi \dis (\psi \land \NE))\land\NE)\intd ((\phi \dis (\psi \land \NE))\land\NE) \tag{$\Bot\R{Ctr}$}\\
            &\vdash&&(\phi \dis (\psi \land \NE))\land\NE
        \end{align*}
\end{proof}

\begin{lemma} \label{lemma:dis_NE_elimination}
    $\phi\lor \psi\vdash \phi\intd \psi\intd ((\phi \land \NE)\dis ((\psi \land \NE))$.
\end{lemma}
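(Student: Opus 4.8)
The plan is to reduce the local disjunction $\phi\dis\psi$ to an explicit case split on whether the two witnessing substates are empty, and then dispatch each case separately. Semantically, if $s\vDash\phi\dis\psi$ then $s=t\cup u$ with $t\vDash\phi$ and $u\vDash\psi$; if $u=\emptyset$ then $s=t\vDash\phi$, if $t=\emptyset$ then $s=u\vDash\psi$, and if both are non-empty then $t\vDash\phi\land\NE$ and $u\vDash\psi\land\NE$, so $s\vDash(\phi\land\NE)\dis(\psi\land\NE)$. These three cases correspond exactly to the three global disjuncts of the target formula $\vartheta:=\phi\intd\psi\intd((\phi\land\NE)\dis(\psi\land\NE))$. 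The proof-theoretic task is to make this trichotomy visible as a global disjunction and then eliminate it.

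First I would record the decomposition $\phi\proveq(\phi\land\bot)\intd(\phi\land\NE)$ (and symmetrically for $\psi$): using $\NE\R{I}$ and $\land\R{I}$ we get $\phi\vdash\phi\land(\bot\intd\NE)$, distributing $\land$ over $\intd$ (a derivable law, cf.\ the proof of Lemma \ref{lemma:provability_results_NE}) gives $(\phi\land\bot)\intd(\phi\land\NE)$, and the converse follows by $\intd\R{E}$ with $\land\R{E}$. Substituting these into $\phi\dis\psi$ — by successive applications of $\dis\R{Mon}$ and $\R{Com}\dis$ to rewrite each disjunct in turn (the subderivations discharge only $[\psi]$, resp.\ $[\phi]$, so the $(\dagger)$ side condition on $\dis\R{Mon}$ is met, and note that this rule carries no restriction forbidding $\intd$ in the rewritten disjunct) — yields $[(\phi\land\bot)\intd(\phi\land\NE)]\dis[(\psi\land\bot)\intd(\psi\land\NE)]$. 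Then, iterating the distribution of $\dis$ over $\intd$ (Proposition \ref{prop:commutativity_associativity_distributivity}(iv), using $\R{Com}\dis$ to reach the left-hand $\intd$), I would turn this into the four-fold global disjunction
\[[(\phi\land\bot)\dis(\psi\land\bot)]\intd[(\phi\land\NE)\dis(\psi\land\bot)]\intd[(\phi\land\bot)\dis(\psi\land\NE)]\intd[(\phi\land\NE)\dis(\psi\land\NE)].\]

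It then remains to derive $\vartheta$ from this global disjunction by $\intd\R{E}$, showing that each of the four disjuncts proves $\vartheta$. The last disjunct is literally $(\phi\land\NE)\dis(\psi\land\NE)$, so it proves $\vartheta$ by $\intd\R{I}$. For each of the other three the recurring move is to absorb a $\bot$-conjunct: e.g.\ from $(\phi\land\NE)\dis(\psi\land\bot)$, the subderivation $\psi\land\bot\vdash\bot$ ($\land\R{E}$) lets $\dis\R{Mon}$ rewrite the disjunct to $(\phi\land\NE)\dis\bot$, then $\R{Com}\dis$ and $\bot\R{E}$ give $\phi\land\NE$, and $\land\R{E}$ gives $\phi$, which proves $\vartheta$ by $\intd\R{I}$; the first disjunct similarly reduces to $\phi$ and the third to $\psi$. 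Chaining these through $\intd\R{E}$ yields $\phi\dis\psi\vdash\vartheta$.

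The individual derivations are routine; the one step I would watch most carefully is the double distribution of $\dis$ over $\intd$ — in particular verifying the $(\dagger)$ condition at each use of $\dis\R{Mon}$ and tracking the commutativity rewrites so that precisely the four terms displayed above are produced, since it is their shape (one pure $\phi$/$\psi$ pair, two mixed $\bot/\NE$ pairs, and the all-$\NE$ pair) that makes the final case analysis go through.
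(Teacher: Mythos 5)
Your proof is correct and follows essentially the same route as the paper's: both rewrite each disjunct via $\NE\R{I}$ as $(\cdot\land\bot)\intd(\cdot\land\NE)$, distribute $\dis$ over $\intd$ using $\dis\R{Mon}$ and $\R{Distr}\dis\intd$, and discharge the $\bot$-cases with $\bot\R{E}$. The only difference is organizational: you split both disjuncts in parallel into a four-fold global disjunction and finish with $\intd\R{E}$, whereas the paper splits sequentially (first $\phi$, resolving its $\bot$-case to $\psi$ before splitting $\psi$), which yields three cases instead of four.
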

\begin{proof}
We have that
\begin{align*}
    &\phi \dis \psi &&\vdash &&(\phi \land (\bot \intd \NE))\dis \psi\tag{$\NE\R{I}$, $\dis \R{Mon}$}\\
    &&&\vdash &&((\phi \land \bot) \intd (\phi \land \NE)) \dis \psi\\
    &&&\vdash &&((\phi \land \bot) \dis \psi )\intd ((\phi \land \NE) \dis \psi)\tag{$\R{Distr}\dis\intd$}\\
    &&&\vdash && \psi \intd ((\phi \land \NE) \dis \psi)\tag{$\bot \R{E}$} \\
    &&&\vdash && \psi \intd ((\phi \land \NE) \dis (\psi \land (\bot \intd \NE)))\tag{$ \NE\R{I}$, $\dis \R{Mon}$}\\
    &&&\vdash && \psi \intd ((\phi \land \NE) \dis ((\psi \land \bot) \intd (\psi \land  \NE)))\\
    &&&\vdash && \psi \intd ((\phi \land \NE) \dis (\psi \land \bot)) \intd ((\phi \land \NE) \dis (\psi \land  \NE))\tag{$\R{Distr}\dis\intd $}\\
    &&&\vdash && \psi \intd \phi  \intd ((\phi \land \NE) \dis (\psi \land  \NE))\tag{$ \bot \R{E}$}
\end{align*}
\end{proof}

The following lemma further characterizes the interactions between $\NE$ and the disjunctions. From item \ref{lemma:BSMLI_provability_results_nf_classical} below, it follows that formulas in $\ML$-normal form $\bigdis_{(M,w)\in \llbracket \alpha\rrbracket}\chi^k_w$ can be converted into $\BSMLI$-normal form $\bigintd_{\PPP\subseteq \llbracket \alpha \rrbracket}\bigdis_{w\in \PPP}( \chi^k_w\land \NE) $.

\begin{lemma} \label{lemma:BSMLI_provability_results_nf}\
    \begin{enumerate}
        \item[(i)] 
        \makeatletter\def\@currentlabel{(i)}\makeatother \label{lemma:BSMLI_provability_results_nf_NE_general} $\displaystyle{\NE, \underset{i\in I}{\bigdis}\phi_i\vdash \underset{\emptyset\neq J\subseteq I}{\bigintdd}\underset{j\in J}{\bigdis}(\phi_j\land \NE) }$
        \item[(ii)] 
        \makeatletter\def\@currentlabel{(ii)}\makeatother \label{lemma:BSMLI_provability_results_nf_NE_classical} $\displaystyle{\NE \land \underset{i\in I}{\bigdis}\alpha_i\proveq \underset{\emptyset\neq J\subseteq I}{\bigintdd}\underset{j\in J}{\bigdis}(\alpha_j\land \NE) }$
        \item[(iii)] 
        \makeatletter\def\@currentlabel{(iii)}\makeatother \label{lemma:BSMLI_provability_results_nf_general} $\displaystyle{\underset{i\in I}{\bigdis}\phi_i\vdash \underset{J\subseteq I}{\bigintdd}\underset{j\in J}{\bigdis}(\phi_j\land \NE) }$
        \item[(iv)] 
        \makeatletter\def\@currentlabel{(iv)}\makeatother \label{lemma:BSMLI_provability_results_nf_classical} $\displaystyle{\underset{i\in I}{\bigdis}\alpha_i\proveq \underset{J\subseteq I}{\bigintdd}\underset{j\in J}{\bigdis}(\alpha_j\land \NE) }$
    \end{enumerate}
\end{lemma}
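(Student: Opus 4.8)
The plan is to prove the $\vdash$-parts of items (iii) and (i) first, and then read off the two classical equivalences (ii) and (iv). The core is item (iii), namely $\bigdis_{i\in I}\phi_i\vdash\bigintdd_{J\subseteq I}\bigdis_{j\in J}(\phi_j\land\NE)$, which I would prove by induction on $|I|$. The one subtlety is that the inductive step for a larger index set genuinely needs the hypothesis for the smaller set in its $\NE$\emph{-strengthened} form (i), not merely form (iii). I would therefore carry (iii) and (i) through the induction together, deriving (i) for $I$ from (iii) for $I$ at each stage (see below). The base case $|I|=0$ is immediate, since $\bigdis\emptyset=\bot$ makes both sides $\bot$ in (iii), and makes (i) read $\NE,\bot\vdash\Bot$, which holds by the definition $\Bot:=\bot\land\NE$.

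For the inductive step, fix $m\notin I$ and abbreviate $\Psi:=\bigdis_{i\in I}\phi_i$, so that $\bigdis_{i\in I\cup\{m\}}\phi_i\proveq\Psi\dis\phi_m$. Lemma \ref{lemma:dis_NE_elimination} gives $\Psi\dis\phi_m\vdash\Psi\intd\phi_m\intd((\Psi\land\NE)\dis(\phi_m\land\NE))$, and I would show that each of the three $\intd$-disjuncts derives the target $T:=\bigintdd_{J\subseteq I\cup\{m\}}\bigdis_{j\in J}(\phi_j\land\NE)$, concluding by $\intd\R{E}$. The disjunct $\Psi$ derives $T$ by the hypothesis (iii) for $I$ followed by $\intd\R{I}$-weakening (every $J\subseteq I$ is also a subset of $I\cup\{m\}$). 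The disjunct $\phi_m$ derives the singleton instance $\bot\intd(\phi_m\land\NE)$ — obtained from $\NE\R{I}$, distributivity, and $\phi_m\land\bot\vdash\bot$ — and both $\emptyset$ and $\{m\}$ are subsets of $I\cup\{m\}$, so again $\intd\R{I}$-weakening gives $T$. The only nontrivial disjunct is $(\Psi\land\NE)\dis(\phi_m\land\NE)$: here I would invoke the hypothesis in form (i) for $I$, namely $\Psi\land\NE\vdash\bigintdd_{\emptyset\neq J\subseteq I}\bigdis_{j\in J}(\phi_j\land\NE)$, then use $\dis\R{Mon}$ and the distributivity rule $\R{Distr}\dis\intd$ to push the trailing $\dis(\phi_m\land\NE)$ inside the global disjunction, landing on $\bigintdd_{\emptyset\neq J\subseteq I}\bigdis_{j\in J\cup\{m\}}(\phi_j\land\NE)$; since each $J\cup\{m\}$ is a subset of $I\cup\{m\}$, a final $\intd\R{I}$-weakening yields $T$.

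To obtain (i) from (iii) for the same $I$, I would conjoin $\NE$ to both sides of (iii), distribute $\land$ over $\intd$ (the standard distributivity derivable as in Proposition \ref{prop:commutativity_associativity_distributivity}), and treat the disjuncts $\NE\land\bigdis_{j\in J}(\phi_j\land\NE)$ by cases: for $J=\emptyset$ this is $\NE\land\bot=\Bot$, which derives anything by Lemma \ref{lemma:Bot_E}; for $J\neq\emptyset$ a single $\land\R{E}$ discards the $\NE$ and leaves a disjunct of the target of (i), reached by $\intd\R{I}$. Finally, items (ii) and (iv) are the equivalence versions of (i) and (iii) for classical $\alpha_i$: their $\vdash$-directions are exactly (i) and (iii) instantiated (noting $\NE\land\bigdis_i\alpha_i$ and the pair $\NE,\bigdis_i\alpha_i$ are interderivable by $\land\R{I}/\land\R{E}$), and for the converse directions each $\intd$-disjunct $\bigdis_{j\in J}(\alpha_j\land\NE)$ derives $\bigdis_{i\in I}\alpha_i$ by dropping the $\NE$-conjuncts ($\dis\R{Mon}$, $\land\R{E}$) and re-adding the missing classical disjuncts via $\dis\R{I}$. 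For (ii) one additionally recovers $\NE$ from any nonempty such disjunct using Lemma \ref{lemma:provability_results_NE}, and for (iv) the $J=\emptyset$ disjunct $\bot$ derives $\bigdis_{i\in I}\alpha_i$ by \emph{ex falso} for classical formulas.

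The main obstacle is the bookkeeping in the inductive step of (iii): keeping the reindexing $J\mapsto J\cup\{m\}$ aligned with the distributivity rewriting in the third disjunct, and threading (i) and (iii) through the induction simultaneously, since the step for $I\cup\{m\}$ genuinely consumes the $\NE$-strengthened form (i) of the hypothesis for $I$. A secondary point worth flagging in the write-up is \emph{why} only the classical items admit the full equivalence: the converse directions of (ii) and (iv) reintroduce absent disjuncts with $\dis\R{I}$, whose side condition forbids $\NE$ in the introduced disjunct, so the argument breaks for arbitrary $\phi_i$.
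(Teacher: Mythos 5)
Your proposal is correct, and it rests on exactly the same two pillars as the paper's own proof: Lemma \ref{lemma:dis_NE_elimination} (giving $\phi\dis\psi\vdash\phi\intd\psi\intd((\phi\land\NE)\dis(\psi\land\NE))$) and the rule $\R{Distr}\dis\intd$, with items \ref{lemma:BSMLI_provability_results_nf_NE_classical} and \ref{lemma:BSMLI_provability_results_nf_classical} handled just as in the paper (Lemma \ref{lemma:provability_results_NE} to recover $\NE$ from a nonempty disjunct, and $\land\R{E}$, $\dis\R{I}$, \emph{ex falso} for the converse directions). The only genuine difference is the orientation of the induction. The paper inducts directly on item \ref{lemma:BSMLI_provability_results_nf_NE_general}, keeping $\NE$ as a standing premise: its two-disjunct instance serves both as a base case and as the workhorse inside the inductive step, and item \ref{lemma:BSMLI_provability_results_nf_general} then falls out of \ref{lemma:BSMLI_provability_results_nf_NE_general} in one line by conjoining $\bot\intd\NE$ and distributing. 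You invert this: you induct on \ref{lemma:BSMLI_provability_results_nf_general} and re-derive \ref{lemma:BSMLI_provability_results_nf_NE_general} from it at each stage. This costs you the extra step $\phi_m\vdash\bot\intd(\phi_m\land\NE)$ for the middle disjunct (which the paper's standing $\NE$ premise renders unnecessary, since the $k=2$ case attaches $\land\NE$ uniformly), but it lets you dispatch the $\Psi$-disjunct directly from the induction hypothesis for \ref{lemma:BSMLI_provability_results_nf_general}; the two organizations are of essentially equal length and interderivable, so neither buys anything substantial. One nit to repair in the write-up: when your inductive step is run with $I=\emptyset$ (i.e., to establish the case $|I\cup\{m\}|=1$), the invoked instance of \ref{lemma:BSMLI_provability_results_nf_NE_general} has conclusion $\bigintdd_{\emptyset\neq J\subseteq\emptyset}\bigdis_{j\in J}(\phi_j\land\NE)=\bigintdd\,\emptyset=\Bot$, so the promised ``push the trailing disjunct inside the global disjunction'' via $\R{Distr}\dis\intd$ does not literally apply to this empty global disjunction; conclude that branch by $\R{Com}\dis$ and $\Bot\R{Ctr}$ instead, or simply start the induction at $|I|=1$, as the paper in effect does by listing the cases $k=0,1,2$ separately.
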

\begin{proof}
\ref{lemma:BSMLI_provability_results_nf_NE_general} By induction on $k=|I|$. If $k=0$, then $\bigdis\emptyset=\bot$, and by Lemma \ref{lemma:Bot_E} we have $\NE \land \bot\vdash \psi$ for any $\psi$. 

If $k=1$, then  $\bigdis \{\phi\}=\phi$, and  $\phi,\NE\vdash \phi \land \NE$ by $\land \R{I}$; and $\bigintd \{ \bigdis \{\phi \land \NE\}\}=\phi \land \NE$.

If $k=2$, then by Lemma \ref{lemma:dis_NE_elimination} we have $\phi \dis \psi\vdash \phi\intd \psi \intd ((\phi \land \NE) \dis (\psi \land \NE))$, so that clearly $\NE,\phi \dis \psi\vdash (\phi \land \NE)\intd (\psi \land \NE) \intd ((\phi \land \NE) \dis (\psi \land \NE))$.

         For the case $|I|=k+1$, by the induction hypothesis, \begin{equation}\label{lemma:BSMLI_provability_results_nf_eq1}
         \quad\displaystyle{\NE, \underset{i\in (I\backslash\{x\})}{\bigdis}\phi_i\vdash \underset{\emptyset\neq J\subseteq (I\backslash\{x\})}{\bigintdd}\underset{j\in J}{\bigdis}(\phi_j\land \NE) }
         \end{equation}
            where $x\in I$. Then we have that
            \begin{align*}
                &&&\NE, \underset{i\in I}{\bigdis}\phi_i&\\
                &\vdash&&\NE\land (\phi_x \dis \underset{i\in (I\backslash\{x\})}{\bigdis}\phi_i)\\
                &\vdash&&(\phi_x \land \NE)\intd (\NE \land \underset{i\in (I\backslash\{x\})}{\bigdis}\phi_i )\intd  ( (\phi_x \land \NE) \dis (\NE \land \underset{i\in (I\backslash\{x\})}{\bigdis}\phi_i ) )\tag{Case $k=2$}\\
                &\vdash&&(\phi_x \land \NE)\intd(\underset{\emptyset\neq J\subseteq (I\backslash\{x\})}{\bigintdd}\underset{j\in J}{\bigdis}(\phi_j\land \NE) )\intd  ((\phi_x \land \NE) \dis(\underset{\emptyset\neq J\subseteq (I\backslash\{x\})}{\bigintdd}\underset{j\in J}{\bigdis}(\phi_j\land \NE) )\tag{\ref{lemma:BSMLI_provability_results_nf_eq1}}\\
                &\vdash&&(\phi_x \land \NE)\intd(\underset{\emptyset\neq J\subseteq (I\backslash\{x\})}{\bigintdd}\underset{j\in J}{\bigdis}(\phi_j\land \NE)) \intd  \underset{\emptyset\neq J\subseteq (I\backslash\{x\})}{\bigintdd}( (\phi_x \land \NE) \dis \underset{j\in J}{\bigdis}(\phi_j\land \NE) )\tag{$\R{Distr}\dis \intd$}\\
                &\vdash&&\underset{\emptyset\neq J\subseteq I}{\bigintdd}\underset{j\in J}{\bigdis}(\phi_j\land \NE)
            \end{align*}
    
    \ref{lemma:BSMLI_provability_results_nf_NE_classical} $\vdash$ by \ref{lemma:BSMLI_provability_results_nf_NE_general}. $\dashv$: For any nonempty $J\subseteq I$, we have $\bigdis_{j\in J}(\alpha_j\land \NE)\vdash \NE$ by Lemma \ref{lemma:provability_results_NE}. We also have that for any $j\in J$, $\alpha_j\land \NE \vdash \bigdis_{i\in I}\alpha_i$ by $\land\R{E}$ and $\dis\R{I}$. Therefore $\bigdis_{j\in J}(\alpha_j\land \NE)\vdash \bigdis_{i\in I}\alpha_i$ by $\dis\R{E}$. And so $\bigintd_{\emptyset\neq J\subseteq I}\bigdis_{j\in J}(\alpha_j\land \NE)\vdash \NE\land \bigdis_{i\in I}\alpha_i$ by $\intd\R{E}$.
         
    \ref{lemma:BSMLI_provability_results_nf_general} We have that
        \begin{align*}
            &\underset{i\in I}{\bigdis}\phi_i&&\vdash&&(\bot\intd\NE)\land\underset{i\in I}{\bigdis}\phi_i\tag{$\NE\R{I}$}\\
            &&&\vdash&&(\bot\land\underset{i\in I}{\bigdis}\phi_i)\intd (\NE\land\underset{i\in I}{\bigdis}\phi_i)\\
            &&&\vdash&&\bot\intd \underset{\emptyset\neq J\subseteq I}{\bigintdd}\underset{j\in J}{\bigdis}(\phi_j\land \NE)\tag{\ref{lemma:BSMLI_provability_results_nf_NE_general}}\\
            &&&\vdash&&\underset{J\subseteq I}{\bigintdd}\underset{j\in J}{\bigdis}(\phi_j\land \NE)\tag{$\bigdis\emptyset=\bot$}
        \end{align*}
        
        \ref{lemma:BSMLI_provability_results_nf_classical} $\vdash$ by \ref{lemma:BSMLI_provability_results_nf_general}. $\dashv$: We have that
        \begin{align*}
            &\underset{J\subseteq I}{\bigintdd}\underset{j\in J}{\bigdis}(\alpha_j\land \NE)&&\vdash&&\bot\intd\underset{\emptyset\neq J\subseteq I}{\bigintdd}\underset{j\in J}{\bigdis}(\alpha_j\land \NE)\tag{$\bigdis\emptyset=\bot$}\\
            &&&\vdash&&\bot\intd(\NE\land \underset{i\in I}{\bigdis}\alpha_i)\tag{\ref{lemma:BSMLI_provability_results_nf_NE_classical}}\\
            &&&\vdash&& \underset{i\in I}{\bigdis}\alpha_i\tag{$\intd\R{E}$, Prop. \ref{prop:classical_completeness}}
        \end{align*}
\end{proof}

It now follows that each classical formula is equivalent to one in $\BSMLI$-normal form, which, in our inductive proof for Lemma \ref{lemma:charf_provable_equivalence}, takes care of the case in which the formula is classical.

\begin{lemma} \label{lemma:normal_form_provable_equivalence_classical}
    For each $\alpha\in \ML$, each $k \geq md(\alpha)$, and each finite $\mathsf{X}\supseteq \mathsf{P}(\alpha)$, there is some property $\PPP$ over $\mathsf{X}$ such that: $$\alpha \proveq \bigintdd_{s\in \PPP}\theta^{\mathsf{X,k}}_s.$$
\end{lemma}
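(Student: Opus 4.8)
The plan is to treat this lemma as essentially a corollary of the classical ($\ML$) disjunctive normal form together with Lemma \ref{lemma:BSMLI_provability_results_nf}. The idea is to first write $\alpha$ as a tensor disjunction of world Hintikka formulas $\chi^{\mathsf{X},k}_w$, upgrade that semantic equivalence to a \emph{provable} one via classical completeness (Proposition \ref{prop:classical_completeness}), and then mechanically rewrite the tensor disjunction into the shape $\bigintdd_{J}\bigdis_{j\in J}(\chi^{\mathsf{X},k}_{w_j}\land\NE)$, which is definitionally a global disjunction of strong Hintikka formulas $\theta^{\mathsf{X},k}_s$.

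First I would set up the finite index set. Since $\llbracket\alpha\rrbracket_{\mathsf{X}}$ is invariant under $md(\alpha)$-bisimulation and $k\geq md(\alpha)$, it is also invariant under $k$-bisimulation (higher-index bisimilarity implies lower-index bisimilarity, so $k$-bisimilar points are $md(\alpha)$-bisimilar). Because there are only finitely many non-equivalent $k$-Hintikka formulas over $\mathsf{X}$, I may fix a single finite model $M$ over $\mathsf{X}$ carrying one world of each $k$-bisimilarity type --- for instance a disjoint union of representatives, using Proposition \ref{prop:disjoint_union} to guarantee that the types (hence the Hintikka formulas) are preserved. Let $w_1,\dots,w_n$ enumerate those worlds of $M$ satisfying $\alpha$, and $I=\{1,\dots,n\}$. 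The disjunctive normal form for $\ML$ (from Theorem \ref{theorem:world-based_expressive_completeness}, now taken at level $k$) then yields $\alpha\equiv\bigdis_{i\in I}\chi^{\mathsf{X},k}_{w_i}$, and since both sides are classical, Proposition \ref{prop:classical_completeness} upgrades this to $\alpha\proveq\bigdis_{i\in I}\chi^{\mathsf{X},k}_{w_i}$.

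Next I would apply Lemma \ref{lemma:BSMLI_provability_results_nf}\ref{lemma:BSMLI_provability_results_nf_classical} with $\alpha_i:=\chi^{\mathsf{X},k}_{w_i}$, obtaining
$$\bigdis_{i\in I}\chi^{\mathsf{X},k}_{w_i}\proveq\bigintdd_{J\subseteq I}\bigdis_{j\in J}\bigl(\chi^{\mathsf{X},k}_{w_j}\land\NE\bigr).$$
The last step is purely definitional: writing $s_J:=\{w_j\sepp j\in J\}$, Definition \ref{def:characteristic_formulas_states} gives $\bigdis_{j\in J}(\chi^{\mathsf{X},k}_{w_j}\land\NE)=\theta^{\mathsf{X},k}_{s_J}$ (with the convention $\theta^{\mathsf{X},k}_{\emptyset}=\bot$ covering $J=\emptyset$). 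Setting $\PPP:=\{(M,s_J)\sepp J\subseteq I\}$, which is a finite property over $\mathsf{X}$ (at most $2^n$ pointed state models), and chaining the two provable equivalences, I conclude $\alpha\proveq\bigintdd_{s\in\PPP}\theta^{\mathsf{X},k}_s$.

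I do not expect a genuine obstacle here, since every ingredient is an already-established result; the only points needing care are confirming that the $\ML$-normal form is available at the prescribed level $k$ rather than only at $md(\alpha)$ (which is the monotonicity-of-invariance remark above) and that the representatives are gathered into one model so that the $s_J$ are honest states and $\PPP$ an honest property. The substantive work is entirely packaged in Lemma \ref{lemma:BSMLI_provability_results_nf}\ref{lemma:BSMLI_provability_results_nf_classical}, which is exactly why this lemma reduces to a short argument.
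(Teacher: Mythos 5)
Your proposal is correct and follows essentially the same route as the paper's proof: the $\ML$-normal form $\alpha\equiv\bigdis_{(M,w)\in\llbracket\alpha\rrbracket}\chi^{k}_w$ at level $k$, upgraded to $\proveq$ by Proposition \ref{prop:classical_completeness}, then rewritten via Lemma \ref{lemma:BSMLI_provability_results_nf}\ref{lemma:BSMLI_provability_results_nf_classical} into $\bigintdd_{s}\bigdis_{w\in s}(\chi^k_w\land\NE)=\bigintdd_s\theta^k_s$. Your extra step of assembling representatives into one model via disjoint unions so that the $s_J$ are genuine states is just a more careful rendering of what the paper elides by treating subsets $s\subseteq\llbracket\alpha\rrbracket$ directly as states.
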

\begin{proof}
    $\llbracket\alpha \rrbracket$ (over $\mathsf{X}$) is invariant under $k$-bisimulation so by the proof of Theorem \ref{theorem:world-based_expressive_completeness}, we have $\alpha \equiv \bigdis_{w\in \llbracket \alpha \rrbracket}\chi^k_w$. Then by Proposition \ref{prop:classical_completeness}, $\alpha \proveq \bigdis_{w\in \llbracket \alpha \rrbracket}\chi^k_w$, so that by Lemma \ref{lemma:BSMLI_provability_results_nf} \ref{lemma:BSMLI_provability_results_nf_classical}: 
          $$\alpha \proveq \bigdis_{w\in \llbracket \alpha \rrbracket}\chi^k_w\proveq\underset{\PPP\subseteq \llbracket \alpha \rrbracket}{\bigintdd}\underset{w\in \PPP}{\bigdis}( \chi^k_w\land \NE)= \bigintdd_{s\in \wp (\llbracket \alpha \rrbracket)}\underset{w\in s}{\bigdis}\theta^k_s ,$$
          where $\wp (\llbracket \alpha \rrbracket)$ is the state property $\{\biguplus \{(M,\{w\})\sepp (M,w)\in \PPP\} \sepp \PPP\subseteq \llbracket\alpha \rrbracket\}$.
\end{proof}

Now, for the modality cases in our inductive proof for Lemma \ref{lemma:charf_provable_equivalence}, we need to show normal form provable equivalence for formulas of the  forms $\Di \xi^k_\PPP$ and $\Bo \xi^k_\PPP$. These formulas are flat, so by Proposition \ref{prop:ML_expressive_completeness} we have $\Di \xi^k_\PPP\equiv \alpha$ and $\Bo \xi^k_\PPP\equiv\beta$ for some $\alpha,\beta\in \ML$. Given Lemma \ref{lemma:normal_form_provable_equivalence_classical}, it is thus sufficient to show $\Di \xi^k_\PPP\proveq \alpha$ and $\Bo \xi^k_\PPP\proveq\beta$; that is, the modality cases can be reduced to the case for classical formulas. As will become clear later, this  further reduces to showing that formulas of the form $\Di \theta^k_s$ or $\Bo\theta^k_s$ are provably equivalent to classical formulas.  We now work towards proving this result. First, two technical lemmas in order.

\begin{lemma} \label{lemma:di_NE_I}
    $\Di\phi \vdash \Di(\phi \land \NE)$.
\end{lemma}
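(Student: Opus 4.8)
The plan is to exploit the fact that $\Di\phi$ and $\Di(\phi\land\NE)$ are semantically equivalent---the non-empty witness $t\subseteq R[w]$ demanded by $s\vDash\Di\phi$ already supports $\NE$---while working around a genuine proof-theoretic obstacle. One cannot simply feed the subderivation $\phi\vdash\phi\land\NE$ into $\Di\R{Mon}$, because $\phi\vdash\phi\land\NE$ is \emph{not} derivable: it fails on the empty state whenever $\phi$ has the empty state property (e.g. whenever $\phi$ is classical). The idea, then, is to branch on whether the relevant state is empty by means of the global disjunction, and to absorb the empty branch at the very end.

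Concretely, I would first derive $\phi\vdash\bot\intd(\phi\land\NE)$. Starting from the axiom $\NE\R{I}$ (so $\vdash\bot\intd\NE$) together with $\land\R{I}$, we obtain $\phi\vdash\phi\land(\bot\intd\NE)$; distributing $\land$ over $\intd$ (the standard laws, cf. Proposition \ref{prop:commutativity_associativity_distributivity}) rewrites this as $(\phi\land\bot)\intd(\phi\land\NE)$; and since $\phi\land\bot\vdash\bot$ by $\land\R{E}$, a use of $\intd\R{E}$ with $\intd\R{I}$ in each branch yields $\bot\intd(\phi\land\NE)$. Applying $\Di\R{Mon}$ to this closed subderivation gives $\Di\phi\vdash\Di(\bot\intd(\phi\land\NE))$, and the conversion rule $\R{Conv}\Di\intd\dis$ rewrites the right-hand side as $\Di\bot\dis\Di(\phi\land\NE)$.

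It then remains to discard the disjunct $\Di\bot$, i.e. to establish $\Di\bot\vdash\bot$, and this is the one genuinely delicate point: none of the forward modal rules can convert a diamond into a diamond-free formula, so every naive manipulation stalls precisely on the empty state. The clean way around it is to notice that $\Di\bot$ and $\bot$ are both $\NE$-free and so both lie in $\ML$; since any state supporting $\Di\bot$ must be empty we have $\Di\bot\vDash\bot$ semantically, whence classical completeness (Proposition \ref{prop:classical_completeness}) delivers $\Di\bot\vdash\bot$ at once. With this in hand I would finish by applying $\R{Com}\dis$, then $\dis\R{Mon}$ with the subderivation $\Di\bot\vdash\bot$ (whose only undischarged assumption, $\Di\bot$, is $\NE$-free, so the side condition on $\dis\R{Mon}$ is met) to weaken the first disjunct and obtain $\Di(\phi\land\NE)\dis\bot$, and finally $\R{Com}\dis$ followed by $\bot\R{E}$ to conclude $\Di(\phi\land\NE)$. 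I expect the empty-branch step $\Di\bot\vdash\bot$ to be the crux; once it is seen to be a purely classical entailment, the rest is routine bookkeeping with the $\intd$- and $\dis$-rules.
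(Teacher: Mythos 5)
Your proposal is correct and follows essentially the same route as the paper's $\BSMLI$-derivation: push $\phi$ to $\bot\intd(\phi\land\NE)$ via $\NE\R{I}$ and distributivity, move inside the diamond (the paper manipulates inside $\Di$ step-by-step via $\Di\R{Mon}$, you apply $\Di\R{Mon}$ once to the consolidated subderivation), convert with $\R{Conv}\Di\intd\dis$ to $\Di\bot\dis\Di(\phi\land\NE)$, and discharge $\Di\bot$ by observing $\Di\bot\vdash\bot$ via classical completeness (Proposition \ref{prop:classical_completeness}), finishing with $\dis\R{Mon}$ and $\bot\R{E}$. You correctly identified the crux ($\Di\bot\vdash\bot$) and resolved it exactly as the paper does.
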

\begin{proof}
    We give a $\BSMLI$-specific derivation:
    \begin{align*}
    &\Di \phi &&\vdash && \Di(\phi\land (\bot \intd \NE))\tag{$\NE\R{I}$}\\
    &&&\vdash&&\Di((\phi \land \bot) \intd (\phi \land \NE))\\
    & &&\vdash && \Di(\bot \intd (\phi \land \NE))\\
    & &&\vdash && \Di\bot \dis \Di (\phi \land \NE) \tag{$\Di\intd\dis\R{Conv}$}\\
    & &&\vdash && \bot \dis \Di (\phi \land \NE) \tag{Prop. \ref{prop:classical_completeness}, $\dis \R{Mon}$}\\
    & &&\vdash && \Di (\phi \land \NE)\tag{$\bot\R{E}$}
    \end{align*}
\end{proof}

\begin{lemma} \label{lemma:fc} 
    For all $\phi$ which do not contain $\intd$ and for all $\psi$:
    \begin{enumerate}
        \item[(i)] 
        \makeatletter\def\@currentlabel{(i)}\makeatother \label{lemma:fc_Di}  $\Di ((\phi\land \NE)\dis \psi)\proveq \Di(\phi \dis \psi) \land \Di \phi$
        \item[(ii)] 
        \makeatletter\def\@currentlabel{(ii)}\makeatother \label{lemma:fc_Bo} $\Bo ((\phi\land \NE)\dis \psi )\proveq  \Bo(\phi\dis \psi) \land \Di \phi$
    \end{enumerate}
\end{lemma}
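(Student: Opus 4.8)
The plan is to prove each of the two provable equivalences by establishing the entailments $\vdash$ and $\dashv$ separately, using the modal interaction rules of group (f) together with Lemmas \ref{lemma:di_NE_I} and \ref{lemma:provability_results_NE}. The one genuinely semantic fact underlying both items is the union closure of $\intd$-free formulas; proof-theoretically this is captured by the sublemma
$$(\phi \land \NE)\dis \phi \vdash \phi \land \NE \quad \text{for } \intd\text{-free }\phi,$$
which is exactly where the hypothesis on $\phi$ is needed. I would prove this sublemma first: the conjunct $\phi$ follows by $\dis\R{E}$ (with case-subderivations $[\phi \land \NE]\vdash \phi$ via $\land\R{E}$ and $[\phi]\vdash\phi$, whose common conclusion $\phi$ is $\intd$-free as required by the side condition $(\ddagger)$), while the conjunct $\NE$ follows from Lemma \ref{lemma:provability_results_NE} after commuting to $\phi \dis (\phi \land \NE)$; an application of $\land\R{I}$ then yields the sublemma. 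I expect this to be the main—if modest—obstacle, since it is the only place where the union-closure content and the $\intd$-freeness hypothesis are actually used.

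For the $\vdash$ direction of \ref{lemma:fc_Di}, I would obtain the two conjuncts of $\Di(\phi\dis\psi)\land \Di\phi$ as follows. For $\Di\phi$: commute inside the diamond (via $\Di\R{Mon}$ and $\R{Com}\dis$) to rewrite $\Di((\phi\land\NE)\dis\psi)$ as $\Di(\psi \dis (\phi \land \NE))$, and then apply $\Di\R{Sep}$. For $\Di(\phi\dis\psi)$: apply $\Di\R{Mon}$ to the subderivation $[(\phi\land\NE)\dis\psi]\vdash \phi\dis\psi$ obtained by turning the first disjunct $\phi\land\NE$ into $\phi$ with $\land\R{E}$ (using $\R{Com}\dis$ and $\dis\R{Mon}$). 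Then $\land\R{I}$ combines the two. For $\dashv$, I would first use Lemma \ref{lemma:di_NE_I} to pass from $\Di\phi$ to $\Di(\phi\land\NE)$, then apply $\Di\R{Join}$ to $\Di(\phi\land\NE)$ and $\Di(\phi\dis\psi)$ to obtain $\Di((\phi\land\NE)\dis(\phi\dis\psi))$; finally I would simplify the body to $(\phi\land\NE)\dis\psi$ using $\Di\R{Mon}$, where the required entailment $(\phi\land\NE)\dis(\phi\dis\psi)\vdash (\phi\land\NE)\dis\psi$ follows by rearranging (Proposition \ref{prop:commutativity_associativity_distributivity}) to $((\phi\land\NE)\dis\phi)\dis\psi$ and applying the sublemma together with $\dis\R{Mon}$.

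Item \ref{lemma:fc_Bo} runs in parallel, with $\Bo\R{Mon}$ in place of $\Di\R{Mon}$. For $\vdash$: the conjunct $\Bo(\phi\dis\psi)$ comes directly from $\Bo\R{Mon}$ applied to the same subderivation $[(\phi\land\NE)\dis\psi]\vdash\phi\dis\psi$; the conjunct $\Di\phi$ I would get by first rewriting, via Lemma \ref{lemma:provability_results_NE} and the replacement Lemma \ref{lemma:provable_subformula_replacement} (replacement inside $\Bo$ is licensed, as the only negations present are those constituting $\Bo$), the premise $\Bo((\phi\land\NE)\dis\psi)$ as $\Bo(((\phi\land\NE)\dis\psi)\land\NE)$, then applying $\Bo\R{Inst}$ to obtain $\Di((\phi\land\NE)\dis\psi)$, and finally invoking the already-proved $\vdash$ direction of \ref{lemma:fc_Di} to extract $\Di\phi$. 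For $\dashv$: again pass from $\Di\phi$ to $\Di(\phi\land\NE)$ by Lemma \ref{lemma:di_NE_I}, apply $\Bo\Di\R{Join}$ to $\Bo(\phi\dis\psi)$ and $\Di(\phi\land\NE)$ to get $\Bo((\phi\dis\psi)\dis(\phi\land\NE))$, and then use $\Bo\R{Mon}$ to rewrite the body to $(\phi\land\NE)\dis\psi$ exactly as in the $\dashv$ case of \ref{lemma:fc_Di}. Throughout, the side conditions on $\Di\R{Mon}$, $\Bo\R{Mon}$, and $\dis\R{Mon}$ are met because the transformed premises are discharged and no stray $\NE$-assumptions are introduced.
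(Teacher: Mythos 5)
Your proposal is correct and takes essentially the same route as the paper's proof: the $\vdash$ directions via $\Di\R{Sep}$ (plus, for (ii), Lemma \ref{lemma:provability_results_NE} and $\Bo\R{Inst}$), and the $\dashv$ directions via Lemma \ref{lemma:di_NE_I} followed by $\Di\R{Join}$/$\Bo\Di\R{Join}$ and a collapse of $(\phi\land\NE)\dis\phi$ into $\phi\land\NE$ inside the modality. Your explicit sublemma is precisely the step the paper performs inline (its application of $\dis\R{E}$ together with Lemma \ref{lemma:provability_results_NE}), and it isolates correctly the one place where the $\intd$-freeness of $\phi$ is needed.
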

\begin{proof}
    \ref{lemma:fc_Di} $\vdash$: By $\Di\R{Sep}$. $\dashv$: We have that
    \begin{align*}
        &\Di (\phi\dis \psi) \land \Di \phi &&\vdash&&\Di (\phi\dis \psi) \land \Di(\phi\land \NE)&\tag{Lemma \ref{lemma:di_NE_I}}\\
        &&&\vdash&&\Di (\phi \dis (\phi \land \NE)\dis \psi)\tag{$\Di\R{Join}$}\\
        &&&\vdash&&\Di (((\phi\dis(\phi \land \NE)) \land \NE)\dis\psi)\tag{Lemma \ref{lemma:provability_results_NE}, $\dis \R{Mon}$}\\
        &&&\vdash&&\Di ((\phi \land \NE)\dis\psi)\tag{$\dis \R{E}$}
    \end{align*}
    \ref{lemma:fc_Bo} $\vdash$: We have that
    \begin{align*}
        &\Bo ((\phi \land \NE)\dis \psi)&&\vdash&&\Bo (\phi \dis \psi )\land\Bo ((\phi \land \NE)\dis \psi) \\
        &&&\vdash&&\Bo (\phi \dis \psi )\land\Bo (((\phi \land \NE)\dis \psi)\land\NE) \tag{Lemma \ref{lemma:provability_results_NE}}\\
        &&&\vdash&&\Bo (\phi \dis \psi )\land\Di ((\phi \land \NE)\dis \psi) \tag{$\Bo\R{Inst}$}\\
        &&&\vdash&&\Bo (\phi \dis \psi )\land\Di \phi \tag{$\Di\R{Sep}$}
    \end{align*}
    
    $\dashv$: Similar to $\dashv$ of \ref{lemma:fc_Di}, using $\Bo\Di \R{Join}$ instead of $\Di \R{Join}$.
\end{proof}

We now show that formulas of the form $\Di \theta^k_s$ or $\Bo\theta^k_s$ are provably equivalent to classical formulas.

\noindent
\begin{minipage}{\textwidth}
\begin{lemma} \label{lemma:modal_charfs_provably_classical}\
    \begin{enumerate}
        \item[(i)] 
        \makeatletter\def\@currentlabel{(i)}\makeatother \label{lemma:modal_charfs_provably_classical_Di}  $\displaystyle\Di\theta^{k}_{s}\proveq \Di \chi^k_s \land \bigwedge_{w \in s}\Di \chi^{k}_{w}$
        \item[(ii)] 
        \makeatletter\def\@currentlabel{(ii)}\makeatother \label{lemma:modal_charfs_provably_classical_Bo}  $\displaystyle\Bo\theta^{k}_{s}\proveq \Bo \chi^{k}_{s} \land \bigwedge_{w \in s}\Di \chi^{k}_{w}$
    \end{enumerate}
\end{lemma}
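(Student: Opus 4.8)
The plan is to prove both items in parallel by the same inductive strategy, the only difference being that (i) invokes Lemma~\ref{lemma:fc}\ref{lemma:fc_Di} while (ii) invokes Lemma~\ref{lemma:fc}\ref{lemma:fc_Bo}. The guiding idea is that $\theta^k_s=\bigvee_{w\in s}(\chi^k_w\land \NE)$ is a local disjunction of $\NE$-tagged classical disjuncts, and Lemma~\ref{lemma:fc} lets us strip off one such $\NE$ at a time from under a $\Di$ (resp.\ $\Bo$): each strip contributes exactly one conjunct $\Di\chi^k_w$ and replaces $\chi^k_w\land \NE$ by the plain $\chi^k_w$. Iterating until every $\NE$ has been removed turns $\theta^k_s$ into $\chi^k_s=\bigvee_{w\in s}\chi^k_w$ and accumulates the conjunction $\bigwedge_{w\in s}\Di\chi^k_w$, which is precisely the desired right-hand side.

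A single application of Lemma~\ref{lemma:fc}\ref{lemma:fc_Di} to $\Di\theta^k_s=\Di((\chi^k_w\land \NE)\dis\theta^k_{s\setminus\{w\}})$ yields $\Di(\chi^k_w\dis\theta^k_{s\setminus\{w\}})\land\Di\chi^k_w$, in which the leftover disjunct $\chi^k_w$ is no longer $\NE$-tagged; this is why a naive induction directly on $\Di\theta^k_s$ stalls. To keep the induction going I strengthen the claim to: for every $\intd$-free (in particular classical) formula $\chi$,
$$\Di(\chi\dis\theta^k_s)\proveq\Di(\chi\dis\chi^k_s)\land\bigwedge_{w\in s}\Di\chi^k_w,$$
together with the analogous statement with $\Bo$ in place of the outer $\Di$, and proceed by induction on $|s|$. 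For $|s|=0$ we have $\theta^k_\emptyset=\chi^k_\emptyset=\bot$ and $\bigwedge\emptyset=\Top$, so both sides reduce to $\Di(\chi\dis\bot)$ via $\phi\land\Top\proveq\phi$ (using $\vdash\Top$, which holds by classical completeness, Proposition~\ref{prop:classical_completeness}). For the inductive step, pick $w\in s$ and set $s'=s\setminus\{w\}$; rearranging by commutativity and associativity of $\dis$ (Proposition~\ref{prop:commutativity_associativity_distributivity} and $\R{Com}\dis$) gives $\Di(\chi\dis\theta^k_s)=\Di((\chi^k_w\land \NE)\dis(\chi\dis\theta^k_{s'}))$. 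Applying Lemma~\ref{lemma:fc}\ref{lemma:fc_Di} with the classical (hence $\intd$-free) $\phi=\chi^k_w$ produces $\Di((\chi\dis\chi^k_w)\dis\theta^k_{s'})\land\Di\chi^k_w$, and then the induction hypothesis applied to the first conjunct with the $\intd$-free prefix $\chi\dis\chi^k_w$ finishes the step, since $\chi^k_w\dis\chi^k_{s'}$ is $\chi^k_s$ and $\Di\chi^k_w\land\bigwedge_{u\in s'}\Di\chi^k_u=\bigwedge_{u\in s}\Di\chi^k_u$. The box version is identical, using Lemma~\ref{lemma:fc}\ref{lemma:fc_Bo}; note that it too contributes $\Di\chi^k_w$ (not $\Bo\chi^k_w$), so the conjunction $\bigwedge_{w\in s}\Di\chi^k_w$ is generated in exactly the same way.

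Finally, specializing $\chi=\bot$ and using $\bot\dis\phi\proveq\phi$ (by $\dis\R{I}$/$\R{Com}\dis$ one way and $\bot\R{E}$ the other) collapses the two strengthened claims to the statements (i) and (ii). Every rewrite of a provably equivalent conjunct is licensed by the replacement lemma (Lemma~\ref{lemma:provable_subformula_replacement}), as all the substitutions occur in positions not within the scope of a bare $\lnot$. The main obstacle is exactly the mismatch noted above: after one peel the residue $\Di(\chi^k_w\dis\theta^k_{s'})$ matches neither the goal nor the original induction hypothesis. Introducing the accumulating $\intd$-free prefix $\chi$ repairs this, and the one point requiring care is to verify at each stage that the prefix remains $\intd$-free (being a $\dis$ of the $\intd$-free $\chi$ with classical Hintikka formulas), so that the side condition of Lemma~\ref{lemma:fc} continues to be met.
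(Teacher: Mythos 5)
Your proof is correct and takes essentially the same route as the paper's: both obtain (i) and (ii) by iterating Lemma~\ref{lemma:fc} (the $\Di$-version for (i), the $\Bo$-version for (ii)) to strip one $\NE$-tagged disjunct of $\theta^k_s$ at a time, accumulating the conjuncts $\Di\chi^k_w$, with the empty case handled via $\theta^k_\emptyset=\chi^k_\emptyset=\bot$ and $\bigwedge\emptyset=\Top$. Your explicit induction on $|s|$ with an accumulating prefix $\chi$ is just a rigorous packaging of the paper's iterated chain of equivalences (which absorbs the already-stripped classical disjuncts into the unconstrained $\psi$ of Lemma~\ref{lemma:fc}); indeed, since that lemma's side condition falls only on the peeled formula, here the classical $\chi^k_w$, the $\intd$-freeness of your prefix never actually needs to be tracked.
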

\end{minipage}
\begin{proof}
        \ref{lemma:modal_charfs_provably_classical_Di}
        If $s=\emptyset$ then $\Di\theta^{k}_{s}=\Di\bot =\Di\chi^k_s \proveq \Di \chi^k_s \land \Top = \Di \chi^k_s \land \bigwedge_{w \in s}\Di \chi^{k}_{w}$.
                Otherwise if $s\neq \emptyset$ let $\theta^k_{s}=(\chi^k_{w_1}\land \NE) \dis \ldots \dis (\chi^k_{w_n}\land \NE)$. Then by Lemma \ref{lemma:fc} \ref{lemma:fc_Di}, 
        \begin{align*}
            &\Di \theta^k_{s} &&= &&\Di\underset{w\in s}{\bigdis}(\chi^k_w\land \NE)\\
            &&&\proveq &&\Di(\bigdis^{n-1}_{i=1} (\chi^k_{w_{i}}\land \NE)\dis \chi^k_{w_n})\land \Di\chi^k_{w_n}
            \\
            &&&\proveq &&\Di(\bigdis^{n-2}_{i=1} (\chi^k_{w_{i}}\land \NE)\dis \chi^k_{w_{n-1}}\dis \chi^k_{w_n})\land \Di\chi^k_{w_{n-1}}\land \Di\chi^k_{w_n}
            \\
            &&&&&\vdots\\
            &&&\proveq &&\Di \chi^k_{s} \land\bigwedge_{w\in s}\Di\chi^k_w
        \end{align*}
        \ref{lemma:modal_charfs_provably_classical_Bo} Similar to \ref{lemma:modal_charfs_provably_classical_Di}, using Lemma \ref{lemma:fc} \ref{lemma:fc_Bo} instead of \ref{lemma:fc} \ref{lemma:fc_Di}.
\end{proof}

We are finally ready to prove Lemma \ref{lemma:normal_form_provable_equivalence_BSMLI}, the normal form lemma.

\begin{proof}[Proof of Lemma \ref{lemma:normal_form_provable_equivalence_BSMLI}]
    By induction on the complexity of $\phi$. By Proposition \ref{prop:negation_normal_form_provable_BSMLI}, we may assume $\phi$ is in negation normal form. Let $k\geq md(\phi)$.

If $\phi =p$ or $\phi=\lnot p$, we apply Lemma \ref{lemma:normal_form_provable_equivalence_classical}.
     If $\phi =\lnot \NE$, we have $\lnot \NE \proveq \bot$ by $\NE\R{E}$, and the result then follows by Lemma \ref{lemma:normal_form_provable_equivalence_classical}.
        If $\phi=\NE$, we have that
        \begin{align*}
            &\NE &&\proveq &&\NE \land  ( p \dis \lnot p)  \tag{Prop. \ref{prop:classical_completeness}}\\
            &&&\proveq &&\NE\land \underset{s\in \PPP}{\bigintdd} \theta^k_s \tag{Lemma \ref{lemma:normal_form_provable_equivalence_classical}}\\
            &&&\proveq &&\NE \land (\bot \intd \underset{s\neq \emptyset\in  \PPP}{\bigintdd} \theta^k_s) \\
            &&&\proveq &&(\NE\land \bot) \intd \underset{s\neq \emptyset\in  \PPP}{\bigintdd} \theta^k_s \tag{Prop. \ref{prop:commutativity_associativity_distributivity}, Lemma \ref{lemma:provability_results_NE}}\\
            &&&\proveq &&\underset{s\in \PPP}{\bigintdd} \theta^k_s &\tag{Lemma \ref{lemma:Bot_E}}
        \end{align*}

If $\phi =\psi \land \chi$, then $k\geq md(\phi)=max\{md(\psi),md(\chi)\}$, so by induction hypothesis there are $\PPP$ and $\QQQ$ such that $\psi\proveq\bigintd_{s\in \PPP} \theta^k_s$ and $\chi\proveq\bigintd_{t\in \QQQ} \theta^k_{t}$. Let $\RRR:=\{r\sepp \exists s\in \PPP,\exists t\in \QQQ: s\bisim_k r\bisim_k t\}$. We have that
        \begin{align*}
            \psi \land \chi&&&\proveq&&\underset{s\in \PPP}{\bigintdd} \theta^k_s \land \underset{t\in \QQQ}{\bigintdd} \theta^k_t&\\
            &&&\proveq&&\underset{s\in \PPP}{\bigintdd} \underset{t\in \QQQ}{\bigintdd}(\theta^k_s \land  \theta^k_{t})\\
            &&&\proveq&&\underset{s\in \PPP\cap \RRR}{\bigintdd} \,\underset{t\in \QQQ\cap [s]_k}{\bigintdd}(\theta^k_s \land  \theta^k_{t})\intd \underset{s\in \PPP\cap \RRR}{\bigintdd} \,\underset{t\in \QQQ\backslash [s]_k}{\bigintdd}(\theta^k_s \land  \theta^k_{t})\intd\underset{s\in \PPP\backslash \RRR}{\bigintdd} \,\underset{t\in \QQQ}{\bigintdd}(\theta^k_s \land  \theta^k_{t})\tag{where $[s]_k$ denotes the $\bisim_k$-equivalence class of $s$}\\
            &&&\proveq&&\underset{s\in \PPP\cap \RRR}{\bigintdd} \,\underset{t\in \QQQ\cap [s]_k}{\bigintdd}(\theta^k_s \land  \theta^k_{t})\intd (\underset{s\in \PPP\cap \RRR}{\bigintdd} \,\underset{t\in \QQQ\backslash [s]_k}{\bigintdd}\Bot)\intd\underset{s\in \PPP\backslash \RRR}{\bigintdd} \,\underset{t\in \QQQ}{\bigintdd} \Bot\tag{Lemmas \ref{lemma:not_bisimilar_implies_charfs_incompatible} \& \ref{lemma:Bot_E}}\\
            &&&\proveq&&\underset{s\in \PPP\cap \RRR}{\bigintdd} \,\underset{t\in \QQQ\cap [s]_k}{\bigintdd}(\theta^k_s \land  \theta^k_{t})\tag{Lemma \ref{lemma:Bot_E}}\\
            &&&\proveq&&\underset{s\in \RRR}\bigintdd \theta^k_s\tag{Lemma \ref{lemma:charf_provable_equivalence}}
        \end{align*}

    If $\phi =\psi \dis \chi$, then $k\geq md(\phi)=max\{md(\psi),md(\chi)\}$, and by induction hypothesis we have properties $\PPP,\QQQ$ as in the conjunction case. Then:
        \begin{align*}
            &\psi \dis\chi&&\proveq&&\underset{s\in \PPP}{\bigintdd} \theta^k_s \dis \underset{t\in \QQQ}{\bigintdd} \theta^k_{t}\\
            &&&\proveq&&\underset{s\in \PPP}{\bigintdd} \underset{t\in \QQQ}{\bigintdd}(\theta^k_s \dis \theta^k_{t})\tag{Prop. \ref{prop:commutativity_associativity_distributivity}}\\
            &&&\proveq&&\underset{s\in \PPP}{\bigintdd} \underset{t\in \QQQ}{\bigintdd}\theta^k_{s\uplus t}
        \end{align*}

  If $\phi =\psi \intd \chi$, the result follows immediately by the induction hypothesis.
   
   If $\phi= \Di \psi$, then $k-1\geq md(\phi)-1= 
    md(\psi)$, so by the induction hypothesis there is a property $\PPP$ such that $\psi\proveq\bigintd_{s\in \PPP} \theta^{k-1}_s$. Thus,
        \begin{align*}
            &\Di\psi&&\proveq&&\Di \underset{s\in \PPP}{\bigintdd} \theta^{k-1}_s \\
            &&&\proveq  &&\underset{s\in \PPP}{\bigdis}\Di\theta^{k-1}_s \tag{$\mathsf{Conv}\Di\intd\dis$}\\
            &&&\proveq  &&\underset{s\in \PPP}{\bigdis}(\Di \chi^{k-1}_s \land\underset{w\in s}{\bigwedge}\Di\chi^{k-1}_w) \tag{Lemma \ref{lemma:modal_charfs_provably_classical} \ref{lemma:modal_charfs_provably_classical_Di}}
        \end{align*}
        This formula is classical and of modal depth $\leq k$, so we are done by Lemma \ref{lemma:normal_form_provable_equivalence_classical}.

    The case $\phi= \Bo \psi$ is similar to the case for $\Di \psi$, using $\mathsf{Conv}\Bo\intd\dis$ instead of $\mathsf{Conv}\Di\intd\dis$ and Lemma \ref{lemma:modal_charfs_provably_classical} \ref{lemma:modal_charfs_provably_classical_Bo} instead of Lemma \ref{lemma:modal_charfs_provably_classical}\ref{lemma:modal_charfs_provably_classical_Di}.
\end{proof}
\subsection{$\BSMLOBF$} \label{section:BSMLO}

For the $\BSMLO$-system, we remove the rules concerning $\intd$ from the $\BSMLI$-system and add rules for the emptiness operator $\OC$. Recall that $\OC$ corresponds essentially to one specific type of global disjunction, as $\OC\psi \equiv \psi \intd \bot$. The introduction rules for $\OC$ capture (the right-to-left entailment in) this equivalence by imitating instances of $\intd$-introduction which yield $\psi\vdash \psi \vvee \bot$ and $\bot\vdash \psi \vvee \bot$. The elimination rules for $\OC$ capture something stronger than (the left-to-right entailment in) $\OC\psi \equiv \psi \intd \bot$: we also encode the fact that $\lor$, $\land$ and $\OC$ distribute over $\intd$---and that $\lnot$ and $\Di$ do not---directly into these rules. To that end, we call a formula occurrence $[\psi]$ \emph{$\intd\hspace{-1pt}$-distributive} in $\chi$ if $[\psi]$ is not within the scope of any $\lnot$ or $\Di$ in $\chi$ (where recall that $\Bo$ is an abbreviation of $\neg\Di\neg$). For example, $[p]$ is not $\intd\hspace{-1pt}$-distributive in $\Bo(p \lor q)=\lnot\Di\lnot(p \lor q) $ but it \emph{is} $\intd\hspace{-1pt}$-distributive in the subformula $p \lor q$. Given the pertinent equivalence and distributivity facts, we have that if $[\OC\psi]$ is $\intd$-distributive in $\phi$, then $\phi\equiv\phi[\psi\intd\bot/\OC\psi]\equiv \phi[\psi/\OC\psi]\intd\phi[\bot/\OC\psi]$ and $\Di\phi\equiv\Di\phi[\psi\intd\bot/\OC\psi]\equiv \Di(\phi[\psi/\OC\psi]\intd\phi[\bot/\OC\psi])\equiv \Di\phi[\psi/\OC\psi]\vee\Di\phi[\bot/\OC\psi]$ (and similarly for $\Bo$). The elimination rules for $\OC$ capture (the left-to-right entailments in) these equivalences by imitating the $\intd$-rules as applied to formulas of the form $ \phi[\psi/\OC\psi]\intd\phi[\bot/\OC\psi]$ and $\Di/\Bo(\phi[\psi/\OC\psi]\intd\phi[\bot/\OC\psi])$.

\begin{definition}[Natural deduction system for $\BSMLOBF$] \label{def:BSMLO_system}
The natural deduction system for $\BSMLO$ includes all rules not involving $\intd$ from the system for $\BSMLI$ (boxes (a)--(f)) and the following rules for $\OC$:
    
       \begin{proofbox}[]
       {\small

        \begin{minipage}{.33\textwidth}
         \begin{prooftree}
            \AxiomC{}
            \RightLabel{$\OC\NE\R{I}$}
                \UnaryInfC{$\OC  \NE$}
        \end{prooftree}
        \end{minipage}
        \begin{minipage}{.33\textwidth}
        \begin{prooftree}
                \AxiomC{$D $}
                \noLine
            \UnaryInfC{$\bot$}
            \RightLabel{$\OC\R{I}$}
                \UnaryInfC{$\OC \phi$}
        \end{prooftree}
        \end{minipage}
        \begin{minipage}{.33\textwidth}
        \begin{prooftree}
                \AxiomC{$D $}
                \noLine
            \UnaryInfC{$\phi$}
            \RightLabel{$\OC\R{I}$}
                \UnaryInfC{$\OC \phi$}
        \end{prooftree}
        \end{minipage}
        \vspace{0.2cm}

                \begin{minipage}{.66\textwidth}
                  \begin{prooftree}
                \AxiomC{$D $}
                \noLine
            \UnaryInfC{$\phi$}
                \AxiomC{$[\phi[\psi /\OC\psi]] $}
                \noLine
                \UnaryInfC{$ D_1 $}
                \noLine
            \UnaryInfC{$ \chi$}
                \AxiomC{$[\phi[\bot /\OC\psi]]$}
                \noLine
            \UnaryInfC{$ D_2 $}
                \noLine
            \UnaryInfC{$ \chi$}
            \RightLabel{$\OC\R{E} (*)$}
            \TrinaryInfC{$ \chi $}
        \end{prooftree}
        \end{minipage}
                \begin{minipage}{.33\textwidth}
        \begin{prooftree}
                \AxiomC{$D $}
                \noLine
            \UnaryInfC{$\bnot\OC\phi$}
            \doubleLine
            \RightLabel{$\bnot\OC\R{E}$}
                \UnaryInfC{$\bnot\phi$}
        \end{prooftree}
        \end{minipage}
        \vspace{0.2cm}
            
          \begin{minipage}{.5\textwidth}
        \begin{prooftree}
                \AxiomC{$D $}
                \noLine
            \UnaryInfC{$\Di\phi$}
            \RightLabel{$\Di\OC\R{E} (*)$}
            \UnaryInfC{$ \Di\phi[\psi /\OC\psi]\dis \Di \phi[\bot /\OC\psi] $}
        \end{prooftree}
        \end{minipage}
          \begin{minipage}{.5\textwidth}
                  \begin{prooftree}
                \AxiomC{$D $}
                \noLine
            \UnaryInfC{$\Bo\phi$}
            \RightLabel{$\Bo\OC\R{E} (*)$}
            \UnaryInfC{$ \Bo\phi[\psi /\OC\psi]\dis \Bo \phi[\bot /\OC\psi] $}
        \end{prooftree}
        \end{minipage}
        \vspace{0.2cm}
        
        }
        {\footnotesize
    
        $(*)$ $[\OC\psi]$ is $\intd\hspace{-1pt}$-distributive in $\phi$.}
    \end{proofbox}
\end{definition}
The introduction rules $\OC\NE\R{I}$ and $\OC\R{I}$ imitate the $\BSMLI$-rules $\NE\R{I}$ and $\intd\R{I}$, respectively. The elimination rule $\OC\R{E}$ imitates the rule $\intd\R{E}$; and $\Di\OC\R{E}$ and $\Bo\OC\R{E}$ imitate $\Di\intd\vee\R{Conv}$ and $\Bo\intd\vee\R{Conv}$, respectively. The antisupport clause for $\OC\phi$ is characterized by the elimination rule $\lnot \OC\R{E}$. The soundness of these new rules follows from the equivalences noted above.

\begin{theorem}[Soundness of $\BSMLOBF$] \label{theorem:soundness_BSMLO}
If $\Phi \vdash \psi$, then $ \Phi \vDash \psi$.
\end{theorem}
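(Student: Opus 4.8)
The plan is to argue by induction on the length of the derivation witnessing $\Phi \vdash \psi$, exactly as in the proof of Theorem \ref{theorem:soundness_BSMLI}. Since the $\BSMLO$-system consists of the $\intd$-free rules of the $\BSMLI$-system (boxes (a)--(f)) together with the new rules for $\OC$, and the former have already been shown sound in Theorem \ref{theorem:soundness_BSMLI}, the only cases left to treat are the ones in which the last rule applied is one of $\OC\NE\R{I}$, $\OC\R{I}$, $\lnot\OC\R{E}$, $\OC\R{E}$, $\Di\OC\R{E}$, or $\Bo\OC\R{E}$. The introduction rules and $\lnot\OC\R{E}$ are immediate from the clauses for $\OC$: for $\OC\NE\R{I}$ we have $\vDash \OC\NE$ since every $s$ either is nonempty (so $s \vDash \NE$) or is empty; for the two forms of $\OC\R{I}$ we use $\phi \vDash \OC\phi$ (first disjunct of the support clause) and $\bot \vDash \OC\phi$ (as $\bot$ is supported only by $\emptyset$, which supports every $\OC$-formula); and for $\lnot\OC\R{E}$ we use that $s \Dashv \OC\phi$ iff $s \Dashv \phi$, whence $\OC\phi$ and $\phi$ have the same anti-support, i.e.\ $\lnot\OC\phi \equiv \lnot\phi$.

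The three remaining rules all reflect the distributive behaviour of $\OC$, and their soundness rests on the following semantic lemma: if the displayed occurrence $[\OC\psi]$ is $\intd$-distributive in $\phi$, then
\[
\phi \equiv \phi[\psi/\OC\psi] \intd \phi[\bot/\OC\psi].
\]
I would prove this by induction following the unique path from the root of $\phi$ to the occurrence $[\OC\psi]$; because $[\OC\psi]$ is $\intd$-distributive, this path passes only through $\land$, $\dis$, and $\OC$, so the cases $\lnot$, $\Di$, and $\Bo$ never arise. The base case is $\phi = \OC\psi$, where the claim is exactly the equivalence $\OC\psi \equiv \psi \intd \bot$ recorded in Section \ref{section:preliminaries}. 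The inductive steps use the distributivity equivalences already established there, namely $\eta \land (\sigma \intd \tau) \equiv (\eta \land \sigma)\intd(\eta \land \tau)$, $\eta \dis (\sigma \intd \tau) \equiv (\eta \dis \sigma)\intd(\eta \dis \tau)$, and $\OC(\sigma \intd \tau)\equiv \OC\sigma \intd \OC\tau$ (the last following from $\OC\chi \equiv \chi \intd \bot$). From this lemma the rule $\OC\R{E}$ follows in the style of $\intd\R{E}$: if $s$ supports the undischarged assumptions and $s \vDash \phi$, then $s \vDash \phi[\psi/\OC\psi]$ or $s \vDash \phi[\bot/\OC\psi]$, and either way the corresponding subderivation yields $s \vDash \chi$. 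For $\Di\OC\R{E}$ and $\Bo\OC\R{E}$ I would combine the lemma with $\Di(\sigma \intd \tau)\equiv \Di\sigma \dis \Di\tau$ and $\Bo(\sigma \intd \tau)\equiv \Bo\sigma \dis \Bo\tau$ (and the fact that support-equivalence is preserved inside $\Di$ and $\Bo$), obtaining $\Di\phi \equiv \Di\phi[\psi/\OC\psi]\dis \Di\phi[\bot/\OC\psi]$ and the analogous equivalence for $\Bo$.

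The routine cases aside, the one point requiring care is the distributivity lemma, and in particular the role of the $\intd$-distributivity side condition $(*)$. Because ordinary equivalence is \emph{not} in general preserved under replacement (a failure occasioned by the bilateral negation, as noted in Section \ref{section:preliminaries}), one cannot simply substitute $\psi \intd \bot$ for $\OC\psi$ inside an arbitrary context; the side condition $(*)$ is precisely what guarantees that the occurrence sits in a positive, $\lnot$- and $\Di$-free context through which the meta-level $\intd$ can be pulled outward using the distributivity laws, and through which $\Di$ and $\Bo$ can be pushed in as $\dis$. Carrying out the structural induction directly, rather than via a replacement lemma, is the clean way to keep this bookkeeping honest, and it is where the bulk of the work lies.
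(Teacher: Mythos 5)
Your proposal is correct and follows essentially the same route as the paper: the paper likewise inherits the soundness of the shared rules (boxes (a)--(f)) from Theorem \ref{theorem:soundness_BSMLI} and reduces the soundness of the new $\OC$-rules to precisely the equivalences you isolate, namely $\OC\psi\equiv\psi\intd\bot$, $\phi\equiv\phi[\psi/\OC\psi]\intd\phi[\bot/\OC\psi]$ for $\intd\hspace{-1pt}$-distributive occurrences, and $\Di\phi\equiv\Di\phi[\psi/\OC\psi]\lor\Di\phi[\bot/\OC\psi]$ (similarly for $\Bo$). The only difference is one of detail: the paper asserts these equivalences from the distributivity facts of Section \ref{section:preliminaries}, whereas you additionally supply the induction along the path to the occurrence that proves them.
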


To provide a simple demonstration of the new rules, we derive the useful fact that $\OC$ cancels out the effects of appending $\land\NE$ to classical formulas:

\begin{lemma} \label{lemma:alpha_proveq_OC_alpha_and_NE}
    $\alpha \proveq \OC(\alpha \land \NE)$.
\end{lemma}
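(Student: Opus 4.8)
The plan is to prove the two entailments $\OC(\alpha\land\NE)\vdash\alpha$ and $\alpha\vdash\OC(\alpha\land\NE)$ separately, in each case exploiting the rules $\OC\R{I}$ and $\OC\R{E}$, which treat $\OC\phi$ as a disguised global disjunction $\phi\intd\bot$. The key conceptual point is that in both directions the relevant $\OC$-formula is itself the $\intd$-distributive occurrence, so the side condition $(*)$ of $\OC\R{E}$ is automatically satisfied, and applying $\OC\R{E}$ amounts to a case split on ``the body holds'' versus ``$\bot$ holds''.

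For the direction $\OC(\alpha\land\NE)\vdash\alpha$, I would apply $\OC\R{E}$ to the premise $\OC(\alpha\land\NE)$, taking $\psi:=\alpha\land\NE$ so that $\phi[\psi/\OC\psi]=\alpha\land\NE$ and $\phi[\bot/\OC\psi]=\bot$. The two subderivations are then immediate: from $\alpha\land\NE$ we obtain $\alpha$ by $\land\R{E}$, and from $\bot$ we obtain $\alpha$ by \emph{ex falso} for classical formulas (that is, $\bot\vdash\bot\dis\alpha$ by $\dis\R{I}$, legitimate since the introduced disjunct $\alpha$ contains no $\NE$, followed by $\bot\R{E}$). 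Hence $\OC\R{E}$ yields $\alpha$.

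For the converse $\alpha\vdash\OC(\alpha\land\NE)$, the idea is first to manufacture a copy of $\NE$ via $\OC\NE\R{I}$, which is an axiom giving $\OC\NE$, and then to eliminate it. Applying $\OC\R{E}$ to $\OC\NE$ with $\psi:=\NE$ produces the case split $\NE$ versus $\bot$. In the $\NE$-branch I combine the discharged assumption $\NE$ with the standing assumption $\alpha$ using $\land\R{I}$ to get $\alpha\land\NE$, and then conclude $\OC(\alpha\land\NE)$ by $\OC\R{I}$; in the $\bot$-branch I conclude $\OC(\alpha\land\NE)$ directly by the $\bot$-version of $\OC\R{I}$. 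Since $\OC\R{E}$, unlike $\dis\R{E}$, imposes no restriction on the undischarged assumptions of its subderivations, keeping $\alpha$ as an open assumption in the $\NE$-branch is unproblematic.

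I do not expect a genuine obstacle here: both directions are short and mechanical once the right instance of $\OC\R{E}$ is chosen. The only points requiring a little care are verifying the $\intd$-distributivity side condition (which is automatic, as the $\OC$-subformula is the whole formula and so lies within the scope of no $\lnot$ or $\Di$) and recalling that $\bot$ delivers \emph{ex falso} precisely for classical formulas such as $\alpha$ --- which is exactly where the hypothesis $\alpha\in\ML$ is used.
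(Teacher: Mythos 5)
Your proof is correct and takes essentially the same route as the paper's: both directions rest on the same instances of $\OC\NE\R{I}$, $\OC\R{I}$, and $\OC\R{E}$, with a case split on the body versus $\bot$. The only cosmetic differences are that you derive \emph{ex falso} $\bot\vdash\alpha$ via $\dis\R{I}$ and $\bot\R{E}$ where the paper uses $\lnot\R{E}$, and that in the direction $\alpha\vdash\OC(\alpha\land\NE)$ you keep $\alpha$ as an open side assumption while the paper first forms $\alpha\land\OC\NE$ and applies $\OC\R{E}$ to that conjunction.
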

\begin{proof}
$\dashv$: $\bot \vdash \alpha $ by $\lnot \R{I}$ and $\lnot \R{E}$, and $\alpha \land \NE\vdash \alpha$ so $\OC(\alpha \land \NE)\vdash \alpha$ by $\OC\R{E}$. $\vdash$: $\alpha\vdash\alpha\land  \OC \NE$ by $\OC\NE\R{I}$. We have $ \alpha\land \OC \NE[\NE/\OC\NE]= \alpha\land\NE$, and $\alpha\land \NE \vdash \OC(\alpha \land \NE)$ by $\OC\R{I}$. On the other hand, $\alpha\land \OC \NE[\bot/\OC\NE]=\alpha\land \bot$, and $ \alpha \land \bot \vdash \bot \vdash \OC(\alpha \land \NE)$ by $\OC\R{I}$. Therefore $\OC \NE\land\alpha\vdash \OC(\alpha\land \NE)$ by $\OC\R{E}$.
\end{proof}
Our strategy for proving completeness is similar to that used in Section \ref{section:BSMLI}. We again require a crucial normal form provable equivalence result. This is stated in the next lemma; the proof is once more withheld until the end of the section.

\begin{lemma} \label{lemma:normal_form_provable_equivalence_BSMLO}
    For each $\phi\in \BSMLO$ and each $k \geq md(\phi)$, there is some property $\PPP$ such that
    \begin{align*}
        \phi \proveq \bigdis_{s\in \PPP}\OC\theta^k_s &&\text{or}&&\phi \proveq \NE \land \bigdis_{s\in \PPP}\OC\theta^k_s
    \end{align*}
\end{lemma}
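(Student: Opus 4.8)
The plan is to prove the lemma by induction on the complexity of $\phi$, which we may assume is in negation normal form by the $\BSMLO$-analogue of Proposition \ref{prop:negation_normal_form_provable_BSMLI} (the equivalences used there, together with $\lnot\OC\psi\proveq\lnot\psi$ obtained from $\lnot\OC\R{E}$, are all available in the $\BSMLO$-system of Definition \ref{def:BSMLO_system}). The induction carries the disjunctive invariant of the statement itself: $\phi$ is provably equivalent either to a $\zeta$-form $\bigdis_{s\in\PPP}\OC\theta^k_s$ (the case $||\phi||\in\mathbb{U}^\emptyset$) or to $\NE\land\bigdis_{s\in\PPP}\OC\theta^k_s$ (the case $||\phi||\in\mathbb{U}^{\NE}$). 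Throughout I use those results of Section \ref{section:BSMLI} whose statements are $\intd$-free and hence hold for $\BSMLO$, notably Lemma \ref{lemma:charf_provable_equivalence}, Lemma \ref{lemma:not_bisimilar_implies_charfs_incompatible}, Lemma \ref{lemma:modal_charfs_provably_classical} and Lemma \ref{lemma:provability_results_NE}, whose $\BSMLO$-derivations are supplied earlier in this section.

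The first ingredient is a classical normal form for $\BSMLO$ (the counterpart of Lemma \ref{lemma:normal_form_provable_equivalence_classical}): for $\alpha\in\ML$ we have $\alpha\proveq\bigdis_{(M,w)\in\llbracket\alpha\rrbracket}\chi^k_w$ by Proposition \ref{prop:classical_completeness} and the proof of Theorem \ref{theorem:world-based_expressive_completeness}, and each $\chi^k_w\proveq\OC(\chi^k_w\land\NE)=\OC\theta^k_{\{w\}}$ by Lemma \ref{lemma:alpha_proveq_OC_alpha_and_NE} and replacement (Lemma \ref{lemma:provable_subformula_replacement}); hence $\alpha\proveq\bigdis_w\OC\theta^k_{\{w\}}$, a $\zeta$-form. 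This settles the base cases $\phi\in\{p,\lnot p,\lnot\NE\}$, and for $\phi=\NE$ I apply this to $\Top$ and conclude $\NE\proveq\NE\land\Top\proveq\NE\land\bigdis_{s\in\PPP}\OC\theta^k_s$. For the inductive steps I first record a handful of auxiliary equivalences, all derivable from the $\OC$-rules and the $\intd$-free lemmas: $\phi\dis\bot\proveq\phi$ and $\bot\vdash\bigdis_{s\in\PPP}\OC\theta^k_s$ (via $\OC\R{I}$ and $\dis\R{Mon}$); the union collapse $\bigdis_{s\in A}\theta^k_s\proveq\theta^k_{\bigcup A}$ for $A\neq\emptyset$ (idempotence/commutativity of $\dis$ and Lemma \ref{lemma:charf_provable_equivalence}); $\theta^k_s\dis\theta^k_t\proveq\theta^k_{s\cup t}$; $\NE\land\theta^k_s\proveq\theta^k_s$ for $s\neq\emptyset$ and $\OC(\NE\land\psi)\proveq\OC\psi$ (Lemma \ref{lemma:provability_results_NE} and the $\OC$-rules); and $\theta^k_s\land\theta^k_t\proveq\theta^k_s$ if $s\bisim_k t$, $\proveq\Bot$ otherwise (Lemmas \ref{lemma:not_bisimilar_implies_charfs_incompatible}, \ref{lemma:charf_provable_equivalence}, \ref{lemma:Bot_E}).

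The modal cases $\phi=\Di\psi,\Bo\psi$ are then the cleanest: such a $\phi$ is flat, hence in $\mathbb{U}^\emptyset$, so it suffices to prove $\phi$ provably equivalent to a classical formula and invoke the classical normal form. Given $\psi\proveq\bigdis_{s\in\PPP}\OC\theta^{k-1}_s$ (or $\NE\land\dots$, where the $\NE$ is absorbed via $\NE\land\theta^{k-1}_U\proveq\theta^{k-1}_U$ and replacement) by the induction hypothesis, one applies $\Di\OC\R{E}$ (resp. $\Bo\OC\R{E}$) repeatedly to pull every $\OC$ out from under the modality as a $\dis$; after the union collapse and discarding $\bot$-terms, each resulting disjunct is $\Di\theta^{k-1}_{\bigcup A}$ (resp. $\Bo\theta^{k-1}_{\bigcup A}$), which is provably classical by Lemma \ref{lemma:modal_charfs_provably_classical}. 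The $\OC$-case uses $\OC\bigdis_s\OC\theta^k_s\proveq\bigdis_s\OC\theta^k_s$ and $\OC(\NE\land\psi)\proveq\OC\psi$, so $\OC\phi$ always lands in the $\zeta$-form.

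The conjunction and disjunction cases are the crux, and the $\NE$-flag is where the bookkeeping concentrates. The uniform device is to apply $\OC\R{E}$ to every displayed $\OC\theta$ in the combined formula, branching into leaves in which each $\OC\theta^k_s$ has been replaced by $\theta^k_s$ or by $\bot$, and to derive the target normal form in every leaf. After discarding $\bot$-disjuncts and using the union collapse, a conjunction leaf reduces to $\theta^k_{\bigcup A}\land\theta^k_{\bigcup B}$, which is $\Bot$ (proving anything by Lemma \ref{lemma:Bot_E}) or else $\proveq\theta^k_{\bigcup A}$ when $\bigcup A\bisim_k\bigcup B$; collecting these yields $\bigdis_{r\in\RRR}\OC\theta^k_r$ with $\RRR$ generated under union by those $\bigcup A$ ($A\subseteq\PPP$) that are $k$-bisimilar to some $\bigcup B$ ($B\subseteq\QQQ$). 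For the disjunction, the $\NE$-free subcase is a trivial regrouping into $\bigdis_{r\in\PPP\cup\QQQ}\OC\theta^k_r$; when an $\NE$ is present one keeps it through the branching, using $\NE\land\theta^k_{\bigcup A}\proveq\theta^k_{\bigcup A}$, $\theta^k_s\dis\theta^k_t\proveq\theta^k_{s\cup t}$, and the fact (Lemma \ref{lemma:provability_results_NE}) that one $\NE$-bearing disjunct forces $\NE$ on the whole $\dis$, producing $\NE\land\bigdis_{r\in\RRR}\OC\theta^k_r$. In the $\land$-case any $\NE$ is first pulled to the front by associativity and commutativity. I expect the main obstacle to be exactly this last point: checking that the combinatorics of the $\OC\R{E}$-branching (the identity of $\RRR$ and the placement of $\NE$) always delivers precisely one of the two prescribed forms — matching the $\mathbb{U}^\emptyset$/$\mathbb{U}^{\NE}$ dichotomy of $||\phi||$ — and that the auxiliary derivations, especially $\NE\land\theta^k_s\proveq\theta^k_s$ and the union collapse, respect the $\NE$-sensitive side conditions on $\dis\R{I}$, $\dis\R{E}$ and $\dis\R{Mon}$.
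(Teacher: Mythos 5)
Your overall architecture is the paper's: the same induction on negation normal form carrying the $\NE$-flag, the same classical base case (your $\alpha\proveq\bigdis_{w}\OC\theta^k_{\{w\}}$ is exactly Lemma \ref{lemma:normal_form_provable_equivalence_BSMLO_classical}), and your ``uniform device'' of repeated $\OC\R{E}$-branching into $\theta$-versus-$\bot$ leaves is precisely the mechanism the paper isolates as Lemma \ref{lemma:tensor_normal_form_normal_form_provable_equivalence} and reuses in the $\land$, $\lor$ and modal cases. The genuine gap is in the modal cases, in your treatment of the $\NE$ flag under $\Bo$. You propose to absorb the $\NE$ of $\Bo(\NE\land\bigdis_{s\in\PPP}\OC\theta^{k-1}_s)$ ``via $\NE\land\theta^{k-1}_U\proveq\theta^{k-1}_U$ and replacement''. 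Read as an up-front absorption, i.e.\ $\Bo(\NE\land\chi)\proveq\Bo\chi$, this is unsound: $\Bo$, unlike $\Di$, does not absorb $\NE$, because $R[w]$ may be empty. Concretely, for $\psi=\NE$ the induction hypothesis gives $\NE\proveq\NE\land\zeta$ with $\zeta\equiv\Top$, and $\Bo\NE\equiv\Di\Top$ (seriality of the state) whereas $\Bo\zeta\equiv\Top$; so dropping the $\NE$ under $\Bo$ proves a false equivalence. (For $\Di$ the absorption is at least sound, but it too cannot be justified by replacement, since $\chi\nvdash\NE\land\chi$; the paper needs the dedicated Lemma \ref{lemma:di_NE_I} there.) Read instead as leaf-level absorption after branching, your argument handles the leaves $\Bo(\NE\land\theta^{k-1}_{\bigcup A})$ with $\bigcup A\neq\emptyset$, but the leaf with $\bigcup A=\emptyset$ is $\Bo(\NE\land\bot)=\Bo\Bot$, which is not one of your discardable ``$\bot$-terms'' (it contains $\NE$ and is not syntactically $\bot$) and is not covered by any auxiliary fact you list.

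This is exactly the point at which the paper does something your plan has no counterpart of: before any branching it applies Lemma \ref{lemma:fc} \ref{lemma:fc_Bo}, $\Bo((\phi\land\NE)\lor\psi)\proveq\Bo(\phi\lor\psi)\land\Di\phi$, to rewrite $\Bo(\NE\land\chi)\proveq\Bo\chi\land\Di\chi$, so that both conjuncts reduce to the already-handled $\Top$-flag cases ($\Bo$ of, and $\Di$ of, a $\zeta$-form). Your route can be repaired without changing its shape---forwards, discharge the bad leaf by $\Bo\R{Inst}$ ($\Bo(\bot\land\NE)\vdash\Di\bot\vdash\bot$) followed by $\dis\R{Mon}$ and $\bot\R{E}$; backwards, derive $\Bo\psi$ from each surviving disjunct $\Bo\theta^{k-1}_{\bigcup A}$ by $\Bo\R{Mon}$, using $\theta^{k-1}_{\bigcup A}\vdash\NE\land\bigdis_{s\in\PPP}\OC\theta^{k-1}_s$ (Lemmas \ref{lemma:provability_results_NE} and \ref{lemma:tensor_normal_form_normal_form_provable_equivalence} \ref{lemma:tensor_normal_form_normal_form_provable_equivalence_i})---but some such argument must be supplied explicitly; it does not follow from the lemmas you cite. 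The other omissions (the converse directions of your $\land$/$\lor$ equivalences, which need another round of the branching device together with Lemma \ref{lemma:tensor_normal_form_normal_form_provable_equivalence} \ref{lemma:tensor_normal_form_normal_form_provable_equivalence_i}) are routine and match the paper.
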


A number of lemmas which were proved specifically for $\BSMLI$ in Section \ref{section:BSMLI} also hold for $\BSMLO$, and their $\BSMLO$-proofs are similar. In particular, Lemma \ref{lemma:provable_subformula_replacement} (Replacement) follows by a similar inductive argument, where the new case $\theta=\OC\eta$ is proved by applying $\OC\R{E}$ and $\OC\R{I}$. Proposition \ref{prop:negation_normal_form_provable_BSMLI} (Negation normal form) is proved by a similar argument, using the additional equivalence $\lnot \OC \phi \proveq \lnot \phi$ (by $\lnot \OC \R{E}$). 

Now, observe that a formula in $\BSMLO$-normal form can be converted into an equivalent formula in the normal form of the expressively stronger logic $\BSMLI$, as stated in the following lemma (we omit the easy proof).

\begin{lemma} \label{lemma:normal_form_conversion}
$\displaystyle\bigdis_{s\in \PPP}\OC\theta^k_s\equiv \bigintdd_{\QQQ\subseteq \PPP}\theta^k_{\biguplus Q} $.
\end{lemma}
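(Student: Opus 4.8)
The plan is to prove the equivalence directly at the semantic level, reducing both sides to a single condition on the worlds occurring in the evaluation state. The crucial observation is that, by Definition \ref{def:state_bisimilarity}, whether $M',s'\bisim_k M,t$ depends only on the \emph{set of $k$-bisimulation types} (i.e.\ $\bisim_k$-classes of worlds) realized by the worlds of $s'$ and of $t$: the forth and back conditions say exactly that these two sets coincide. Write $T(r)$ for the set of $k$-bisimulation types occurring among the worlds of a state $r$ (there are only finitely many such types over the fixed $\mathsf{X}$). Then $r\bisim_k r'$ iff $T(r)=T(r')$, and for any union of states one plainly has $T(\bigcup_i r_i)=\bigcup_i T(r_i)$.

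First I would unpack the right-hand side. By the support clause for $\intd$, a state $s'$ supports $\bigintdd_{\QQQ\subseteq\PPP}\theta^k_{\biguplus\QQQ}$ iff $s'\vDash\theta^k_{\biguplus\QQQ}$ for some $\QQQ\subseteq\PPP$, which by Proposition \ref{prop:characteristic_formulas_characterize_states}\ref{prop:characteristic_formulas_characterize_states_theta} holds iff $s'\bisim_k\biguplus\QQQ$ for some $\QQQ$. By Proposition \ref{prop:disjoint_union} each member $s\in\QQQ$ is $k$-bisimilar to its copy inside $\biguplus\QQQ$, and the worlds of $\biguplus\QQQ$ are exactly these disjoint copies, so $T(\biguplus\QQQ)=\bigcup_{s\in\QQQ}T(s)$. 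Hence the right-hand side holds at $s'$ iff $T(s')=\bigcup_{s\in\QQQ}T(s)$ for some $\QQQ\subseteq\PPP$.

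Next I would unpack the left-hand side. By the clauses for $\dis$ and $\OC$, $s'\vDash\bigdis_{s\in\PPP}\OC\theta^k_s$ iff $s'=\bigcup_{s\in\PPP}t_s$ with each $t_s$ either empty or satisfying $t_s\vDash\theta^k_s$, i.e.\ (again by Proposition \ref{prop:characteristic_formulas_characterize_states}\ref{prop:characteristic_formulas_characterize_states_theta}) $t_s\bisim_k s$. Setting $\QQQ=\{s\in\PPP\sepp t_s\neq\emptyset\}$, the non-empty pieces satisfy $T(t_s)=T(s)$ and cover $s'$, so $T(s')=\bigcup_{s\in\QQQ}T(s)$, giving one inclusion. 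For the converse, given $\QQQ\subseteq\PPP$ with $T(s')=\bigcup_{s\in\QQQ}T(s)$, I would build the split by choosing, for each $s\in\QQQ$, a subset $t_s\subseteq s'$ realizing exactly the types in $T(s)$ (possible since $T(s)\subseteq T(s')$), setting $t_s=\emptyset$ otherwise, and then assigning each leftover world of $s'$ to some $t_s$ whose type set already contains its type; this keeps $t_s\bisim_k s$ while making the $t_s$ exhaust $s'$.

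Both sides therefore hold at $s'$ under exactly the condition that $T(s')$ is a union of some subfamily of $\{T(s)\sepp s\in\PPP\}$, which proves the equivalence; the degenerate cases ($\PPP=\emptyset$, or $\QQQ=\emptyset$ with $\biguplus\emptyset=(M,\emptyset)$ and $\theta^k_\emptyset=\bot$) fold in, since $T=\emptyset$ corresponds exactly to the empty state. The only step requiring care—and the main, mild, obstacle—is the bookkeeping in constructing the split on the left-hand side: arranging each non-empty $t_s$ to satisfy both the forth and the back conditions of $t_s\bisim_k s$ while the pieces still cover $s'$. Phrasing everything through the finite type sets $T(\cdot)$ makes this routine, which is presumably why the authors call the proof easy.
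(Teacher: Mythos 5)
Your proof is correct, and it fills in what the paper deliberately leaves out: the authors state Lemma \ref{lemma:normal_form_conversion} with the remark ``we omit the easy proof,'' and the route you take---reducing both sides to bisimulation facts via Proposition \ref{prop:characteristic_formulas_characterize_states}\ref{prop:characteristic_formulas_characterize_states_theta} and Proposition \ref{prop:disjoint_union}, together with the splitting semantics of $\lor$ and $\OC$---is exactly the intended routine verification. Your reformulation in terms of the type sets $T(\cdot)$, under which $r\bisim_k r'$ iff $T(r)=T(r')$ and $T(\biguplus\QQQ)=\bigcup_{s\in\QQQ}T(s)$, is a clean way to organize the splitting bookkeeping on the left-hand side, and all degenerate cases ($\PPP=\emptyset$, $\QQQ=\emptyset$, empty members of $\PPP$) are handled correctly.
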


While the global disjunction $\intd$ is not in the language of $\BSMLO$, we show in the next lemma that the formulas $\bigdis_{s\in \PPP}\OC\theta^k_s$ and $\bigintd_{\QQQ\subseteq \PPP}\theta^k_{\biguplus Q}$ are also proof-theoretically interchangeable in the sense that if rules for both $\OC$ and $\intd$ were available, the two formulas would derive the same consequences and be derivable from the same premises. Item \ref{lemma:tensor_normal_form_normal_form_provable_equivalence_i} below corresponds to the entailment $\bigintd_{\QQQ\subseteq \PPP}\theta^k_{\biguplus Q}\vDash \bigdis_{s\in \PPP}\OC\theta^k_s $, and item \ref{lemma:tensor_normal_form_normal_form_provable_equivalence_ii} simulates the other direction $\bigdis_{s\in \PPP}\OC\theta^k_s \vDash \bigintd_{\QQQ\subseteq \PPP}\theta^k_{\biguplus Q}$.

\begin{lemma} \label{lemma:tensor_normal_form_normal_form_provable_equivalence}\
    \begin{enumerate}
        \item[(i)] 
        \makeatletter\def\@currentlabel{(i)}\makeatother\label{lemma:tensor_normal_form_normal_form_provable_equivalence_i} For any $\displaystyle\QQQ\subseteq \PPP$, we have $ \theta^k_{\biguplus \QQQ}\vdash\bigdis_{s\in \PPP} \OC \theta^{k}_{s}$.
        \item[(ii)]         \makeatletter\def\@currentlabel{(ii)}\makeatother\label{lemma:tensor_normal_form_normal_form_provable_equivalence_ii}  Suppose $\displaystyle\bigdis_{s\in \PPP} \OC \theta^{k}_{s}$ is $\intd\hspace{-1pt}$-distributive in $\phi$. If 
        $\displaystyle\Phi, \phi[\theta^k_{\biguplus \QQQ}/\bigdis_{s\in \PPP} \OC \theta^{k}_{s}]\vdash \psi$ for all $\QQQ\subseteq \PPP$, then $\Phi,\phi\vdash \psi$.
    \end{enumerate}
\end{lemma}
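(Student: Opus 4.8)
The plan is to prove item \ref{lemma:tensor_normal_form_normal_form_provable_equivalence_i} by a short direct derivation and item \ref{lemma:tensor_normal_form_normal_form_provable_equivalence_ii} by induction on $|\PPP|$, simulating $\intd\R{E}$ through repeated use of $\OC\R{E}$. For item \ref{lemma:tensor_normal_form_normal_form_provable_equivalence_i}, I would first recall the disjoint-union fact $\theta^k_s \dis \theta^k_t \proveq \theta^k_{s\uplus t}$ from the $\dis$-case in the proof of Lemma \ref{lemma:normal_form_provable_equivalence_BSMLI} (this derivation does not involve $\intd$ and so is available in $\BSMLO$); iterating it gives $\theta^k_{\biguplus \QQQ}\vdash \bigdis_{s\in \QQQ}\theta^k_s$. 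Applying $\OC\R{I}$ to each disjunct via Replacement (Lemma \ref{lemma:provable_subformula_replacement}, legitimate since no disjunct lies under $\lnot$) yields $\bigdis_{s\in \QQQ}\theta^k_s\vdash \bigdis_{s\in \QQQ}\OC\theta^k_s$. Finally the missing disjuncts are adjoined using the derivable schema $\phi \vdash \phi \dis \OC\chi$ (obtained from $\phi\vdash \phi\dis\bot$ by $\dis\R{I}$ together with $\bot\vdash \OC\chi$ by $\OC\R{I}$ and $\dis\R{Mon}$), giving $\bigdis_{s\in \QQQ}\OC\theta^k_s\vdash \bigdis_{s\in \PPP}\OC\theta^k_s$. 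Composing the three steps establishes \ref{lemma:tensor_normal_form_normal_form_provable_equivalence_i}.

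For item \ref{lemma:tensor_normal_form_normal_form_provable_equivalence_ii}, write $\Theta := \bigdis_{s\in \PPP}\OC\theta^k_s$ and argue by induction on $n=|\PPP|$, letting both the context $\phi[\,\cdot\,]$ and the property $\PPP$ vary. The base case $n=0$ is immediate: then $\Theta = \bot = \theta^k_{\biguplus \emptyset}$, so the sole instance of the hypothesis is precisely $\Phi,\phi[\Theta]\vdash \psi$. For the inductive step, fix $t\in \PPP$, put $\PPP' := \PPP\setminus\{t\}$ and $\Theta' := \bigdis_{s\in \PPP'}\OC\theta^k_s$, so that $\Theta \proveq \Theta'\dis \OC\theta^k_t$. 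Since $\Theta$ is $\intd\hspace{-1pt}$-distributive in $\phi$, the occurrence $\OC\theta^k_t$ sits under no $\lnot$ or $\Di$, so $\OC\R{E}$ applies to it and splits the derivation into a $\theta^k_t$-branch with assumption $\phi[\Theta'\dis \theta^k_t]$ and a $\bot$-branch with assumption $\phi[\Theta'\dis \bot]$.

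In the $\bot$-branch, $\Theta'\dis \bot \proveq \Theta'$ (from $\phi\dis\bot\proveq\phi$, itself a consequence of $\dis\R{I}$, $\R{Com}\dis$ and $\bot\R{E}$), so by Replacement the assumption proves $\phi[\Theta']$; the induction hypothesis for the context $\phi[\,\cdot\,]$ and property $\PPP'$ then yields $\psi$, its premise being the instances of the main hypothesis for subsets $\QQQ'\subseteq \PPP'\subseteq \PPP$. In the $\theta^k_t$-branch, I would invoke the induction hypothesis for the context $\phi[\,\cdot\,\dis \theta^k_t]$ and property $\PPP'$: the occurrence $\Theta'$ remains $\intd\hspace{-1pt}$-distributive there, and for each $\QQQ'\subseteq \PPP'$ we have $\theta^k_{\biguplus \QQQ'}\dis \theta^k_t \proveq \theta^k_{\biguplus(\QQQ'\cup\{t\})}$ by the disjoint-union fact, so by Replacement the needed premise $\Phi,\phi[\theta^k_{\biguplus \QQQ'}\dis \theta^k_t]\vdash \psi$ reduces to the instance $\Phi,\phi[\theta^k_{\biguplus(\QQQ'\cup\{t\})}]\vdash\psi$ of the main hypothesis (and $\QQQ'\cup\{t\}$ ranges over exactly the subsets of $\PPP$ containing $t$). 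Thus $\psi$ is derivable in both branches, and $\OC\R{E}$ discharges them to give $\Phi,\phi[\Theta]\vdash\psi$.

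I expect the main obstacle to be bookkeeping rather than any essential difficulty: one must carry the shifting context and property through the induction, verify the $\intd\hspace{-1pt}$-distributivity side condition of $\OC\R{E}$ at each application (preserved because the substituted formulas $\theta^k_t$, $\bot$, and $\theta^k_{\biguplus \QQQ'}$ are inserted only at $\intd\hspace{-1pt}$-distributive positions and introduce no $\lnot$ or $\Di$), and correctly match the subsets $\QQQ'\subseteq\PPP'$ demanded by the induction hypothesis against the subsets $\QQQ\subseteq\PPP$ furnished by the hypothesis of the lemma.
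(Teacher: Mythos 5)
Your proposal is correct and takes essentially the same approach as the paper: part (i) uses the same three steps (flattening $\theta^k_{\biguplus \QQQ}$ into $\bigdis_{s\in\QQQ}\theta^k_s$, disjunct-wise $\OC\R{I}$, then adjoining the missing disjuncts for $\PPP\setminus\QQQ$ via $\bot$), and part (ii) is exactly the paper's repeated pairwise merging of the $2^{|\PPP|}$ hypothesis instances by $\OC\R{E}$, merely packaged as a formal induction on $|\PPP|$ with a context-strengthened statement instead of the paper's iterative coordinate-by-coordinate argument with explicitly $\bot$-padded disjunctions. Both versions rely on the same routine bookkeeping ($\theta^k_s\dis\theta^k_t\proveq\theta^k_{s\uplus t}$, re-association, and preservation of the $\intd$-distributivity side condition) that the paper likewise leaves implicit.
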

\begin{proof}
    \ref{lemma:tensor_normal_form_normal_form_provable_equivalence_i}        Clearly, $\theta^{k}_{\biguplus \QQQ}\proveq \bigdis_{s\in \QQQ}\theta^{k}_{s}$. Next, we derive  $\bigdis_{s\in \QQQ}\theta^{k}_{s}\vdash \bigdis_{s\in \QQQ}\OC\theta^{k}_{s}$ by $\OC\R{I}$ and $\dis \R{Mon}$. Since $ \bigdis_{s\in \QQQ}\OC\theta^{k}_{s}\vdash \bigdis_{s\in \QQQ}\OC\theta^{k}_{s}\vee\bigdis_{t\in \PPP\backslash\QQQ}\bot$ (by $\lor \R{I}$), we obtain $ \bigdis_{s\in \QQQ}\OC\theta^{k}_{s}\vdash \bigdis_{s\in \QQQ}\OC\theta^{k}_{s}\vee\bigdis_{t\in \PPP\backslash\QQQ}\OC\theta_t^k$ by $\OC\R{I}$---that is, $ \bigdis_{s\in \QQQ}\OC\theta^{k}_{s}\vdash \bigdis_{s\in \PPP}\OC\theta^{k}_{s}$. Putting all these together, we conclude $\theta^{k}_{\biguplus \QQQ}\vdash \bigdis_{s\in \PPP}\OC\theta^{k}_{s}$.

         \ref{lemma:tensor_normal_form_normal_form_provable_equivalence_ii} Let $\PPP=\{s_1,\dots,s_n\}$, and thus $\zeta^k_\PPP=\bigdis_{s\in \PPP}\OC\theta^{k}_{s}=\OC\theta^k_{s_1}\lor \ldots\lor \OC\theta^k_{s_n}$. Observe that for every $\QQQ\subseteq\PPP$, we have $\theta_{\biguplus \QQQ}^k\proveq\bigvee_{s_i\in \QQQ}\theta_{s_i}^k\proveq\bigvee_{i=1}^n\tau_i$, where $\tau_i=\theta_{s_i}^k$ if $s_i\in \QQQ$, and $\tau_i=\bot$ if $s_i\notin \QQQ$, by $\dis\R{I}$ and $\bot \R{E}$. Now, by assumption, we have $\Phi,\phi[\theta_{\biguplus\emptyset}^k/\zeta^k_\PPP]\vdash\psi$ and $\Phi,\phi[\theta_{\biguplus\{s_n\}}^k/\zeta^k_\PPP]\vdash\psi$. Since $\theta_{\biguplus\emptyset}^k\proveq\bigvee_{i=1}^n\bot$ and $\theta_{\biguplus\{s_n\}}^k\proveq(\bigvee_{i=1}^{n-1}\bot)\vee\theta_{s_n}^k$, we obtain
\[\Phi,\phi[(\bigvee_{i=1}^{n-2}\bot)\vee\bot\vee\bot/\zeta^k_{\PPP}]\vdash\psi\text{ and }\Phi,\phi[(\bigvee_{i=1}^{n-2}\bot)\vee\bot\vee\theta_{s_n}^k/\zeta^k_{\PPP}]\vdash\psi.
\]
         Then, by $\OC\R{E}$, we obtain    \begin{equation}\label{lemma:tensor_normal_form_normal_form_provable_equivalence_eq1}
         \Phi,\phi[(\bigvee_{i=1}^{n-2}\bot)\vee\bot\vee\OC\theta_{s_n}^k/\zeta^k_{\PPP}]\vdash\psi.
                  \end{equation}
         Next, by assumption again,  $\Phi,\phi[\theta^k_{\biguplus \{s_{n-1}\}}/\zeta_\PPP^k]\vdash\psi$ and $\Phi,\phi[\theta^k_{\biguplus \{s_{n-1},s_n\}}/\zeta_\PPP^k]\vdash\psi$, which imply
         \[\Phi,\phi[(\bigvee_{i=1}^{n-2}\bot)\vee\theta_{s_{n-1}}^k\vee\bot/\zeta^k_{\PPP}]\vdash\psi\text{ and }\Phi,\phi[(\bigvee_{i=1}^{n-2}\bot)\vee\theta_{s_{n-1}}^k\vee\theta_{s_n}^k/\zeta^k_{\PPP}]\vdash\psi.\]
         Again, by $\OC\R{E}$, we obtain \begin{equation}\label{lemma:tensor_normal_form_normal_form_provable_equivalence_eq2}
         \Phi,\phi[(\bigvee_{i=1}^{n-2}\bot)\vee\theta_{s_{n-1}}^k\vee\OC\theta_{s_n}^k/\zeta^k_{\PPP}]\vdash\psi.
         \end{equation}
         Applying $\OC\R{E}$ to (\ref{lemma:tensor_normal_form_normal_form_provable_equivalence_eq1}) and (\ref{lemma:tensor_normal_form_normal_form_provable_equivalence_eq2}), we obtain $\Phi,\phi[(\bigvee_{i=1}^{n-2}\bot)\vee\OC\theta_{s_{n-1}}^k\vee\OC\theta_{s_n}^k/\zeta^k_{\PPP}]\vdash\psi$. Repeating this argument, we eventually get $\Phi,\phi\vdash  \psi$, as required.
    \end{proof}

Given this proof-theoretic correspondence between the normal forms, we can show completeness for $\BSMLO$ using an argument that is similar to that used for $\BSMLI$. This time we first establish weak completeness, and then derive the strong completeness theorem as a corollary of weak completeness and compactness. (The strategy we employed for the direct proof of the strong completeness of $\BSMLI$ does not work for $\BSMLO$ due to the structure of the $\BSMLO$-normal form.)
    
\begin{theorem}[Weak completeness of $\BSMLOBF$] \label{theorem:BSMLO_weak_completeness}
If $\phi\vDash \psi$, then $\phi\vdash \psi$.
\end{theorem}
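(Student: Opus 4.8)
The plan is to replay the strong-completeness argument for $\BSMLI$ (Theorem \ref{theorem:BSMLI_completeness}) inside $\BSMLO$, using the emptiness operator $\OC$ as a proxy for the global disjunction $\intd$. First I would reduce to normal forms. Given $\phi\vDash\psi$, fix $k=\max\{md(\phi),md(\psi)\}$ and a common vocabulary $\mathsf{X}\supseteq\mathsf{P}(\phi)\cup\mathsf{P}(\psi)$, and apply Lemma \ref{lemma:normal_form_provable_equivalence_BSMLO} to get $\phi\proveq\Phi^*$ and $\psi\proveq\Psi^*$, where $\Phi^*$ is $\bigdis_{s\in\PPP}\OC\theta^k_s$ and $\Psi^*$ is $\bigdis_{r\in\RRR}\OC\theta^k_r$, each possibly carrying an $\NE\land$ prefix. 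Since provable equivalence chains through $\vdash$, it suffices to prove $\Phi^*\vdash\Psi^*$; and from $\phi\vDash\psi$ together with soundness we get the semantic fact $\Phi^*\vDash\Psi^*$.

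The core case is when neither normal form has the $\NE$ prefix. Here I would apply Lemma \ref{lemma:tensor_normal_form_normal_form_provable_equivalence}\ref{lemma:tensor_normal_form_normal_form_provable_equivalence_ii}: the occurrence of $\bigdis_{s\in\PPP}\OC\theta^k_s$ is trivially $\intd$-distributive in $\Phi^*$ (it is the whole formula and lies under no $\lnot$ or $\Di$), so to derive $\Phi^*\vdash\Psi^*$ it is enough to show $\theta^k_{\biguplus\QQQ}\vdash\Psi^*$ for every $\QQQ\subseteq\PPP$. For the semantic input I would use Lemma \ref{lemma:normal_form_conversion} to pass to the $\BSMLI$-style forms $\Phi^*\equiv\bigintd_{\QQQ\subseteq\PPP}\theta^k_{\biguplus\QQQ}$ and $\Psi^*\equiv\bigintd_{\RRR'\subseteq\RRR}\theta^k_{\biguplus\RRR'}$. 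Fix $\QQQ\subseteq\PPP$. If $\QQQ=\emptyset$ then $\theta^k_{\biguplus\emptyset}=\bot$ and $\bot\vdash\Psi^*$ already by Lemma \ref{lemma:tensor_normal_form_normal_form_provable_equivalence}\ref{lemma:tensor_normal_form_normal_form_provable_equivalence_i} (taking the empty subset of $\RRR$). Otherwise, $\biguplus\QQQ\vDash\theta^k_{\biguplus\QQQ}$ (Proposition \ref{prop:characteristic_formulas_characterize_states}\ref{prop:characteristic_formulas_characterize_states_theta}) so $\biguplus\QQQ\vDash\Phi^*$ as one $\intd$-disjunct, hence $\biguplus\QQQ\vDash\Psi^*$; by the conversion and the semantics of $\intd$ there is some $\RRR'\subseteq\RRR$ with $\biguplus\QQQ\vDash\theta^k_{\biguplus\RRR'}$, i.e.\ $\biguplus\QQQ\bisim_k\biguplus\RRR'$. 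Lemma \ref{lemma:charf_provable_equivalence} then gives $\theta^k_{\biguplus\QQQ}\proveq\theta^k_{\biguplus\RRR'}$, and Lemma \ref{lemma:tensor_normal_form_normal_form_provable_equivalence}\ref{lemma:tensor_normal_form_normal_form_provable_equivalence_i} gives $\theta^k_{\biguplus\RRR'}\vdash\Psi^*$; chaining yields $\theta^k_{\biguplus\QQQ}\vdash\Psi^*$, which closes the core case.

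For the prefixed cases I would do some $\NE$-bookkeeping. Since $\phi\vDash\psi$, if $\Psi^*$ has the prefix then so must $\Phi^*$ (otherwise $\phi$ would have the empty-state property and force it onto $\psi$), so only three prefix-combinations arise. When $\Phi^*=\NE\land\bigdis_{s\in\PPP}\OC\theta^k_s$, the same occurrence is still $\intd$-distributive, and Lemma \ref{lemma:tensor_normal_form_normal_form_provable_equivalence}\ref{lemma:tensor_normal_form_normal_form_provable_equivalence_ii} reduces the goal to $\NE\land\theta^k_{\biguplus\QQQ}\vdash\Psi^*$ for each $\QQQ\subseteq\PPP$. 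For $\QQQ=\emptyset$ the premise is $\Bot$, so Lemma \ref{lemma:Bot_E} applies; for $\QQQ\neq\emptyset$ a nonempty strong Hintikka formula already proves $\NE$, so $\NE\land\theta^k_{\biguplus\QQQ}\proveq\theta^k_{\biguplus\QQQ}$ and I finish as in the core case, additionally noting, when $\Psi^*$ itself carries the prefix, that the witnessing $\RRR'$ is nonempty so that $\theta^k_{\biguplus\RRR'}\vdash\NE$ and hence $\theta^k_{\biguplus\RRR'}\vdash\Psi^*$ by $\land\R{I}$. Weak completeness then yields compactness, and strong completeness follows as remarked after Theorem \ref{theorem:BSMLI_completeness}.

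The main obstacle, and the reason for the detour through $\OC$, is precisely that $\intd$ is \emph{not} in the language of $\BSMLO$, so the direct $\intd\R{I}$/$\intd\R{E}$ manipulations of the $\BSMLI$ proof are unavailable; all of the $\intd$-style case splitting has to be routed through Lemma \ref{lemma:tensor_normal_form_normal_form_provable_equivalence}, whose $\intd$-distributivity side conditions must be verified at each use, while the empty state is tracked by hand via the $\NE$-prefix analysis.
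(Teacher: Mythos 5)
Your proposal is correct and takes essentially the same route as the paper's own proof: reduction to $\BSMLO$-normal forms via Lemma \ref{lemma:normal_form_provable_equivalence_BSMLO}, passage to the $\intd$-style forms via Lemma \ref{lemma:normal_form_conversion} together with Proposition \ref{prop:characteristic_formulas_characterize_states}\ref{prop:characteristic_formulas_characterize_states_theta} and Lemma \ref{lemma:charf_provable_equivalence} on the semantic side, and Lemma \ref{lemma:tensor_normal_form_normal_form_provable_equivalence} to simulate the $\intd$-reasoning proof-theoretically---the paper differs only in bookkeeping, since it discharges all three $\NE$-prefix combinations semantically at the outset (reducing each to the single unprefixed entailment $\bigdis_{s\in\PPP}\OC\theta^k_s\vDash\bigdis_{r\in\RRR}\OC\theta^k_r$) and then performs the derivation once, whereas you carry the prefix through the derivation. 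One minor slip to fix: in the prefixed case your split should be on whether $\biguplus\QQQ$ is empty rather than on whether $\QQQ$ is empty, since $\PPP$ may contain empty states, in which case $\QQQ\neq\emptyset$ yet $\theta^k_{\biguplus\QQQ}=\bot$ and your claimed equivalence $\NE\land\theta^k_{\biguplus\QQQ}\proveq\theta^k_{\biguplus\QQQ}$ fails; but then the premise $\NE\land\theta^k_{\biguplus\QQQ}$ is again provably $\Bot$, and Lemma \ref{lemma:Bot_E} closes that subcase exactly as in your $\QQQ=\emptyset$ case.
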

\begin{proof}
    Let $k=max\{md(\phi),md(\psi)\}$. By Lemma \ref{lemma:normal_form_provable_equivalence_BSMLO},
        \begin{align}\label{theorem:BSMLO_weak_completeness_eq1}
        &&& \phi \proveq \eta_\phi \land \bigdis_{s\in \PPP}\OC\theta^{k}_s &&\text{and}&&\psi \proveq \eta_\psi\land \bigdis_{t\in \QQQ}\OC\theta^{k}_{t}, 
    \end{align}
    where each of $\eta_\phi$ and $\eta_\psi$ is either $\NE$ or (for notational convenience) $\Top$. Since $\phi \vDash \psi$, $\eta_\phi=\Top$ implies $\eta_\psi=\Top$. Each of the remaining possibilities---that is, (a) $\eta_\phi=\eta_\psi=\Top$; (b) $\eta_\phi=\eta_\psi=\NE$; (c) $\eta_\phi=\NE$ and $\eta_\psi=\Top$---implies by (\ref{theorem:BSMLO_weak_completeness_eq1}) and soundness,
    $$\bigdis_{s\in \PPP}\OC\theta^{k}_{s}\vDash  \bigdis_{t\in \QQQ}\OC\theta^{k}_{t},$$
    so that by Lemma \ref{lemma:normal_form_conversion},
\begin{equation}\label{theorem:BSMLO_weak_completeness_eq2} \quad\bigintdd_{\PPP'\subseteq\PPP}\theta^{k}_{\biguplus \PPP'}\vDash\bigintdd_{\QQQ'\subseteq \QQQ}\theta^{k}_{\biguplus \QQQ'}.\end{equation}
    Let $\PPP'\subseteq \PPP$, and $u\vDash \theta^{k}_{\biguplus \PPP'}$. By (\ref{theorem:BSMLO_weak_completeness_eq2}), we have $u\vDash \bigintd_{\QQQ'\subseteq \QQQ}\theta^{k}_{\biguplus \QQQ'}$. Thus, $u\vDash \theta^{k}_{\biguplus \QQQ'}$ for some $\QQQ'\subseteq \QQQ$. By Proposition \ref{prop:characteristic_formulas_characterize_states}\ref{prop:characteristic_formulas_characterize_states_theta}, $\biguplus \PPP'\bisim_ku\bisim_k \biguplus \QQQ'$.
    Then by Lemma \ref{lemma:charf_provable_equivalence}, $\theta^k_{\biguplus \PPP'}\vdash \theta^k_{\biguplus \QQQ'}$. By Lemma \ref{lemma:tensor_normal_form_normal_form_provable_equivalence}\ref{lemma:tensor_normal_form_normal_form_provable_equivalence_i}, $\theta^k_{\biguplus \QQQ'}\vdash \bigdis_{t\in \QQQ} \OC \theta^{k}_{t}$ , so also $\theta^k_{\biguplus \PPP'}\vdash \bigdis_{t\in \QQQ} \OC \theta^{k}_{t}$. Hence, by Lemma \ref{lemma:tensor_normal_form_normal_form_provable_equivalence}\ref{lemma:tensor_normal_form_normal_form_provable_equivalence_ii}, $\bigdis_{s\in \PPP} \OC \theta^{k}_{s}\vdash \bigdis_{t\in \QQQ} \OC \theta^{k}_{t}$, and therefore $\phi\vdash \psi$.
\end{proof}

\begin{corollary}[Strong completeness of $\BSMLOBF$] \label{coro:BSMLO_strong_completeness}
If $\Phi\vDash \psi$, then $\Phi\vdash \psi$.
\end{corollary}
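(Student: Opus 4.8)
The plan is to derive strong completeness from the weak completeness just established (Theorem~\ref{theorem:BSMLO_weak_completeness}) together with the compactness of $\BSMLO$. Recall that compactness for $\BSMLO$ was obtained above as a consequence of the strong completeness of $\BSMLI$ (Theorem~\ref{theorem:BSMLI_completeness}): since $\OC\phi \equiv \phi \intd \bot$, the obvious recursive translation replacing each subformula $\OC\chi$ by $\chi \intd \bot$ carries every $\BSMLO$-formula to a semantically equivalent $\BSMLI$-formula, preserving support and hence entailment. Thus $\BSMLO$-entailment is a restriction of $\BSMLI$-entailment, and the latter is compact because $\BSMLI$-derivations are finite objects (so $\Phi \vDash_{\BSMLI} \psi$ yields, via strong completeness and then soundness, a finite $\Phi_0$ with $\Phi_0 \vDash_{\BSMLI} \psi$). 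Consequently, if $\Phi \vDash \psi$ in $\BSMLO$ there is a finite $\Phi_0 = \{\phi_1,\dots,\phi_n\} \subseteq \Phi$ with $\Phi_0 \vDash \psi$.

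Next I would collapse this finite premise set into a single formula using conjunction. Since $M,s \vDash \phi_1 \land \dots \land \phi_n$ iff $M,s \vDash \phi_i$ for every $i$, the entailment $\Phi_0 \vDash \psi$ is equivalent to $\phi_1 \land \dots \land \phi_n \vDash \psi$. Weak completeness (Theorem~\ref{theorem:BSMLO_weak_completeness}) then yields $\phi_1 \land \dots \land \phi_n \vdash \psi$. Finally, repeated applications of $\land\R{I}$ give $\phi_1,\dots,\phi_n \vdash \phi_1 \land \dots \land \phi_n$, so by transitivity of derivability $\phi_1,\dots,\phi_n \vdash \psi$, whence $\Phi \vdash \psi$ by the monotonicity of $\vdash$. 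The degenerate case $\Phi_0 = \emptyset$ is handled by taking the empty conjunction $\Top$, which is provable by classical completeness (Proposition~\ref{prop:classical_completeness}), so that $\Top \vdash \psi$ gives $\vdash \psi$ and hence $\Phi \vdash \psi$.

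The only place where genuine content is hidden is the compactness step, which is not internal to the $\BSMLO$-calculus—deriving it from $\BSMLO$'s own strong completeness would be circular—but is imported from $\BSMLI$. The point to verify carefully is that the embedding $\OC\phi \equiv \phi \intd \bot$ preserves support (and therefore entailment), so that any failure of compactness for $\BSMLO$ would transfer to a failure for $\BSMLI$, contradicting Theorem~\ref{theorem:BSMLI_completeness}. Everything else is routine bookkeeping with $\land\R{I}$ and monotonicity; no proof-theoretic machinery specific to $\OC$ is required.
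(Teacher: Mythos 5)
Your proposal is correct and takes essentially the same route as the paper: the paper's proof of this corollary is precisely ``by Theorem~\ref{theorem:BSMLO_weak_completeness} and compactness,'' where the compactness of $\BSMLO$ is the one announced after Theorem~\ref{theorem:BSMLI_completeness}, obtained from the strong completeness of $\BSMLI$ via the embedding $\OC\chi \mapsto \chi \intd \bot$ exactly as you describe, followed by the same collapse of a finite premise set into a conjunction. The one detail worth making explicit in your translation step is that, since plain equivalence is not preserved under replacement in these logics, you should appeal to the \emph{strong} equivalence $\OC\chi \equivclosed \chi \intd \bot$ (which holds because $\bot$ is anti-supported in every state) together with Proposition~\ref{Replacement_thm_semantic}.
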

\begin{proof}
    By Theorem \ref{theorem:BSMLO_weak_completeness}  and compactness.
\end{proof}
In the remainder of this section we give the inductive proof of Lemma \ref{lemma:normal_form_provable_equivalence_BSMLO} (normal form provable equivalence). As in Section \ref{section:BSMLI}, we first show normal form provable equivalence for classical formulas.

\begin{lemma} \label{lemma:normal_form_provable_equivalence_BSMLO_classical}
    For each $\alpha\in \ML$ and each $k \geq md(\alpha)$, there is some property $\PPP$ such that
    $$\alpha \proveq \bigdis_{s\in \PPP}\OC\theta^k_s.$$
\end{lemma}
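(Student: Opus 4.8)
The plan is to reduce to the classical normal form already supplied by the world-based theory and then convert each world-Hintikka disjunct into the required $\OC\theta^k_s$-shape, exactly paralleling the $\BSMLI$ base case (Lemma \ref{lemma:normal_form_provable_equivalence_classical}). Since $k\geq md(\alpha)$, the property $\llbracket\alpha\rrbracket$ is invariant under $k$-bisimulation, so the proof of Theorem \ref{theorem:world-based_expressive_completeness} gives $\alpha\equiv\bigdis_{(M,w)\in\llbracket\alpha\rrbracket}\chi^k_w$, where the disjunction may be taken finite because there are only finitely many non-equivalent $\chi^k_w$ over $\mathsf{P}(\alpha)$. By classical completeness (Proposition \ref{prop:classical_completeness}) this semantic equivalence upgrades to a provable one: $\alpha\proveq\bigdis_{(M,w)\in\llbracket\alpha\rrbracket}\chi^k_w$.

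The key observation is that a single-world state satisfies $\theta^k_{\{w\}}=\chi^k_w\land\NE$, so applying Lemma \ref{lemma:alpha_proveq_OC_alpha_and_NE} to the classical formula $\chi^k_w$ yields $\chi^k_w\proveq\OC(\chi^k_w\land\NE)=\OC\theta^k_{\{w\}}$. Each $\chi^k_w$ occurs as a disjunct of $\bigdis_{(M,w)\in\llbracket\alpha\rrbracket}\chi^k_w$, hence not in the scope of any $\lnot$, so Replacement (Lemma \ref{lemma:provable_subformula_replacement}) allows me to substitute $\OC\theta^k_{\{w\}}$ for each $\chi^k_w$ while preserving provable equivalence, giving $\alpha\proveq\bigdis_{(M,w)\in\llbracket\alpha\rrbracket}\OC\theta^k_{\{w\}}$.

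It then suffices to set $\PPP:=\{(M,\{w\})\sepp (M,w)\in\llbracket\alpha\rrbracket\}$, read up to the finitely many $k$-bisimulation types so that $\bigdis_{s\in\PPP}\OC\theta^k_s$ is a finite, well-defined disjunction (distinct choices of representative yield provably equivalent $\theta^k_s$ by Lemma \ref{lemma:charf_provable_equivalence}). With this choice $\bigdis_{s\in\PPP}\OC\theta^k_s$ is literally $\bigdis_{(M,w)\in\llbracket\alpha\rrbracket}\OC\theta^k_{\{w\}}$, whence $\alpha\proveq\bigdis_{s\in\PPP}\OC\theta^k_s$, as required. The degenerate case $\llbracket\alpha\rrbracket=\emptyset$ (i.e.\ $\alpha$ a classical contradiction) is absorbed uniformly, since both sides collapse to $\bigdis\emptyset=\bot$ and $\alpha\proveq\bot$ follows from classical completeness.

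I expect no genuine obstacle here: the entire content lies in lining up the right earlier results. The only points demanding care are the recognition that $\OC$ exactly neutralizes the $\land\NE$ appended to a classical world-formula (via Lemma \ref{lemma:alpha_proveq_OC_alpha_and_NE}), and the bookkeeping that keeps $\PPP$ finite and the disjunction well-defined; both are routine.
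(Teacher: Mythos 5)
Your proposal is correct and follows essentially the same route as the paper: invoke the classical normal form $\alpha\proveq\bigdis_{(M,w)\in\llbracket\alpha\rrbracket}\chi^k_w$ via Theorem \ref{theorem:world-based_expressive_completeness} and Proposition \ref{prop:classical_completeness}, then convert each disjunct to $\OC\theta^k_{\{w\}}$ using Lemma \ref{lemma:alpha_proveq_OC_alpha_and_NE}. The only cosmetic difference is that you carry the disjunct-wise equivalence through the disjunction via Replacement (Lemma \ref{lemma:provable_subformula_replacement}) where the paper applies $\dis\R{Mon}$, and you spell out the empty-property and finiteness bookkeeping that the paper leaves implicit.
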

\begin{proof}
    Since $\llbracket\alpha \rrbracket$ is invariant under $k$-bisimulation, by the proof of Theorem \ref{theorem:world-based_expressive_completeness}, we have $\alpha \equiv \bigdis_{w\in \llbracket \alpha \rrbracket}\chi^k_w$. Then by Proposition \ref{prop:classical_completeness}, $\alpha \proveq \bigdis_{w\in \llbracket \alpha \rrbracket}\chi^k_w$, and by Lemma \ref{lemma:alpha_proveq_OC_alpha_and_NE} and $\dis \R{Mon}$, $$\bigdis_{w\in \llbracket \alpha \rrbracket}\chi^k_w \proveq \bigdis_{w\in \llbracket \alpha \rrbracket} \OC(\chi^k_w \land \NE)=\bigdis_{\{w\}\in \wp(\llbracket \alpha\rrbracket)}\OC \theta^k_{\{w\}}.\qedhere$$
\end{proof}

Again following the argument in Section \ref{section:BSMLI}, in our inductive proof of Lemma \ref{lemma:normal_form_provable_equivalence_BSMLO}, another important step consists in showing that formulas of the form $\Di\theta^k_s$ or $\Bo\theta^k_s$ are provably equivalent to classical formulas. In Section \ref{section:BSMLI}, this was proved in Lemma \ref{lemma:modal_charfs_provably_classical}, which made use of Lemma \ref{lemma:fc}. The two lemmas leading to Lemma \ref{lemma:fc} (namely, Lemmas \ref{lemma:provability_results_NE} and \ref{lemma:di_NE_I}) were given, in Section \ref{section:BSMLI}, only $\BSMLI$-specific derivations. We now show that these two lemmas can also be derived in the system for $\BSMLO$; we may then also make use of Lemmas \ref{lemma:fc} and \ref{lemma:modal_charfs_provably_classical} in  $\BSMLO$.

\begin{proof}[Proof of Lemma \ref{lemma:provability_results_NE} (in $\BSMLOBF$)] We show the non-trivial direction $\phi \dis (\psi \land \NE) \vdash\\ (\phi \dis (\psi\land \NE))\land \NE$.
First, $\phi \dis (\psi \land \NE) \vdash (\phi \dis (\psi \land \NE))\land \OC\NE$ by $\OC\NE\R{I}$. Then, $(\phi \dis (\psi \land \NE))\land \bot \vdash (\phi \land \bot )\lor (\psi \land \NE\land \bot)$ by Prop. \ref{prop:commutativity_associativity_distributivity} and $(\phi \land \bot )\lor (\psi \land \NE\land \bot) \vdash\\ (\phi \land \bot )\lor \Bot\vdash (\phi \dis (\psi\land \NE))\land \NE$ by $\Bot\R{Ctr}$. Obviously, $(\phi \dis (\psi\land \NE))\land \NE \vdash (\phi \dis (\psi\\\land \NE))\land \NE $, thus we have $\phi \dis (\psi \land \NE) \vdash (\phi \dis (\psi\land \NE))\land \NE $ by $\OC\R{E}$.
\end{proof}

 \begin{proof}[Proof of Lemma \ref{lemma:di_NE_I} (in $\BSMLOBF$)] We derive $\Di \phi\vdash\Di(\phi \land \NE)$ as follows: 
 \begin{align*}
     &\Di \phi &&\vdash &&\Di(\phi \land \OC\NE) \tag{$\OC\NE\R{I}$}\\
     &&&\vdash &&\Di(\phi \land \NE) \lor \Di(\phi \land \bot) \tag{$\Di\OC\R{E}$}\\
     &&&\vdash &&\Di(\phi \land \NE) \lor \bot \tag{Prop. \ref{prop:classical_completeness}}\\
     &&&\vdash &&\Di(\phi \land \NE)  \tag*{($\bot \R{E}$)}
 \end{align*}
 \end{proof}

With all the pieces at hand, we are now ready to give the full proof of Lemma \ref{lemma:normal_form_provable_equivalence_BSMLO}.

\begin{proof}[Proof of Lemma \ref{lemma:normal_form_provable_equivalence_BSMLO}.]
    By induction on the complexity of $\phi$ (assumed w.l.o.g. to be in negation normal form). Let $k \geq md(\phi)$.
    
  If $\phi =p$ or $\phi =\bnot p$ or $\phi =\lnot \NE$, then the result follows by Lemma \ref{lemma:normal_form_provable_equivalence_BSMLO_classical} and $\lnot \NE\R{E}$.
    If $\phi=\NE$, then we have $\NE \proveq \NE \land  ( p \dis \lnot p)$ by Proposition \ref{prop:classical_completeness}, and  $\NE \land  ( p \dis \lnot p)\proveq \NE\land \underset{s\in \PPP}\bigdis \OC\theta^k_s$ by Lemma \ref{lemma:normal_form_provable_equivalence_BSMLO_classical}.

If $\phi =\psi \land \chi$, then $k\geq md(\phi)=max\{ md(\psi),md(\chi)\}$, so by the induction hypothesis, there are properties $\PPP,\QQQ$ such that 
\begin{equation}\label{lemma:normal_form_provable_equivalence_BSMLO_eq0}
\psi\proveq\eta_\psi \land\bigdis_{s\in \PPP} \OC\theta^k_s\text{ and }\chi\proveq\eta_\chi \land\bigdis_{t\in \QQQ} \OC\theta^k_{t}
\end{equation}
where each of $\eta_\psi$ and $\eta_\chi$ is either $\Top $ or $\NE$. Let
        $$\RRR:=\{r\sepp \biguplus \PPP' \bisim_k r\bisim_k \biguplus \QQQ'\text{ for some $\PPP'\subseteq\PPP$ and some $\QQQ'\subseteq \QQQ$} \}.$$
 We will show        \begin{equation}\label{lemma:normal_form_provable_equivalence_BSMLO_eq1}
\bigdis_{s\in \PPP}\OC\theta^{k}_{s}\land \bigdis_{t\in \QQQ}\OC\theta^{k}_{t}\proveq \bigdis_{r\in \RRR}\OC\theta^k_r,
        \end{equation}
        from which the result will follow, since if $\eta_\psi=\eta_\chi=\Top$, then $\phi \land \psi \proveq \bigdis_{r\in \RRR}\OC\theta^k_r$; and if $\eta_\psi=\NE$ or $\eta_\chi =\NE$, then $\phi \land \psi \proveq \NE\land \bigdis_{s\in \PPP}\OC\theta^{k}_{s}\land \bigdis_{t\in \QQQ}\OC\theta^{k}_{t}\proveq \NE \land \bigdis_{r\in \RRR}\OC\theta^k_r$.

Now, for the direction $\vdash$ of (\ref{lemma:normal_form_provable_equivalence_BSMLO_eq1}), by Lemma \ref{lemma:tensor_normal_form_normal_form_provable_equivalence} \ref{lemma:tensor_normal_form_normal_form_provable_equivalence_ii}, it suffices to show that $\theta^{k}_{\biguplus \PPP'},\theta^{k}_{\biguplus \QQQ'}\vdash \bigdis_{s\in \RRR}\OC\theta^{k}_{s}$ for all $\PPP'\subseteq \PPP$ and $\QQQ'\subseteq \QQQ$. If $\biguplus \PPP'\nbisim_k \biguplus \QQQ'$, then $\theta^{k}_{\biguplus \PPP'},\theta^{k}_{\biguplus \QQQ'}\vdash \Bot\vdash \bigdis_{r\in \RRR}\OC\theta^{k}_{r}$ by Lemmas \ref{lemma:not_bisimilar_implies_charfs_incompatible} and \ref{lemma:Bot_E}. If $\biguplus \PPP'\bisim_k \biguplus \QQQ'\bisim_k u\in \RRR$, then $\theta^{k}_{\biguplus \PPP'} \proveq \theta^{k}_{u}$ by Lemma \ref{lemma:charf_provable_equivalence}. Next, we derive by applying $\OC\R{I}$ and $\lor \R{I}$ that $\theta^{k}_{u}\vdash \OC\theta_u^k\vee\bot\vdash \OC\theta_u^k\vee\bigdis_{u\neq r\in \RRR}\OC\theta_r^k\vdash \bigdis_{r\in \RRR}\OC\theta^{k}_{r}$ .
        
For the converse direction $\dashv$ of (\ref{lemma:normal_form_provable_equivalence_BSMLO_eq1}), we only give the detailed proof for $\bigdis_{r\in \RRR}\OC\theta^{k}_{r}\vdash \bigdis_{s\in \PPP}\OC\theta^{k}_{s} $. Given Lemma \ref{lemma:tensor_normal_form_normal_form_provable_equivalence} \ref{lemma:tensor_normal_form_normal_form_provable_equivalence_ii}, it suffices to show $\theta^{k}_{\biguplus \RRR'}\vdash \bigdis_{s\in \PPP}\OC\theta^{k}_{s}$ for all $\RRR'\subseteq \RRR$. Observe that $\RRR'=\{r\sepp r \bisim_k \biguplus \PPP'$ and $\PPP'\in \mathcal{X}\}$ for some $\mathcal{X}\subseteq \wp(\PPP)$ and $\biguplus \RRR' \bisim_k \biguplus \bigcup \mathcal{X}$. Thus, $\theta^{k}_{\biguplus \RRR'}\proveq \theta^{k}_{\biguplus \bigcup \mathcal{X}}$ by Lemma \ref{lemma:charf_provable_equivalence}. Since $\bigcup \mathcal{X}\subseteq \PPP$, we have by Lemma \ref{lemma:tensor_normal_form_normal_form_provable_equivalence} \ref{lemma:tensor_normal_form_normal_form_provable_equivalence_i} that $\theta^{k}_{\biguplus \bigcup \mathcal{X}}\vdash \bigdis_{s\in \PPP}\OC\theta^{k}_{s}$.
 
   If $\phi =\psi \dis \chi$, then $k\geq md(\phi)= max\{md(\psi),md(\chi)\}$, so by the induction hypothesis we have (\ref{lemma:normal_form_provable_equivalence_BSMLO_eq0}). If $\eta_\psi=\NE$ and $\eta_\chi=\Top$, let
        $$\RRR:=\{  \biguplus \PPP'\uplus \biguplus \QQQ' \sepp \PPP' \subseteq \PPP \text{ s.t. }\biguplus \PPP'\nbisim_k (M,\emptyset), \text{ and }\QQQ'\subseteq \QQQ \},$$
        where $M$ is some arbitrary model. We show that
        $\displaystyle \psi \lor \chi \proveq \NE \land \bigdis_{r\in \RRR}\OC\theta^{k}_{r}$.
        
    For the direction $\vdash$, given Lemma \ref{lemma:tensor_normal_form_normal_form_provable_equivalence} \ref{lemma:tensor_normal_form_normal_form_provable_equivalence_ii}, it suffices to show that for all $\PPP'\subseteq \PPP$ and $\QQQ'\subseteq \QQQ$, $(\NE\land\theta^{k}_{\biguplus \PPP'})\lor \theta^{k}_{\biguplus \QQQ'}\vdash \NE\land \bigdis_{r\in \RRR}\OC\theta^{k}_{r}$. If $\biguplus\PPP'\bisim_k(M,\emptyset)$, then $\theta^k_{\biguplus \PPP'}\proveq\bot$ and we have $(\NE\land\theta^{k}_{\biguplus \PPP'})\lor \theta^{k}_{\biguplus \QQQ'}\vdash \Bot \lor\theta^{k}_{\biguplus \QQQ'}\vdash \NE\land \bigdis_{r\in \RRR}\OC\theta^{k}_{r}$ by $\Bot\R{Ctr}$. Otherwise $\biguplus \PPP'\nbisim_k \emptyset$, and thus $\biguplus \PPP'\uplus \biguplus \QQQ'\in \RRR$. We have \[(\NE\land\theta^{k}_{\biguplus \PPP'})\lor \theta^{k}_{\biguplus \QQQ'}\vdash \theta^{k}_{\biguplus \PPP'}\lor \theta^{k}_{\biguplus \QQQ'}\proveq \theta^{k}_{\biguplus \PPP'\uplus \biguplus \QQQ'}\vdash \bigdis_{r\in \RRR}\OC\theta^{k}_{r},\] 
      by $\OC\R{I}$ and $\lor \R{I}$, and $(\NE\land\theta^{k}_{\biguplus \PPP'})\lor \theta^{k}_{\biguplus \QQQ'}\vdash \NE$ by Lemma \ref{lemma:provability_results_NE}.
    
     For the direction $\dashv$, given Lemma \ref{lemma:tensor_normal_form_normal_form_provable_equivalence} \ref{lemma:tensor_normal_form_normal_form_provable_equivalence_ii}, it suffices to show that for all $\RRR'\subseteq \RRR$,  $\NE \land\theta^{k}_{\biguplus \RRR'}\vdash (\NE\land \bigdis_{s\in \PPP}\OC\theta^{k}_{s}) \dis \bigdis_{t\in \QQQ}\OC\theta^{k}_{t}$. If $\RRR'=\emptyset$, then $\NE \land\theta^{k}_{\biguplus \RRR'}=\NE\land \bot=\Bot$, and the implication follows by Lemma \ref{lemma:Bot_E}. Otherwise 
     \[\RRR'=\{  \biguplus \PPP' \uplus \biguplus\QQQ'\sepp\PPP'\in \mathcal{X}\text{ and }\QQQ'\in \mathcal{Y}\}\] 
     for some nonempty $\mathcal{X}\subseteq \wp(\PPP)$ and nonempty $\mathcal{Y}\subseteq \wp(\QQQ)$ with $\biguplus \PPP' \nbisim_k (M,\emptyset)$  for all $\PPP'\in \mathcal{X}$. Since $\biguplus \RRR' \bisim_k \biguplus \bigcup \mathcal{X} \uplus \biguplus \bigcup \mathcal{Y}$, by Lemma \ref{lemma:charf_provable_equivalence} we have  \[\theta^{k}_{\biguplus \RRR'}\proveq \theta^{k}_{\biguplus \bigcup \mathcal{X}\uplus \biguplus \bigcup \mathcal{Y}}\proveq \theta^{k}_{\biguplus \bigcup \mathcal{X}}\lor \theta^{k}_{\biguplus \bigcup \mathcal{Y}}.\] Observe that $\biguplus \bigcup \mathcal{X}\nbisim_k (M,\emptyset)$, which implies $\theta^{k}_{\biguplus \bigcup \mathcal{X}}\neq \bot$, so that $\theta^{k}_{\biguplus \bigcup \mathcal{X}} \vdash \NE \land \theta^{k}_{\biguplus \bigcup \mathcal{X}} $ by Lemma \ref{lemma:provability_results_NE}. Thus, $\theta^{k}_{\biguplus \bigcup \mathcal{X}}\lor \theta^{k}_{\biguplus \bigcup \mathcal{Y}}\vdash (\NE\land \theta^{k}_{\biguplus \bigcup \mathcal{X}})\lor \theta^{k}_{\biguplus \bigcup \mathcal{Y}}$. Finally, by Lemma \ref{lemma:tensor_normal_form_normal_form_provable_equivalence} \ref{lemma:tensor_normal_form_normal_form_provable_equivalence_i}, we derive $(\NE\land \theta^{k}_{\biguplus \bigcup \mathcal{X}})\lor \theta^{k}_{\biguplus \bigcup \mathcal{Y}}\vdash (\NE\land \bigdis_{s\in \PPP}\OC\theta^{k}_{s}) \dis \bigdis_{t\in \QQQ}\OC\theta^{k}_{t}$.
     
        The other cases are similar; for example, if $\eta_\psi=\eta_\chi=\NE$, one uses
               \begin{align*}
        \RRR:=\{& \biguplus \PPP'\uplus \biguplus \QQQ' \sepp \PPP' \subseteq \PPP \text{ and }\QQQ'\subseteq \QQQ \text{ s.t. }\biguplus \PPP'\nbisim_k \emptyset\text{ and }\biguplus \QQQ'\nbisim_k \emptyset \},
        \end{align*}
        and if $\eta_\psi=\eta_\chi=\Top$ we  clearly have $\phi \lor \psi \proveq \bigdis_{u\in \PPP\cup \QQQ}\OC\theta^{k}_{u}  $.
        
  If $\phi=\OC\psi$, then $k\geq md(\phi)= md(\psi)$, so by the induction hypothesis we have the equivalence for $\psi$ as in (\ref{lemma:normal_form_provable_equivalence_BSMLO_eq0}). We show $\OC (\eta_\psi \land \bigdis_{s\in \PPP}\OC\theta^{k}_{s})   
  \proveq\bigdis_{s\in \PPP}\OC\theta^{k}_{s}$. For the direction $\vdash$, we have $\eta_\psi \land \bigdis_{s\in \PPP}\OC\theta^{k}_{s}\vdash \bigdis_{s\in \PPP} \OC\theta^k_s $ and $\bot\vdash \bigdis_{s\in \PPP}\bot\vdash \bigdis_{s\in \PPP} \OC\theta^k_s$ by $\OC\R{I}$. Thus we conclude $\OC (\eta_\psi \land \bigdis_{s\in \PPP}\OC\theta^{k}_{s})   \vdash\bigdis_{s\in \PPP}\OC\theta^{k}_{s}$ by $\OC\R{E}$. For the converse direction $\dashv$, if $\eta_\psi=\Top$, we have $\bigdis_{s\in \PPP}\OC\theta^{k}_{s} \proveq \Top \land \bigdis_{s\in \PPP}\OC\theta^{k}_{s} \vdash \OC(\Top \land \bigdis_{s\in \PPP}\OC\theta^{k}_{s})$ by $\OC\R{I}$. If $\eta_\psi=\NE$, we have $\bigdis_{s\in \PPP}\OC\theta^{k}_{s}\vdash \bigdis_{s\in \PPP}\OC\theta^{k}_{s} \land\OC\NE$ by $\OC\NE\R{I}$. Then since $\bigdis_{s\in \PPP}\OC\theta^{k}_{s} \land\NE\vdash \OC(\bigdis_{s\in \PPP}\OC\theta^{k}_{s} \land\NE)$ by $\OC\R{I}$ and $\bigdis_{s\in \PPP}\OC\theta^{k}_{s} \land\bot \vdash \bot \vdash \OC(\bigdis_{s\in \PPP}\OC\theta^{k}_{s} \land\NE)$ by $\OC\R{I}$, we have $\bigdis_{s\in \PPP}\OC\theta^{k}_{s} \land\OC\NE\vdash \OC(\bigdis_{s\in \PPP}\OC\theta^{k}_{s} \land\NE)$ by $\OC\R{E}$.

 If $\phi=\Di\psi$, then $k-1\geq md(\phi)-1=md(\psi)$, so by the induction hypothesis, there is a property $\PPP$ such that $\psi\proveq\eta_\psi \land\bigdis_{s\in \PPP} \OC\theta^{k-1}_s$ where $\eta_\psi=\Top$ or $\eta_\psi=\NE$. Let $n=k-1$. By Lemma \ref{lemma:normal_form_provable_equivalence_BSMLO_classical}, it suffices to show that $\psi$ is equivalent to some classical formula of modal depth $\leq k=n+1$. Now, if $\eta_\psi=\NE$, we have $\Di\psi \proveq \Di(\eta_\psi \land \bigdis_{s\in \PPP}\OC\theta^{n}_{s})\proveq \Di\bigdis_{s\in \PPP}\OC\theta^{n}_{s}$ by Lemma \ref{lemma:di_NE_I}; the same equivalence holds also when $\eta_\psi=\Top$. It is then sufficient to show that 
\begin{equation}\label{lemma:normal_form_provable_equivalence_BSMLO_eq2}
  \Di\bigdis_{s\in \PPP}\OC\theta^{n}_{s}\proveq  \bigdis_{\PPP'\subseteq\PPP}\Di\theta^{n}_{\biguplus \PPP'},
  \end{equation}
  for then we have $\displaystyle\bigdis_{\PPP'\subseteq \PPP}\Di \theta^n_{\biguplus \PPP'} \proveq \bigdis_{\PPP'\subseteq \PPP}(\Di\chi^n_{\biguplus \PPP'} \land \bigwedge_{w\in \biguplus \PPP'}\Di \chi^n_w)$ by Lemma \ref{lemma:modal_charfs_provably_classical}\ref{lemma:modal_charfs_provably_classical_Di}.

  To prove the direction $\dashv$ of (\ref{lemma:normal_form_provable_equivalence_BSMLO_eq2}), for any $\PPP'\subseteq \PPP$ we have $\theta^n_{\uplus \PPP'}\vdash \bigdis_{s\in \PPP}\OC\theta^{n}_{s}$ by Lemma \ref{lemma:tensor_normal_form_normal_form_provable_equivalence} \ref{lemma:tensor_normal_form_normal_form_provable_equivalence_i}, so  $\Di\theta^n_{\uplus \PPP'}\vdash \Di\bigdis_{s\in \PPP}\OC\theta^{n}_{s}$ by $\Di \R{Mon}$. The result then follows by $\dis\R{E}$. For the converse direction $\vdash$, let $\PPP=\{s_1,\dots,s_m\}$. Repeatedly applying $\Di\OC\R{E}$ gives
\begin{align*}
\Di\bigvee_{i=1}^m\OC\theta_{s_i}^n
\vdash&\Di(\theta_{s_1}^n\vee\OC\theta_{s_2}^n\vee\dots\vee\OC\theta_{s_m}^n)\vee\Di(\bot\vee\OC\theta_{s_2}^n\vee\dots\vee\OC\theta_{s_m}^n)\\
\vdash&\big(\Di(\theta_{s_1}^n\vee\theta_{s_2}^n\vee\OC\theta_{s_3}^n\vee\dots\vee\OC\theta_{s_m}^n)\vee\Di(\theta_{s_1}^n\vee\bot\vee\OC\theta_{s_3}^n\vee\dots\vee\OC\theta_{s_m}^n)\big)\\
&\vee\Di(\bot\vee\theta_{s_2}^n\vee\OC\theta_{s_3}^n\vee\dots\vee\OC\theta_{s_m}^n)\vee\Di(\bot\vee\bot\vee\OC\theta_{s_3}^n\vee\dots\vee\OC\theta_{s_m}^n)\\
\vdots&\\
\vdash&\bigvee_{\tau_1\in \{s_1,\emptyset\}}\dots\bigvee_{\tau_m\in \{s_m,\emptyset\}}\Di(\theta_{\tau_1}^n\vee\dots\vee\theta_{\tau_m}^n)\tag{recall $\theta_\emptyset^n=\bot$}.
\end{align*}
For each sequence $\tau_1,\dots,\tau_m$ as above, putting $\PPP'=\{\tau_i\mid \tau_i=s_i,~1\leq i\leq m\}$, it is easy to see $\Di(\theta_{\tau_1}^n\vee\dots\vee\theta_{\tau_m}^n)\proveq \Di \bigvee_{\tau_i\in \PPP'}\theta_{\tau_i}^n\proveq\Di \theta^n_{\biguplus \PPP'}$. Thus, \[\bigvee_{\tau_1\in \{s_1,\emptyset\}}\dots\bigvee_{\tau_m\in \{s_m,\emptyset\}}\Di(\theta_{\tau_1}^n\vee\dots\vee\theta_{\tau_m}^n)\vdash \bigdis_{\PPP'\subseteq \PPP}\Di \theta^n_{\biguplus \PPP'}.\] 
        
If  $\phi =\Bo\psi$, then $n=k-1\geq md(\phi)-1=md(\psi)$, so by the induction hypothesis, we have the same normal form equivalence for $\psi$ as in the diamond case. Again, by Lemma \ref{lemma:normal_form_provable_equivalence_BSMLO_classical} it suffices to show that $\Bo \psi$ is provably equivalent to a classical formula of modal depth $\leq k$. For the case $\eta_\psi=\Top$, we have $\Bo\psi \proveq \Bo\bigdis_{s\in \PPP}\OC\theta^{n}_{s}$. One can show that $\Bo\bigdis_{s\in \PPP}\OC\theta^{n}_{s}$ is provably equivalent to a classical formula of modal depth $\leq k$ with a proof analogous to that given for $\Di\bigdis_{s\in \PPP}\OC\theta^{n}_{s}$ in the case for $\phi=\Di\psi$ (using $\Bo \R{Mon}$, $\Bo\OC\R{E}$, and Lemma \ref{lemma:modal_charfs_provably_classical} \ref{lemma:modal_charfs_provably_classical_Bo} instead of $\Di \R{Mon}$, $\Di\OC\R{E}$, and \ref{lemma:modal_charfs_provably_classical} \ref{lemma:modal_charfs_provably_classical_Di}, respectively). For the case $\eta_\psi=\NE$, we have $\Bo\psi \proveq \Bo(\NE \land \bigdis_{s\in \PPP}\OC\theta^{n}_{s})\proveq \Bo\bigdis_{s\in \PPP}\OC\theta^{n}_{s} \land \Di\bigdis_{s\in \PPP}\OC\theta^{n}_{s}$ by Lemma \ref{lemma:fc} \ref{lemma:fc_Bo}. We have already shown that each conjunct in this formula is provably equivalent to a classical formula.
\end{proof}

\subsection{$\BSMLBF$} \label{section:BSML}

In this final section, we axiomatize our base logic $\BSML$. As with $\BSMLI$ and $\PT$, the propositional fragment of $\BSML$ corresponds to the logic $\CPL$ studied and axiomatized in \cite{yang2017} (with the caveat that $\CPL$ does not feature a bilateral negation). To construct the system for $\BSML$,  we remove all rules involving $\intd$ from the system for $\BSMLI$, and add rules which simulate the removed rules.

\begin{definition}[Natural deduction system for $\BSMLBF$] \label{def:BSML_system}

The natural deduction system for $\BSML$ includes all rules not involving $\intd$ from the system for $\BSMLI$ (boxes (a)--(f)) and the following rules simulating the behavior of $\intd$:
 
       \begin{proofbox}[]
       {\small
       
            \begin{prooftree}
                \AxiomC{$D $}
                \noLine
            \UnaryInfC{$\phi$}
                \AxiomC{$[\phi[\psi \land \NE/\psi]] $}
                \noLine
                \UnaryInfC{$ D_1 $}
                \noLine
            \UnaryInfC{$ \chi$}
                \AxiomC{$[\phi[\psi \land \bot/\psi]] $}
                \noLine
            \UnaryInfC{$ D_2 $}
                \noLine
            \UnaryInfC{$ \chi$}
            \RightLabel{$\bot\NE\R{Trs} (*)$}
            \TrinaryInfC{$ \chi $}
            \end{prooftree}
\vspace{0.2cm}
            
        \begin{prooftree}
                \AxiomC{$D $}
                \noLine
            \UnaryInfC{$\Di\phi$}
            \RightLabel{$\Di\bot \NE\R{Trs} (*)$}
            \UnaryInfC{$ \Di\phi[\psi \land \NE /\psi]\dis \Di \phi[\psi \land \bot/\psi] $}
        \end{prooftree}
        \vspace{0.2cm}
        
        \begin{prooftree}
                \AxiomC{$D $}
                \noLine
            \UnaryInfC{$\Bo\phi$}
            \RightLabel{$\Bo\bot \NE\R{Trs} (*)$}
            \UnaryInfC{$ \Bo\phi[\psi \land \NE /\psi]\dis \Bo \phi[\psi \land \bot/\psi] $}
        \end{prooftree}
        \vspace{0.2cm}
        }
        
        {\footnotesize
    
        $(*)$ $[\psi]$ is $\intd\hspace{-1pt}$-distributive in $\phi$.}
    \end{proofbox}
\end{definition}
Recall that $[\psi]$ is $\intd\hspace{-1pt}$-distributive in $\chi$ if $[\psi]$ is not within the scope of any $\lnot$ or $\Di$ in $\chi$. So given $\psi \equiv\psi \land (\NE \intd\bot)$, and given that $\land$ and $\lor$ distribute over $\intd$, if $[\psi]$ is $\intd$-distributive in $\phi$, then $\phi\equiv \phi[\psi \land (\NE \intd\bot)/\psi]\equiv  \phi[\psi\wedge \NE/\psi]\intd\phi[\psi\wedge\bot/\psi]$. The $\bot\NE$-translation rule $\bot\NE \R{Trs}$ captures this equivalence by simulating $\intd$-elimination as applied to $ \phi[\psi\wedge \NE/\psi]\intd\phi[\psi\wedge\bot/\psi]$. The rule $\Di\bot \NE\R{Trs}$ captures the equivalence $\Di\phi\equiv \Di(\phi[\psi\land \NE/\psi]\intd\phi[\psi\land\bot/\psi])\equiv \Di\phi[\psi\land \NE/\psi]\vee\Di\phi[\psi\land\bot/\psi]$ by simulating $\Di\intd\dis \R{Conv}$; similarly for $\Bo\bot \NE\R{Trs}$ and $\Bo\intd\dis \R{Conv}$. The soundness of the new rules follows from these equivalences, and so we have:

\begin{theorem}[Soundness of $\BSMLBF$] \label{theorem:soundness_BSML}
If $\Phi \vdash \psi$, then $ \Phi \vDash \psi$.
\end{theorem}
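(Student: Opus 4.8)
The plan is to argue by induction on the length of the derivation of $\Phi\vdash\psi$, exactly as in the proof of Theorem~\ref{theorem:soundness_BSMLI}. Since the system for $\BSML$ consists of the rules in boxes (a)--(f)---a subset of those for $\BSMLI$---together with the three new rules $\bot\NE\R{Trs}$, $\Di\bot\NE\R{Trs}$, and $\Bo\bot\NE\R{Trs}$, every case in which the last rule applied lies in (a)--(f) is dispatched verbatim by the corresponding case of Theorem~\ref{theorem:soundness_BSMLI}. Thus the only work is to verify that the three translation rules preserve entailment.

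The key semantic ingredient, stated informally just before the theorem, is a distributivity equivalence: if the occurrence $[\psi]$ is $\intd\hspace{-1pt}$-distributive in $\phi$, then
\[
\phi\equiv\phi[\psi\land\NE/\psi]\intd\phi[\psi\land\bot/\psi].
\]
I would establish this first as a lemma. (The global disjunction $\intd$ is absent from the $\BSML$ object language, but its support clause is given in Definition~\ref{def:semantics}, so I may freely use it as a semantic device.) The argument has two moves. First, since $s\vDash\NE\intd\bot$ holds for every $s$ (either $s\neq\emptyset$, giving $s\vDash\NE$, or $s=\emptyset$, giving $s\vDash\bot$), we have $\psi\land(\NE\intd\bot)\equiv\psi$; because $[\psi]$ is $\intd\hspace{-1pt}$-distributive it is in particular not in the scope of $\lnot$, so this replacement preserves $\equiv$ and yields $\phi\equiv\phi[\psi\land(\NE\intd\bot)/\psi]$. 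Second, using $\psi\land(\NE\intd\bot)\equiv(\psi\land\NE)\intd(\psi\land\bot)$ together with the distribution of $\land$ and $\lor$ over $\intd$ (noted in Section~\ref{section:preliminaries}), I would push the $\intd$ outward to the top of $\phi$ by an induction on the depth of the occurrence; since $[\psi]$ is $\intd\hspace{-1pt}$-distributive, the only connectives on the path to the root are $\land$ and $\lor$, so each inductive step is an instance of a distributive law.

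With the lemma in hand, the three rules follow. For $\bot\NE\R{Trs}$, suppose $s$ supports all undischarged assumptions; by the induction hypothesis $s\vDash\phi$, so by the lemma $s\vDash\phi[\psi\land\NE/\psi]$ or $s\vDash\phi[\psi\land\bot/\psi]$, and in either case the corresponding subderivation (whose remaining assumptions $s$ also supports) gives $s\vDash\chi$---this is precisely the soundness of $\intd\R{E}$ applied to $\phi[\psi\land\NE/\psi]\intd\phi[\psi\land\bot/\psi]$, and requires no splitting of the state. For $\Di\bot\NE\R{Trs}$ it suffices to show $\Di\phi\vDash\Di\phi[\psi\land\NE/\psi]\dis\Di\phi[\psi\land\bot/\psi]$: by the lemma and congruence of $\Di$ under $\equiv$ (its support clause depends only on which substates support its argument), $\Di\phi\equiv\Di(\phi[\psi\land\NE/\psi]\intd\phi[\psi\land\bot/\psi])$, which is equivalent to $\Di\phi[\psi\land\NE/\psi]\dis\Di\phi[\psi\land\bot/\psi]$ by the already-verified validity of $\R{Conv}\Di\intd\dis$. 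The case of $\Bo\bot\NE\R{Trs}$ is identical, using $\R{Conv}\Bo\intd\dis$ instead.

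I expect the main obstacle to be the bookkeeping in the distributivity lemma, and in particular the legitimacy of the replacement steps. Because $\BSML$ fails replacement of equivalents in general (owing to the bilateral negation), I must invoke that the occurrence $[\psi]$ is $\intd\hspace{-1pt}$-distributive---hence not in the scope of $\lnot$---so that replacing $\psi$ by an equivalent formula preserves $\equiv$; and I must use that $\Di$ and $\Bo$ are congruences for \emph{support} under plain $\equiv$ (rather than only for strong equivalence), which is all the entailment direction needs. Once these congruence facts are pinned down, each rule's verification is routine.
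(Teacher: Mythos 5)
Your proposal is correct and takes essentially the same route as the paper: the paper likewise inherits soundness of the shared rules (boxes (a)--(f)) from Theorem~\ref{theorem:soundness_BSMLI} and justifies the three translation rules by exactly the equivalences you prove, namely $\phi\equiv \phi[\psi\wedge \NE/\psi]\intd\phi[\psi\wedge\bot/\psi]$ and $\Di\phi\equiv \Di\phi[\psi\land \NE/\psi]\vee\Di\phi[\psi\land\bot/\psi]$ (and the $\Bo$ analogue), stated in the discussion just before Definition~\ref{def:BSML_system}, with the verification left implicit. Your additional care about why the replacements are legitimate---$\intd\hspace{-1pt}$-distributivity keeping the occurrence out of the scope of $\lnot$, and $\Di$, $\Bo$ being congruences for plain support-equivalence---supplies precisely the details the paper omits.
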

Observe that the proofs given in Section \ref{section:BSMLI} for Lemma \ref{lemma:provable_subformula_replacement} (Replacement) and Proposition \ref{prop:negation_normal_form_provable_BSMLI} (Negation normal form) also suffice to establish these results for the $\BSML$-system. In what follows we can thus always assume that formulas are in negation normal form.

We now turn to the proof of the completeness theorem for the system. From our expressivity analysis in Section \ref{section:expressive_power}, we know that every formula $\phi$ in $\BSML$ (being also a formula in the stronger logic $\BSMLI$) is equivalent to a formula in $\BSMLI$-normal form: $\phi \equiv\bigintd_{s\in ||\phi||}\theta_s^k$, where $k=md(\phi)$. While the global disjunction in the normal form is not in the language of $\BSML$, we will still be able to make use of this disjunctive normal form in our completeness proof, as we were also able to do in our proof for $\BSMLO$. Adapting a similar strategy employed in \cite{yang2017} for $\CPL$, we associate with each formula $\phi$ a set of $\BSML$-formulas $\phi^f$ of certain syntactic form, called \emph{realizations} of $\phi$. The realizations of $\phi$ correspond to the disjuncts $\theta_s^k$ in the normal form for $\phi$---we will show that $\phi \equiv\bigintd_{s\in ||\phi||} \theta_s^k\equiv\bigintd\{\phi^f\sepp \phi^f$ is a realization for $\phi\}$ (cf. Lemma \ref{lemma:normal_form_conversion})---and $\phi$ can be simulated in the completeness proof using its realizations: we also show that $\phi$ and $\bigintd\{\phi^f\sepp \phi^f$ is a realization for $\phi\}$ are proof-theoretically interchangeable (cf. Lemma \ref{lemma:tensor_normal_form_normal_form_provable_equivalence}).

Realizations for $\phi$ are defined by replacing $\intd$-distributive occurrences $[\eta]$ in $\phi$ by disjuncts $\theta^k_s$ in their respective normal forms $\bigintd_{||\eta||}\theta^k_s$ (where $k \geq md(\phi)$). The equivalence between $\phi$ and $\bigintd\{\phi^f\sepp \phi^f$ is a realization for $\phi\}$ then follows by the $\intd$-distributivity of these occurrences. In order to secure the proof-theoretic interchangeability result, we require each $\eta$ to be replaced to be either classical or $\NE$---we first essentially show that the required results hold for formulas that can be constructed using only such occurrences (that is, they hold for formulas of the form $\eta_1\medcircle_1 \eta_2\medcircle_2\ldots \medcircle_{n-1} \eta_n$ where each $\eta_i\in \ML$ or $\eta_i=\NE$, and each $\medcircle_i$ is either $\wedge$ or $\vee$), and later show that every formula is provably equivalent to a formula of this form.

Formally, we define realizations relative to $\intd\hspace{-1pt}$-distributive sequences, and we show our results hold with respect to such sequences. A \emph{$\intd\hspace{-1pt}$-distributive sequence} of $\phi$ is a sequence $\mathbf{a}=\langle \eta_1, \ldots, \eta_n\rangle$ of distinct $\intd\hspace{-1pt}$-distributive occurrences of formulas $\eta_i$ in $\phi$ such that each $\eta_i\in \ML$ or $\eta_i=\NE$ for all $1\leq i \leq n$.

Let $\mathbf{a}=\langle \eta_1, \ldots, \eta_n\rangle$ be a $\intd$-$\hspace{-1pt}$-distributive sequence of $\phi$ and let $\mathsf{X}\supseteq \mathsf{P}(\phi)$ be some finite set of propositional variables. A \emph{realizing function over $\mathbf{a}$ and $\mathsf{X}$} is a function $f:\{1,\ldots , n\}\to||\Top||_{\mathsf{X}}$ such that $f_i\vDash \eta_i$ for all $1\leq i \leq n$, where $||\Top||_{\mathsf{X}}=\{(M,s)\sepp M,s\models\Top\}$ is the class of all pointed state models over $\mathsf{X}$. For any $k\geq md(\phi)$, the \emph{$(k,f)$-realization of $\phi$}, written $\phi^{k,f}$ (or simply $\phi^f$), is the $\BSML$-formula
$$\phi[\theta^{k}_{f_1}/\eta_1,\ldots \theta^{k}_{f_n}/\eta_n],$$
where recall that each $\theta^k_{f_i}$ is the strong Hintikka formula for the state $f_i$. We write $\mathcal{F}^{\mathsf{X},\mathbf{a}}$ (or simply $\mathcal{F}^{\mathbf{a}}$) for the set of all realizing functions over $\mathbf{a}$ and $\mathsf{X}$. 

Most results in this section hold for all finite $\mathsf{X}\supseteq \mathsf{P}(\phi)$ and all $k\geq md(\phi)$ where $\phi$ is clear from the context; we usually omit mention of these conditions. As promised, each formula is equivalent to the global disjunction of all its realizations: 
 
\begin{lemma} \label{lemma:realization_equivalence}
Let $\mathbf{a}$ be a $\intd\hspace{-1pt}$-distributive sequence of $\phi$. Then $\phi \equiv \bigintd_{f\in \mathcal{F}^{\mathbf{a}}} \phi^{f}$.
\end{lemma}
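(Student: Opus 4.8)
The plan is to prove the equivalence in two conceptual moves—first rewriting each $\eta_i$ via its disjunctive normal form, and then pulling the resulting global disjunctions up to the top using distributivity—fused into a single induction on $\phi$. The semantic input comes from expressive completeness: since each $\eta_i$ is classical or $\NE$, it is a $\BSMLI$-formula, so by the disjunctive normal form following Theorem~\ref{theorem:characterization_BSMLI} we have $\eta_i \equiv \bigintd_{s\in||\eta_i||}\theta^{md(\eta_i)}_s$. I would first upgrade this to the common depth $k$: for any $k \geq md(\phi) \geq md(\eta_i)$ one still has $\eta_i \equiv \bigintd_{s\in ||\eta_i||}\theta^k_s$, because by Proposition~\ref{prop:characteristic_formulas_characterize_states}\ref{prop:characteristic_formulas_characterize_states_theta} a state $s'$ supports $\bigintd_{s\in||\eta_i||}\theta^k_s$ iff $s'\bisim_k s$ for some $s\vDash\eta_i$, and $||\eta_i||$ is invariant under $k$-bisimulation. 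This disjunction is finite up to equivalence, since there are only finitely many non-equivalent $\theta^k_s$ for the fixed $\mathsf{X}$.

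Next I would prove, by induction on $\phi$, the general distributive fact that whenever $\langle\eta_1,\dots,\eta_n\rangle$ are pairwise non-overlapping $\intd\hspace{-1pt}$-distributive occurrences with $\eta_i\equiv\bigintd_{j\in J_i}\sigma^i_j$, then $\phi \equiv \bigintd_{(j_1,\dots,j_n)\in\prod_i J_i}\phi[\sigma^1_{j_1}/\eta_1,\dots,\sigma^n_{j_n}/\eta_n]$. The base cases are when $\mathbf{a}$ is empty (trivial, a single term equal to $\phi$) and when the whole of $\phi$ is the sole occurrence (apply the given equivalence directly). In the cases $\phi=\psi_1\land\psi_2$ and $\phi=\psi_1\lor\psi_2$, the $\intd\hspace{-1pt}$-distributivity of the occurrences guarantees that each lies entirely within $\psi_1$ or within $\psi_2$; I split $\mathbf{a}$ accordingly, apply the induction hypothesis to each conjunct/disjunct, and then pull the two global disjunctions to the front using the distributivity laws $\chi\land(\psi\intd\chi')\equiv(\chi\land\psi)\intd(\chi\land\chi')$ and $\chi\lor(\psi\intd\chi')\equiv(\chi\lor\psi)\intd(\chi\lor\chi')$ from Section~\ref{section:preliminaries}. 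The cases $\phi=\lnot\psi$ and $\phi=\Di\psi$ (hence also $\Bo$) require no inductive step: no proper subformula occurrence inside them is $\intd\hspace{-1pt}$-distributive, so the only nontrivial possibility is that $\phi$ itself is the single occurrence, which is already a base case. Specializing $\sigma^i_s:=\theta^k_s$ with $J_i:=||\eta_i||$, and observing that a choice $(s_1,\dots,s_n)\in\prod_i||\eta_i||$ is exactly a realizing function $f\in\mathcal{F}^{\mathbf{a}}$ with $\phi[\theta^k_{s_1}/\eta_1,\dots,\theta^k_{s_n}/\eta_n]=\phi^f$, the desired equivalence $\phi\equiv\bigintd_{f\in\mathcal{F}^{\mathbf{a}}}\phi^f$ drops out.

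The main obstacle is conceptual rather than computational: justifying that the rewriting and the pulling-out are legitimate precisely at $\intd\hspace{-1pt}$-distributive positions. The paper stresses that replacement of equivalents fails in general (because of $\lnot$) and that the modalities do not distribute over $\intd$. The entire role of the $\intd\hspace{-1pt}$-distributivity condition—no occurrence under $\lnot$ or $\Di$—is to confine attention to $\land/\lor$-paths, where both replacement of equivalents and distribution over $\intd$ are sound; this is exactly what the $\land$- and $\lor$-cases exploit and what makes the $\lnot$- and $\Di$-cases degenerate. A secondary point is the well-definedness of the a priori infinite disjunction $\bigintd_{f\in\mathcal{F}^{\mathbf{a}}}\phi^f$ and of $\mathcal{F}^{\mathbf{a}}$ itself: this is secured by the finiteness of the $\theta^k_s$ up to equivalence, together with the fact that each $\eta_i$ is satisfiable (classical formulas by the empty-state property, and $\NE$ by any non-empty state), so that $\mathcal{F}^{\mathbf{a}}\neq\emptyset$ and the outermost global disjunction is never the empty (vacuous) one.
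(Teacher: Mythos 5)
Your proposal is correct and takes essentially the same route as the paper: the paper's own proof likewise invokes the normal form $\eta_i \equiv \bigintd_{s\in ||\eta_i||}\theta^k_s$ (via the proof of Theorem \ref{theorem:characterization_BSMLI}) and then appeals to the $\intd\hspace{-1pt}$-distributivity of the occurrences together with the distributivity of $\land$ and $\lor$ over $\intd$. Your explicit induction on $\phi$, the upgrade of each $\eta_i$ to the common depth $k$, and the finiteness/non-emptiness observations about $\mathcal{F}^{\mathbf{a}}$ are careful elaborations of steps the paper leaves implicit.
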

\begin{proof} By the proof of Theorem \ref{theorem:characterization_BSMLI}, $\eta_i\equiv \bigintd_{s\in || \eta_i||} \theta^k_{s}$ for each $i$. The result then follows since $\mathbf{a}$ is
$\intd\hspace{-1pt}$-distributive, and $\land$ and $\lor$ distribute over $\intd$.
\end{proof}

As explained above, we will also show that in the system for $\BSML$, every formula is proof-theoretically interchangeable with the global disjunction of its realizations. This is expressed formally in the following lemma.

\begin{lemma} \label{lemma:realization_provable_equivalence}
Let $\mathbf{a}$ be a $\intd\hspace{-1pt}$-distributive sequence of $\phi$. Then:
\begin{enumerate}
    \item[(i)] 
        \makeatletter\def\@currentlabel{(i)}\makeatother\label{lemma:realization_provable_equivalence_i} $\phi^{f}\vdash \phi$ for all $f\in \mathcal{F}^{\mathbf{a}}$ and
    \item[(ii)] 
        \makeatletter\def\@currentlabel{(ii)}\makeatother\label{lemma:realization_provable_equivalence_ii} if $\Phi,\phi^{f}\vdash \psi$ for all $f\in \mathcal{F}^{\mathbf{a}}$, then $\Phi,\phi\vdash \psi$.
\end{enumerate}
\end{lemma}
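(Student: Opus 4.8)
The plan is to treat both items as consequences of a single provable fact about realizing states, namely that $\theta^{k}_{f_i}\vdash\eta_i$ whenever $f_i\vDash\eta_i$. When $\eta_i\in\ML$ this follows from flatness together with Theorem~\ref{theorem:hintikka_bisimulation} and classical completeness (Proposition~\ref{prop:classical_completeness}): every $w\in f_i$ satisfies $\eta_i$, so $\chi^{k}_{w}\vdash\eta_i$, and then $\theta^{k}_{f_i}=\bigdis_{w\in f_i}(\chi^{k}_{w}\land\NE)\vdash\eta_i$ by $\dis\R{E}$ (legitimate since $\eta_i$ is classical, hence union closed and free of $\intd$). When $\eta_i=\NE$ it follows from $f_i\neq\emptyset$ and Lemma~\ref{lemma:provability_results_NE}. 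Item~\ref{lemma:realization_provable_equivalence_i} is then immediate: since each $\eta_i$ occurs $\intd\hspace{-1pt}$-distributively in $\phi$ it lies outside the scope of any bare $\lnot$, so the monotone form of Replacement (Lemma~\ref{lemma:provable_subformula_replacement}) applies, and rewriting each $\theta^{k}_{f_i}$ back to $\eta_i$ yields $\phi^{f}\vdash\phi$.

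The substance is item~\ref{lemma:realization_provable_equivalence_ii}, which must simulate inside $\BSML$ the global-disjunction elimination that carried the analogous step for $\BSMLO$ (Lemma~\ref{lemma:tensor_normal_form_normal_form_provable_equivalence}\ref{lemma:tensor_normal_form_normal_form_provable_equivalence_ii}). I would argue by induction on the length $n$ of $\mathbf{a}$, isolating a single-occurrence claim: if $[\eta]$ is a lone $\intd\hspace{-1pt}$-distributive occurrence in $\phi$ with $\eta\in\ML$ or $\eta=\NE$, and $\Phi,\phi[\theta^{k}_{s}/\eta]\vdash\psi$ for every state $s\vDash\eta$ (only finitely many up to $\bisim_k$, by Lemma~\ref{lemma:charf_provable_equivalence}), then $\Phi,\phi\vdash\psi$. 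Granting this claim, the inductive step fixes the choices $f_1,\dots,f_{n-1}$ at the first $n-1$ positions, applies the single-occurrence claim at position $n$ to the realization $\phi[\theta^{k}_{f_1}/\eta_1,\dots,\theta^{k}_{f_{n-1}}/\eta_{n-1}]$, and then discharges the remaining positions by the induction hypothesis.

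To prove the single-occurrence claim I would first expose classical sub-occurrences. By Proposition~\ref{prop:classical_completeness} we have $\eta\proveq\bigdis_{w_j\in W}\chi^{k}_{w_j}$ (with $W$ the satisfying world-types) when $\eta$ is classical, and $\NE\proveq\NE\land\bigdis_{j}\chi^{k}_{w_j}$ (using $\bigdis_j\chi^{k}_{w_j}\proveq\Top$) when $\eta=\NE$; Replacement converts $\phi$ into a provably equivalent $\phi^{\ast}$ carrying these $\intd\hspace{-1pt}$-distributive classical sub-occurrences $\chi^{k}_{w_j}$. Applying $\bot\NE\R{Trs}$ to each $\chi^{k}_{w_j}$ in turn produces a finite nested case analysis whose leaves are indexed by sets $S$ of world-types: at leaf $S$ the sub-occurrence $\chi^{k}_{w_j}$ is rewritten to $\chi^{k}_{w_j}\land\NE$ if $w_j\in S$ and to $\chi^{k}_{w_j}\land\bot$ otherwise. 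Using $\chi^{k}_{w_j}\land\bot\proveq\bot$, the absorption $\phi'\dis\bot\proveq\phi'$ ($\bot\R{E}$, $\dis\R{I}$), and $\theta^{k}_{s_S}\vdash\NE$ (Lemma~\ref{lemma:provability_results_NE}) to collapse the leading $\NE$ in the $\eta=\NE$ case, the occurrence at leaf $S$ becomes provably equal (via Replacement) to $\theta^{k}_{s_S}$, where $s_S=\{w_j\mid w_j\in S\}\vDash\eta$; thus each leaf is a realization $\phi[\theta^{k}_{s_S}/\eta]$ and derives $\psi$. The sole exception is the empty leaf $S=\emptyset$ in the $\eta=\NE$ case, where the occurrence collapses to $\Bot$; there any realization follows from $\Bot\vdash\theta^{k}_{s}$ (Lemma~\ref{lemma:Bot_E}) and Replacement. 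With every leaf yielding $\psi$, the stacked $\bot\NE\R{Trs}$ applications give $\Phi,\phi^{\ast}\vdash\psi$, hence $\Phi,\phi\vdash\psi$.

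The hard part will be the bookkeeping of the single-occurrence claim rather than any deep new idea: one must check that $\intd\hspace{-1pt}$-distributivity is preserved as each $\bot\NE\R{Trs}$ rewrite inserts only $\land$ above a $\chi^{k}_{w_j}$, and, crucially, handle $\eta=\NE$ through the auxiliary padding by $\Top\proveq\bigdis_j\chi^{k}_{w_j}$ rather than by distributing $\NE$ over $\dis$ (which genuinely fails, e.g.\ $\NE\land(p\dis\lnot p)\nvDash(\NE\land p)\dis(\NE\land\lnot p)$). These manoeuvres are precisely where the $\bot\NE\R{Trs}$ rule is made to do the work that $\intd\R{E}$ and $\OC\R{E}$ performed in the systems for $\BSMLI$ and $\BSMLO$.
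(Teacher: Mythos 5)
Your proposal is correct and takes essentially the same route as the paper: item (i) is proved exactly as in the paper (per-occurrence derivations $\theta^{k}_{f_i}\vdash\eta_i$ for the classical and $\NE$ cases, then Replacement), and your single-occurrence claim for item (ii) is precisely the paper's mechanism---insert a provably valid disjunction of Hintikka formulas by Replacement, run a nested case analysis with $\bot\NE\R{Trs}$, and identify the leaves with realizations, handling the $\Bot$-leaf in the $\NE$ case via Lemma \ref{lemma:Bot_E}. The only difference is bookkeeping: the paper factors (ii) into Lemma \ref{lemma:NE_E}, which splits over all states in $||\Top||_{\mathsf{X}}$ while keeping $\eta$ conjoined as $\theta^{k}_{s}\land\eta$, followed by Lemma \ref{lemma:theta_and_formula_implies_realization_for_formula}, which converts each $\theta^{k}_{s}\land\eta$ into a realizing $\theta^{k}_{r}$, whereas you restrict the padding disjunction to $\eta$'s satisfying world-types up front so that the leaves are realizations outright---an interchangeable reorganization of the same argument.
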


The proof of the above requires several technical lemmas. For item \ref{lemma:realization_provable_equivalence_i}, we need to show that Lemma \ref{lemma:provability_results_NE} is also derivable in the system for $\BSML$:

\begin{proof}[Proof of Lemma \ref{lemma:provability_results_NE} (in $\BSMLBF$)] We prove $\phi \dis (\psi \land \NE) \proveq (\phi \dis (\psi\land \NE))\land \NE $. Let $\chi:=\phi \dis (\psi \land \NE)$. Then $\chi[\chi \land \bot/\chi]=(\phi \dis (\psi \land \NE))\land \bot \vdash (\phi \land \bot )\lor (\psi \land \NE\land \bot)$ by Proposition \ref{prop:commutativity_associativity_distributivity}, and $(\phi \land \bot )\lor (\psi \land \NE\land \bot) \vdash (\phi \land \bot )\lor \Bot\vdash (\phi \dis (\psi\land \NE))\land \NE$ by $\Bot\R{Ctr}$. And $\chi[\chi \land \NE/\chi]=(\phi \dis (\psi\land \NE))\land \NE \vdash (\phi \dis (\psi\land \NE))\land \NE $, and so we have $\phi \dis (\psi \land \NE) \vdash (\phi \dis (\psi\land \NE))\land \NE $ by $\bot\NE\R{Trs}$.
\end{proof}

\begin{proof} [Proof of Lemma \ref{lemma:realization_provable_equivalence} \ref{lemma:realization_provable_equivalence_i} ]
     By Lemma \ref{lemma:provable_subformula_replacement}, it suffices to show $\theta^k_{f_i}\vdash \eta_i$ for all $1\leq i\leq n$. If $\eta_i=\NE$, then $f_i\neq \emptyset$ so $\theta^k_{f_i}$ is not $\bot$. We have $\theta^k_{f_i}=\bigdis_{w\in f_i}(\chi^k_w\land \NE)\vdash \NE$ by Lemma \ref{lemma:provability_results_NE}. If $\eta_i=\alpha\in \ML$, we clearly have that $\theta^k_{f_i}\vdash \chi^k_{f_i}$. Since $f_i\vDash\alpha$, we have $f_i\subseteq \llbracket \alpha\rrbracket$ by flatness, which implies $\chi_{f_i}^{k}\vDash \chi_{\llbracket \alpha\rrbracket}^k\equiv\alpha$. Therefore, $\chi^k_{f_i}\vdash \alpha$ by Proposition \ref{prop:classical_completeness}.
\end{proof}

To prove item \ref{lemma:realization_provable_equivalence_ii} of Lemma \ref{lemma:realization_provable_equivalence}, we first show that for a $\intd\hspace{-1pt}$-distributive sequence $\mathbf{a}=\langle \eta \rangle$ of length $1$, the entailment $\phi[\eta]\vDash \phi [\Top \land \eta/\eta]\vDash \phi [\bigintd_{s \in ||\Top||_{\mathsf{X}}}\theta^{\mathsf{X},k}_s \land \eta/\eta]$ can be simulated in our system in the sense of the following lemma.

\begin{lemma} \label{lemma:NE_E}
    Let $[\eta]$ be $\intd\hspace{-1pt}$-distributive in $\phi$. Let $k\in \mathbb{N}$ and let $\mathsf{X}\subseteq \mathsf{Prop}$ be finite. If $\Phi,\phi[\theta^{\mathsf{X},k}_s\land \eta/\eta]\vdash \psi$ for all $s\in ||\Top||_{\mathsf{X}}$, then $\Phi,\phi \vdash \psi$.
\end{lemma}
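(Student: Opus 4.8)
The plan is to simulate, within the $\intd$-free system for $\BSML$, the semantic equivalence $\eta\equiv\bigintdd_{s}(\theta^{\mathsf{X},k}_s\land\eta)$ that underlies the hypothesis, using the translation rule $\bot\NE\R{Trs}$ as a surrogate for $\intd$-elimination. Since there are only finitely many non-equivalent Hintikka formulas for worlds over $\mathsf{X}$, I would fix representatives $\chi_1:=\chi^{\mathsf{X},k}_{w_1},\dots,\chi_N:=\chi^{\mathsf{X},k}_{w_N}$ of all $k$-bisimilarity types of worlds over $\mathsf{X}$. Each nonempty $s\in||\Top||_{\mathsf{X}}$ is $k$-bisimilar to the state $s_I:=\{w_i\sepp i\in I\}$ for $I:=\{i\sepp \text{some world of }s\text{ has type }\chi_i\}$, so that, by the definition of $\theta^k_s$ and Lemma \ref{lemma:charf_provable_equivalence}, the formulas $\theta^k_s$ coincide up to $\proveq$ with the formulas $\theta^k_{s_I}=\bigdis_{i\in I}(\chi_i\land\NE)$, with $\theta^k_{s_\emptyset}=\bot$ covering the case $s=\emptyset$.

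First I would rewrite the occurrence $[\eta]$. Since $\bigdis_{i=1}^N\chi_i\equiv\Top$ is a classical validity, Proposition \ref{prop:classical_completeness} gives $\vdash\bigdis_{i=1}^N\chi_i$, hence $\eta\vdash\eta\land\bigdis_{i=1}^N\chi_i$ by $\land\R{I}$; as $[\eta]$ is $\intd\hspace{-1pt}$-distributive (so in particular not in the scope of any $\lnot$), Replacement (Lemma \ref{lemma:provable_subformula_replacement}) yields $\Phi,\phi\vdash\phi[\eta\land\bigdis_{i=1}^N\chi_i/\eta]$. I then process the disjuncts $\chi_1,\dots,\chi_N$ one at a time. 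At each stage, in the branch indexed by a set $J\subseteq\{1,\dots,j\}$ of types already kept, the current formula is $\phi[\eta\land(\bigdis_{i\in J}(\chi_i\land\NE)\dis\bigdis_{i=j+1}^N\chi_i)/\eta]$; applying $\bot\NE\R{Trs}$ to the (still $\intd\hspace{-1pt}$-distributive) slot $[\chi_{j+1}]$ splits it into a branch where $\chi_{j+1}$ becomes $\chi_{j+1}\land\NE$ (adding $j+1$ to $J$) and a branch where it becomes $\chi_{j+1}\land\bot$. In the latter, $\chi_{j+1}\land\bot\proveq\bot$ (by $\land\R{E}$ and ex falso via $\lnot\R{E}$) and $\psi'\dis\bot\proveq\psi'$ (by $\dis\R{I}$ and $\bot\R{E}$), so the disjunct drops out by Replacement, leaving the $J$-branch with one fewer remaining type.

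After all $N$ steps, the leaf indexed by $J\subseteq\{1,\dots,N\}$ carries the formula $\phi[\eta\land\bigdis_{i\in J}(\chi_i\land\NE)/\eta]=\phi[\eta\land\theta^k_{s_J}/\eta]$, which is provably equivalent, by commutativity of $\land$ and Replacement, to $\phi[\theta^k_{s_J}\land\eta/\eta]$; since $s_J\in||\Top||_{\mathsf{X}}$, the hypothesis gives $\Phi,\phi[\theta^k_{s_J}\land\eta/\eta]\vdash\psi$, so $\psi$ is derivable at every leaf. As the conclusion of each $\bot\NE\R{Trs}$ application is the common formula $\psi$, collapsing the nested case-split yields $\Phi,\phi\vdash\psi$. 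The main obstacle will be the bookkeeping of this nested induction: one must check that $\intd\hspace{-1pt}$-distributivity of the active slot is preserved under all the rewrites (which holds because only $\land$ and $\dis$ are ever introduced, never $\lnot$ or $\Di$), and that each leaf disjunction $\bigdis_{i\in J}(\chi_i\land\NE)$ is literally the strong Hintikka formula of a genuine pointed state model over $\mathsf{X}$, so that the hypothesis indeed applies at that leaf.
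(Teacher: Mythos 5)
Your proposal is correct and takes essentially the same route as the paper's own proof: both conjoin $\eta$ (via classical completeness and Replacement) with the provable tautology $\bigvee_{i}\chi^{k}_{w_i}$ ranging over representatives of all $k$-bisimilarity types over $\mathsf{X}$, then iterate $\bot\NE\R{Trs}$ to resolve each disjunct into its $\NE$/$\bot$ cases, identify each resulting leaf formula with $\phi[\theta^{k}_{s}\land\eta/\eta]$ for a state $s\in||\Top||_{\mathsf{X}}$ (living in the disjoint union of the representatives' models), and close every leaf using the hypothesis. The only difference is bookkeeping: you discard $\bot$-marked disjuncts branch by branch, whereas the paper keeps all marked conjuncts and collapses $\bigvee_{i}(\chi^{k}_{w_i}\land\tau_i)$ to $\theta^{k}_{s}$ only at the leaves---an immaterial variation.
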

\begin{proof}
Consider $\chi^k_{\llbracket \Top\rrbracket}:=\bigvee_{w\in \llbracket \Top\rrbracket_{\mathsf{X}}}\chi_w^k$, where $\llbracket \Top\rrbracket_{\mathsf{X}}=\{w\sepp w\vDash \Top\}$. Since there are only finitely many non-equivalent $k$-th Hintikka formulas $\chi_w^k$ over $\mathsf{X}$, we may assume w.l.o.g.  $\chi^k_{\llbracket \Top\rrbracket}=\chi^k_{w_1}\lor\ldots\lor \chi^k_{w_n}$, for some worlds $w_1,\dots,w_n$ from some  models $M_1,\dots,M_n$. Clearly, $\chi^k_{\llbracket \Top\rrbracket}\equiv\Top$, which implies $\vdash \chi^k_{\llbracket\Top\rrbracket}$ by Proposition \ref{prop:classical_completeness}. Thus, we have $\phi\vdash \phi[\chi^k_{\llbracket\Top\rrbracket}\land \eta/\eta]$ by Lemma \ref{lemma:provable_subformula_replacement}. 

Now, for any $\tau_1,\dots,\tau_n\in\{\NE,\bot\}$, consider the state $s=\{w_i\mid \tau_i=\NE\}$. By applying $\bot \R{E}$ and $\dis\R{I}$, we derive $\bigvee_{i=1}^n(\chi_{w_i}^k\land\tau_i) \proveq \bigdis_{w_i\in s}(\chi^k_{w_i}\land \NE )= \theta^k_s$. Thus, we have $\phi[\bigvee_{i=1}^n(\chi_{w_i}^k\land\tau_i)\land \eta/\eta]\proveq \phi[\theta_s^k\land \eta/\eta]$. Therefore, by  assumption we have
\[\Phi,\phi[\big((\chi^k_{w_1}\land \bot)\vee\ldots\vee(\chi^k_{w_{n}}\land \bot)\big)\wedge\eta/\eta]\vdash \psi\]
\[\text{ and }\Phi,\phi[\big((\chi^k_{w_1}\land \bot)\vee\ldots\vee(\chi^k_{w_{n}}\land \NE)\big)\wedge \eta/\eta]\vdash \psi,\]
from which $\Phi,\phi[\big((\chi^k_{w_1}\land \bot)\vee\ldots\vee(\chi^k_{w_{n-1}}\land \bot)\vee\chi^k_{w_n}\big)\land\eta/\eta]\vdash \psi$ follows by $\bot\NE\R{Trs}$. Repeating this argument, we finally get $\Phi,\phi[\chi^k_{\llbracket\Top\rrbracket}\land \eta/\eta]\vdash  \psi$, whence $\Phi,\phi\vdash  \psi$.
\end{proof}

Next, in \ref{lemma:theta_and_formula_implies_realization_for_formula_i} of the following lemma (\ref{lemma:theta_and_formula_implies_realization_for_formula_ii} is used later) we show that for a $\intd\hspace{-1pt}$-distributive sequence $\mathbf{a}=\langle \eta_1, \ldots, \eta_n\rangle$ of arbitrary length, we can simulate the entailment $$ \phi [\bigintd_{s \in ||\Top||_{\mathsf{X}}}\theta^{\mathsf{X},k}_s \land \eta_1/\eta_1,\ldots, \bigintd_{s \in ||\Top||_{\mathsf{X}}}\theta^{\mathsf{X},k}_s \land \eta_n/\eta_n]\vDash \bigintd_{f\in \FFF^{\mathsf{X},\mathbf{a}}}\phi^{k,f}.$$

\begin{lemma} \label{lemma:theta_and_formula_implies_realization_for_formula}
    Let $\mathbf{a}$ be a $\intd\hspace{-1pt}$-distributive sequence of $\phi$. Then:
    \begin{enumerate}
        \item [(i)] 
        \makeatletter\def\@currentlabel{(i)}\makeatother \label{lemma:theta_and_formula_implies_realization_for_formula_i} for any $s_1,\ldots ,s_n\in ||\Top||_{\mathsf{X}}$, there is some $f\in \FFF^{\mathbf{a}}$ such that $$\phi[\theta^{\mathsf{X},k}_{s_1}\land \eta_1/\eta_1,\ldots ,\theta^{\mathsf{X},k}_{s_n}\land \eta_n/\eta_n]\vdash \phi^{k,f};$$
        \item[(ii)] 
        \makeatletter\def\@currentlabel{(ii)}\makeatother\label{lemma:theta_and_formula_implies_realization_for_formula_ii} for any $f\in \FFF^{\mathbf{a}}$ there are some $s_1,\ldots , s_n\in ||\Top||_{\mathsf{X}}$ such that $$\phi[\theta^{\mathsf{X},k}_{s_1}\land \eta_1/\eta_1,\ldots ,\theta^{\mathsf{X},k}_{s_n}\land \eta_n/\eta_n]\vdash \phi^{k,f}.$$
    \end{enumerate}
\end{lemma}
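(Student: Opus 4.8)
The plan is to reduce both items, via the Replacement Lemma (Lemma \ref{lemma:provable_subformula_replacement}), to a single \emph{per-occurrence} derivability claim: that for each $i$ one can exhibit a state with $\theta^{k}_{s_i}\land \eta_i\vdash \theta^{k}_{f_i}$. Since each $\eta_i$ is an $\intd\hspace{-1pt}$-distributive occurrence in $\phi$, it lies outside the scope of every $\lnot$, so Lemma \ref{lemma:provable_subformula_replacement} applies to it; replacing the (pairwise non-overlapping) occurrences one at a time then upgrades the family of per-occurrence entailments $\theta^{k}_{s_i}\land\eta_i\vdash\theta^k_{f_i}$ to
$\phi[\ldots,\theta^{k}_{s_i}\land\eta_i/\eta_i,\ldots]\vdash\phi[\ldots,\theta^{k}_{f_i}/\eta_i,\ldots]=\phi^{k,f}$.
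Given this reduction, item \ref{lemma:theta_and_formula_implies_realization_for_formula_ii} is immediate: for a realizing $f$ put $s_i:=f_i\in||\Top||_{\mathsf{X}}$, so that $\theta^{k}_{f_i}\land\eta_i\vdash\theta^{k}_{f_i}$ by $\land\R{E}$, and Replacement finishes.

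Item \ref{lemma:theta_and_formula_implies_realization_for_formula_i} is where the work lies: given arbitrary $s_1,\ldots,s_n\in||\Top||_{\mathsf{X}}$, I would build a realizing $f$ occurrence by occurrence. If $\eta_i=\NE$, then when $s_i\neq\emptyset$ I set $f_i:=s_i$ (so $f_i\vDash\NE$ and $\theta^{k}_{s_i}\land\NE\vdash\theta^{k}_{s_i}$ by $\land\R{E}$), and when $s_i=\emptyset$ the antecedent $\theta^{k}_{s_i}\land\NE$ is $\bot\land\NE=\Bot$, so I take any nonempty $f_i$ and appeal to $\Bot\vdash\theta^{k}_{f_i}$ (Lemma \ref{lemma:Bot_E}). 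If $\eta_i=\alpha\in\ML$, I set $f_i:=\{w\in s_i\sepp w\vDash\alpha\}$, which satisfies $\alpha$ by flatness of classical formulas (Fact \ref{fact:NE_intd_closure_properties}), so $f$ is realizing. Writing $g_i:=s_i\setminus f_i$ and regrouping the defining disjunction of $\theta^{k}_{s_i}$ gives $\theta^{k}_{s_i}\proveq\theta^{k}_{f_i}\dis\theta^{k}_{g_i}$; distributing $\alpha$ over this $\dis$ (legitimate by Proposition \ref{prop:commutativity_associativity_distributivity}, as $\alpha$ contains no $\NE$) yields $\theta^{k}_{s_i}\land\alpha\vdash(\alpha\land\theta^{k}_{f_i})\dis(\alpha\land\theta^{k}_{g_i})$. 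For each $w\in g_i$ we have $w\nvDash\alpha$, hence $\chi^{k}_{w}\vDash\lnot\alpha$ (by bisimulation invariance, Theorem \ref{theorem:hintikka_bisimulation}, since $md(\lnot\alpha)\leq k$) and so $\chi^{k}_{w}\vdash\lnot\alpha$ by classical completeness (Proposition \ref{prop:classical_completeness}); therefore $\alpha\land(\chi^{k}_{w}\land\NE)\vdash\Bot$ via $\lnot\R{E}$, whence $\alpha\land\theta^{k}_{g_i}\vdash\Bot$ when $g_i\neq\emptyset$. A final $\dis\R{E}$ — whose side conditions hold because $\theta^{k}_{f_i}$ is $\intd$-free and the case hypotheses are discharged — collapses $(\alpha\land\theta^{k}_{f_i})\dis(\alpha\land\theta^{k}_{g_i})$ to $\theta^{k}_{f_i}$, using $\land\R{E}$ on the left case and $\Bot\vdash\theta^{k}_{f_i}$ (Lemma \ref{lemma:Bot_E}) on the right. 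The degenerate cases are handled directly: if $g_i=\emptyset$ then $f_i=s_i$ and the claim is just $\land\R{E}$, and if $f_i=\emptyset$ then $\theta^{k}_{f_i}=\bot$ while the antecedent already derives $\Bot$.

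The main obstacle is precisely this classical subcase of \ref{lemma:theta_and_formula_implies_realization_for_formula_i}: it must be carried out entirely inside the proof system, since we cannot invoke the completeness theorem we are building toward, so the semantic fact $\theta^{k}_{s_i}\land\alpha\vDash\theta^{k}_{f_i}$ has to be converted into an explicit derivation. The delicate points are (a) deriving $\alpha\land\theta^{k}_{g_i}\vdash\Bot$ syntactically, which is where classical completeness for the worlds' Hintikka formulas and $\lnot\R{E}$ enter, and (b) respecting the restrictions on the $\dis$-rules (the conclusion of $\dis\R{E}$ must be $\intd$-free and its open assumptions $\NE$-free), together with the bookkeeping required to keep the weak contradiction $\bot$ and the strong contradiction $\Bot$ distinct across the empty-state degenerate cases.
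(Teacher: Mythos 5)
Your proof is correct and follows essentially the same route as the paper's: both reduce each item via the Replacement Lemma to the per-occurrence claims $\theta^{k}_{s_i}\land\eta_i\vdash\theta^{k}_{f_i}$, with the identical constructions of $f_i$ in the $\NE$-case (taking $s_i$ itself, or any nonempty state via $\Bot$) and in the classical case (restricting $s_i$ to the $\alpha$-worlds, splitting $\theta^{k}_{s_i}$, and killing the complementary disjunct via classical completeness for Hintikka formulas and $\lnot\R{E}$). The only cosmetic difference is that you collapse the split with $\dis\R{E}$ and $\Bot\vdash\theta^{k}_{f_i}$, whereas the paper stays inside a single $\lor$-context using $\dis\R{Mon}$ and finishes with $\bot\R{E}$; both are legitimate.
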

\begin{proof}
\ref{lemma:theta_and_formula_implies_realization_for_formula_i} We first give the proof for the case when $\mathbf{a}=\langle \eta_1\rangle$ is a sequence of length $1$. Let $(M_1,s_1)\in ||\Top||_{\mathsf{X}}$. Case 1: $\eta_1= \NE$. If $s_1\neq \emptyset$, let $f_1:=(M_1,s_1)$. Since $s_1\vDash\NE$, we have $f\in \FFF^{\mathbf{a}}$. We derive $\phi[\theta^k_{s_1}\land \NE/\NE]\vdash \phi[\theta^k_{s_1}/\NE]=\phi^f$ by Lemma \ref{lemma:provable_subformula_replacement}. If $s_1= \emptyset$, pick any $(N,t)$ such that $t\neq \emptyset$ and let $f_1:=(N,t)$. Since $t\vDash\NE$, we have $f\in \FFF^{\mathbf{a}}$. Now, we derive $\theta^k_{s_1}\land \NE=\bot\land\NE= \Bot\vdash \theta_{t}^k$ by Lemma \ref{lemma:Bot_E}. Thus, $\phi[\theta^k_{s_1}\land \NE/\NE]\vdash \phi[\theta^k_{t}/\NE]=\phi^f$ follows from Lemma \ref{lemma:provable_subformula_replacement}. 
    
Case 2: $\eta_1 =\alpha\in \ML$. Let $r=\{w\in s_1\sepp w\vDash \alpha\}$. Define $f_1:=(M_1,r)$, which yields a realizing function over $\langle \eta_1\rangle$ (as $r\vDash \alpha$). We show that $\phi[\theta_{s_1}^k\wedge\alpha/\alpha]\vdash\phi^f$, i.e., $\phi[\theta_{s_1}^k\wedge\alpha/\alpha]\vdash\phi[\theta_{r}^k/\alpha]$. By Lemma \ref{lemma:provable_subformula_replacement}, it suffices to show $\theta_{s_1}^k\wedge\alpha\vdash\theta_r^k$. Now, by Proposition \ref{prop:commutativity_associativity_distributivity}, we derive that $$ \theta^k_{s_1}\land \alpha\vdash(\theta^k_{r}\lor \theta^k_{s_1\backslash r} )\land \alpha \vdash \theta^k_{r}\lor (\theta^k_{s_1\backslash r} \land \alpha )\vdash \theta^k_{r}\lor (\chi_{s_1\backslash r}^k\land \alpha).$$
Clearly, $\chi_{s_1\backslash r}^k\vDash\neg\alpha$ and thus $\chi_{s_1\backslash r}^k\vdash\neg\alpha$ by Proposition \ref{prop:classical_completeness}. Thus, we further derive $\theta^k_{r}\lor (\chi_{s_1\backslash r}^k\land \alpha)\vdash\theta^k_{r}\lor (\neg\alpha\land \alpha)\vdash \theta^k_{r}\lor\bot\vdash\theta_r^k$ by $\bot \R{E}$. We thus conclude $\theta_{s_1}^k\wedge\alpha\vdash\theta_r^k$.
      
Now let $\mathbf{a}=\langle \eta_1, \ldots, \eta_n\rangle$ with $n>1$. Let $\phi':= \phi[\theta^k_{s_2}\land \eta_2/\eta_2,\ldots ,\theta^k_{s_n}\land \eta_n/\eta_n]$ and $\mathbf{a}':=\langle \eta_1\rangle$. Applying what we just proved for $\phi'$ and $\mathbf{a'}$ we have $\phi'[\theta^k_{s_1}\land \eta_1/\eta_1]\vdash \phi'^g$ for some $g\in \FFF^{\mathbf{a'}}$. Repeating this argument $n$ times, we can find some $f\in \FFF^{\mathbf{a}}$ such that $\phi[\theta^k_{s_1}\land \eta_1/\eta_1,\ldots ,\theta^k_{s_n}\land \eta_n/\eta_n]\vdash \phi^f$.

\ref{lemma:theta_and_formula_implies_realization_for_formula_ii}  Define $(M_i,s_i):=f_i$.  Since $\theta_{s_i}^k\wedge \eta_i \vdash \theta_{s_i}^k$ for each $i$, we derive by Lemma \ref{lemma:provable_subformula_replacement} that $\phi[\theta^k_{s_1}\land \eta_1/\eta_1,\ldots ,\theta^k_{s_n}\land \eta_n/\eta_n]\vdash \phi[\theta^k_{s_1}/\eta_1,\ldots ,\theta^k_{s_n}/\eta_n]= \phi^f$.
\end{proof}

Putting together Lemmas \ref{lemma:NE_E} and \ref{lemma:theta_and_formula_implies_realization_for_formula} \ref{lemma:theta_and_formula_implies_realization_for_formula_i}, we can simulate $ \phi \vDash \bigintd_{f\in \FFF^{\mathbf{a}}}\phi^{f}$; that is, we can prove Lemma \ref{lemma:realization_provable_equivalence} \ref{lemma:realization_provable_equivalence_ii}.

\begin{proof} [Proof of Lemma \ref{lemma:realization_provable_equivalence} \ref{lemma:realization_provable_equivalence_ii}]
 Assume $\Phi,\phi^f\vdash \psi$ for all $f\in \mathcal{F}^\mathbf{a}$. By multiple applications of Lemma \ref{lemma:NE_E} it suffices to show $\Phi, \phi[\theta^k_{s_1}\land \eta_1/\eta_1,\ldots \theta^k_{s_n}\land \eta_n/\eta_n]\vdash \psi$ for all $s_1,\ldots,s_n\in ||\Top||_{\mathsf{X}}$. This follows by Lemma \ref{lemma:theta_and_formula_implies_realization_for_formula} \ref{lemma:theta_and_formula_implies_realization_for_formula_i} and our assumption. 
\end{proof}

We have now settled that relative to $\intd\hspace{-1pt}$-distributive sequences, each formula is proof-theoretically interchangeable with the global disjunction of all its realizations. For a formula $\phi$ that can be constructed solely using the occurrences in such a sequence $\mathbf{a}$, the realizations over $\mathbf{a}$ correspond with (and are, in fact, provably equivalent to) the disjuncts $\theta^k_s$ in the normal form $\bigintd_{s\in ||\phi||}\theta^k_s$ for $\phi$. We must now show that every formula is provably equivalent to some formula of this form. Formally, we say that a $\intd\hspace{-1pt}$-distributive sequence $\mathbf{a}=\langle \eta_1, \ldots, \eta_n\rangle$ of a formula is in addition a \emph{$\intd\hspace{-1pt}$-distributive partition} of $\phi$ if $\phi$ is of the form $[\eta_1]\medcircle_1 [\eta_2]\medcircle_2\ldots \medcircle_{n-1} [\eta_n]$ where each $\medcircle_i$ is either $\wedge$ or $\vee$. We show:

\begin{lemma} \label{lemma:partitions}
For all $\phi$ there is a $\phi'$ such that $\phi'$ has a partition and $\phi\proveq \phi'$.
\end{lemma}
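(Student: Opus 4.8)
The plan is to induct on $\phi$, which by Proposition~\ref{prop:negation_normal_form_provable_BSMLI} we may take to be in negation normal form, and to argue that the only genuine work lies in the modal cases. The atomic and propositional cases are immediate: $p$, $\lnot p$ and $\bot$ are classical and hence length-one partitions, $\NE$ is itself a partition, $\lnot\NE\proveq\bot$ by $\lnot\NE\R{E}$, and for $\phi=\psi\land\chi$ (the case $\phi=\psi\lor\chi$ being identical) the induction hypothesis yields partitions $\psi'\proveq\psi$ and $\chi'\proveq\chi$, so $\phi\proveq\psi'\land\chi'$ by Replacement (Lemma~\ref{lemma:provable_subformula_replacement}), and juxtaposing the two partitions through the new $\land$ exhibits a partition of $\psi'\land\chi'$. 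So it remains to treat $\phi=\Di\psi$ and $\phi=\Bo\psi$.

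For these I would first pass, via the induction hypothesis and Replacement, to $\Di\psi'$ (resp.\ $\Bo\psi'$) with $\psi'$ a partition, and then show that $\Di\psi'$ and $\Bo\psi'$ are provably equivalent to \emph{classical} formulas; since a classical formula is a length-one partition, this closes the induction. Let $\mathbf{a}=\langle\eta_1,\dots,\eta_n\rangle$ be the partition of $\psi'$ and consider the realizations $(\psi')^{f}$ for $f\in\mathcal{F}^{\mathbf{a}}$. As $(\psi')^{f}$ arises from $\psi'=\eta_1\medcircle_1\cdots\medcircle_{n-1}\eta_n$ by replacing each $\eta_i$ with a strong Hintikka formula $\theta^{k}_{f_i}$, it is a $\land/\lor$-combination of formulas $\theta^{k}_{s}$. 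Using $\theta^{k}_{s}\lor\theta^{k}_{t}\proveq\theta^{k}_{s\uplus t}$ (Proposition~\ref{prop:disjoint_union} and Lemma~\ref{lemma:charf_provable_equivalence}), $\theta^{k}_{s}\land\theta^{k}_{t}\proveq\theta^{k}_{s}$ when $s\bisim_k t$ and $\theta^{k}_{s}\land\theta^{k}_{t}\proveq\Bot$ otherwise (Lemmas~\ref{lemma:charf_provable_equivalence}, \ref{lemma:not_bisimilar_implies_charfs_incompatible}, and~\ref{lemma:Bot_E}), together with the absorption $\Bot\lor\theta^{k}_{s}\proveq\Bot$ (by $\Bot\R{Ctr}$), this combination collapses provably to a single $\theta^{k}_{r_f}$ or to $\Bot$. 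In the first case $\Di(\psi')^{f}\proveq\Di\theta^{k}_{r_f}\proveq\alpha_f$ for a classical $\alpha_f$ by Lemma~\ref{lemma:modal_charfs_provably_classical}\ref{lemma:modal_charfs_provably_classical_Di}; in the inconsistent case $\Di(\psi')^{f}\vdash\Di\Bot\vdash\bot$ (via $\Di\R{Mon}$, Lemma~\ref{lemma:Bot_E}, and classical completeness).

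I would then establish $\Di\psi'\proveq\alpha$ with $\alpha:=\bigvee_{f}\alpha_f$ (classical, the join over the consistent realizations). The inclusion $\alpha\vdash\Di\psi'$ is obtained by lifting: Lemma~\ref{lemma:realization_provable_equivalence}\ref{lemma:realization_provable_equivalence_i} gives $(\psi')^{f}\vdash\psi'$, whence $\Di(\psi')^{f}\vdash\Di\psi'$ by $\Di\R{Mon}$, so $\alpha_f\vdash\Di\psi'$ and $\alpha\vdash\Di\psi'$ by $\lor\R{E}$ (admissible since each $\alpha_f$ is $\NE$-free and $\Di\psi'$ is $\intd$-free). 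For the converse $\Di\psi'\vdash\alpha$ one cannot appeal to Lemma~\ref{lemma:realization_provable_equivalence}\ref{lemma:realization_provable_equivalence_ii} directly, because the occurrences $\eta_i$ sit inside $\Di$ and so are not $\intd\hspace{-1pt}$-distributive in $\Di\psi'$. Instead I would prove a diamond analogue of Lemma~\ref{lemma:NE_E}: if $[\eta]$ is $\intd\hspace{-1pt}$-distributive in $\chi$ and $\Di\chi[\theta^{k}_{s}\land\eta/\eta]\vdash\delta$ for every $s\in||\Top||_{\mathsf{X}}$, then $\Di\chi\vdash\delta$. This is a verbatim re-run of the proof of Lemma~\ref{lemma:NE_E} with each use of $\bot\NE\R{Trs}$ replaced by $\Di\bot\NE\R{Trs}$, whose side condition demands precisely the $\intd\hspace{-1pt}$-distributivity of $[\eta]$ in the matrix $\chi$. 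Applying this elimination successively to $\eta_1,\dots,\eta_n$ reduces $\Di\psi'\vdash\alpha$ to showing $\Di\psi'[\theta^{k}_{s_1}\land\eta_1/\eta_1,\dots,\theta^{k}_{s_n}\land\eta_n/\eta_n]\vdash\alpha$ for all $s_1,\dots,s_n$; and by the diamond-lift (through $\Di\R{Mon}$) of Lemma~\ref{lemma:theta_and_formula_implies_realization_for_formula}\ref{lemma:theta_and_formula_implies_realization_for_formula_i} the left-hand side derives $\Di(\psi')^{f}$ for some $f$, which entails $\alpha$ (either $\proveq\alpha_f$, or $\vdash\bot\vdash\alpha$ by classical ex falso when $f$ is inconsistent). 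The box case is identical, with $\Bo\R{Mon}$, $\Bo\bot\NE\R{Trs}$, Lemma~\ref{lemma:modal_charfs_provably_classical}\ref{lemma:modal_charfs_provably_classical_Bo}, and $\Bo\Bot\vdash\Di\bot\vdash\bot$ via $\Bo\R{Inst}$ in place of their diamond counterparts.

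The main obstacle is exactly this transfer of the realization machinery across a modality. Lemmas~\ref{lemma:NE_E} and~\ref{lemma:theta_and_formula_implies_realization_for_formula} are stated for top-level $\intd\hspace{-1pt}$-distributive occurrences, and the modal transfer rules $\Di\bot\NE\R{Trs}$ and $\Bo\bot\NE\R{Trs}$ are designed to license the same case analysis beneath a $\Di$ or $\Bo$; so once the diamond/box analogue of Lemma~\ref{lemma:NE_E} is in place the argument mirrors the non-modal development. The remaining care is bookkeeping: checking that the occurrences stay $\intd\hspace{-1pt}$-distributive in the matrix under the modality, and that the degenerate realizations (those collapsing to $\Bot$) are safely absorbed into $\bot$ using Lemma~\ref{lemma:Bot_E} and, for the box, $\Bo\R{Inst}$.
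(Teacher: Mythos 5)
Your proof is correct and is essentially the paper's own argument: the paper obtains Lemma \ref{lemma:partitions} as an immediate corollary of Lemma \ref{lemma:modals_provably_classical} (every $\Di\phi$ and $\Bo\phi$ is provably equivalent to a classical formula), and the proof of that lemma---realizations of a partitioned matrix, collapse of each realization to some $\theta^k_s$ or $\Bot$ via Lemma \ref{lemma:instatitions_provably_equivalent_to_charfs}, Lemma \ref{lemma:modal_charfs_provably_classical}, and the transfer rules $\Di\bot\NE\R{Trs}$, $\Bo\bot\NE\R{Trs}$---is precisely what your modal cases carry out, with your structural induction playing the role of the paper's induction on modal depth inside Lemma \ref{lemma:modals_provably_classical}. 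One small caveat: your ``diamond analogue of Lemma \ref{lemma:NE_E}'' is not quite a \emph{verbatim} re-run of that proof, since $\Di\bot\NE\R{Trs}$ \emph{introduces} a disjunction rather than eliminating two case derivations into a common conclusion (what it yields directly is the paper's Lemma \ref{lemma:Di_NE_E}, namely $\Di\phi\vdash\bigdis_{s}\Di\phi[\theta^k_s\land\eta/\eta]$); your elimination form then follows by $\dis\R{E}$, whose side conditions are indeed met in your application because the target $\alpha$ is classical (hence $\intd$-free) and the case derivations carry no undischarged $\NE$-assumptions.
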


Before we prove this lemma, let us confirm that realizations over $\intd\hspace{-1pt}$-distributive partitions are indeed provably equivalent to strong Hintikka formulas $\theta_s^k$. Hereafter, to simplify notation, we write $\theta^k_\mathbf{0}:=\Bot$ and view $\mathbf{0}$ as a null state such that $\mathbf{0}\vDash \Bot$.

\begin{lemma} \label{lemma:instatitions_provably_equivalent_to_charfs}
    Let $\mathbf{a}$ be a $\intd\hspace{-1pt}$-distributive partition of $\phi$. Then for all $f\in \mathcal{F}^{\mathsf{X},\mathbf{a}}$, we have that $\phi^{k,f}\proveq \theta^{\mathsf{X},k}_s$ for some $s\in ||\Top||_{\mathsf{X}}$ or $s=\mathbf{0}$.
\end{lemma}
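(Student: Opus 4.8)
The plan is to induct on the structure of $\phi$ regarded as a tree built from the leaves $\eta_1,\dots,\eta_n$ of the partition $\mathbf{a}$ by the connectives $\land$ and $\lor$. The observation driving the induction is that realization commutes with these connectives: if $\phi=\psi\medcircle\chi$ with $\medcircle\in\{\land,\lor\}$, then the occurrences listed in $\mathbf{a}$ split into a $\intd\hspace{-1pt}$-distributive partition of $\psi$ and one of $\chi$, and since replacement is carried out occurrence-wise, $\phi^{k,f}=\psi^{k,f}\medcircle\chi^{k,f}$ (with the realizing function restricted in the obvious way to each side). In the base case $\phi=\eta_i$ is a single leaf, so $\phi^{k,f}=\theta^{k}_{f_i}$; since $f_i\in||\Top||_{\mathsf{X}}$ by the definition of a realizing function, we may take $s=f_i$.

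For the inductive step, assume by the induction hypothesis that $\psi^{k,f}\proveq\theta^{k}_{s}$ and $\chi^{k,f}\proveq\theta^{k}_{t}$, with each of $s,t$ either a pointed state model in $||\Top||_{\mathsf{X}}$ or the null state $\mathbf{0}$. Consider first $\phi=\psi\lor\chi$ with $s,t\in||\Top||_{\mathsf{X}}$: then $\phi^{k,f}\proveq\theta^{k}_{s}\lor\theta^{k}_{t}\proveq\theta^{k}_{s\uplus t}$, where the second equivalence follows from Proposition \ref{prop:disjoint_union} and Lemma \ref{lemma:hintikka_provable_equivalence} exactly as in the disjunction case of the proof of Lemma \ref{lemma:normal_form_provable_equivalence_BSMLI}, and $s\uplus t\in||\Top||_{\mathsf{X}}$. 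Now consider $\phi=\psi\land\chi$ with $s,t\in||\Top||_{\mathsf{X}}$: if $s\bisim_k t$ then $\theta^{k}_{s}\proveq\theta^{k}_{t}$ by Lemma \ref{lemma:charf_provable_equivalence}, so $\phi^{k,f}\proveq\theta^{k}_{s}\land\theta^{k}_{t}\proveq\theta^{k}_{s}$; and if $s\nbisim_k t$ then $\theta^{k}_{s},\theta^{k}_{t}\vdash\Bot$ by Lemma \ref{lemma:not_bisimilar_implies_charfs_incompatible}, while $\Bot\vdash\theta^{k}_{s}\land\theta^{k}_{t}$ by Lemma \ref{lemma:Bot_E}, whence $\phi^{k,f}\proveq\Bot=\theta^{k}_{\mathbf{0}}$.

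It remains to absorb the cases in which $s$ or $t$ equals $\mathbf{0}$, i.e.\ in which one subformula has already collapsed to $\theta^{k}_{\mathbf{0}}=\Bot$. Here the strong contradiction swallows both connectives: $\Bot\lor\xi\proveq\Bot$ (forward by $\Bot\R{Ctr}$, backward by Lemma \ref{lemma:Bot_E}) and $\Bot\land\xi\proveq\Bot$ (by $\land\R{E}$ and Lemma \ref{lemma:Bot_E}), so in each such case $\phi^{k,f}\proveq\Bot=\theta^{k}_{\mathbf{0}}$ irrespective of the connective, completing the induction. The argument is largely bookkeeping over prior results; the one point requiring care is precisely this uniform treatment of the null-state case $\theta^{k}_{\mathbf{0}}=\Bot$, which acts as an absorbing element for $\land$ and $\lor$ rather than as a genuine state, so that a disjunction or conjunction involving it collapses to $\Bot$ instead of merging states via $\uplus$.
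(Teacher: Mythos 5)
Your proof is correct and takes essentially the same route as the paper's: the paper likewise reduces the claim to showing that $\theta^{k}_{s}\land\theta^{k}_{t}$ and $\theta^{k}_{s}\lor\theta^{k}_{t}$ are each provably equivalent to some $\theta^{k}_{u}$ (null or non-null), with exactly your case analysis --- a null state absorbs both connectives via Lemma \ref{lemma:Bot_E} and $\Bot\R{Ctr}$, disjunction of non-null states merges via disjoint union, and conjunction splits on $s\bisim_k t$ (Lemma \ref{lemma:charf_provable_equivalence}) versus $s\nbisim_k t$ (Lemma \ref{lemma:not_bisimilar_implies_charfs_incompatible}). The only difference is presentational: you make the structural induction over the partition explicit, whereas the paper leaves it implicit behind ``it suffices to show.''
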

\begin{proof}
It suffices to show that for all  states $s$ and $t$ (including the null state), $\theta^k_s\land \theta^k_t\proveq \theta^k_u$ and $\theta^k_s\lor \theta^k_t\proveq \theta^k_v$ for some (null or non-null) states $u$ and $v$. If one of $s$ and $t$ is $\mathbf{0}$, then $\theta^k_s\land \theta^k_t\proveq \theta^k_{\mathbf{0}}$ and $\theta^k_s\lor\theta^k_t\proveq \theta^k_{\mathbf{0}}$ by Lemma \ref{lemma:Bot_E} and $\Bot \R{Ctr}$. If neither state is $\mathbf{0}$, then $\theta^k_s\lor \theta^k_t \proveq \theta^k_{s\uplus t}$. For $\theta^k_s\land \theta^k_t$, if $s\nbisim_k t$, then $\theta^k_s\land \theta^k_t\proveq \theta^k_{\mathbf{0}}$ by Lemmas \ref{lemma:not_bisimilar_implies_charfs_incompatible} and \ref{lemma:Bot_E}; if $s\bisim_k t$, then $\theta^k_s\proveq \theta^k_t$ by Lemma \ref{lemma:charf_provable_equivalence}, so clearly $\theta^k_s\land\theta^k_t\proveq \theta^k_{s}$.
\end{proof}

We now turn to Lemma \ref{lemma:partitions}. This result is, as it turns out, an easy corollary of the fact that all formulas of the form $\Di \phi$ or $\Bo \phi$ are provably equivalent to classical formulas. To establish this fact, we first recall that in Lemma \ref{lemma:modal_charfs_provably_classical} in Section \ref{section:BSMLI}, we proved that formulas of the form $\Di\theta^k_s$ or $\Bo\theta^k_s$ are provably equivalent to classical formulas. Lemma \ref{lemma:modal_charfs_provably_classical} depends on Lemma \ref{lemma:fc}, which in turns depends on Lemmas \ref{lemma:provability_results_NE} and \ref{lemma:di_NE_I}, which were given $\BSMLI$-specific derivations in Section \ref{section:BSMLI}. We have already shown Lemma \ref{lemma:provability_results_NE} for $\BSML$, and directly below we show Lemma \ref{lemma:di_NE_I}; we may then also make use of Lemmas $\ref{lemma:fc}$ and \ref{lemma:modal_charfs_provably_classical} in $\BSML$.

\begin{proof}[Proof of Lemma \ref{lemma:di_NE_I} (in $\BSMLBF$)]
By $\Di\bot\NE\R{Trs}$, $\Di\phi\vdash \Di(\phi\land \NE)\lor \Di(\phi \land \bot)$. We have $\Di(\phi\land \NE)\lor \Di(\phi \land \bot)\vdash \Di(\phi \land \NE)\lor \Di\bot\vdash \Di(\phi \land \NE)\lor \bot$ by Proposition \ref{prop:classical_completeness}, and finally $\Di(\phi \land \NE)\lor \bot\vdash \Di(\phi \land \NE)$ by $\bot \R{E}$.
\end{proof}

We also need the following modal analogue of Lemma \ref{lemma:NE_E}:

\begin{lemma} \label{lemma:Di_NE_E}
    Let $[\eta]$ be $\intd\hspace{-1pt}$-distributive in $\phi$. For any $k\in \mathbb{N}$ and any finite $\mathsf{X}\subseteq\mathsf{Prop}$:
    \begin{align*}
    &\Di\phi\vdash \bigdis_{s\in ||\Top||_{\mathsf{X}}} \Di\phi[\theta^{\mathsf{X},k}_s\land \eta/\eta]&&\text{ and }&&\Bo \phi\vdash \bigdis_{s\in ||\Top||_{\mathsf{X}}} \Bo\phi[\theta^{\mathsf{X},k}_s\land \eta/\eta].
    \end{align*}
\end{lemma}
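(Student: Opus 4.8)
The plan is to mimic, in the forward (introduction) direction, the proof of the propositional analogue Lemma~\ref{lemma:NE_E}, using the modal translation rules $\Di\bot\NE\R{Trs}$ and $\Bo\bot\NE\R{Trs}$ in place of $\bot\NE\R{Trs}$. I would spell out the $\Di$-case in full; the $\Bo$-case is verbatim the same, with $\Bo\bot\NE\R{Trs}$ in place of $\Di\bot\NE\R{Trs}$ throughout.

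First I would fix, exactly as in the proof of Lemma~\ref{lemma:NE_E}, a finite disjunction $\chi^k_{\llbracket\Top\rrbracket}=\chi^k_{w_1}\dis\cdots\dis\chi^k_{w_n}$ of representatives of all $k$-bisimulation types of worlds over $\mathsf{X}$ (well-defined since there are only finitely many non-equivalent Hintikka formulas over $\mathsf{X}$). Since $\chi^k_{\llbracket\Top\rrbracket}\equiv\Top$ we have $\vdash\chi^k_{\llbracket\Top\rrbracket}$ by Proposition~\ref{prop:classical_completeness}, hence $\eta\proveq\chi^k_{\llbracket\Top\rrbracket}\land\eta$ (by $\land\R{I}$ and $\land\R{E}$, irrespective of whether $\eta=\NE$). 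Because $[\eta]$ is $\intd\hspace{-1pt}$-distributive it is not in the scope of any $\lnot$, and the outer $\Di$ adds no $\lnot$, so Replacement (Lemma~\ref{lemma:provable_subformula_replacement}) applied through the diamond yields $\Di\phi\proveq\Di\phi[\chi^k_{\llbracket\Top\rrbracket}\land\eta/\eta]$.

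Next I would peel off the disjuncts $\chi^k_{w_i}$ one at a time with $\Di\bot\NE\R{Trs}$. Each occurrence $[\chi^k_{w_i}]$ sits at the former $\eta$-position inside a purely $\land/\dis$-context, hence remains $\intd\hspace{-1pt}$-distributive in the formula under the diamond, so the side condition of $\Di\bot\NE\R{Trs}$ is met; applying the rule to $[\chi^k_{w_i}]$ replaces it by $\chi^k_{w_i}\land\NE$ in one disjunct and by $\chi^k_{w_i}\land\bot$ in the other. Iterating over $i=1,\dots,n$ (operating inside disjuncts via $\dis\R{Mon}$ and $\R{Com}\dis$, just as in the $\Di\psi$-case of the proof of Lemma~\ref{lemma:normal_form_provable_equivalence_BSMLO}) gives
\[\Di\phi[\chi^k_{\llbracket\Top\rrbracket}\land\eta/\eta]\vdash\bigdis_{\tau_1,\dots,\tau_n\in\{\NE,\bot\}}\Di\phi\Big[\textstyle\bigdis_{i=1}^n(\chi^k_{w_i}\land\tau_i)\land\eta/\eta\Big].\]
Finally, for each assignment $\langle\tau_1,\dots,\tau_n\rangle$ I set $s=\{w_i\mid\tau_i=\NE\}$; discarding the $\bot$-conjuncts via $\bot\R{E}$ and $\dis\R{I}$ gives $\bigdis_{i=1}^n(\chi^k_{w_i}\land\tau_i)\proveq\theta^k_s$ (precisely the computation used in Lemma~\ref{lemma:NE_E}), whence $\Di\phi[\bigdis_i(\chi^k_{w_i}\land\tau_i)\land\eta/\eta]\proveq\Di\phi[\theta^k_s\land\eta/\eta]$ by Replacement again. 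As $\langle\tau_1,\dots,\tau_n\rangle$ runs over all assignments, $s$ runs over all subsets of $\{w_1,\dots,w_n\}$ (the case $s=\emptyset=\theta^k_\emptyset=\bot$ arising when all $\tau_i=\bot$), and by Lemma~\ref{lemma:charf_provable_equivalence} together with the finiteness of non-equivalent $\theta^k_s$ these states form a complete set of representatives for $\{\theta^k_s\mid s\in||\Top||_{\mathsf{X}}\}$, since every state over $\mathsf{X}$ is $k$-bisimilar to such a subset. Choosing these representatives for the (finite, well-defined) disjunction $\bigdis_{s\in||\Top||_{\mathsf{X}}}$, the right-hand side is literally $\bigdis_{s\in||\Top||_{\mathsf{X}}}\Di\phi[\theta^k_s\land\eta/\eta]$, which is the claim.

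I expect the main obstacle to be purely bookkeeping rather than conceptual: checking that the side condition of $\Di\bot\NE\R{Trs}$ (that the distinguished occurrence be $\intd\hspace{-1pt}$-distributive) survives each of the $n$ peeling steps — which it does, since splitting off one classical disjunct never moves the remaining ones under a $\lnot$ or $\Di$ — and matching the finite disjunction produced over subsets of the representatives $\{w_1,\dots,w_n\}$ with the target disjunction indexed by $||\Top||_{\mathsf{X}}$, a step that rests on Lemma~\ref{lemma:charf_provable_equivalence} and the standard finiteness-of-Hintikka-formulas convention.
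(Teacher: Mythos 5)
Your proposal is correct and follows essentially the same route as the paper's own proof: replace $\eta$ by $\chi^k_{\llbracket\Top\rrbracket}\land\eta$ via Replacement, peel off the finitely many Hintikka disjuncts $\chi^k_{w_i}$ by repeated applications of $\Di\bot\NE\R{Trs}$ (resp. $\Bo\bot\NE\R{Trs}$), collapse each $\NE/\bot$-assignment to $\theta^k_s$ for $s=\{w_i\mid\tau_i=\NE\}$ using $\bot\R{E}$ and $\dis\R{I}$, and identify the resulting disjunction over subsets of $\{w_1,\dots,w_n\}$ with $\bigdis_{s\in||\Top||_{\mathsf{X}}}$ modulo $k$-bisimulation. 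The only cosmetic difference is that the paper phrases the last step as a derivation $\bigdis_{s\subseteq\{w_1,\ldots,w_n\}}\Di\phi[\theta_s^k\wedge\eta/\eta]\vdash\bigdis_{s\in ||\Top||_{\mathsf{X}}}\Di\phi[\theta^k_s\land\eta/\eta]$ rather than as a choice of representatives, which amounts to the same appeal to Lemma \ref{lemma:charf_provable_equivalence} and finiteness.
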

\begin{proof}
We prove the result for $\Di \phi$. Consider $\chi^k_{\llbracket\Top\rrbracket}=\chi_{w_1}^k\vee\dots\vee\chi_{w_n}^k$, where $w_1,\dots,w_n$ are some worlds from some  models $M_1,\dots,M_n$. Since $\vdash \chi^k_{\llbracket\Top\rrbracket}$, we derive $\Di\phi\vdash \Di\phi[\chi^k_{\llbracket\Top\rrbracket}\land \eta/\eta]$ by Lemma \ref{lemma:provable_subformula_replacement}. Put $\phi':=\phi[\chi^k_{\llbracket\Top\rrbracket}\land \eta/\eta]$. By applying $\Di\bot\NE\R{Trs}$ repeatedly, we derive 
    \begin{align*}
        &\Di\phi'&&\vdash &&\Di\phi'[\chi^k_{w_1}\land \NE/\chi^k_{w_1}]\lor \Di\phi'[\chi^k_{w_1}\land \bot/\chi^k_{w_1}]\\
        &&&\vdash &&\Di\phi'[\chi^k_{w_1}\land \NE/\chi^k_{w_1}, \chi^k_{w_2}\land \NE/\chi^k_{w_2}]\lor \Di\phi'[\chi^k_{w_1}\land \NE/\chi^k_{w_1}, \chi^k_{w_2}\land \bot/\chi^k_{w_2}]\lor\\
        &&&&& \Di\phi'[\chi^k_{w_1}\land \bot/\chi^k_{w_1}, \chi^k_{w_2}\land \NE/\chi^k_{w_2}]\lor \Di\phi'[\chi^k_{w_1}\land \bot/\chi^k_{w_1}, \chi^k_{w_2}\land \bot/\chi^k_{w_2}]\\
                &&&\dots &&\\
        &&&\vdash &&\bigdis_{\tau_1,\dots,\tau_n\in\{\NE,\bot\}} \Di\phi'[\chi_{w_1}^k\land\tau_1/\chi_{w_1}^k,\dots,\chi_{w_n}^k\land\tau_n/\chi_{w_n}^k]\\
   &&&\vdash && \bigdis_{\tau_1,\dots,\tau_n\in\{\NE,\bot\}} \Di\phi'[(\chi_{w_1}^k\land\tau_1)\vee\dots\vee(\chi_{w_n}^k\land\tau_n)/\chi_{\llbracket\Top\rrbracket}^k]\\
      &&&\vdash && \bigdis_{\tau_1,\dots,\tau_n\in\{\NE,\bot\}} \Di\phi[\bigvee_{i=1}^n(\chi_{w_i}^k\land\tau_i)\wedge\eta/\eta]
    \end{align*}
 Now, consider each disjunct of the above formula, with some arbitrary fixed $\tau_1,\dots,\tau_n\in\{\NE,\bot\}$.  Let $s=\{w_i\mid \tau_i=\NE\}\subseteq\{w_1,\dots,w_n\}$. By applying $\bot \R{E}$ and $\dis\R{I}$, we derive $\bigvee_{i=1}^n(\chi_{w_i}^k\land\tau_i) \proveq \bigdis_{w_i\in s}(\chi^k_{w_i}\land \NE )= \theta^k_s$. It then follows by Lemma \ref{lemma:provable_subformula_replacement} that $\Di\phi[\bigvee_{i=1}^n(\chi_{w_i}^k\land\tau_i)\wedge\eta/\eta]\vdash\Di\phi[\theta_s^k\wedge\eta/\eta]$. We then clearly also have $\bigdis_{\tau_1,\dots,\tau_n\in\{\NE,\bot\}} \Di\phi[\bigvee_{i=1}^n(\chi_{w_i}^k\land\tau_i)\wedge\eta/\eta]\vdash \bigdis_{s\subseteq \{w_1,\ldots,w_n\}}\Di\phi[\theta_s^k\wedge\eta/\eta]$. Observe that $||\Top||_{\mathsf{X}}$ is (modulo $k$-bisimulation) $\{(M_1\uplus\dots\uplus M_n,s)\sepp s\subseteq \{w_1,\ldots, w_n\}\}$.
  Thus, we have $\bigdis_{s\subseteq \{w_1,\ldots,w_n\}}\Di\phi[\theta_s^k\wedge\eta/\eta]\vdash\bigdis_{s\in ||\Top||_{\mathsf{X}}} \Di\phi[\theta^k_s\land \eta/\eta].$ Putting all these together, we finally obtain $\Di\phi\vdash\bigdis_{s\in ||\Top||_{\mathsf{X}}} \Di\phi[\theta^k_s\land \eta/\eta]$.
 
The $\Bo\phi$-result is proved analogously, using $\Bo\bot\NE\R{Trs}$ in place of $\Di\bot\NE\R{Trs}$.
\end{proof}

We can now show that formulas of the form $\Di \phi$ or $\Bo \phi$ are provably equivalent to classical formulas, and then derive the partition lemma as a corollary.

\begin{lemma} \label{lemma:modals_provably_classical}
      $\Di\phi$ and $\Bo\phi$ are provably equivalent to classical formulas.
\end{lemma}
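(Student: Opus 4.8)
The plan is to prove both halves simultaneously by induction on the modal depth $md(\phi)$, reducing at each step to the case where the propositional ``skeleton'' of $\phi$ lying above the modalities is a $\{\land,\lor\}$-combination of classical formulas and copies of $\NE$. Note that a naive structural induction on $\phi$ does \emph{not} work, since $\Di(\psi\land\chi)$ is not built from $\Di\psi$ and $\Di\chi$; the modal-depth induction instead lets me dispose of all the modal material below the top level by the induction hypothesis before handling the remaining propositional structure with the realization machinery.

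So I would fix $\phi$ in negation normal form (Proposition \ref{prop:negation_normal_form_provable_BSMLI}) and set $k=md(\phi)$. Let $\Di\psi_1,\dots,\Di\psi_a,\Bo\chi_1,\dots,\Bo\chi_b$ be the maximal modal subformulas of $\phi$, i.e.\ those lying under no further modality; each inner formula has modal depth $\le k-1$, so by the induction hypothesis $\Di\psi_i\proveq\alpha_i$ and $\Bo\chi_j\proveq\beta_j$ for suitable classical $\alpha_i,\beta_j$. These maximal modal subformulas are $\intd\hspace{-1pt}$-distributive in $\phi$ (they lie under no modality, and in negation normal form under no $\lnot$), so Replacement (Lemma \ref{lemma:provable_subformula_replacement}) yields $\phi\proveq\phi''$, where $\phi''$ is obtained by substituting the $\alpha_i,\beta_j$ and by rewriting every top-level $\lnot\NE$ to $\bot$ (using $\lnot\NE\proveq\bot$). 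Now $\phi''$ is a $\{\land,\lor\}$-combination whose leaves are all classical or $\NE$, hence $\phi''$ carries a $\intd\hspace{-1pt}$-distributive partition $\mathbf{a}$. Since $\Di\phi\proveq\Di\phi''$ and $\Bo\phi\proveq\Bo\phi''$ by Replacement, it suffices to reduce $\Di\phi''$ and $\Bo\phi''$ to classical formulas.

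For the core step I would push the modality across the partition using the modal elimination Lemma \ref{lemma:Di_NE_E}: applying it once for each entry of $\mathbf{a}$ and then collapsing the resulting substitutions to realizations via Lemma \ref{lemma:theta_and_formula_implies_realization_for_formula}\ref{lemma:theta_and_formula_implies_realization_for_formula_i} together with $\Di\R{Mon}$ gives $\Di\phi''\vdash\bigvee_{f\in\FFF^{\mathbf{a}}}\Di(\phi'')^{f}$, while the reverse entailment follows from $(\phi'')^{f}\vdash\phi''$ (Lemma \ref{lemma:realization_provable_equivalence}\ref{lemma:realization_provable_equivalence_i}), $\Di\R{Mon}$ and $\lor\R{E}$; hence $\Di\phi''\proveq\bigvee_{f}\Di(\phi'')^{f}$. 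Because $\mathbf{a}$ is a partition, each realization satisfies $(\phi'')^{f}\proveq\theta^k_{s_f}$ for a genuine state $s_f$ or the null state (Lemma \ref{lemma:instatitions_provably_equivalent_to_charfs}); for genuine $s_f$, Lemma \ref{lemma:modal_charfs_provably_classical}\ref{lemma:modal_charfs_provably_classical_Di} renders $\Di\theta^k_{s_f}$ provably equal to a classical formula, so $\Di\phi''$ becomes a finite disjunction of classical formulas, i.e.\ classical. The argument for $\Bo\phi''$ is identical, using the $\Bo$-clause of Lemma \ref{lemma:Di_NE_E} and Lemma \ref{lemma:modal_charfs_provably_classical}\ref{lemma:modal_charfs_provably_classical_Bo}. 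Finally $\Di\phi\proveq\Di\phi''\proveq(\text{classical})$, and likewise for $\Bo$.

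The main obstacle is the degenerate null-state realizations, which contribute disjuncts $\Di\theta^k_{\mathbf{0}}=\Di\Bot$ (resp.\ $\Bo\Bot$) that Lemma \ref{lemma:modal_charfs_provably_classical} does not cover; I must show separately that these are provably classical, in fact $\Di\Bot\proveq\bot$ and $\Bo\Bot\proveq\bot$. The directions $\Di\Bot\vdash\bot$ and $\Bo\Bot\vdash\bot$ are routine (via $\Bot\vdash\bot$ and $\Di\R{Mon}$, and via $\Bo\R{Inst}$ applied to $\Bo\Bot=\Bo(\bot\land\NE)$, together with $\Di\bot\proveq\bot$). The converse $\bot\vdash\Di\Bot$ is the real difficulty: it is valid only because the empty state supports every diamond formula, and it cannot be obtained by monotonicity from $\bot\proveq\Di\bot$. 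I would derive it by applying $\Di\bot\NE\R{Trs}$ to $\Di\bot$ at the occurrence $[p]$ in $\bot=p\land\lnot p$, which after rearranging the substituted conjunctions yields $\Di\bot\vdash\Di\Bot\lor\Di\bot$; since $\Di\bot\proveq\bot$, peeling off the second disjunct with $\bot\R{E}$ gives $\bot\proveq\Di\bot\vdash\Di\Bot$. For $\bot\vdash\Bo\Bot$ the same $\bot\R{E}$ trick fails (as $\Bo\bot\not\proveq\bot$), so instead I would feed the now-available $\Di\Bot$ and the classical $\Bo\bot$ into $\Bo\Di\R{Join}$ to obtain $\bot\vdash\Bo(\bot\lor\Bot)\proveq\Bo\Bot$, the final equivalence holding by Replacement since $\bot\lor\Bot\proveq\Bot$. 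With these two facts in hand the core argument applies uniformly to all realizations, genuine or null.
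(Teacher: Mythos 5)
Your proof is correct and is essentially the paper's own argument: the same reduction to a formula carrying a $\intd\hspace{-1pt}$-distributive partition by induction on modal depth plus Replacement, the same use of Lemmas \ref{lemma:Di_NE_E}, \ref{lemma:theta_and_formula_implies_realization_for_formula}, \ref{lemma:realization_provable_equivalence}, and \ref{lemma:instatitions_provably_equivalent_to_charfs} to turn $\Di\phi''$ into a disjunction of formulas $\Di\theta^k_s$, and the same appeal to Lemma \ref{lemma:modal_charfs_provably_classical} with separate treatment of the null state. Two minor remarks: the paper additionally cites Lemma \ref{lemma:theta_and_formula_implies_realization_for_formula}\ref{lemma:theta_and_formula_implies_realization_for_formula_ii}, which is what licenses the disjunction over \emph{all} of $\FFF^{\mathbf{a}}$ (from \ref{lemma:theta_and_formula_implies_realization_for_formula_i} alone you only reach a sub-disjunction, since $\dis\R{I}$ cannot introduce $\NE$-containing disjuncts---harmless here, as a sub-disjunction of provably classical disjuncts already yields your conclusion); and your derivation of $\bot\vdash\Di\Bot$ via $\Di\bot\NE\R{Trs}$ is exactly the $\phi=\bot$ instance of Lemma \ref{lemma:di_NE_I}, which the paper has already established in the $\BSML$ system and simply invokes, giving its terser $\Di\Bot\proveq\Di\bot\proveq\bot$.
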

\begin{proof}
We prove the two results simultaneously but only give the details for $\Di\phi$; the details for $\Bo\phi$ are similar. We first prove that the result holds in case $\phi$ has a partition $\mathbf{a}$. Let $k\geq md(\phi)$, and let $\mathsf{X}\supseteq \mathsf{P}(\phi)$ be finite.  We first show that
    $$\Di\phi\proveq \bigdis_{f\in \FFF^{\mathsf{X},\mathbf{a}}} \Di \phi^{k,f}.$$
For the direction $\dashv$, for a given $f\in \mathcal{F}^\mathbf{a}$, we have $\phi^f\vdash \phi$ by Lemma \ref{lemma:realization_provable_equivalence} \ref{lemma:realization_provable_equivalence_i}, which implies  $\Di\phi^f\vdash \Di\phi$; therefore $\bigdis_{f\in \FFF^\mathbf{a}} \Di \phi^f\vdash \Di\phi$ follows by $\dis \R{E}$. For the converse direction $\vdash$, by repeated applications of Lemma \ref{lemma:Di_NE_E}, we have
$$\Di\phi\vdash \bigdis_{s_1\in ||\Top||_{\mathsf{X}}}\ldots \bigdis_{s_n\in ||\Top||_{\mathsf{X}}} \Di\phi[\theta^k_{s_1}\land \eta_1/\eta_1,\ldots, \theta^k_{s_n}\land \eta_n/\eta_n].$$ By Lemma \ref{lemma:theta_and_formula_implies_realization_for_formula} \ref{lemma:theta_and_formula_implies_realization_for_formula_i} and \ref{lemma:theta_and_formula_implies_realization_for_formula_ii} together with $\dis\R{Mon}$ and $\Di\R{Mon}$, we derive
$$\bigdis_{s_1\in ||\Top||_{\mathsf{X}}}\ldots \bigdis_{s_n\in ||\Top||_{\mathsf{X}}}\Di\phi[\theta^k_{s_1}\land \eta_1/\eta_1,\ldots, \theta^k_{s_n}\land \eta_n/\eta_n]\vdash  \bigdis_{f\in \FFF^\mathbf{a}} \Di \phi^f.$$

It now suffices to show that $\bigdis_{f\in \FFF^\mathbf{a}} \Di \phi^f$ is provably equivalent to a classical formula. We show this by showing that each disjunct $\Di\phi^f$ is so. By Lemma \ref{lemma:instatitions_provably_equivalent_to_charfs}, we have $\phi^f\proveq \theta^k_s$ for some  state $s$. Thus, also $\Di\phi^f\proveq \Di\theta^k_s$. If $s=\mathbf{0}$, then $\Di\theta^k_s=\Di\Bot\proveq\Di \bot \proveq \bot$ by Proposition \ref{prop:classical_completeness} and Lemma \ref{lemma:di_NE_I}. If $s\neq \mathbf{0}$, then by Lemma \ref{lemma:modal_charfs_provably_classical} \ref{lemma:modal_charfs_provably_classical_Di}, we know that $\Di\theta^k_s$ is provably equivalent to a classical formula. 

We now show the general case by induction on the modal depth of $\phi$. Note that by $\lnot\NE\R{E}$, we may assume w.l.o.g. that $\phi=\phi(\bot/\lnot\NE)$. If $md(\phi)=0$, there is clearly a partition of $\phi$, so the result follows from what we just proved. If $md(\phi)=n+1$, then for all subformulas of $\phi$ of form $\Di\psi$ or $\Bo\psi$, we have $md(\psi)\leq n$, and thus by induction hypothesis $\Di \psi\proveq \alpha_{\Di\psi}$ and $\Bo \psi\proveq \alpha_{\Bo\psi}$ for some $\alpha_{\Di\psi},\alpha_{\Bo\psi}\in \ML$. By Lemma \ref{lemma:provable_subformula_replacement}, $\phi \proveq \phi':= \phi[\alpha_{\medcircle_1 \psi_1}/\medcircle_1 \psi_1,\ldots , \alpha_{\medcircle_n \psi_n}/\medcircle_n \psi_n]$, where $[\medcircle_1 \psi_1],\ldots , [\medcircle_n \psi_n]$ are all the subformula occurrences of form $\Di\psi$ or $\Bo \psi$ in $\phi$ such that no $[\medcircle_i \psi_i]$ is a suboccurrence of any other occurrence of the form $\Di \psi$ or $\Bo \psi$ in $\phi$. The formula $\phi'$ clearly has a partition, so $\Di\phi'\proveq \alpha \in \ML$; then also $\Di\phi \proveq \alpha$.
\end{proof}
    
\begin{proof}[Proof of Lemma \ref{lemma:partitions}]
    Apply Lemma \ref{lemma:modals_provably_classical}, $\lnot \NE\R{E}$, and Lemma \ref{lemma:provable_subformula_replacement} to replace all subformula occurrences of form $\Di\psi$, $\Bo\psi$, or $\lnot \NE$ in $\phi$ with classical formulas.
\end{proof}

We are now ready to prove the completeness theorem for the $\BSML$-system.

\begin{theorem}[Completeness of \BSMLBF] \label{theorem:BSML_completeness}
    If $\Phi \vDash \psi$, then $\Phi\vdash \psi$.
\end{theorem}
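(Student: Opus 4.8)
The plan is to mirror the strong-completeness proof for $\BSMLI$ (Theorem~\ref{theorem:BSMLI_completeness}), replacing every use of the genuine $\intd$-normal form by the realization machinery. I argue by contraposition: assuming $\Phi\nvdash\psi$, I will construct a single pointed state model $(M,t)$ witnessing $t\vDash\Phi$ and $t\nvDash\psi$. Write $\Phi=\{\phi_i\mid i\in I\}$ and, for each $i$, set $k_i:=\max\{md(\phi_i),md(\psi)\}$ and $\mathsf{X}_i:=\mathsf{P}(\phi_i)\cup\mathsf{P}(\psi)$, so that the realizations of $\phi_i$ and of $\psi$ can be taken at common parameters. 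By Lemma~\ref{lemma:partitions} I may assume (replacing each $\phi_i$ by a $\proveq$-, hence also $\vDash$-equivalent formula) that every $\phi_i$ possesses a $\intd\hspace{-1pt}$-distributive partition $\mathbf{a}_i$.

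The role played in the $\BSMLI$ proof by Lemma~\ref{lemma:traceable_deduction_failure} is now taken over by Lemma~\ref{lemma:realization_provable_equivalence}\ref{lemma:realization_provable_equivalence_ii}, whose contrapositive reads: if $\Psi,\phi_i\nvdash\psi$ then $\Psi,\phi_i^{f}\nvdash\psi$ for some realizing function $f\in\mathcal{F}^{\mathbf{a}_i}$. Iterating this selection over $I$ exactly as in the inductive argument of Lemma~\ref{lemma:traceable_deduction_failure}\ref{lemma:traceable_deduction_failure_ii}---and using that, modulo $\proveq$, only finitely many realizations arise (there being finitely many non-equivalent $\theta^k_s$ over the finite $\mathsf{X}_i$)---I obtain a choice $\{\phi_i^{f_i}\mid i\in I\}$ with $\{\phi_i^{f_i}\mid i\in I\}\nvdash\psi$. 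By Lemma~\ref{lemma:instatitions_provably_equivalent_to_charfs} each $\phi_i^{f_i}$ is provably equivalent to some strong Hintikka formula $\theta^{k_i}_{s_i}$ (possibly the null $\theta^{k_i}_{\mathbf 0}=\Bot$), so writing $\Phi':=\{\theta^{k_i}_{s_i}\mid i\in I\}$ gives $\Phi'\nvdash\psi$.

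From here the argument is identical to the $\BSMLI$ case. First, $\Phi'\nvDash\Bot$ (i.e.\ $\Phi'$ is satisfiable): otherwise some $s_i=\mathbf 0$, or two members $\theta^{k_i}_{s_i},\theta^{k_j}_{s_j}$ would have $s_i\nbisim_m s_j$ at the common parameters $m=\min\{k_i,k_j\}$, $\mathsf{M}=\mathsf{X}_i\cap\mathsf{X}_j$; in either case Lemmas~\ref{lemma:not_bisimilar_implies_charfs_incompatible}, \ref{lemma:charf_proves_lower_md_charf} and \ref{lemma:Bot_E} would yield $\Phi'\vdash\Bot\vdash\psi$, a contradiction. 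Hence there is $t$ with $t\vDash\Phi'$. Since $\theta^{k_i}_{s_i}\proveq\phi_i^{f_i}\vdash\phi_i$ by Lemma~\ref{lemma:realization_provable_equivalence}\ref{lemma:realization_provable_equivalence_i}, soundness gives $t\vDash\phi_i$ for every $i$, so $t\vDash\Phi$. For $t\nvDash\psi$, suppose instead $t\vDash\psi$; fix any $i$ and realize $\psi$ at parameters $k_i,\mathsf{X}_i$: by Lemmas~\ref{lemma:partitions}, \ref{lemma:realization_equivalence} and \ref{lemma:instatitions_provably_equivalent_to_charfs}, $\psi\proveq\bigintd_g\psi'^{g}$ with each $\psi'^{g}\proveq\theta^{k_i}_{r}$, so $t\vDash\theta^{k_i}_{r}$ for some (non-null) $r$, giving $r\bisim_{k_i}t$ by Proposition~\ref{prop:characteristic_formulas_characterize_states}\ref{prop:characteristic_formulas_characterize_states_theta}. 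Also $t\vDash\theta^{k_i}_{s_i}$ yields $s_i\bisim_{k_i}t$, whence $\theta^{k_i}_{s_i}\proveq\theta^{k_i}_{r}$ by Lemma~\ref{lemma:charf_provable_equivalence}; combined with $\theta^{k_i}_{r}\proveq\psi'^{g}\vdash\psi$ (Lemma~\ref{lemma:realization_provable_equivalence}\ref{lemma:realization_provable_equivalence_i}) this gives $\Phi'\vdash\psi$, contradicting the choice of $\Phi'$. Thus $t\nvDash\psi$ and $\Phi\nvDash\psi$.

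The step I expect to be the main obstacle is the simultaneous selection of realizations over the whole---possibly infinite---premise set: Lemma~\ref{lemma:realization_provable_equivalence}\ref{lemma:realization_provable_equivalence_ii} is a single-formula statement, and converting it into the collective choice $\{f_i\}_{i\in I}$ is precisely where the $\BSML$ proof is more delicate than the $\BSMLI$ one (there $\intd\R{E}$ delivers the traceable deduction failure directly). This requires the enumeration/induction bookkeeping of Lemma~\ref{lemma:traceable_deduction_failure}\ref{lemma:traceable_deduction_failure_ii} together with the finiteness-up-to-$\proveq$ of the realizations; the remaining parameter reconciliation between the various $k_i$ and $\mathsf{X}_i$ is routine given Lemma~\ref{lemma:charf_proves_lower_md_charf}, exactly as before.
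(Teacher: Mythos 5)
Your proposal is correct and follows essentially the same route as the paper's own proof: contraposition, Lemma~\ref{lemma:partitions} to obtain partitions, iterated use of the contrapositive of Lemma~\ref{lemma:realization_provable_equivalence}\ref{lemma:realization_provable_equivalence_ii} to pick realizations $\phi_i^{f_i}$ with $\{\phi_i^{f_i}\}\nvdash\psi$, conversion to strong Hintikka formulas via Lemma~\ref{lemma:instatitions_provably_equivalent_to_charfs}, satisfiability of $\Phi'$ via Lemmas~\ref{lemma:not_bisimilar_implies_charfs_incompatible} and \ref{lemma:charf_proves_lower_md_charf}, and the final contradiction via Proposition~\ref{prop:characteristic_formulas_characterize_states}\ref{prop:characteristic_formulas_characterize_states_theta} and Lemma~\ref{lemma:charf_provable_equivalence}. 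The only cosmetic caveat is the notation $\psi\proveq\bigintdd_g\psi'^{g}$, which should be read as the semantic equivalence of Lemma~\ref{lemma:realization_equivalence} (composed with the $\proveq$ of Lemma~\ref{lemma:partitions}), since $\intd$ is not in the language of $\BSML$; this is exactly how the paper uses it, so nothing is lost.
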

\begin{proof}
    Assume $\Phi\nvdash \psi$. We  show $\Phi\nvDash \psi$. 
    Let $\Phi=\{\phi_i\sepp i \in I\}$. By Lemma \ref{lemma:partitions}, we may assume that each $\phi_i$ has a partition $\mathbf{a}_i$ and $\psi$ has a partition $\mathbf{a}$. For each $i$, let $k_i:=max\{md(\phi_i),md(\psi)\}$ and  $\mathsf{X}_i:=\mathsf{P}(\phi_i)\cup \mathsf{P}(\psi)$. Now, for an arbitrary $i\in I$, since $\Phi\backslash\{\phi_i\},\phi_i\nvdash \psi$, by Lemma \ref{lemma:realization_provable_equivalence} \ref{lemma:realization_provable_equivalence_ii}, we know there is a realizing function $f_i\in \FFF^{\mathsf{X}_i,\mathbf{a}_i}$ such that $\Phi\backslash\{\phi_i\},\phi_i^f\nvdash \psi$. Continuing to argue in a similar way, one can find for each $i\in I$ a realizing function $f_i\in \FFF^{\mathsf{X}_i,\mathbf{a}_i}$ such that  $\{\phi_i^{k_i,f_i} \sepp f_i\in \FFF^{\mathsf{X}_i,\mathbf{a}_i},i\in I\}\nvdash  \psi$. By Lemma \ref{lemma:instatitions_provably_equivalent_to_charfs}, each $\phi_i^{k_i,f_i}$ is provably equivalent to $\theta_{s_i}^{\mathsf{X}_i,k_i}$ for some $s_i$. Thus, $\Phi'=\{\theta_{s_i}^{\mathsf{X}_i,k_i} \sepp i\in I\}\nvdash  \psi$ as well.

    Observe that $\Phi'\nvDash\Bot$, since otherwise either $s_i=\mathbf{0}$ for some $i\in I$, or $s_i\nbisim^\mathsf{M}_m s_j$ for some $i,j\in I$ and for $m=min\{k_i,k_j\}$ and $\mathsf{M}=\mathsf{X}_i\cap \mathsf{X}_j$. In the former case, $\theta_{s_i}^{\mathsf{X}_i,k_i}=\Bot$ and thus $\Phi'\vdash\psi$; a contradiction. In the latter case, $\theta^{{\mathsf{M},m}}_{s_i},\theta^{{\mathsf{M},m}}_{s_j}\vdash\Bot$ by Lemma \ref{lemma:not_bisimilar_implies_charfs_incompatible}, so that by Lemma \ref{lemma:charf_proves_lower_md_charf}, $\theta^{\mathsf{X}_i,k_i}_{s_i},\theta^{\mathsf{X}_j,k_j}_{s_j}\vdash\Bot$. Thus, $\Phi'\vdash\psi$; a contradiction again.

    Now, let $t$ be such that $t\vDash\Phi'$. For each $i\in I$, we have $t\vDash\theta_{s_i}^{k_i}\proveq\phi_i^{f_i}$, and thus $t\vDash\phi_i^{f_i}$ by soundness. By Lemma \ref{lemma:realization_equivalence}, we further conclude $t\vDash \phi_i$, whereby $t\vDash\Phi$. To show $\Phi\nvDash\psi$, it then suffices to show $t\nvDash\psi$. Assume otherwise. Take $i\in I$. By Lemma \ref{lemma:realization_equivalence}, $t\vDash \psi^{k_i,g}$ for some $g\in \FFF^{\mathsf{X}_i,\mathbf{a}}$. In view of Lemma \ref{lemma:instatitions_provably_equivalent_to_charfs}, we have $t\vDash \theta_r^{\mathsf{X}_i,k_i}$ for some (non-null) state $r$ with $\psi^{k_i,g}\proveq\theta_r^{\mathsf{X}_i,k_i}$. On the other hand, since $t\vDash\Phi'$, we have $t\vDash\theta_{s_i}^{\mathsf{X}_i,k_i}$. Then, by Proposition \ref{prop:characteristic_formulas_characterize_states}\ref{prop:characteristic_formulas_characterize_states_theta}, we have $r \bisim^{\mathsf{X}_i}_{k_i} t\bisim^{\mathsf{X}_i}_{k_i} s_i$. Therefore, by Lemma \ref{lemma:charf_provable_equivalence}, $\theta^{\mathsf{X}_i,k_i}_{s_i}\vdash \theta^{\mathsf{X}_i,k_i}_{r}\vdash \psi^{k_i,g}$. As $\psi^{k_i,g}\vdash \psi$ (by Lemma \ref{lemma:realization_provable_equivalence} \ref{lemma:realization_provable_equivalence_i}), we are forced to conclude that $\Phi'\vdash \psi$, which is a contradiction.
\end{proof}
\section{Conclusion} \label{section:conclusion}
In this paper, we presented natural deduction axiomatizations for $\BSML$ and the two extensions we introduced, $\BSMLI$ ($\BSML$ with the global/inquisitive disjunction $\intd$) and $\BSMLO$ ($\BSML$ with the novel emptiness operator $\OC$). We also proved the expressive completeness of the two extensions: $\BSMLI$ is expressively complete for the class of all state properties invariant under bounded bisimulation; $\BSMLO$ for the class of all union-closed state properties invariant under bounded bisimulation. We conclude by noting an additional preliminary result known to us, and listing possible directions for further investigation.

We saw that \BSML is union closed but not expressively complete for union-closed properties. It appears, however, that one can find a different natural class of state properties for which \BSML \emph{is} complete: according to a very recent unpublished result \cite{aknudstorp2024}, \BSML is complete for the class of all properties $\PPP$ such that $\PPP$ is invariant under bounded bisimulation, union-closed, and also \emph{convex}, where $\PPP$ is convex if $[(M,s)\in \PPP$ and $(M,t)\in \PPP$ and $t\subseteq u\subseteq s ]$ implies $(M,u)\in\PPP$.

The semantics of the quantifiers in first-order dependence logic are analogous to the semantics of the modalities used in modal dependence logic, which are distinct from the \BSML-modalities $\Di$ and $\Bo$. Defining first-order quantifiers whose semantics are analogous to those of $\Di$ and $\Bo$ may yield first-order state-based logics with interesting properties.

In inquisitive semantics, the global disjunction $\intd$ is used to model the meanings of questions. Similarly, in the context of the extension $\BSMLI$, this disjunction might allow one to model questions and their interaction with free choice phenomena \ref{ex:qfc}, and to arrive at a full account of examples of overt free choice cancellation \ref{ex:sluice},  which were left open in \cite{aloni2022}.

\ex. \label{ex:qfc} Can I have coffee or tea? \quad $\Diamond (c \vee t) \intd \neg \Diamond (c \vee t)$
\a. Yes $\implicates$ you can have coffee and you can have tea
\b. No $\implicates$ you cannot have either

\ex. \label{ex:sluice} You may have coffee or tea, I don't know which.

Finally, given that the emptiness operator $\OC$ is a natural counterpart to $\NE$ and that it can be used to cancel out the effects of pragmatic enrichment in the sense that $\OC(\alpha \land \NE)\equiv \alpha$, investigating the applications of this operator in formal semantics may yield some interesting results.

\bibliography{bibb}

\end{document}